\renewcommand{\div}{\operatorname{div}}
\newcommand{\osc}{\operatorname{osc}}
\newcommand{\Tt}{{\mathbb{T}}}
 \newcommand{\Rr}{\mathbb R}
 \newcommand{\af}{\alpha}
 \newcommand{\ep}{\epsilon}
\newcommand{\be}{\beta}
 \newcommand{\ga}{\gamma}
 \newcommand{\de}{\delta}
  \newcommand{\lam}{\lambda}
 \newcommand{\te}{\theta}
\renewcommand{\div}{\operatorname{div}}
\newcommand{\diver}{\operatorname{div}}
\newcommand{\tr}{\operatorname{Tr}}
\newcommand{\bx}{{\bf x}}
\newcommand{\br}{{\bf r}}
\newcommand{\bv}{{\bf v}}
\newtheorem{teo}{Theorem}[section]
\newtheorem{Definition}{Definition}[section]
\newtheorem{Lemma}{Lemma}[section]
\newtheorem{Corollary}{Corollary}[section]
\newtheorem{Proposition}{Proposition}[section]
\newtheorem{Remark}{Remark}[section]
\newtheorem{Assumption}{A}
\begin{document}

\title{Time-dependent mean-field games in the subquadratic case}
\author{Diogo
  A. Gomes\footnote{Center for Mathematical Analysis, Geometry, and Dynamical Systems, Departamento de Matem\'atica, Instituto Superior T\'ecnico, 1049-001 Lisboa, Portugal, 
King Abdullah University of Science and Technology (KAUST), CSMSE Division , Thuwal 23955-6900. Saudi Arabia. e-mail: dgomes@math.ist.utl.pt, and
KAUST SRI, Uncertainty Quantification Center in Computational Science and Engineering.}, 
Edgard Pimentel
\footnote{Center for Mathematical Analysis, Geometry, and Dynamical Systems, Departamento de Matem\'atica, Instituto Superior T\'ecnico, 1049-001 Lisboa, Portugal. e-mail: epiment@math.ist.utl.pt}, 
H\'ector S\'anchez-Morgado.  
\footnote{Instituto de Matem\'aticas, Universidad Nacional Aut\'onoma
  de M\'exico. e-mail: hector@matem.unam.mx}}

\date{\today} 

\maketitle

\begin{abstract}
In this paper we consider time-dependent mean-field games with subquadratic Hamiltonians and power-like local dependence on the measure. We establish existence of classical solutions under a certain set of conditions depending on both the growth of the Hamiltonian and the dimension. This is done by combining 
regularity estimates for the Hamilton-Jacobi equation based on the
Gagliardo-Nirenberg interpolation inequality with polynomial estimates for the Fokker-Planck equation. This technique improves substantially 
the previous results on the regularity of time-dependent mean-field games. 
\end{abstract}

\thanks{
D. Gomes was partially supported by CAMGSD-LARSys through FCT-Portugal and by grants
PTDC/MAT-CAL/0749/2012,  UTA-CMU/MAT/0007/2009 
PTDC/MAT/114397/2009, and
UTAustin/MAT/0057/2008.

E. Pimentel was supported by CNPq-Brazil, grant GDE/238040/2012-7.}

\section{Introduction}

The mean-field games framework
is a class of methods inspired by ideas in statistical physics which
aims at understanding differential games with infinitely many indistinguishable players. 
Since the pioneering works
 \cite{C1, C2, ll1, ll2, ll3, ll4}
this area has known an intense research activity, see, for instance, the recent surveys \cite{llg2}, \cite{cardaliaguet}, \cite{achdou2013finite}, or \cite{GS}, as well as the video lectures by P-L. Lions \cite{LCDF,LIMA},
and the references therein. In particular,
a number of problems have been worked out in detail by various authors, 
including numerical methods \cite{lst}, \cite{DY}, \cite{CDY}, 
applications
in economics \cite{llg1}, \cite{GueantT} and environmental
policy \cite{lst}, finite state problems \cite{GMS}, \cite{GMS2}, \cite{GF}, 
explicit models \cite{Ge}, \cite{NguyenHuang},
obstacle-type problems \cite{GPat}, extended mean-field games \cite{GPatVrt},
\cite{GVrt}, probabilistic methods \cite{Carmona1}, \cite{Carmona2}, long-time behavior
\cite{CLLP}, \cite{Cd1}, weak solutions
to mean-field games  \cite{porretta}, \cite{Cd2}, rigorous justification of mean-field games systems \cite{BFl}, \cite{KLY},
to mention just a few.
 
A model mean-field game problem is the system
\begin{equation}
\label{eq:smfg0}
\begin{cases}
-u_t+H(x, Du)=\Delta u +g(m)\\
m_t-\div(D_pH m)=\Delta m.
\end{cases}
\end{equation}
As usual, $H$ and $g$ satisfy various conditions as detailed in Section \ref{ma}.
In many applications the boundary conditions for
the previous equations are the, so called, initial-terminal boundary conditions:
\begin{equation}
\label{itvp}
\begin{cases}
u(x,T)=u_0(x)\\
m(x,0)=m_0(x),
\end{cases}
\end{equation}where $T>0$ is a fixed terminal instant. To avoid additional difficulties, we will consider in this paper spatially 
periodic solutions.
That is, $u$ and $m$ are regarded as functions with domain $\Tt^d\times [0,T]$, where $\Tt^d$ is
the $d$-dimensional torus.

%
%


The objective of this paper 
is to obtain conditions under which existence of solutions to \eqref{eq:smfg0} under the initial-terminal conditions \eqref{itvp}
 can be established. 
This fundamental question
 has been addressed in several ways by various authors. The first result
on existence of solutions appeared in 
\cite{ll2} where the authors consider weak solutions to \eqref{eq:smfg0}-\eqref{itvp}.  Weak solutions for the planning problem were considered in \cite{porretta}. 
For quadratic Hamiltonians the existence of smooth solutions to \eqref{eq:smfg0}-\eqref{itvp} was established in \cite{CLLP}. The proof
in that paper
uses strongly 
the quadratic structure of the Hamiltonian through the Hopf-Cole transformation and does not extend (except perhaps in very specific perturbation regimes) 
to general Hamiltonians with quadratic growth. By using
the classical results from \cite{CrAm} combined with elementary techniques, \cite{LIMA} verifies that mean-field games with Hamiltonians satisfying quadratic or subquadratic growth conditions and with the model non-linearity $g(m)=m^\alpha$
admit classical solutions under the constraint
$0<\alpha<\frac{2}{d-2}$, if $d>2$ and any $\alpha>0$ if $d=2$. 
A second result also from \cite{LIMA} concerns subquadratic Hamiltonians $H$
behaving at $\infty$ like $|p|^\gamma$, $\gamma<1+\frac{1}{d+1}$, 
and $g(m)=m^\alpha$. In this case existence of classical solutions is guaranteed for
$0<\alpha<\infty$.
In the stationary case the first result on existence of solutions was obtained in \cite{ll1}. Existence of smooth solutions was addressed in \cite{GM} (see also \cite{GIMY} for a related problem), \cite{GPM1}, and \cite{GPatVrt}. The results in \cite{e}, although not mentioning mean-field games, imply in fact existence of smooth solutions for certain stationary first order mean-field games. 

In this paper, as in \cite{LIMA}, we
consider also the model non-linearity $g(m)=m^\alpha$, and
improve and extend the previous
results substantially for Hamiltonians with subquadratic growth. In a companion paper \cite{GPM3}
we address superquadratic Hamiltonians which require different techniques. 

The specific assumptions under which we work, A\ref{ah}-A\ref{alphaimp}, are discussed in 
Section \ref{ma}. Under these assumptions, 
our main result is 
\begin{teo}\label{teo:intro1}
Suppose A\ref{ah}-A\ref{alphaimp} hold.
 Then
there exists a classical solution $(u,m)$ to \eqref{eq:smfg0}
 under the initial-terminal conditions \eqref{itvp}.
\end{teo}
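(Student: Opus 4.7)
The plan is to establish Theorem \ref{teo:intro1} by a continuation argument in a parameter $\lambda\in[0,1]$ that interpolates between a trivial uncoupled problem (for which existence and uniqueness are classical) and the full system \eqref{eq:smfg0}-\eqref{itvp}, for instance by replacing the coupling $g(m)$ and the drift in the Fokker--Planck equation by $\lambda$ times themselves. The set of $\lambda$ for which a classical solution exists is then shown to be non-empty, open (via the implicit function theorem applied to the linearized system, using standard Schauder parabolic theory), and closed (via uniform a priori estimates). Thus the heart of the proof reduces to producing uniform a priori bounds on $(u,m)$ in a sufficiently strong norm, independent of $\lambda$.

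The a priori estimates are derived in a carefully ordered chain, exploiting the decoupled structure of \eqref{eq:smfg0}. First I would establish integral bounds for $u$ and $m$: on the Fokker--Planck side, testing against powers of $m$ yields $L^r$ bounds for $m$ in terms of norms of $Du$, and on the Hamilton--Jacobi side, multiplying the equation by suitable powers of $u$ (or of $m$ itself) gives integral control of $Du$ in terms of $\|m\|_{L^s}$ via the subquadratic structure of $H$. Next I would invoke the Gagliardo--Nirenberg interpolation inequality to upgrade these coupled integral bounds: this is the place where the restriction on the growth exponent $\gamma$ of $H$ (and on $\alpha$ and $d$, encoded in A\ref{alphaimp}) enters, since Gagliardo--Nirenberg trades $L^p$-integrability against integrability of derivatives in a dimension-dependent way. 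The outcome is an a priori bound $\|m\|_{L^q}\le C$ for some $q$ large enough that $g(m)=m^\alpha$ lies in a space compatible with the Hamilton--Jacobi theory.

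Once such integral control is in place, I would bootstrap. Feeding $g(m)\in L^q$ into the Hamilton--Jacobi equation and using the results of \cite{CrAm} (or standard parabolic $L^p$ theory together with the subquadratic growth of $H$) upgrades $u$ to $W^{2,1}_p$, then to H\"older, and finally to $C^{2,\theta}$ in space and $C^{1,\theta/2}$ in time. The gradient $Du$ being H\"older then turns the Fokker--Planck equation into a linear parabolic equation with H\"older coefficients, and classical Schauder theory upgrades $m$ to the same $C^{2,\theta}$ class. These regularity gains are uniform in $\lambda$, closing the continuation.

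The main obstacle is the delicate balancing in the second step: one needs the exponents produced by Gagliardo--Nirenberg, when combined with the integral identity coming from the Hamilton--Jacobi equation and the Fokker--Planck polynomial estimates, to actually improve over the starting integrability of $m$ rather than merely reproduce it. This is exactly where the subquadratic growth $\gamma<2$ is used, and it dictates the explicit admissible range of $\alpha$ in A\ref{alphaimp}. A secondary technical point is to handle the sub- and super-solution comparison for $u$ carefully so that lower bounds on $u$ (needed to exploit $u$ as a weight in the Fokker--Planck estimates) are available; this is typically done by subtracting a carefully chosen affine or polynomial function in $t$ and using the maximum principle together with the sign and growth assumptions on $H$ and $g$.
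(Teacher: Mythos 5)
The paper's own proof does not use a continuation in a coupling parameter; it instead mollifies the local nonlinearity, replacing $g(m)$ by $g_\epsilon(m)=\eta_\epsilon*g(\eta_\epsilon*m)$, for which existence follows from a fixed-point/compactness argument for mean-field games with smoothing couplings (this is taken as known; see the discussion after \eqref{eq:smfg}). The work is then entirely in deriving estimates on $(u^\epsilon,m^\epsilon)$ uniform in $\epsilon$ and passing to the limit. Your continuation scheme could in principle serve the same purpose, but as written the openness step is a genuine gap: the linearized mean-field game system is a coupled forward-backward parabolic system, and standard Schauder theory alone does \emph{not} give invertibility of its linearization. Ruling out a nontrivial kernel requires the Lasry--Lions monotonicity structure (here, $g$ increasing), and this is precisely where continuation arguments for mean-field games become delicate. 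You would need to make this argument explicit before your ``open'' step is justified.

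On the a priori estimates, you correctly identify the heart of the matter: the loop between polynomial $L^\infty_t L^\beta_x$ estimates for $m$ in terms of $\|\,|D_pH|^2\|_{L^r_t L^p_x}$ (Theorem \ref{cpp2}), $L^\infty$ bounds for $u$ in terms of $\|g(m)\|_{L^a_t L^b_x}$ (Lemma \ref{l8421}), and the Gagliardo--Nirenberg bound $\|D^2u\|_{L^r_t L^p_x}\lesssim \|g(m)\|+\|u\|_\infty^{\gamma/(2-\gamma)}$ (Theorem \ref{c92}); and you correctly identify that the subquadraticity $\gamma<2$ together with A\ref{alphaimp} is what makes this loop self-improving. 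However, your jump from there to classical regularity via a ``generic'' $W^{2,1}_p\to$ H\"older $\to C^{2,\theta}$ bootstrap glosses over two essential and nontrivial steps that the paper devotes whole sections to. First, Lipschitz regularity of $u$ is \emph{not} obtained from parabolic $L^p$ theory alone: the nonlinearity $H(x,Du)$ obstructs a naive bootstrap, and the paper uses the nonlinear adjoint method (Section \ref{lrsbh}), testing the differentiated equation against a dual heat-type density with Dirac data and exploiting the integrability of $|D\rho^{\nu/2}|^2$, to get $Du^\epsilon\in L^\infty$ uniformly. Second, you never address lower bounds on $m$, which are indispensable for a classical solution (so that $g(m)=m^\alpha$ is smooth and the Fokker--Planck equation is nondegenerate); the paper obtains them via the Hopf--Cole transform $w=\ln m$ and a second application of the adjoint method (Section \ref{frhct}, Theorem \ref{mLip}), showing $w$ is Lipschitz, hence $m$ bounded above and below. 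Without these two ingredients, the chain of estimates does not close and the ``closed'' part of your continuation would also fail.
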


Theorem \ref{teo:intro1} applied to mean-field games with Hamiltonians behaving at $\infty$ like $|p|^\gamma$, $1+\frac{1}{d+1}<\gamma<2$, and $g(m)=m^\alpha$ yields existence of smooth solutions for $0<\alpha<\alpha_{\ga,d}$ with $\alpha_{\ga,d}>\frac{2}{d-2},$
given in Assumption A\ref{alphaimp}.

In order to prove Theorem \ref{teo:intro1} we consider a regularization of \eqref{eq:smfg0} by replacing $g(m)$ by the nonlocal operator $g_\epsilon(m)=\eta_\epsilon*g(\eta_\epsilon*m)$, 
where $\eta_\epsilon$ is a standard mollifying kernel, which in particular is symmetric. This yields the system
\begin{equation}
\label{eq:smfg}
\begin{cases}
-u_t^\epsilon+H(x, Du^\epsilon)=\Delta u^\epsilon +g_\epsilon(m^\epsilon)\\
m_t^\epsilon-\div(D_pH m^\epsilon)=\Delta m^\epsilon.  
\end{cases}
\end{equation}
We use the convention $g_0=g$. The existence of solutions to \eqref{eq:smfg} follows from standard arguments using some of the ideas in \cite{cardaliaguet}. A detailed proof of this result is discussed in \cite{PIM}.

The proof proceeds by establishing a new class of polynomial estimates for $m^\ep$, which are combined with upper bounds for $u^\ep$. 

\begin{teo}\label{cpp2}
Let $(u^\epsilon,m^\epsilon)$ be a solution of \eqref{eq:smfg}. 
Suppose $m^\epsilon \in L^\infty([0,T], L^{\be_0}(\Tt^d))$, $\be_0\geq 1$.
Assume that $p\;>\;\frac{d}{2}$, let $q$ be the conjugate exponent
and $r=\frac{1}{\kappa}$, where
\begin{equation}
\label{bbkappa}
\kappa=\frac{d+2q-dq}{q[(\theta-1)d+2]}.
\end{equation} Then, 
\begin{align*}
\int_{\Tt^d}\left(m^\epsilon\right)^{\be_n}\left(\tau,x\right)dx
\leq C+C\left\|\left|D_{p}H\right|^{2}\right\|_{L^{r}\left(0,T;L^{p}\left(\Tt^d\right)\right)}^{r_n}
\end{align*}where
\begin{equation}\label{eq:eq221}
r_{n}=r\frac{\theta^{n}-1}{\theta-1},
\end{equation}$\theta<1$ and $\be_n=\theta^n\be_0$.
\end{teo}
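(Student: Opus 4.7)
}

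The plan is to derive an energy inequality from the Fokker--Planck equation in \eqref{eq:smfg} by multiplying by powers of $m^\epsilon$, then close it using the Gagliardo--Nirenberg interpolation inequality and iterate the resulting estimate.

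\emph{Step 1: energy identity.} Fix an exponent $\beta\geq \beta_0$ and multiply the Fokker--Planck equation $m^\epsilon_t-\operatorname{div}(D_pH\, m^\epsilon)=\Delta m^\epsilon$ by $\beta(m^\epsilon)^{\beta-1}$. Integrating by parts on $\Tt^d$ and writing $v=(m^\epsilon)^{\beta/2}$, the dissipation from $\Delta m^\epsilon$ produces a term $c_\beta\int |Dv|^2$ on the left, and the drift term becomes $-2(\beta-1)\int v\, Dv\cdot D_pH$. Cauchy--Schwarz and Young's inequality absorb half of the good term, leaving
\begin{equation*}
\frac{d}{dt}\int_{\Tt^d} v^2\,dx + c\int_{\Tt^d}|Dv|^2\,dx \leq C_\beta\int_{\Tt^d} v^2\,|D_pH|^2\,dx.
\end{equation*}

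\emph{Step 2: spatial Hölder and Gagliardo--Nirenberg.} Apply Hölder with exponents $(p,q)$ to the right-hand side to get $\bigl\||D_pH|^2\bigr\|_{L^p}\,\|v\|_{L^{2q}}^2$. The hypothesis $p>d/2$ (equivalently $2q<2d/(d-2)$) ensures that the Gagliardo--Nirenberg inequality
\begin{equation*}
\|v\|_{L^{2q}}\leq C\|Dv\|_{L^2}^{\lambda}\|v\|_{L^2}^{1-\lambda},\qquad \frac{1}{2q}=\lambda\Bigl(\frac12-\frac1d\Bigr)+\frac{1-\lambda}{2},
\end{equation*}
holds with $\lambda\in(0,1)$. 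Substituting, one obtains on the right-hand side a factor $\|Dv\|_{L^2}^{2\lambda}$ multiplied by $\|v\|_{L^2}^{2(1-\lambda)}\bigl\||D_pH|^2\bigr\|_{L^p}$.

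\emph{Step 3: Young and integration in time.} Since $2\lambda<2$, Young's inequality lets us absorb a small multiple of $\|Dv\|_{L^2}^2$ into the dissipation, paying the price of raising $\bigl\||D_pH|^2\bigr\|_{L^p}\,\|v\|_{L^2}^{2(1-\lambda)}$ to the power $1/(1-\lambda)$. Integrating in time between $0$ and $\tau$ and applying Hölder in time to the resulting $\bigl\||D_pH|^2\bigr\|_{L^p}^{1/(1-\lambda)}$ factor with exponent matched to $r=1/\kappa$ yields
\begin{equation*}
\sup_{\tau\in[0,T]}\int_{\Tt^d}(m^\epsilon)^{\beta}\,dx
\leq C\int_{\Tt^d}(m^\epsilon_0)^{\beta}\,dx+C\bigl\||D_pH|^2\bigr\|_{L^r(L^p)}^{r}\Bigl(\sup_{\tau}\|v\|_{L^2}^2\Bigr)^{\theta-1},
\end{equation*}
for a suitable ratio $\theta>1$; tracking $\lambda$ and the Gagliardo--Nirenberg exponent through the Hölder step is exactly what produces the algebraic formula \eqref{bbkappa} for $\kappa$.

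\emph{Step 4: iteration.} Note that $\|v\|_{L^2}^2=\|m^\epsilon\|_{L^\beta}^\beta$, so the previous inequality is a relation between $\|m^\epsilon\|_{L^\infty_t L^{\theta\beta}_x}^{\theta\beta}$ and $\|m^\epsilon\|_{L^\infty_t L^{\beta}_x}^{\beta(\theta-1)}$. Starting from the given bound $m^\epsilon\in L^\infty_t L^{\beta_0}_x$ and iterating with $\beta_n=\theta^n\beta_0$, the exponent on $\bigl\||D_pH|^2\bigr\|_{L^r(L^p)}$ accumulates geometrically: $r_n=r(1+\theta+\dots+\theta^{n-1})=r(\theta^n-1)/(\theta-1)$, which is precisely \eqref{eq:eq221}.

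\emph{Main obstacle.} The conceptual ingredients — Fokker--Planck energy estimate, Hölder, Gagliardo--Nirenberg, Young, iteration — are standard. The real work is bookkeeping: verifying that the specific interpolation exponent $\lambda$ coming from Gagliardo--Nirenberg combines with the time Hölder exponent to yield exactly $\kappa=(d+2q-dq)/(q[(\theta-1)d+2])$, and checking that the sign conditions on the resulting algebraic expressions (in particular $p>d/2$ and the admissible range of $\theta$) are compatible with the range of admissible exponents at each iteration step.
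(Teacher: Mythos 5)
There is a genuine gap, and it is structural rather than bookkeeping. In Step 2 you interpolate $\|v\|_{L^{2q}}$ (with $v=(m^\epsilon)^{\beta/2}$) between $\|Dv\|_{L^2}$ and $\|v\|_{L^2}$ \emph{at the same level} $\beta$. After Young's inequality the factor $\|v\|_{L^2}^{2(1-\lambda)}$ is raised to the power $\tfrac{1}{1-\lambda}$ and becomes exactly $\|v\|_{L^2}^{2}$, so what you actually obtain is
\begin{equation*}
\frac{d}{dt}\|v\|_{L^2}^2+\frac c2\|Dv\|_{L^2}^2\leq C\bigl\||D_pH|^2\bigr\|_{L^p(\Tt^d)}^{\frac{1}{1-\lambda}}\,\|v\|_{L^2}^2 ,
\end{equation*}
a Gronwall-type inequality. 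Integrating it gives an \emph{exponential} bound in $\||D_pH|^2\|$, not the polynomial bound $C+C\||D_pH|^2\|^{r_n}_{L^r(0,T;L^p)}$ claimed in Theorem \ref{cpp2}; alternatively, if you H\"older in time and keep $\sup_\tau\|v\|_{L^2}^2$ on the right as in your Step 3 display, the same quantity appears on both sides with power one and a constant that is not small, so the inequality cannot be closed. Moreover the exponent $\theta-1$ in your Step 3 display and the two-level reading in Step 4 (relating $L^{\theta\beta}$ to $L^{\beta}$) do not follow from Steps 1--3: your Gagliardo--Nirenberg exponent $\lambda=\tfrac{d(q-1)}{2q}$ is independent of $\theta$, whereas the paper's $\kappa$ in \eqref{bbkappa} depends on $\theta$ precisely because the needed interpolation is between two \emph{different} levels.

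The paper's mechanism, which your proposal is missing, is Lemma \ref{lpp2}: the troublesome norm $\|(m^\epsilon)^{\beta_{n+1}}\|_{L^q}$ is interpolated between the \emph{previous-level} norm $\int(m^\epsilon)^{\beta_n}$ and the Sobolev term $\|(m^\epsilon)^{\beta_{n+1}/2}\|_{L^{2^*}}$, with interpolation exponent exactly $\kappa$ of \eqref{bbkappa}; Young's inequality with conjugate exponents $r=1/\kappa$ and $1/(1-\kappa)$ then absorbs the gradient factor into the dissipation and leaves $C\||D_pH|^2\|_{L^p}^r\bigl(\int(m^\epsilon)^{\beta_n}\bigr)^{\theta}$ (Propositions \ref{ppp1}--\ref{ppp2}). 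It is the appearance of the lower level $\beta_n$ on the right, together with the induction hypothesis bounding it uniformly in time, that yields the polynomial accumulation $r_{n+1}=r+\theta r_n$ and hence \eqref{eq:eq221}. The polynomial (rather than exponential) dependence is not cosmetic: it is what allows the bootstrap of Section \ref{sec10} to close via Young's inequality, so the exponential bound your route produces would not suffice. To repair the proposal, replace the same-level Gagliardo--Nirenberg step by the two-level H\"older interpolation of Lemma \ref{lpp2} (plus Sobolev and the probability-measure normalization as in Lemmas \ref{lpp3}--\ref{lpp4}) and then run the induction as in the paper.
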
The proof of Theorem \ref{cpp2} is presented in Section \ref{elep}. The key upper bounds for $u^\ep$ are given by:

\begin{Lemma}\label{l8421}
Let $(u^\epsilon,m^\epsilon)$ be a solution of \eqref{eq:smfg} and assume that
A\ref{ah}-\ref{coerca} hold. Let $a,\;b>1$ be such that 
$$\frac{d}{2}<\frac{b(a-1)}{a}.$$Then there exists $C>0$ such
that  
\[
\|u^\epsilon\|_{L^\infty(0,T;L^\infty(\Tt^d))}\leq C+C\|g_\ep(m)\|_{L^a(0,T;L^b(\Tt^d))}.
\]
\end{Lemma}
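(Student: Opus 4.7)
The plan is to treat the Hamilton--Jacobi equation for $u^\ep$, rewritten as
$$-u^\ep_t - \Delta u^\ep = g_\ep(m^\ep) - H(x, Du^\ep),$$
as a backward heat equation with a source, and to exploit coercivity of $H$ to control the Hamiltonian term one-sidedly. Under A\ref{ah}--\ref{coerca}, $H$ is continuous and coercive in $p$ on $\Tt^d\times\Rr^d$, so $\inf_{(x,p)} H(x,p)\geq -C$; equivalently, the Legendre transform $L$ of $H$ satisfies $L(x,0)\leq C$. Hence $-H(x,Du^\ep)\leq C$ pointwise, and the source is bounded above by $g_\ep(m^\ep)+C$.

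Let $v$ solve the linear backward heat equation $-v_t - \Delta v = g_\ep(m^\ep) + C$ on $\Tt^d\times[0,T]$ with $v(\cdot,T)=u_0$. The parabolic maximum principle yields $u^\ep\leq v$, and Duhamel's formula gives
$$v(x,t) = (e^{(T-t)\Delta}u_0)(x) + \int_t^T e^{(s-t)\Delta}\bigl[g_\ep(m^\ep)(\cdot,s)+C\bigr](x)\,ds.$$
The boundary term is controlled by $\|u_0\|_\infty$ via contractivity of the heat semigroup. For the source integral, I would invoke the heat kernel bound $\|K(\cdot,\tau)\|_{L^{b'}(\Tt^d)}\leq C(1+\tau^{-d/(2b)})$ (with $b'$ the conjugate exponent to $b$), so that H\"older in space gives
$$\|e^{(s-t)\Delta} g_\ep(\cdot,s)\|_{L^\infty}\leq C(s-t)^{-d/(2b)}\|g_\ep(\cdot,s)\|_{L^b},$$
and H\"older in time with exponents $a$, $a'$ yields
$$\int_t^T (s-t)^{-d/(2b)}\|g_\ep(\cdot,s)\|_{L^b}\,ds \leq \left(\int_0^T \sigma^{-da'/(2b)}\,d\sigma\right)^{1/a'}\|g_\ep\|_{L^a(0,T;L^b)}.$$
The time integral is finite precisely when $\frac{da'}{2b}<1$, which is algebraically equivalent to the hypothesis $\frac{d}{2}<\frac{b(a-1)}{a}$.

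For the matching lower bound on $u^\ep$, I would invoke the stochastic control representation of $u^\ep$ as the value function of a problem with running cost $L(x,b)+g_\ep(m^\ep)$ and terminal cost $u_0$. Since $g_\ep\geq 0$ (because $g(m)=m^\alpha$, $m\geq 0$, and $\eta_\ep\geq 0$) and $L\geq -C$ by the same coercivity argument applied at $p=0$, any admissible drift produces $u^\ep(x,t)\geq -C$. Combining the two one-sided bounds yields the claimed estimate. The main technical point is the clean extraction of the one-sided bound $H(x,0)\geq -C$ from A\ref{ah}--\ref{coerca}; everything else is standard Duhamel/heat-kernel machinery, with the exponent condition corresponding exactly to the borderline at which the temporal singularity $\tau^{-d/(2b)}$ becomes just barely $L^{a'}$-integrable.
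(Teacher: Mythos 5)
Your proposal is correct and follows essentially the same route as the paper: the upper bound via comparison with the heat semigroup and the heat-kernel $L^{b'}$ decay plus H\"older in time is exactly the content of the second assertion of Proposition \ref{imphc0} (itself a consequence of the Lax--Hopf duality estimate of Proposition \ref{plb} with $b=0$ and $\zeta_0=\delta_x$), and your control-representation lower bound using $L(x,v)\geq -H(x,0)$ and $g_\ep\geq 0$ is the same fact as Proposition \ref{plest}, there obtained by the maximum principle. The exponent bookkeeping $\frac{da'}{2b}<1\Leftrightarrow \frac{d}{2}<\frac{b(a-1)}{a}$ matches the paper's condition $\frac{p}{s}>\frac{d}{2}$.
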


Lemma \ref{l8421} is proved in Section \ref{pub}. Using the Gagliardo-Nirenberg interpolation Theorem we obtain:

\begin{teo}\label{c92}
Let $(u^\ep,m^\ep)$ be a solution of \eqref{eq:smfg} and assume that
A\ref{ah}-\ref{asbdpph} hold. 
For $1<p, r<\infty$ there are positive constants $c$ and $C$ such that 
\[\|D^2u^\ep\|_{L^r(0,T;L^p(\Tt^d))} \le c\|g_\ep(m^\ep)\|_{L^r(0,T;L^p(\Tt^d))}+c\|u^\ep\|_{L^{\infty}(0,T;L^{\infty}(\Tt^d))}^\frac{\ga}{2-\ga}+C.\]
\end{teo}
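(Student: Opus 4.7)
The plan is to treat the Hamilton--Jacobi equation in \eqref{eq:smfg} as an inhomogeneous backward heat equation
\[
-u^\ep_t - \Delta u^\ep = g_\ep(m^\ep) - H(x, Du^\ep), \qquad u^\ep(\cdot,T)=u_0,
\]
and apply standard parabolic maximal regularity ($L^r$--$L^p$ Calder\'on--Zygmund estimates, valid for $1<r,p<\infty$ on $\Tt^d\times[0,T]$) to get
\[
\|D^2 u^\ep\|_{L^r(0,T;L^p(\Tt^d))} \le C\|g_\ep(m^\ep)\|_{L^r(0,T;L^p(\Tt^d))} + C\|H(\cdot,Du^\ep)\|_{L^r(0,T;L^p(\Tt^d))} + C,
\]
where the trailing constant absorbs the contribution of the smooth terminal datum $u_0$. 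Everything then reduces to controlling the nonlinear Hamiltonian term by something that can be partially absorbed back into $\|D^2 u^\ep\|_{L^r L^p}$.

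For this, I would use the subquadratic growth built into A\ref{ah}--\ref{asbdpph}, namely $|H(x,p)|\le C(1+|p|^\ga)$ with $\ga<2$, so $\|H(\cdot,Du^\ep)\|_{L^p} \le C + C\|Du^\ep\|_{L^{\ga p}}^\ga$. Since $\ga\le 2$, H\"older on the torus gives $\|Du^\ep\|_{L^{\ga p}}\le C\|Du^\ep\|_{L^{2p}}$. The crucial ingredient is the Gagliardo--Nirenberg interpolation with the half--half exponent,
\[
\|Du^\ep\|_{L^{2p}(\Tt^d)} \;\le\; C\,\|D^2u^\ep\|_{L^p(\Tt^d)}^{1/2}\,\|u^\ep\|_{L^\infty(\Tt^d)}^{1/2},
\]
which is the correct specialization (with $j=1$, $k=2$, $a=1/2$, $s=\infty$) to eventually match the stated exponent. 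Raising to the power $\ga r$ and integrating in time, H\"older in $t$ (again using $\ga<2$) gives
\[
\|H(\cdot,Du^\ep)\|_{L^r(0,T;L^p(\Tt^d))} \;\le\; C + C\,\|u^\ep\|_{L^\infty(0,T;L^\infty(\Tt^d))}^{\ga/2}\,\|D^2u^\ep\|_{L^r(0,T;L^p(\Tt^d))}^{\ga/2}.
\]

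The last step is Young's inequality with conjugate exponents $s'=2/\ga$ and $s=2/(2-\ga)$, which yields
\[
C\,\|u^\ep\|_{L^\infty L^\infty}^{\ga/2}\|D^2u^\ep\|_{L^r L^p}^{\ga/2} \;\le\; \tfrac12 \|D^2u^\ep\|_{L^r L^p} + C\,\|u^\ep\|_{L^\infty L^\infty}^{\ga/(2-\ga)}.
\]
Absorbing the $\tfrac12\|D^2u^\ep\|_{L^r L^p}$ term into the left-hand side of the CZ bound gives precisely the asserted inequality. The exponent $\ga/(2-\ga)$ comes out correctly \emph{only} because G--N is applied with $a=1/2$; any other split would force the wrong power on $\|u^\ep\|_{L^\infty L^\infty}$.

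The main obstacle, in my view, is not any single algebraic step but the combinatorial juggling: one must pick the G--N exponent, the spatial H\"older pairing between $L^{\ga p}$ and $L^{2p}$, and the Young conjugate pair so that the three choices are mutually consistent and land exactly on the exponent $\ga/(2-\ga)$, all while using $\ga<2$ to legitimize both the intermediate H\"older embeddings and the absorption. A secondary technical point is ensuring enough a priori smoothness of $u^\ep$ for the parabolic $L^r$--$L^p$ estimate to be rigorously applicable; for the regularized system \eqref{eq:smfg} this is standard since $g_\ep(m^\ep)$ is smooth in $x$, but it should be flagged.
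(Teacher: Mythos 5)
Your proposal is correct and follows essentially the same route as the paper: maximal $L^r$--$L^p$ parabolic regularity for the heat equation (the paper's Lemma \ref{lrsqc2}), the subquadratic growth A\ref{coerca} combined with the H\"older embedding $L^{2p}\hookrightarrow L^{\ga p}$ and the half--half Gagliardo--Nirenberg inequality (Lemmas \ref{l91}, \ref{l92} and \ref{lrsqc3}), and finally Young's inequality with exponents $2/\ga$ and $2/(2-\ga)$ followed by absorption. Your closing remark about the applicability of the parabolic estimates to the regularized system matches the paper's reliance on standard references for exactly this point.
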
The proof of Theorem \ref{c92} is presented in Section \ref{rsbqc}. The proof of Theorem \ref{teo:intro1} follows by combining the estimates in Theorem \ref{cpp2}, Lemma \ref{l8421} and Theorem \ref{c92}, which, together with Gagliardo-Nirenberg Theorem yield a key bound for $\|Du^\ep\|_{L^r(0,T;L^p(\Tt^d))}$. After this, a number of additional estimates, as we outline next, end the proof.



The paper is organized as follows: Section \ref{ma} introduces the main assumptions and presents a model Hamiltonian satisfying those. Preliminary estimates are discussed in Sections \ref{apes} to \ref{best}. 
In Section \ref{elsoe} we obtain second order estimates that enable us to address in
Section \ref{igain} the regularity of solutions to the Fokker-Planck equation. 
In section \ref{sec10} we prove the main estimate by combining the results in Section \ref{rsbqc} with the ones from Section \ref{igain}. 
In section \ref{lrsbh} we show that $u$ is Lipschitz by an application of the non-linear adjoint method
introduced in \cite{E3}.
Once Lipschitz regularity is obtained for the Hamilton-Jacobi equation, a number of estimates on the Fokker-Planck equation can be established which improve significantly
the regularity for $m$. These are discussed in Section \ref{irfp}. Finally, in Section \ref{frhct} we apply the Hopf-Cole transformation 
to obtain further a priori bounds. Namely in Theorem \ref{mLip} we obtain that $\ln m^\epsilon$ is Lipschitz. All the previous estimates are uniform in $\epsilon$. 
Therefore, from this, we can then bootstrap the smoothness of solutions and pass to the limit as $\epsilon\to 0$. This is done in Section \ref{sec17} where we finally prove Theorem \ref{teo:intro1}.

The authors thank P. Cardaliaguet, P-L. Lions, A. Porretta and P. Souganidis for very useful comments and suggestions.

\section{Main assumptions}\label{ma}

In this section we discuss the main assumptions used throughout the paper. We have tried to 
work under fairly general hypothesis which cover a wide range of interesting problems. 
We end the section with an example which illustrates some of the applications of our results. 

\subsection{Assumptions}\label{subsec:assumptions}

We start by setting the general hypothesis on $H$ and $g$.

\begin{Assumption}
\label{ah}
The Hamiltonian
$H:\Tt^d\times \Rr^d \to \Rr$, $d>2$, is smooth and:
\begin{enumerate}
\item 
for fixed $x$, $p\mapsto H(x,p)$ is a strictly convex function;  
\item satisfies the coercivity condition
\[
\lim_{|p|\to \infty} \frac{H(x,p)}{|p|}=+\infty,   
\] 
and without loss of generality we suppose further that $H(x,p)\geq 1$. 
\end{enumerate}
Futhermore,  $(u_0, m_0)\in C^\infty(\Tt^d)$ with $m_0\geq 0$, and $\int_{\Tt^d} m_0=1$. 
\end{Assumption}

\begin{Assumption}
\label{ag}
$g:\Rr_0^+\to \Rr$ is a non-negative increasing function. 
\end{Assumption}

From the previous hypothesis it follows that $g(z)=G'(z)$ for some
convex increasing function $G:\Rr^+_0\to\Rr$.  

We define the Legendre transform of $H$ by
\begin{equation}
\label{hatele}
L(x,v)=\sup_p \left( -p\cdot v-H(x,p)\right).  
\end{equation}
Then if we set
\begin{equation}
\label{ele}
\hat L(x,p)=D_pH(x,p) p-H(x,p),  
\end{equation}
by standard properties of the Legendre transform
$
\hat L(x,p)=L(x,-D_pH(x,p)).
$

\begin{Assumption}
\label{aele}
For some $c, C>0$
\[
\hat L(x,p)\geq c H(x,p)-C.
\]
\end{Assumption}

Though various results can be proved under 
minimalistic assumptions
such as A\ref{ah}-\ref{aele}, see Section \ref{apes},
to proceed further one needs 
the additional condition $g\geq 0$. For definiteness and convenience
we take $g$ to be a power non-linearity:
\begin{Assumption}
\label{ag2}
$g(m)=m^\alpha$, for some $\alpha>0$.
\end{Assumption}

The general case with $x$ dependence $g(x,m)$ could be addressed similarly without any major changes as long as suitable growth conditions and bounds on $g$ are imposed.  

%

\begin{Assumption}
\label{strong}
$H$
satisfies 
$
|D_xH|, |D^2_{xx}H|\leq CH+C, 
$
and, for any symmetric matrix $M$, and any $\delta>0$ there exists $C_\delta$ such that 
\[
\tr(D^2_{px} H M) \leq \delta \tr(D^2_{pp} H M^2)+C_{\delta} H. 
\]
\end{Assumption}

Note that since we assume $H\geq 1$ we can replace the inequality in
the previous Assumption by $|D_xH|, |D^2_{xx}H|\leq \tilde C H$, for
some constant $\tilde C$.  

%

\begin{Assumption}
\label{bcc}
We have $m_0\geq \kappa_0$
for some $\kappa_0\in \Rr^+$. 
\end{Assumption}


The next group of hypothesis concerns subquadratic growth. Since the results in \cite{LIMA}
yield existence of solution for Hamiltonians with growth at infinity like $|p|^\gamma$
for $\gamma<1+\frac{1}{d+1}$ we only consider larger growth exponents.

\begin{Assumption}
\label{coerca}
$H$ satisfies the sub-quadratic growth condition
$
H(x,p)\leq C|p|^\ga +C
$, for some $1+\frac{1}{d+1}<\ga<2$.
\end{Assumption}

\begin{Assumption}
\label{dphs}
$D_pH$ satisfies the following growth condition
$
\left|D_pH\right|\leq C\left|p\right|^{\ga-1}+C
$, for some $1+\frac{1}{d+1}<\ga<2$.
\end{Assumption}

\begin{Assumption}
\label{asbdpph}
$H$ satisfies
$\left|D^2_{xp}H\right|^2\leq CH$
and, for any symmetric matrix $M$
\[
\left|D^2_{pp}HM\right|^2\leq  C \tr(D^2_{pp} H M M). 
\]
\end{Assumption}

The second assertion in Assumption A\ref{asbdpph} ensures the existence of a uniform upper bound for the eigenvalues of $D^2_{pp}H$.
%

For convenience we present here a technical lemma:
\begin{Lemma}\label{techlemma}
Let $d>2$. There exists $\af_{\ga,d}$, with
\begin{align*}
\af_{\ga,d}>\frac{-4(-4+\ga)^2(-1+\ga)\ga^2+2d(-4+(-2+\ga)\ga)(-4+(-4+\ga)(-2+\ga)\ga)}{(-2+d)(-4+\ga)(-1+\ga)\ga(-2(-4+\ga)\ga+d(-4+(-2+\ga)\ga))},
\end{align*}
such that, for $\alpha<\af_{\ga,d}$ there are $\lam,\,\zeta,\,\upsilon,\,a_\upsilon,\,b_\upsilon,\,r,\,\tilde{r},\,p,\,\tilde{p},\,\theta,\,F$ and $G$ satisfying \eqref{eq:1sec10}-\eqref{eq:eqG}, \eqref{eq:5sec10}-\eqref{eq:6sec10} and \eqref{eq:7sec10}-\eqref{eq:8sec10}.
\end{Lemma}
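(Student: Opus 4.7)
The plan is to verify the feasibility of a system of algebraic equations and strict inequalities linking the twelve parameters listed in the statement. Several of the relations in \eqref{eq:1sec10}--\eqref{eq:eqG} should be read as \emph{defining} $F$ and $G$ as explicit functions of the remaining quantities; the pair \eqref{eq:5sec10}--\eqref{eq:6sec10} prescribes the integrability exponents $p,\tilde p,r,\tilde r$ (or equivalently $a_\upsilon,b_\upsilon$) in terms of the remaining free parameters via conjugate-exponent and Gagliardo--Nirenberg admissibility conditions; and \eqref{eq:7sec10}--\eqref{eq:8sec10} encode the inequalities that must hold so that the Moser-type iteration built from Theorem \ref{cpp2}, Lemma \ref{l8421} and Theorem \ref{c92} actually closes: the Sobolev threshold $\frac{d}{2}<\frac{b_\upsilon(a_\upsilon-1)}{a_\upsilon}$, the constraint $\theta<1$ from Theorem \ref{cpp2}, and the compatibility conditions relating $\alpha$ to the growth exponent $\gamma-1$ of $D_pH$.

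First I would isolate the purely algebraic relations and use them to express every parameter as a rational function of $\gamma$, $d$, $\alpha$, and a single free parameter; the natural choice is $\theta$, since it controls the gain of the iteration. Second, I would substitute these expressions into the strict inequalities. Each substitution produces a rational inequality in $\alpha,\gamma,d,\theta$, with denominators that are non-vanishing thanks to the hypotheses $d>2$ and $1+\tfrac{1}{d+1}<\gamma<2$ from Assumptions A\ref{coerca}--A\ref{dphs}. This reduces the system to a \emph{single} constraint of the form $\alpha<\Phi(\gamma,d,\theta)$, and the task is to show that the supremum of $\Phi$ over admissible $\theta$ equals the explicit quantity $\alpha_{\gamma,d}$ in the statement.

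Third, I would optimize over $\theta$. Differentiating $\Phi(\gamma,d,\cdot)$ and solving the first-order condition produces an explicit critical $\theta^*(\gamma,d)$; substituting back yields the ratio of cubics displayed in the statement. Because all constraints in \eqref{eq:1sec10}--\eqref{eq:8sec10} are \emph{strict}, for $\alpha<\alpha_{\gamma,d}$ one has a nondegenerate admissible range of $\theta$, and choosing $\theta$ in this range and propagating back through the algebraic substitutions produces the required tuple $(\lambda,\zeta,\upsilon,a_\upsilon,b_\upsilon,r,\tilde r,p,\tilde p,\theta,F,G)$. A short continuity/open-set argument then shows the tuple can be taken so that every inequality is strict.

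The main obstacle is not conceptual but bookkeeping: the threshold formula is a ratio of degree-three polynomials in $\gamma$ with $d$-dependent coefficients, so tracking the signs of numerators and denominators through the substitutions, and identifying which inequality is binding at the optimum, has to be done with care. Introducing intermediate quantities for the conjugate exponents and for the Gagliardo--Nirenberg interpolation weights is essential to keep the algebra organized; otherwise the combinatorial explosion of terms obscures the elementary structure of the proof, which is simply that of solving a small linear programming problem whose boundary is the stated rational function.
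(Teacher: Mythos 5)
There is a genuine gap: the entire content of Lemma \ref{techlemma} is the \emph{quantitative} feasibility of the constraint system \eqref{eq:1sec10}--\eqref{eq:eqG}, \eqref{eq:5sec10}--\eqref{eq:6sec10}, \eqref{eq:7sec10}--\eqref{eq:8sec10}, together with the explicit lower bound on $\af_{\ga,d}$, and your text never actually verifies either. Every step is conditional (``I would isolate\dots'', ``I would substitute\dots'', ``I would optimize\dots''), and the two decisive claims --- (i) that after eliminating the equality constraints the system collapses to a single inequality $\alpha<\Phi(\ga,d,\theta)$ in the one free parameter $\theta$, and (ii) that optimizing $\Phi$ over admissible $\theta$ produces exactly the displayed rational function --- are asserted without any computation. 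Claim (i) is moreover doubtful as stated: after imposing the equalities \eqref{eq:2sec10}, \eqref{eq:eq2ni}--\eqref{eq:eq3ni}, \eqref{eq:eqF}--\eqref{eq:eqG} and \eqref{eq:5sec10}--\eqref{eq:6sec10}, several parameters besides $\theta$ remain free (at least $\upsilon$, $\zeta$, $\be_0$, and $p$, with $r$ tied to $p$ through \eqref{eq:c4}), so feasibility is an optimization over several variables and the inequalities \eqref{eq:3sec10}, \eqref{eq:eq1ni}, \eqref{eq:7sec10}, \eqref{eq:8sec10} need not reduce to a single binding constraint. A small but telling slip points the same way: \eqref{eq:1sec10} requires $\theta>1$ (the iteration $\be_{n+1}=\theta\be_n$ of Definition \ref{def:def1} is increasing), not $\theta<1$ as you write. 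Note also that the lemma only asserts $\af_{\ga,d}$ exceeds the displayed expression; it does not claim the expression is the exact supremum, so aiming to prove equality is both harder than needed and unverified.

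For comparison, the paper does not attempt this elimination by hand at all: feasibility and the explicit bound are checked symbolically with Mathematica, with details deferred to \cite{PIM}. Your outline is a reasonable description of what such a computation must accomplish, but as written it contains no verification --- either carry out the elimination and optimization explicitly (tracking all free parameters, not just $\theta$) or, as the authors do, exhibit a certified symbolic computation; without one of these the lemma is not proved.
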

\begin{proof}
The Lemma is proved by using the symbolic software Mathematica. For the details, we refer the reader to \cite{PIM}.
\end{proof}

Notice that, for $d>2$ and $1<\ga<2$ one has

\begin{align*}
\frac{-4(-4+\ga)^2(-1+\ga)\ga^2+2d(-4+(-2+\ga)\ga)(-4+(-4+\ga)(-2+\ga)\ga)}{(-2+d)(-4+\ga)(-1+\ga)\ga(-2(-4+\ga)\ga+d(-4+(-2+\ga)\ga))}>\frac
{2}{d-2},
\end{align*}

\begin{Assumption}
\label{alphaimp}
The exponent $\alpha$ is such that 
$0<\alpha<\alpha_{\ga,d}$ 
\end{Assumption}

\subsection{A model Hamiltonian}\label{subsec:sec22}

Next we discuss an example satisfying the Assumptions introduced previously. Consider the following Hamiltonian
\[
H_s(x,p)=a(x)\big(1+|p|^2\big)^\frac{\ga}{2}+V(x),
\]where $a, V\in C^{2}(\Tt^d)$ with $a, V >0$. Thus
$\left(1+|p|^2\right)^\frac{\ga}{2}\le C H_s(x,p)$. 
It is clear that $H$ satisfies A\ref{ah}.

Using \eqref{ele} we have
$\hat
L_s(x,p)=a(x)\left((\gamma-1)|p|^2-1\right)\left(1+\left|p\right|^2\right)^\frac{\ga-2}{2}-V(x)$, 
from which it follows A\ref{aele}. 
Observe that 
$H_s(x,p)$, $|D_xH_s(x,p)|$, $|D^2_{xx}H_s(x,p)|\leq
C \left(1+|p|^2\right)^{\frac\ga 2}$ 
and 
$\left|D_pH_s\right|=a(x)\ga\left(1+|p|^2\right)^{\frac{\ga-2}{2}}|p|$.
Hence, the first part of 
A\ref{strong}, A\ref{coerca} and A\ref{dphs} are clearly satisfied. 

Now notice that 
\[
D_{p_ip_j}^2H_s=a(x)\ga(\ga-2)(1+|p|^2)^\frac{\ga-4}{2}p_ip_j \quad i\ne j,
\]
and
\[ D_{p_jp_j}^2H_s= \ga a(x)
\left((\ga-2)(1+|p|^2)^\frac{\ga-4}{2}p_j^2+(1+|p|^2)^\frac{\ga-2}{2}\right). 
\]
Hence, for any symmetric matrix $M$
$
  \tr(D^2_{pp}H_sM^2)
\geq C (1+|p|^2)^\frac{\ga-2}{2}|M|^2 
$, 
since $\gamma>1$.
Notice that 
\begin{align*}
&\left|\tr\left(D^2_{xp}H_sM\right)\right|\leq \ga\left(1+\left|p\right|^2\right)^\frac{\ga-2}{2}\left|Da\right| |pM|
 \leq \delta \tr(D^2_{pp}H_sM^2)+C H_s. 
\end{align*}
This shows that the second part of A\ref{strong} also holds. 

In order to verify that $H_s$ satisfies A\ref{asbdpph}, notice that 
$|D_{xp}^2H_s|^2\le C\big(1+|p|^2\big)^{\ga-1}.$
Since $\ga-1<\dfrac\ga 2$ for $\ga<2$, the first part of the Assumption is
verified. For the second part 
observe that  
\begin{align*}
  \big|D_{pp}^2H_sM\big|^2& \le C(1+|p|^2)^{\ga-4}\big(|p|^2|Mp|^2+
(1+|p|^2)^2|M|^2+(1+|p|^2)|Mp|^2\big)\\
&\leq C^* (1+|p|^2)^{\ga-2}|M|^2\leq C (1+|p|^2)^{\frac{\ga-2}{2}}|M|^2, 
\end{align*}
since $1<\ga<2$. Thus $H_s$ satisfies also the second part of A\ref{asbdpph}.

\section{Lax-Hopf estimate}
\label{apes}

In this section we establish various estimates for the solutions of
\eqref{eq:smfg}. These  follow from  
the stochastic optimal control representation of solutions of Hamilton-Jacobi equations. \emph{}

We start by recalling the stochastic control representation for solutions 
to the first equation in \eqref{eq:smfg}, which we call the stochastic
 Lax-Hopf formula. For that, let $(u^\epsilon,m^\epsilon)$ be a
 solution of \eqref{eq:smfg}. 
Then, see for instance \cite{FS}, $u^\ep$ is the value function for
the following stochastic optimal control problem 
\begin{equation}
\label{soc}
u^\epsilon(x,t)=\inf_{\bv} E\int_t^T L(\bx(s),\bv(s))+g_\epsilon(m^\epsilon)(\bx(s),s)ds + u^\epsilon(\bx(T),T), 
\end{equation}
where $L$ is given by \eqref{hatele}, and the infimum is taken over
all bounded and progressively measurable controls $\bv$, 
$
d\bx=\bv ds+\sqrt{2} dW_s$, 
where $\bx(t)=x$, and  $W_s$ is a $d$-dimensional Brownian motion. The estimates that we
discuss now can be regarded  
as a consequence of this optimal control representation formula.

\begin{Proposition}
\label{plb}
Suppose A\ref{ah} holds. Let $(u^\epsilon,m^\epsilon)$ be a solution
to \eqref{eq:smfg}.  
Then, for any smooth vector field $b:\Tt^d\times(t,T)\to\Rr^d$,
and any solution to 
\begin{equation}
\label{diflaw2}
\zeta_s+\div(b\zeta)=\Delta \zeta, 
\end{equation}
with $\zeta(x,t)=\zeta_0$ 
we have
 the following upper bound: 
\begin{align}
\label{lhe2}
\int_{\Tt^d} u^\epsilon(x,t)\zeta_0(x) dx \le
&\int_t^T\int_{\Tt^d}\bigl(L(y,b(y,s))
+g_\epsilon(m^\epsilon)(y,s)\bigr)\zeta(y,s) dy ds\\\nonumber
&+\int_{\Tt^d} u^\epsilon(y, T) \zeta(y,T). 
\end{align} 
\end{Proposition}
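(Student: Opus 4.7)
The idea is a standard duality computation: test the Hamilton--Jacobi equation for $u^\epsilon$ against $\zeta$, integrate by parts using the Fokker--Planck-type equation \eqref{diflaw2} satisfied by $\zeta$, and then convert the resulting bilinear term in $Du^\epsilon$ and $b$ into the Lagrangian via Fenchel--Young. Implicit in the statement is that $\zeta_0\ge 0$ (so that $\zeta$ acts as a test measure), whence the maximum principle for \eqref{diflaw2} yields $\zeta\ge 0$ throughout $\Tt^d\times[t,T]$.

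The first step is to multiply the equation
\[
-u_s^\epsilon+H(y,Du^\epsilon)=\Delta u^\epsilon+g_\epsilon(m^\epsilon)
\]
by $\zeta$ and integrate over $\Tt^d\times[t,T]$. Integrating the time derivative by parts produces the boundary terms $\int_{\Tt^d}u^\epsilon(\cdot,t)\,\zeta_0-\int_{\Tt^d}u^\epsilon(\cdot,T)\,\zeta(\cdot,T)$ and the interior term $\int_t^T\!\!\int_{\Tt^d}u^\epsilon\zeta_s$. Substituting $\zeta_s=\Delta\zeta-\div(b\zeta)$ from \eqref{diflaw2} and integrating by parts in space on the torus (no boundary), the Laplacian pieces cancel exactly, and the drift piece becomes $\int_t^T\!\!\int_{\Tt^d}Du^\epsilon\cdot b\,\zeta$.

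After collecting terms one finds the exact identity
\[
\int_{\Tt^d}u^\epsilon(\cdot,t)\,\zeta_0+\int_t^T\!\!\int_{\Tt^d}\bigl(H(y,Du^\epsilon)+Du^\epsilon\cdot b\bigr)\zeta\,dy\,ds=\int_{\Tt^d}u^\epsilon(\cdot,T)\,\zeta(\cdot,T)+\int_t^T\!\!\int_{\Tt^d}g_\epsilon(m^\epsilon)\,\zeta\,dy\,ds.
\]
The final step uses the Legendre duality \eqref{hatele}, which gives the pointwise inequality
$-Du^\epsilon\cdot b-H(y,Du^\epsilon)\le L(y,b)$; multiplying by the nonnegative weight $\zeta$ and integrating, the left-hand bulk term above is bounded below by $-\int_t^T\!\!\int_{\Tt^d}L(y,b)\zeta$, which upon rearrangement yields precisely \eqref{lhe2}.

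The only subtlety is justifying all the integrations by parts: this is routine once smoothness of $u^\epsilon$, $m^\epsilon$ (and hence of $g_\epsilon(m^\epsilon)$), and of the linear parabolic solution $\zeta$ associated to the smooth vector field $b$ are granted, all of which are standard under the regularity framework in which \eqref{eq:smfg} is posed. The nonnegativity $\zeta\ge 0$ is the only structural input beyond direct calculation, and it is needed precisely so that the Fenchel--Young inequality can be integrated against $\zeta$ with the correct sign.
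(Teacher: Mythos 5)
Your argument is correct and is essentially the paper's own proof: both multiply the Hamilton--Jacobi equation by $\zeta$, use the transport/diffusion equation \eqref{diflaw2} to trade $\zeta_s$ for $\Delta\zeta-\div(b\zeta)$ (the paper phrases this as computing $-\frac{d}{dt}\int u^\epsilon\zeta$), and then apply the Fenchel--Young inequality $L(x,b)\ge -p\cdot b - H(x,p)$ before integrating in time. Your explicit remark that $\zeta_0\ge 0$ (hence $\zeta\ge 0$ by the maximum principle) is needed to integrate the pointwise inequality with the right sign is a correct observation that the paper leaves implicit, consistent with its choices $\zeta_0=m_0$, Lebesgue, or $\delta_{x_0}$.
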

\begin{proof}
Multiply the first equation in \eqref{eq:smfg} by $\zeta$ and multiply
\eqref{diflaw2} by $u^\epsilon$. Subtract these equations and integrate in
$\Tt^d$ to conclude that 
\[
-\frac{d}{dt}\int_{\Tt^d}u^\epsilon\zeta dx=\int_{\Tt^d}(-b(x,t)\cdot
Du^\epsilon-H(x,Du^\epsilon) 
+g_\epsilon(m^\epsilon))\zeta dx. 
\]
Using the inequality
$
L(x, b)\geq -H(x, p)-p\cdot b, 
$
we obtain the result. 
\end{proof}

A natural choice in the previous Proposition is $b=0$, and for
$\zeta_0$ either the Lebesgue measure 
or the measure $m_0$. A further choice of $b$ is of course
$b=-D_pH(x,Du^\epsilon)$, the optimal feedback control for \eqref{soc},  
and $\zeta_0=\delta_{x_0}$. This last choice makes it possible to
establish pointwise estimates and will be used in Section \ref{pub}.  

\begin{Corollary}
Suppose A\ref{ah} holds.
Let $(u^\epsilon,m^\epsilon)$ be a solution to \eqref{eq:smfg}.
We have the following two estimates: 
\begin{equation}
\label{etreze}
\int_{\Tt^d}u^\epsilon(x,0)m_0 dx\le
CT+\int_0^T\int_{\Tt^d}g_\epsilon(m^\epsilon)(x,t)\mu(x,t) dxdt 
+\int_{\Tt^d} u^\epsilon(x,T) \mu(x,T)dx,
\end{equation}
where $\mu(x,t)$ is the solution to the heat equation with $\mu(x,0)=m_0$, 
and
\begin{equation}
\label{ecatorze}
\int_{\Tt^d} u^\epsilon(x, 0)dx\leq CT +\int_0^T \int_{\Tt^d}
g_\epsilon(m^\epsilon)(x,t)dxdt+\int_{\Tt^d} u^\epsilon(x,T) dx, 
\end{equation}
\end{Corollary}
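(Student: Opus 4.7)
The plan is a direct application of Proposition \ref{plb} with two specific choices of the auxiliary vector field $b$ and initial measure $\zeta_0$. Both choices exploit the simplest possible drift, namely $b\equiv 0$, in which case \eqref{diflaw2} reduces to the heat equation $\zeta_s=\Delta\zeta$ on $\Tt^d$.

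First I would observe that under Assumption A\ref{ah}, the Hamiltonian is continuous on the compact set $\Tt^d\times\{0\}$ -- actually we only need it at $p=0$ in the Legendre transform -- and satisfies $H\ge 1$. Consequently
\[
L(y,0)=\sup_{p}\bigl(-H(y,p)\bigr)=-\inf_{p}H(y,p)
\]
is a bounded function of $y$ (continuity of $p\mapsto H(y,p)$ together with coercivity ensures the infimum is attained and depends continuously on $y$, hence is bounded on $\Tt^d$). In particular there exists a constant $C>0$ such that $L(y,0)\le C$ for every $y\in\Tt^d$.

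For \eqref{etreze}, I would apply Proposition \ref{plb} with $t=0$, $b\equiv 0$ and $\zeta_0=m_0$. The function $\zeta=\mu$ then solves the heat equation on $\Tt^d$ with initial datum $m_0$, and mass is preserved on the torus so that $\int_{\Tt^d}\mu(y,s)dy=\int_{\Tt^d}m_0(y)dy=1$ for every $s\in[0,T]$. Plugging $L(y,0)\le C$ into \eqref{lhe2} and integrating in $s$ yields the contribution $CT$, while the remaining terms match exactly the right-hand side of \eqref{etreze}.

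For \eqref{ecatorze}, I would take the same $t=0$ and $b\equiv 0$ but now $\zeta_0\equiv 1$. Constants are fixed by the heat semigroup on the torus, so $\zeta\equiv 1$, and \eqref{lhe2} reduces to $\int u^\ep(x,0)dx\le\int_0^T\int(L(y,0)+g_\ep(m^\ep))dy\,ds+\int u^\ep(x,T)dx$. The first integrand is bounded by $C$, yielding $CT$, and the remaining terms reproduce the right-hand side of \eqref{ecatorze}. There is no serious obstacle here: the only point requiring a moment of care is the uniform upper bound on $L(y,0)$, which as noted is immediate from A\ref{ah}.
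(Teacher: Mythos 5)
Your proof is correct and takes exactly the same route as the paper: apply Proposition \ref{plb} with $b\equiv 0$, choosing $\zeta_0=m_0$ for \eqref{etreze} and $\zeta_0\equiv 1$ for \eqref{ecatorze}. The only difference is that you spell out the details (boundedness of $L(\cdot,0)$ from A\ref{ah} with $H\ge 1$, mass conservation of the heat flow on $\Tt^d$) which the paper leaves implicit.
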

\begin{proof}
Both estimates follow from the choice $b=0$ in Proposition \ref{plb}.
With $\zeta_0=m_0$, for estimate \eqref{etreze} and $\zeta_0=1$ for
\eqref{ecatorze}.  
\end{proof}

\section{First order estimates}
\label{lle}

In this section we recover (and improve slightly) first order estimate
from \cite{ll2, ll3} for the regularized problem. These are also considered in \cite{CLLP}.
Recall that for a function $f$ we define
\[
\osc f=\sup f-\inf f. 
\]

\begin{Proposition}
\label{pehm}
Assume A\ref{ah}-\ref{aele} hold. 
Let $(u^\epsilon ,m^\epsilon)$ be a solution of \eqref{eq:smfg}. Then
\begin{equation}
\label{ihm}
\int_0^T \int_{\Tt^d} c H(x, D_xu^\epsilon) m^\epsilon
+G(\eta_\epsilon*m^\epsilon) dx dt 
\leq CT+C \osc u^\epsilon(\cdot, T), 
\end{equation}
where $G'=g$.
\end{Proposition}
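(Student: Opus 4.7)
The plan is to derive an energy identity for $(u^\epsilon,m^\epsilon)$, combine it with the Lax--Hopf bound \eqref{etreze}, and close the estimate through a Young--Fenchel inequality for $G$ together with Jensen for the mollifier $\eta_\epsilon$.

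I would multiply the Hamilton--Jacobi equation in \eqref{eq:smfg} by $m^\epsilon$ and the Fokker--Planck equation by $u^\epsilon$, subtract and integrate over $\Tt^d\times(0,T)$. Periodic integration by parts cancels the two Laplacians, while the drift and the Hamiltonian combine into $\hat L(x,Du^\epsilon)=D_pH\cdot Du^\epsilon-H$, giving the identity
\begin{align*}
\int_{\Tt^d} u^\epsilon(x,0)m_0\,dx-\int_{\Tt^d} u^\epsilon(x,T)m^\epsilon(x,T)\,dx=\int_0^T\int_{\Tt^d}\bigl[\hat L(x,Du^\epsilon)m^\epsilon+g_\epsilon(m^\epsilon)m^\epsilon\bigr]\,dx\,dt.
\end{align*}
Applying A\ref{aele} yields $c\int_0^T\int Hm^\epsilon+\int_0^T\int g_\epsilon(m^\epsilon)m^\epsilon\leq CT+\int u^\epsilon(0)m_0-\int u^\epsilon(T)m^\epsilon(T)$. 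Since shifting $u^\epsilon$ by an additive constant leaves both \eqref{eq:smfg} and $\osc u^\epsilon(\cdot,T)$ unchanged, I may assume $\inf_x u^\epsilon(x,T)=0$. Then \eqref{etreze} together with $\|\mu\|_\infty\leq\|m_0\|_\infty$ gives $\int u^\epsilon(T)\mu(T)\leq\osc u^\epsilon(\cdot,T)$ and $\int u^\epsilon(T)m^\epsilon(T)\geq 0$, so
\begin{align*}
\int u^\epsilon(0)m_0\,dx-\int u^\epsilon(T)m^\epsilon(T)\,dx\leq CT+\int_0^T\int_{\Tt^d}g_\epsilon(m^\epsilon)\mu\,dx\,dt+\osc u^\epsilon(\cdot,T),
\end{align*}
with $\mu$ the heat flow of $m_0$.

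The main obstacle is the remaining cross term $\int g_\epsilon(m^\epsilon)\mu$, for which the crude $\mu\leq\|m_0\|_\infty$ bound is insufficient since $\int g_\epsilon(m^\epsilon)$ is not in general controlled by $\int G(\eta_\epsilon*m^\epsilon)$. The key move is the Young--Fenchel inequality $g_\epsilon(m^\epsilon)\mu\leq G(\mu)+G^*(g_\epsilon(m^\epsilon))$, where $G^*$ is the Legendre conjugate of $G$, combined with Jensen's inequality for the convex function $G^*$ applied to the probability kernel $\eta_\epsilon(x-\cdot)$, which gives $G^*(g_\epsilon(m^\epsilon))\leq\eta_\epsilon*G^*(g(\eta_\epsilon*m^\epsilon))$. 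Integrating in $x$, using $\int\eta_\epsilon=1$, the symmetry of $\eta_\epsilon$, the pointwise identity $G^*(g(z))=zg(z)-G(z)$, and $G(\mu)\leq G(\|m_0\|_\infty)\leq C$, one obtains
\begin{align*}
\int_0^T\int_{\Tt^d}g_\epsilon(m^\epsilon)\mu\,dx\,dt\leq CT+\int_0^T\int_{\Tt^d} m^\epsilon g_\epsilon(m^\epsilon)\,dx\,dt-\int_0^T\int_{\Tt^d} G(\eta_\epsilon*m^\epsilon)\,dx\,dt.
\end{align*}
Substituting this back into the chain of inequalities, the $\int g_\epsilon(m^\epsilon) m^\epsilon$ terms cancel and $\int G(\eta_\epsilon*m^\epsilon)$ migrates with the correct sign to the left-hand side, yielding the claimed bound $c\int_0^T\int H m^\epsilon+\int_0^T\int G(\eta_\epsilon*m^\epsilon)\leq CT+C\osc u^\epsilon(\cdot,T)$.
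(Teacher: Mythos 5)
Your argument is correct and follows essentially the same route as the paper: the cross-multiplied energy identity, Assumption A\ref{aele}, the Lax--Hopf estimate \eqref{etreze}, normalizing the terminal term into $\osc u^\epsilon(\cdot,T)$, and convexity of $G$. Your Young--Fenchel plus Jensen step (using $G^*(g(z))=zg(z)-G(z)$) is just a repackaging of the paper's tangent-line inequality $g(\eta_\epsilon*m^\epsilon)\,\eta_\epsilon*(\mu-m^\epsilon)\leq G(\eta_\epsilon*\mu)-G(\eta_\epsilon*m^\epsilon)$, obtained there after moving the mollifier onto $\mu$ by symmetry.
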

\begin{proof}
We have
\[
-\frac{d}{dt}\int_{\Tt^d}u^\epsilon m^\epsilon dx+\int_{\Tt^d}(H-D_pH
D_xu^\epsilon)m^\epsilon dx =\int_{\Tt^d}m^\epsilon g_\epsilon(m^\epsilon)dx.
\]
Because $\eta_\epsilon(y)=\eta_\epsilon(-y)$, the previous computation yields,  using Assumption A\ref{aele}, 
\begin{align*}
&c \int_0^T\int_{\Tt^d} H(x,Du^\epsilon) m^\epsilon dx\,dt\leq 
\int_0^T\int_{\Tt^d} (D_pH D_xu^\epsilon-H) m^\epsilon dx\,dt=\\
&-\int_0^T\int_{\Tt^d} \eta_\epsilon*m^\epsilon
g(\eta_\epsilon*m^\epsilon)dx+\int_{\Tt^d}\left( u^\epsilon(x,0)
  m^\epsilon(x, 0)- u^\epsilon(x,T) m^\epsilon(x, T)\right)dx.  
\end{align*}
We now use the estimate \eqref{etreze} to conclude
that
\begin{align*}
c \int_0^T\int_{\Tt^d}H(x,Du^\epsilon)m^\epsilon dx\,dt \leq 
&CT+\int_{\Tt^d} u^\epsilon(x,T)(\mu(x,T)- m^\epsilon(x,T))dx\\
&+\int_0^T\int_{\Tt^d} g(\eta_\epsilon*m^\epsilon) \eta_\epsilon*(\mu -m^\epsilon) dx\,dt,
\end{align*}where $\mu$ is the solution of the heat equation with $\mu(x,0)=m_0(x)$.

By Assumption A\ref{ag}, there exists a convex function $G$ such that
$G'(z)=g(z)$. Therefore we have 
$
g(\eta_\epsilon*m^\epsilon)\eta_\epsilon*(\mu-m^\epsilon) \leq
G(\eta_\epsilon*\mu) -G(\eta_\epsilon*m^\epsilon), 
$
hence
\begin{align*}
c \int_0^T\int_{\Tt^d} H(x, Du^\epsilon) m^\epsilon dx\,dt 
&+\int_0^T\int_{\Tt^d} G(\eta_\epsilon*m^\epsilon) dxdt\\ 
\qquad
&\leq CT +\osc u^\epsilon(\cdot , T)+\int_0^T\int_{\Tt^d}
G(\eta_\epsilon*\mu) dxdt.    
\end{align*}
Since $\mu$ is bounded $G(\eta_\epsilon*\mu)$ is also bounded and 
so we obtain
\eqref{ihm}. 
\end{proof}

The estimates proved in this section are independent on $\epsilon$ and also 
do not rely on the positivity of $g$.
For instance they apply for $g(m)=\ln m$. 
Therefore
we improve slightly the results in   \cite{ll2}, whose proof depends
on lower bounds for $g$ (note however that in \cite{ll3} this lower
bound requirement is no longer asked 
though no proof is given there). In \cite{ll2} the
lower bound on $g$ is used to obtain a lower bound for $u$, 
which we managed to avoid here using a convexity argument.
\begin{Corollary}
\label{capu}
Assume A\ref{ah}-\ref{ag2} hold. 
Let $(u^\epsilon,m^\epsilon)$ be a solution of \eqref{eq:smfg}. Then 
\[
\int_0^T\int_{\Tt^d}(\eta_\epsilon*m^\epsilon)^{\alpha+1} 
+H(x,Du^\epsilon) m^\epsilon dxdt \leq C. 
\]
\end{Corollary}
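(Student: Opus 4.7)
The plan is to show that Corollary \ref{capu} is essentially an immediate specialization of Proposition \ref{pehm} under the power-law hypothesis A\ref{ag2}, combined with the smoothness of the terminal data prescribed in A\ref{ah}.

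First I would observe that under A\ref{ag2} we have $g(z)=z^\alpha$, and therefore the convex primitive $G$ appearing in Proposition \ref{pehm} is explicitly $G(z)=\frac{z^{\alpha+1}}{\alpha+1}$. Substituting this into \eqref{ihm} immediately produces the $(\eta_\epsilon\ast m^\epsilon)^{\alpha+1}$ term on the left-hand side (up to the harmless multiplicative constant $\alpha+1$, which can be absorbed into the outer constant).

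Next I would bound the right-hand side of \eqref{ihm} uniformly in $\epsilon$. By the terminal condition \eqref{itvp}, $u^\epsilon(\cdot,T)=u_0$, and Assumption A\ref{ah} guarantees $u_0\in C^\infty(\Tt^d)$. Since $\Tt^d$ is compact, $u_0$ is bounded, and hence $\osc u^\epsilon(\cdot,T)=\osc u_0\leq 2\|u_0\|_{L^\infty(\Tt^d)}$ is a finite constant depending only on the (fixed) initial-terminal data, not on $\epsilon$. Therefore the bound in \eqref{ihm} becomes $CT+C\osc u_0\leq C$.

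Combining these two observations, Proposition \ref{pehm} directly yields
\[
\int_0^T\!\!\int_{\Tt^d}\!\Bigl(c\,H(x,Du^\epsilon)\,m^\epsilon+\tfrac{1}{\alpha+1}(\eta_\epsilon\ast m^\epsilon)^{\alpha+1}\Bigr)dx\,dt\leq C,
\]
which is the stated estimate after renaming constants. I do not anticipate any genuine obstacle here: the content of the corollary is simply to record the consequence of Proposition \ref{pehm} specialized to the model nonlinearity $g(m)=m^\alpha$, together with the trivial observation that the smooth periodic terminal data has bounded oscillation.
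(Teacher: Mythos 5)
Your proposal is correct and coincides with the paper's (implicit) proof: Corollary \ref{capu} is stated without a proof block precisely because it follows immediately from Proposition \ref{pehm} by taking $G(z)=z^{\alpha+1}/(\alpha+1)$ under A\ref{ag2} and noting that $\osc u^\epsilon(\cdot,T)=\osc u_0$ is a fixed constant by the terminal condition \eqref{itvp} and the smoothness of $u_0$ in A\ref{ah}. Nothing is missing.
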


\section{Further regularity for the Hamilton-Jacobi equation}
\label{best}

We now apply the results from the previous section to obtain improved
regularity for the Hamilton-Jacobi equation, the first equation in
\eqref{eq:smfg}. The estimates proved here depend in an essential  
way on $g$ being positive. The following proposition is an elementary
estimate on $u^\epsilon$ by below in the case $g\geq 0$,  
which will be crucial in what follows.  
\begin{Proposition}
\label{plest}
Let $(u^\epsilon,m^\epsilon)$ be a solution of \eqref{eq:smfg}. 
Suppose $g\geq 0$ and let 
$M=\max\limits_x H(x,0)$.
Then
\begin{equation}
\label{lest}
u^\epsilon(x,t) \geq  \min_xu^\epsilon(x,T)+M(t-T).
\end{equation}
\end{Proposition}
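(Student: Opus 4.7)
The statement is a one-sided (lower) comparison principle, so the natural tool is the stochastic control representation \eqref{soc} already established in Section \ref{apes}, which encodes $u^\epsilon$ as an infimum over feedback controls.

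The plan is as follows. First I would invoke \eqref{soc} to write
\[
u^\epsilon(x,t)=\inf_{\bv} E\!\int_t^T\!\bigl[L(\bx(s),\bv(s))+g_\epsilon(m^\epsilon)(\bx(s),s)\bigr]ds+E\,u^\epsilon(\bx(T),T),
\]
with $\bx$ the controlled diffusion starting at $x$ at time $t$. Since by A\ref{ag} (and because $\eta_\epsilon\ge 0$) one has $g_\epsilon(m^\epsilon)\ge 0$, this non-negative running cost can be discarded to produce a lower bound. Second, from the definition \eqref{hatele} of the Legendre transform, taking $p=0$ yields
\[
L(x,v)=\sup_p\bigl(-p\cdot v-H(x,p)\bigr)\ge -H(x,0)\ge -M
\]
for every $(x,v)$. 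Therefore, for any admissible control,
\[
E\!\int_t^T L(\bx(s),\bv(s))\,ds\ge -M(T-t).
\]

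Third, combining these two inequalities gives
\[
u^\epsilon(x,t)\ge -M(T-t)+\inf_{\bv} E\,u^\epsilon(\bx(T),T)\ge -M(T-t)+\min_{y}u^\epsilon(y,T),
\]
which is precisely \eqref{lest}. There is no real obstacle here: the estimate is essentially a tautology once one recognizes that the worst case for the running Lagrangian is the constant control $\bv\equiv 0$, which costs $-H(x,0)$ per unit time and is bounded below by $-M$.

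As an alternative (which avoids invoking the stochastic representation and uses only the PDE), one can introduce the affine comparison function $w(x,t)=\min_y u^\epsilon(y,T)+M(t-T)$. Since $Dw=0$ and $\Delta w=0$,
\[
-w_t+H(x,Dw)-\Delta w=-M+H(x,0)\le 0\le g_\epsilon(m^\epsilon),
\]
so $w$ is a classical subsolution of the Hamilton--Jacobi equation satisfied by $u^\epsilon$, and $w(x,T)\le u^\epsilon(x,T)$. The standard parabolic comparison principle then yields $w\le u^\epsilon$ on $\Tt^d\times[0,T]$, which is \eqref{lest}. Either route is essentially immediate; I would present the stochastic one to maintain continuity with Proposition \ref{plb}.
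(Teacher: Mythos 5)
Your proposal is correct, and in fact it contains the paper's own argument as your ``alternative'': the paper proves Proposition \ref{plest} in one line as a simple application of the maximum principle, which is exactly your comparison-function argument with $w(x,t)=\min_y u^\epsilon(y,T)+M(t-T)$ (note $Dw=0$, $\Delta w=0$, $-w_t+H(x,Dw)-\Delta w=-M+H(x,0)\le 0\le g_\epsilon(m^\epsilon)$, using $m^\epsilon\ge 0$ and $g\ge 0$, and $w(\cdot,T)\le u^\epsilon(\cdot,T)$). Your primary, stochastic route via \eqref{soc} is also valid and fits naturally with Proposition \ref{plb}: discarding the non-negative coupling term and using $L(x,v)=\sup_p\left(-p\cdot v-H(x,p)\right)\ge -H(x,0)\ge -M$ together with $E\,u^\epsilon(\bx(T),T)\ge\min_y u^\epsilon(y,T)$ gives \eqref{lest} directly. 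The only slight imprecision is your closing gloss that the control $\bv\equiv 0$ ``costs $-H(x,0)$ per unit time'': in fact $L(x,0)=-\inf_p H(x,p)$, which need not equal $-H(x,0)$; what your chain of inequalities actually uses (correctly) is the pointwise lower bound $L(x,v)\ge -H(x,0)$ for all $v$, obtained by taking $p=0$ in the supremum. The PDE comparison route is the more economical one and requires no verification of the stochastic representation; the control route has the mild advantage of making the constant $M$ appear as a bound on the running cost, in the same spirit as the Lax--Hopf estimates of Section \ref{apes}.
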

\begin{proof}
This is a simple application of the maximum principle. 
\end{proof}

The following estimate is also used in \cite{CLLP}.

\begin{Proposition}
\label{P41}
Assume A\ref{ah}-\ref{ag2} hold. 
Let $(u^\epsilon,m^\epsilon)$ be a solution of \eqref{eq:smfg}. 
We have
\begin{equation}
\label{hest}
\int_0^T\int_{\Tt^d}H(x,D_xu^\epsilon)dx dt\leq C
+\int_{\Tt^d}\left(u^\epsilon(x,T)-u^\epsilon(x,0)\right) dx.  
\end{equation}
\end{Proposition}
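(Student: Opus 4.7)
The plan is to integrate the Hamilton--Jacobi equation in \eqref{eq:smfg} directly over $\Tt^d\times[0,T]$ and then absorb the source term $g_\epsilon(m^\epsilon)$ via Corollary~\ref{capu}. Writing
\[
\int_0^T\!\!\int_{\Tt^d}H(x,Du^\epsilon)\,dx\,dt
=\int_0^T\!\!\int_{\Tt^d}u_t^\epsilon\,dx\,dt
+\int_0^T\!\!\int_{\Tt^d}\Delta u^\epsilon\,dx\,dt
+\int_0^T\!\!\int_{\Tt^d}g_\epsilon(m^\epsilon)\,dx\,dt,
\]
the Laplacian term vanishes because $u^\epsilon$ is spatially periodic, while the time derivative integrates to $\int_{\Tt^d}(u^\epsilon(x,T)-u^\epsilon(x,0))\,dx$. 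So the whole proposition reduces to showing that $\int_0^T\!\!\int_{\Tt^d}g_\epsilon(m^\epsilon)\,dx\,dt\le C$.

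For this, I would use the symmetry of the mollifier and the identity $\int_{\Tt^d}\eta_\epsilon*\varphi=\int_{\Tt^d}\varphi$ to rewrite
\[
\int_0^T\!\!\int_{\Tt^d}g_\epsilon(m^\epsilon)\,dx\,dt
=\int_0^T\!\!\int_{\Tt^d}\eta_\epsilon*\bigl((\eta_\epsilon*m^\epsilon)^\alpha\bigr)\,dx\,dt
=\int_0^T\!\!\int_{\Tt^d}(\eta_\epsilon*m^\epsilon)^\alpha\,dx\,dt,
\]
using A\ref{ag2}. Then by H\"older's inequality with conjugate exponents $\tfrac{\alpha+1}{\alpha}$ and $\alpha+1$,
\[
\int_0^T\!\!\int_{\Tt^d}(\eta_\epsilon*m^\epsilon)^\alpha\,dx\,dt
\le\Bigl(\int_0^T\!\!\int_{\Tt^d}(\eta_\epsilon*m^\epsilon)^{\alpha+1}\,dx\,dt\Bigr)^{\!\frac{\alpha}{\alpha+1}}\!\cdot(T\,|\Tt^d|)^{\frac{1}{\alpha+1}},
\]
and Corollary~\ref{capu} bounds the right-hand side by a constant. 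Combining these identities and estimates yields \eqref{hest}.

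There is no real obstacle here, since all of the heavy lifting was done in establishing Corollary~\ref{capu}; the only thing to keep in mind is being careful with the symmetry of $\eta_\epsilon$ when moving it off $g$ so that the convolution integrates out cleanly, and choosing H\"older exponents so as to match the $L^{\alpha+1}$ bound that is already available.
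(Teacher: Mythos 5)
Your proposal is correct and follows essentially the same route as the paper: integrate the Hamilton--Jacobi equation over $\Tt^d\times[0,T]$, use periodicity to kill the Laplacian, use $\int_{\Tt^d}\eta_\epsilon*\varphi=\int_{\Tt^d}\varphi$ to reduce $g_\epsilon(m^\epsilon)$ to $(\eta_\epsilon*m^\epsilon)^\alpha$, and bound that term through Corollary~\ref{capu}. The only difference is that you spell out the H\"older step passing from the $L^{\alpha+1}$ bound to the $L^\alpha$ integral (and note that only $\int\eta_\epsilon=1$, not symmetry, is needed for the convolution identity), a detail the paper leaves implicit.
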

\begin{proof}
Integrating the first equation of \eqref{eq:smfg} and using Corollary
\ref{capu} we have 
\begin{align}
  \label{eq:qh1}
   \int_0^T\int_{\Tt^d}H(x, D_xu^\epsilon)dx\,dt
&=\int_0^T\int_{\Tt^d} (\eta_\epsilon*m^\epsilon)^\alpha dx dt+\int_{\Tt^d}\left(u^\epsilon(x, T)-u^\epsilon(x,0)\right)dx\\
&\notag \le C + \int_{\Tt^d}\left(u^\epsilon(x,T) -u^\epsilon(x,0)\right)dx.
\end{align}
\end{proof}

\begin{Remark}
From \eqref{hest} it follows, using $H\geq 0$, 
\begin{equation}
\label{ilb}
\int_{\Tt^d} u^\epsilon(x,0)dx \leq C+\int_{\Tt^d} u^\epsilon(x, T) dx. 
\end{equation}
\end{Remark}

\begin{Corollary}
\label{c4p1}
Assume A\ref{ah}-\ref{ag2} hold. 
Let $(u^\epsilon,m^\epsilon)$ be a solution of \eqref{eq:smfg}. 
Then
\[
\int_0^T\int_{\Tt^d} H(x, D_xu^\epsilon)dx\,dt \leq C+\osc(u^\epsilon(\cdot, T)),
\]
and
$
\int_{\Tt^d} |u^\epsilon(x,0)|dx\leq C+3\|u^\epsilon(\cdot,T)\|_{\infty}.
$
\end{Corollary}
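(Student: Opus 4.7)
The plan is to combine the estimates already available: Proposition \ref{P41} (the $H$-in-$L^1$ bound driven by the oscillation of the terminal data), Proposition \ref{plest} (the Lax-Hopf lower bound on $u^\epsilon$ in terms of $\min u^\epsilon(\cdot,T)$), Corollary \ref{capu} (the $L^{\alpha+1}$ control on $\eta_\epsilon \ast m^\epsilon$), and inequality \eqref{ecatorze} (the upper bound on $\int u^\epsilon(\cdot,0)\,dx$). Using the normalization that the torus has unit volume throughout.

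For the first inequality, I start from \eqref{hest}:
\[
\int_0^T\!\!\int_{\Tt^d} H(x,D_xu^\epsilon)\,dx\,dt \le C + \int_{\Tt^d}\bigl(u^\epsilon(x,T)-u^\epsilon(x,0)\bigr)\,dx.
\]
Apply Proposition \ref{plest} at $t=0$ with $g_\epsilon(m^\epsilon)\ge 0$ to get
$u^\epsilon(x,0)\ge \min_x u^\epsilon(x,T) - MT$, so that for every $x$
\[
u^\epsilon(x,T) - u^\epsilon(x,0) \le u^\epsilon(x,T) - \min_{y} u^\epsilon(y,T) + MT \le \osc\bigl(u^\epsilon(\cdot,T)\bigr) + MT.
\]
Integrating over $\Tt^d$ and absorbing $MT$ into the constant $C$ yields the claim.

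For the second inequality, I split $|u^\epsilon(x,0)| = u^\epsilon(x,0) + 2\bigl(u^\epsilon(x,0)\bigr)^-$ and bound the two pieces separately. From \eqref{ecatorze},
\[
\int_{\Tt^d} u^\epsilon(x,0)\,dx \le CT + \int_0^T\!\!\int_{\Tt^d} g_\epsilon(m^\epsilon)\,dx\,dt + \int_{\Tt^d} u^\epsilon(x,T)\,dx.
\]
The middle term is controlled by Corollary \ref{capu}: since $\int g_\epsilon(m^\epsilon)\,dx = \int (\eta_\epsilon\ast m^\epsilon)^\alpha\,dx$ (by symmetry of $\eta_\epsilon$), H\"older with exponent $(\alpha+1)/\alpha$ and the bound $\int\int(\eta_\epsilon\ast m^\epsilon)^{\alpha+1}\le C$ give $\int_0^T\!\!\int g_\epsilon(m^\epsilon)\,dx\,dt\le C$. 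Hence $\int u^\epsilon(x,0)\,dx \le C + \|u^\epsilon(\cdot,T)\|_\infty$. For the negative part, Proposition \ref{plest} gives $u^\epsilon(x,0) \ge -\|u^\epsilon(\cdot,T)\|_\infty - MT$, so $(u^\epsilon(x,0))^- \le \|u^\epsilon(\cdot,T)\|_\infty + MT$ pointwise, hence
\[
2\int_{\Tt^d} \bigl(u^\epsilon(x,0)\bigr)^-\,dx \le 2\|u^\epsilon(\cdot,T)\|_\infty + 2MT.
\]
Adding these two contributions produces exactly $C+3\|u^\epsilon(\cdot,T)\|_\infty$.

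There is no real obstacle here, everything is bookkeeping on top of previously proven estimates. The only point that warrants care is the clean justification that $\int_0^T\!\!\int g_\epsilon(m^\epsilon)\,dx\,dt$ is controlled purely by Corollary \ref{capu}, which forces the mild H\"older step above, together with the bookkeeping of the constant $3$ (rather than $2$) arising from the identity $|u|=u+2u^-$ combined with the unit volume of $\Tt^d$.
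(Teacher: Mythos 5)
Your proof is correct, and it follows essentially the same route as the paper's (the paper's proof simply says ``combine Proposition \ref{plest} with estimate \eqref{hest}''). The only minor variation is in establishing the upper bound $\int_{\Tt^d}u^\epsilon(x,0)\,dx\leq C+\|u^\epsilon(\cdot,T)\|_\infty$: you route through \eqref{ecatorze} plus Corollary \ref{capu} with a H\"older step, whereas the paper reads it off directly from \eqref{hest} (equivalently, Remark \eqref{ilb}) using $H\geq 0$ --- since \eqref{hest} is itself proved from Corollary \ref{capu}, the two paths are interchangeable. The decomposition $|u^\epsilon(x,0)|=u^\epsilon(x,0)+2(u^\epsilon(x,0))^-$ combined with the pointwise lower bound from Proposition \ref{plest} is exactly what produces the factor $3$, matching the paper's intent.
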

\begin{proof}
It suffices to combine the lower bound from Proposition
\ref{plest}, with the estimate \eqref{hest}. 
\end{proof}


\section{Second order estimate}\label{elsoe}

We now discuss a second order estimate for mean-field games systems. This appeared in \cite{E2}, for the stationary case, as well as in \cite{LIMA}, concerning the time-dependent setting.


\begin{Proposition}\label{elsoe1}
Assume A\ref{ah}-\ref{bcc}
hold. 
Let $(u^\epsilon,m^\epsilon)$ be a solution of \eqref{eq:smfg}. Then
\begin{align*}
&\int_0^T\int_{\Tt^d}  g'(\eta_\epsilon*m^\epsilon)|D_x (\eta_\epsilon*m^\epsilon)|^2
+
\tr(D_{pp}^2H(D^2_{xx}u^\epsilon)^2) m^\epsilon
\leq \max_x\Delta u^\epsilon(x,T)\\&+
 C(1+\max_x u^\epsilon(x,T) -\min_xu^\epsilon(x, T))-\int_{\Tt^d} u^\epsilon(x,0)\Delta m^\epsilon(0,x) dx.
\end{align*}
\end{Proposition}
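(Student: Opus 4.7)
The plan is to apply the Bernstein-type "second-order estimate" based on the duality between the linearization of the Hamilton-Jacobi equation and the Fokker-Planck equation. I would first apply the Laplacian $\Delta$ to the first equation in \eqref{eq:smfg}. Writing $w = \Delta u^\epsilon$ and using the chain rule,
\[
\Delta H(x,Du^\epsilon) = \tr(D^2_{xx}H) + 2\tr(D^2_{xp}H\,D^2_{xx}u^\epsilon) + \tr\bigl(D^2_{pp}H(D^2_{xx}u^\epsilon)^2\bigr) + D_pH\cdot Dw,
\]
so that $w$ satisfies
\[
-w_t + D_pH\cdot Dw - \Delta w + \tr\bigl(D^2_{pp}H(D^2_{xx}u^\epsilon)^2\bigr) + 2\tr(D^2_{xp}H\,D^2_{xx}u^\epsilon) + \tr(D^2_{xx}H) = \Delta g_\epsilon(m^\epsilon).
\]

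Next, I would multiply this equation by $m^\epsilon$ and integrate over $\Tt^d\times[0,T]$. The key algebraic point is that the linear operator $-\partial_t + D_pH\cdot D - \Delta$ is precisely the formal adjoint of the Fokker-Planck operator $\partial_t - \div(D_pH\,\cdot) - \Delta$ appearing in the second equation of \eqref{eq:smfg}. Consequently, integration by parts in both $x$ and $t$ shows that
\[
\int_0^T\!\!\int_{\Tt^d} m^\epsilon(-w_t + D_pH\cdot Dw - \Delta w) = \int_{\Tt^d} m^\epsilon(0)\Delta u^\epsilon(0) - \int_{\Tt^d} m^\epsilon(T)\Delta u^\epsilon(T).
\]
For the right-hand side, using that $\eta_\epsilon$ is symmetric, $g_\epsilon(m^\epsilon)=\eta_\epsilon*g(\eta_\epsilon*m^\epsilon)$, and two integrations by parts,
\[
\int_0^T\!\!\int_{\Tt^d} m^\epsilon\,\Delta g_\epsilon(m^\epsilon)\,dxdt = -\int_0^T\!\!\int_{\Tt^d} g'(\eta_\epsilon*m^\epsilon)\,|D(\eta_\epsilon*m^\epsilon)|^2\,dxdt,
\]
which produces the first term on the left-hand side of the claim.

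Collecting everything, I am left with boundary terms and the two "bad" cross terms involving $D^2_{xp}H$ and $D^2_{xx}H$. The boundary term at $t=T$ is bounded by $\max_x\Delta u^\epsilon(x,T)$ since $\int m^\epsilon(T)=1$; the boundary term at $t=0$ is rewritten, via integration by parts in $x$, as $-\int u^\epsilon(x,0)\Delta m^\epsilon(x,0)\,dx$, giving precisely the corresponding terms in the statement. For the $2\tr(D^2_{xp}H\,D^2_{xx}u^\epsilon)$ term I apply the second part of Assumption A\ref{strong} with $\delta$ small to absorb a fraction of $\tr(D^2_{pp}H(D^2_{xx}u^\epsilon)^2)$ into the left-hand side, leaving a remainder controlled by $C_\delta H$. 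The term $\tr(D^2_{xx}H)$ is handled by the first part of A\ref{strong}, $|D^2_{xx}H|\le CH$.

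After these absorptions, the remaining uncontrolled quantity is $\int_0^T\!\!\int m^\epsilon H(x,Du^\epsilon)$, which is precisely what Proposition \ref{pehm} bounds by $C(1+\osc u^\epsilon(\cdot,T))$, yielding the required $C(1+\max_x u^\epsilon(x,T) - \min_x u^\epsilon(x,T))$ contribution. The main technical obstacle I expect is bookkeeping: making sure the constant in front of $\tr(D^2_{pp}H(D^2_{xx}u^\epsilon)^2)$ on the left-hand side stays positive after absorbing the cross term (which requires choosing $\delta$ in A\ref{strong} strictly less than $1/2$) and then rescaling so that the inequality reads with coefficient $1$ as stated, all while keeping the boundary terms clean. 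Assumption A\ref{bcc} ($m_0\ge\kappa_0>0$) is what makes the integration by parts at $t=0$ meaningful once one wants to extract further information, though for the statement as written the estimate follows directly from the adjoint identity.
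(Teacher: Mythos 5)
Your proposal is correct and follows essentially the same route as the paper: apply $\Delta$ to the Hamilton--Jacobi equation, multiply by $m^\epsilon$, use the Fokker--Planck equation (i.e.\ the adjoint structure) to produce the boundary terms, exploit the symmetry of $\eta_\epsilon$ to obtain the $g'(\eta_\epsilon*m^\epsilon)|D(\eta_\epsilon*m^\epsilon)|^2$ term, absorb the $D^2_{xp}H$ cross term via Assumption A\ref{strong}, and bound the remaining $\int Hm^\epsilon$ by Proposition \ref{pehm}. Your remark about keeping $\delta$ strictly below $1/2$ to retain a positive coefficient on the Hessian trace is in fact slightly more careful than the paper's proof, which takes $\delta=1/2$ and consequently derives the inequality with a factor of $\tfrac{1}{2}$ in front of $\tr(D_{pp}^2H(D^2_{xx}u^\epsilon)^2)m^\epsilon$ rather than the coefficient $1$ appearing in the stated proposition.
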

\begin{proof}
Applying the Laplacian $\Delta$ to first equation of \eqref{eq:smfg}
we get
\begin{align*}
& -\Delta u_t^\epsilon-
\Delta\Delta u^\epsilon+\tr(D_{pp}^2H(D^2_{xx}u^\epsilon)^2)
+\Delta_xH+2\tr(D_{px}^2HD^2_{xx}u^\epsilon)\\
&+D_pHD_x\Delta u^\epsilon =\diver(\eta_\epsilon*(g'(\eta_\epsilon*m^\epsilon)D_x (\eta_\epsilon*m^\epsilon)).
\end{align*}
Multiplying by $m^\epsilon$, integrating by parts and taking into account the
second equation of \eqref{eq:smfg}
and Assumption A\ref{strong}, we have
\begin{align}\label{eq:elsoe1}
&\int_0^T\int_{\Tt^d}g'(\eta_\epsilon*m^\epsilon)|D_x(\eta_\epsilon*m^\epsilon)|^2
+\tr(D_{pp}^2H(D^2_{xx}u^\epsilon)^2) m^\epsilon\\\notag
&\le \int_0^T\int_{\Tt^d}(|D_{xx}^2H|+\delta \tr(D_{pp}^2H(D^2_{xx}u^\epsilon)^2) + C_\delta H )m^\epsilon dx dt\\\notag
&\qquad +\int_{\Tt^d}m^\epsilon(x,T)\Delta u^\epsilon(x,T)-u(x,0)\Delta m^\epsilon(x,0) dx.
\end{align}
Choosing $\delta =\frac 1 2$ we obtain
\begin{align*}
&\int_0^T\int_{\Tt^d}g'(\eta_\epsilon*m^\epsilon)|D_x(\eta_\epsilon*m^\epsilon)|^2
	+\frac 1 2 \tr(D_{pp}^2H(D^2_{xx}u^\epsilon)^2)m^\epsilon dx dt\\
&\le C+C\int_0^T\int_{\Tt^d}Hm^\epsilon dx\,dt +\max_x\Delta
u(x,T)-\int_{\Tt^d}u^\epsilon(x,0)\Delta m^\epsilon(x,0) dx\\
&\le C(1+\max_x u^\epsilon(x,T) -\min_xu^\epsilon(x, T))+\max_x\Delta
u^\epsilon(x,T) -\int_{\Tt^d} u^\epsilon(x,0)\Delta m^\epsilon(0,x)dx,  
\end{align*}
where in the last inequality we used Proposition \ref{pehm}.
\end{proof}



Finally, from Sobolev's theorem we can conclude that 
\begin{Corollary}
\label{mhjr}
Assume  A\ref{ah}-\ref{bcc} hold. 
Let $(u^\epsilon,m^\epsilon)$ be a solution of \eqref{eq:smfg}. 
Then
\[
\int_0^T\int_{\Tt^d}g'(\eta_\epsilon*m^\epsilon)|D_x(\eta_\epsilon*m^\epsilon)|^2dx dt\leq C,  
\]
and so
\[
\int_0^T \|\eta_\epsilon*m^\epsilon\|_{L^{\frac{2^*}2 (\alpha+1)}(\Tt^d)}^{\alpha +1}dt\leq C.
\]
\end{Corollary}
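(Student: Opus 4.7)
My plan is to derive the two stated inequalities directly from Proposition~\ref{elsoe1} by (i) showing the right-hand side there is bounded uniformly in $\ep$, and (ii) rewriting the integrand as the squared gradient of a power of $m^\ep$, so Sobolev embedding supplies the integrability.

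First I would dispose of the right-hand side of Proposition~\ref{elsoe1}. Since $u^\ep(\cdot,T)=u_0\in C^\infty(\Tt^d)$, the terms $\max_x\Delta u^\ep(x,T)$ and $\osc u^\ep(\cdot,T)$ reduce to constants depending only on the data. The only nontrivial term is $-\int_{\Tt^d} u^\ep(x,0)\Delta m_0(x)\,dx$; I would estimate it by
\[
\Bigl|\int_{\Tt^d} u^\ep(x,0)\Delta m_0(x)\,dx\Bigr|\;\le\;\|\Delta m_0\|_\infty\,\int_{\Tt^d}|u^\ep(x,0)|\,dx,
\]
and invoke Corollary~\ref{c4p1} to control $\int|u^\ep(x,0)|\,dx\le C+3\|u_0\|_\infty$. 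Since Assumption~A\ref{ah} guarantees $m_0\in C^\infty$ and A\ref{strong} ensures $D^2_{pp}H$ is positive semidefinite so the dropped $\tr$-term is nonnegative, this yields
\[
\int_0^T\!\!\int_{\Tt^d} g'(\eta_\ep*m^\ep)\,|D_x(\eta_\ep*m^\ep)|^2\,dx\,dt\;\le\;C,
\]
which is the first claim.

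For the second claim, I would use that Assumption~A\ref{ag2} gives $g(z)=z^\alpha$, so $g'(z)=\alpha z^{\alpha-1}$ and the pointwise identity
\[
g'(\eta_\ep*m^\ep)\,|D(\eta_\ep*m^\ep)|^2 \;=\; \frac{4\alpha}{(\alpha+1)^2}\,\bigl|D(\eta_\ep*m^\ep)^{\frac{\alpha+1}{2}}\bigr|^2.
\]
Hence the already-established bound is really a uniform bound on $(\eta_\ep*m^\ep)^{(\alpha+1)/2}$ in $L^2(0,T;H^1(\Tt^d))$. By the Sobolev embedding $H^1(\Tt^d)\hookrightarrow L^{2^*}(\Tt^d)$ with $2^*=\frac{2d}{d-2}$,
\[
\bigl\|(\eta_\ep*m^\ep)^{\frac{\alpha+1}{2}}\bigr\|_{L^{2^*}(\Tt^d)}^{2}\;\le\;C\Bigl(\bigl\|(\eta_\ep*m^\ep)^{\frac{\alpha+1}{2}}\bigr\|_{L^2(\Tt^d)}^{2}+\bigl\|D(\eta_\ep*m^\ep)^{\frac{\alpha+1}{2}}\bigr\|_{L^2(\Tt^d)}^{2}\Bigr),
\]
i.e.
\[
\|\eta_\ep*m^\ep\|_{L^{\frac{2^*}{2}(\alpha+1)}(\Tt^d)}^{\alpha+1}\;\le\;C\Bigl(\|\eta_\ep*m^\ep\|_{L^{\alpha+1}(\Tt^d)}^{\alpha+1}+\bigl\|D(\eta_\ep*m^\ep)^{\frac{\alpha+1}{2}}\bigr\|_{L^2(\Tt^d)}^{2}\Bigr).
\]
Integrating in $t\in[0,T]$, the first summand is bounded by Corollary~\ref{capu} and the second by the first claim, so the second estimate follows.

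The only place where anything substantive happens is the bookkeeping in the first step; the rest is algebraic rewriting plus Sobolev embedding. If one wished to strip off the mollifier $\eta_\ep$ later, one could appeal to standard convolution bounds $\|\eta_\ep*f\|_{L^p}\le\|f\|_{L^p}$, but for the stated corollary no such step is required.
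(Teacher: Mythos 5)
Your proposal is correct and follows essentially the route the paper intends: Corollary \ref{mhjr} is stated there as a direct consequence of the second order estimate in Proposition \ref{elsoe1} (whose right-hand side is controlled by the smooth terminal/initial data together with the bound on $\int_{\Tt^d}|u^\epsilon(x,0)|dx$ from Corollary \ref{c4p1}), followed by the rewriting $g'(\eta_\ep*m^\ep)|D(\eta_\ep*m^\ep)|^2=\tfrac{4\alpha}{(\alpha+1)^2}|D(\eta_\ep*m^\ep)^{\frac{\alpha+1}{2}}|^2$ and Sobolev embedding, exactly as you do. The only minor slip is attributing the nonnegativity of $\tr(D^2_{pp}H(D^2_{xx}u^\epsilon)^2)\,m^\epsilon$ to A\ref{strong}; it follows from the convexity of $p\mapsto H(x,p)$ in A\ref{ah} together with $m^\epsilon\geq 0$, which does not affect the argument.
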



\begin{Corollary}
\label{ctpt}
Assume A\ref{ah}-\ref{asbdpph} hold. 
Let $(u^\epsilon,m^\epsilon)$ be a solution of \eqref{eq:smfg}. Then
\[
\int_0^T \int_{\Tt^d} |\div D_pH|^2 m^\epsilon dxdt\leq C.
\]
\end{Corollary}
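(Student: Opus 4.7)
The plan is to bound $|\div D_pH(x,Du^\epsilon)|^2$ pointwise by quantities already controlled by the earlier estimates. Differentiating gives
\[
\div\bigl(D_pH(x, Du^\epsilon)\bigr) = \tr(D^2_{xp}H) + \tr\!\bigl(D^2_{pp}H \cdot D^2_{xx}u^\epsilon\bigr),
\]
so by the triangle inequality and the elementary matrix bound $|\tr A|^2 \leq d |A|^2$,
\[
|\div D_pH|^2 \leq C|D^2_{xp}H|^2 + C\,\bigl|D^2_{pp}H \cdot D^2_{xx}u^\epsilon\bigr|^2.
\]
Now A\ref{asbdpph} is tailor-made for exactly these two terms: its first part yields $|D^2_{xp}H|^2 \leq CH$, and its second part, applied with $M = D^2_{xx}u^\epsilon$, yields $|D^2_{pp}H \cdot D^2_{xx}u^\epsilon|^2 \leq C\,\tr\bigl(D^2_{pp}H (D^2_{xx}u^\epsilon)^2\bigr)$. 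Combining,
\[
|\div D_pH|^2 \leq C\,H(x,Du^\epsilon) + C\,\tr\!\bigl(D^2_{pp}H (D^2_{xx}u^\epsilon)^2\bigr).
\]

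Multiplying by $m^\epsilon$ and integrating over $\Tt^d \times (0,T)$, the first term on the right is controlled by $C\int_0^T\!\int_{\Tt^d} H(x,Du^\epsilon) m^\epsilon\,dxdt \leq C$ thanks to Corollary \ref{capu}. The second term is exactly the quantity estimated on the left-hand side of Proposition \ref{elsoe1}, so it is bounded by
\[
\max_x \Delta u^\epsilon(x,T) + C\bigl(1+\max_x u^\epsilon(x,T)-\min_x u^\epsilon(x,T)\bigr) - \int_{\Tt^d} u^\epsilon(x,0)\,\Delta m^\epsilon(x,0)\,dx.
\]
Since $u^\epsilon(\cdot,T)=u_0$ and $m^\epsilon(\cdot,0)=m_0$ are smooth by A\ref{ah}, the first two terms are bounded by constants depending only on the data, and $|\int u^\epsilon(x,0)\Delta m_0\,dx| \leq \|\Delta m_0\|_\infty \int |u^\epsilon(x,0)|\,dx \leq C$ by Corollary \ref{c4p1}.

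There is no genuine obstacle here; the statement is essentially a direct consequence of A\ref{asbdpph} together with the two preceding estimates (Corollary \ref{capu} and Proposition \ref{elsoe1}). The only point requiring mild care is verifying that all boundary-type terms produced by Proposition \ref{elsoe1} are indeed $\epsilon$-uniformly finite, which follows from the smoothness of the prescribed initial-terminal data together with the $L^1$ control of $u^\epsilon(\cdot,0)$ from Corollary \ref{c4p1}.
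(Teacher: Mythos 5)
Your proposal follows exactly the paper's line of argument: the same decomposition $\div(D_pH)=\tr(D^2_{pp}HD^2u)+\tr(D^2_{xp}H)$, the same use of A\ref{asbdpph} to reduce both squares to $\tr(D^2_{pp}H(D^2u)^2)$ and $H$, and the same appeal to Proposition \ref{elsoe1} and the first-order estimate (Proposition \ref{pehm}, of which your Corollary \ref{capu} is a direct corollary) to close the bound. The only difference is that you spell out why the right-hand side of Proposition \ref{elsoe1} is finite, which the paper leaves implicit; your handling of the boundary terms via the smoothness of $(u_0,m_0)$ and Corollary \ref{c4p1} is correct.
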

\begin{proof}
Observe that 
$\div(D_pH)=\tr (D^2_{pp} H D^2u) +\tr(D^2_{xp}H)$. 
Hence
$
|\div(D_pH)|^2\leq 
2 |\tr (D^2_{pp} H D^2u)|^2+2 |\tr (D^2_{xp}H)|\leq C \tr(D^2_{pp} H (D^2u)^2) + CH 
$,
by using Assumption A\ref{asbdpph}. The result then follows by applying a second order estimate, Proposition \ref{elsoe1}, and 
Proposition \ref{pehm}. 
\end{proof}

\section{Regularity for the Fokker-Planck equation}
\label{igain}

We first observe that by integrating the second equation of
\eqref{eq:smfg} we obtain $\int_{\Tt^d} m^\epsilon(x,
t)=1$, for all 
$0\leq t\leq T$.
Also the maximum principle yields that $m^\epsilon\geq 0$ if $m^\epsilon(x,0)\geq 0$.

In this section 
we explore various estimates and obtain further integrability for $m^\epsilon$. In Section \ref{rele}
we use the second order estimate
from the previous section to obtain improved integrability for $m$. 
 In Section \ref{elep} we control the integrability of $m^\epsilon$ in 
terms of $L^p$ norms of $D_pH$. For our purposes, we need explicit control for norms of $m^\epsilon$
in terms of polynomial expressions in $\|D_pH\|_{L^p(\Tt^d)}$.

\subsection{Regularity by the second order estimate}
\label{rele}

Our first approach concerning the regularity of the Fokker-Planck equation
uses the second order estimate. We start with an elementary result:

\begin{Proposition} \label{ent-prod}
Assume A\ref{ah} holds. 
Let $(u^\epsilon,m^\epsilon)$ be a solution of \eqref{eq:smfg}. 
Let $\varphi:\Rr\to \Rr$ be a $C^2$ function. Then
\[
\frac{d}{dt}\int_{\Tt^d} \varphi(m^\epsilon)+\int_{\Tt^d}\div(D_pH)
\varphi^*(m^\epsilon)=-\int_{\Tt^d} \varphi''(m^\epsilon) |D_x m^\epsilon|^2, 
\]
where $\varphi^*(z)=-z\varphi'(z)+\varphi(z)$. 
\end{Proposition}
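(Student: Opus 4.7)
The plan is to test the Fokker-Planck equation (the second equation of \eqref{eq:smfg}) against $\varphi'(m^\epsilon)$ and to massage the terms via integration by parts until the asymmetric combination $\varphi^*(z) = -z\varphi'(z) + \varphi(z)$ emerges naturally. Periodic boundary conditions make all boundary terms vanish, so this is essentially a bookkeeping exercise.

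Concretely, I would start from
\[
\frac{d}{dt}\int_{\Tt^d}\varphi(m^\epsilon)\,dx = \int_{\Tt^d}\varphi'(m^\epsilon)\,m_t^\epsilon\,dx = \int_{\Tt^d}\varphi'(m^\epsilon)\bigl[\Delta m^\epsilon + \div(D_pH\,m^\epsilon)\bigr]dx,
\]
using the Fokker--Planck equation. The diffusion term is handled by a standard integration by parts:
\[
\int_{\Tt^d}\varphi'(m^\epsilon)\,\Delta m^\epsilon\,dx = -\int_{\Tt^d}\varphi''(m^\epsilon)\,|D_x m^\epsilon|^2\,dx,
\]
which already accounts for the right-hand side of the identity.

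For the transport term, I would expand the divergence as $\div(D_pH\,m^\epsilon) = D_pH\cdot D_xm^\epsilon + m^\epsilon\,\div(D_pH)$ and treat the two pieces separately. The first piece is $\varphi'(m^\epsilon)D_pH\cdot D_x m^\epsilon = D_pH\cdot D_x\varphi(m^\epsilon)$, so integration by parts gives $-\int \varphi(m^\epsilon)\div(D_pH)$; the second piece contributes $\int m^\epsilon\varphi'(m^\epsilon)\div(D_pH)$. Together they combine to
\[
\int_{\Tt^d}\varphi'(m^\epsilon)\,\div(D_pH\,m^\epsilon)\,dx = -\int_{\Tt^d}\bigl[\varphi(m^\epsilon)-m^\epsilon\varphi'(m^\epsilon)\bigr]\div(D_pH)\,dx = -\int_{\Tt^d}\varphi^*(m^\epsilon)\,\div(D_pH)\,dx,
\]
which is precisely the term that needs to be moved to the left-hand side.

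There is no real obstacle here beyond ensuring we have enough regularity (which is guaranteed since $(u^\epsilon,m^\epsilon)$ is a classical solution of the regularized system, so $D_pH$ and $m^\epsilon$ are smooth and periodic, justifying all integrations by parts). The only content is recognizing that the combination $-z\varphi'(z)+\varphi(z)$ is the Legendre-type correction produced when one integrates by parts a nonlinear function of $m^\epsilon$ against $\div(D_pH\,m^\epsilon)$ instead of against the pure transport field.
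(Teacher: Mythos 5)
Your computation is correct: testing the Fokker--Planck equation against $\varphi'(m^\epsilon)$, integrating the Laplacian by parts, and splitting $\div(D_pH\,m^\epsilon)$ into $D_pH\cdot D_xm^\epsilon+m^\epsilon\div(D_pH)$ yields exactly the stated identity with $\varphi^*(z)=-z\varphi'(z)+\varphi(z)$, and all integrations by parts are legitimate on the torus for the smooth solutions of the regularized system. The paper omits this proof as ``an elementary computation,'' and your argument is precisely that intended computation, so there is nothing to add.
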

\begin{proof} This is an elementary computation whose proof we will omit. 
\end{proof}

Next we obtain the following a priori estimates for $m^\epsilon$:
\begin{teo}
\label{em}
Assume A\ref{ah}-\ref{asbdpph} hold.
Let $(u^\epsilon,m^\epsilon)$ be a solution of \eqref{eq:smfg}. Then
for $d>2$, $\|m^\epsilon\|_{L^\infty([0,T],L^r(\Tt^d))}$ bounded for any 
$1\le r<\frac{2^*}{2}$, uniformly in $\epsilon$.  
\end{teo}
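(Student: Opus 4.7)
The plan is an energy estimate: test the Fokker--Planck equation against $(m^\epsilon)^{r-1}$ to derive the evolution of $\int(m^\epsilon)^r$, control the resulting drift contribution via the bound on $\int|\div D_pH|^2 m^\epsilon$ from Corollary~\ref{ctpt}, and absorb the super-critical integrals of $m^\epsilon$ via Sobolev interpolation and Young's inequality. The subcritical threshold $r<\tfrac{2^*}{2}=\tfrac{d}{d-2}$ will emerge as precisely the range in which Young's inequality allows absorption of the diffusion term while preserving the time integrability coming from Corollary~\ref{ctpt}.

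Concretely, fix $r\in(1,d/(d-2))$ and apply Proposition~\ref{ent-prod} with $\varphi(m)=m^r/(r(r-1))$, so that $\varphi^*(m)=-m^r/r$ and $\varphi''(m)=m^{r-2}$; the case $1<r<2$ requires only a standard regularization $\varphi_\delta(m)=(m+\delta)^r/(r(r-1))$. One obtains the identity
\[
\frac{d}{dt}\int\frac{(m^\epsilon)^r}{r(r-1)}\,dx+\frac{4}{r^2}\int\bigl|D(m^\epsilon)^{r/2}\bigr|^2\,dx=\frac{1}{r}\int \div(D_pH)\,(m^\epsilon)^r\,dx.
\]
Cauchy--Schwarz bounds the right-hand side by $C\,b(t)^{1/2}\bigl(\int(m^\epsilon)^{2r-1}\bigr)^{1/2}$, where $b(t):=\int|\div D_pH|^2 m^\epsilon\,dx$ satisfies $\|b\|_{L^1([0,T])}\le C$ by Corollary~\ref{ctpt}. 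Since $r<2r-1<rd/(d-2)$ for $r\in(1,d/(d-2))$, $L^p$-interpolation between $L^r$ and $L^{rd/(d-2)}$, combined with the Sobolev embedding $H^1\hookrightarrow L^{2^*}$ applied to $(m^\epsilon)^{r/2}$, yields, setting $A(t):=\int(m^\epsilon)^r$ and $a(t):=\int|D(m^\epsilon)^{r/2}|^2$,
\[
\int(m^\epsilon)^{2r-1}\le C\,A^{2\sigma}(A+a)^{2\beta},\qquad \sigma=\tfrac{\theta(2r-1)}{2r},\quad \beta=\tfrac{d(r-1)}{4r},
\]
with interpolation parameter $\theta=1-\tfrac{d(r-1)}{2(2r-1)}\in(0,1)$.

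Applying Young's inequality $XY^\beta\le \eta Y+C_\eta X^{1/(1-\beta)}$ to $X=b^{1/2}A^\sigma$, $Y=A+a$, and choosing $\eta$ small enough to absorb $\eta a$ into the $\tfrac{4}{r^2}a$ term on the left, produces the differential inequality
\[
A'(t)\le C\,A(t)+C\,b(t)^{\frac{1}{2(1-\beta)}}A(t)^{\frac{\sigma}{1-\beta}}.
\]
Elementary algebra gives $\sigma+\beta=1-\tfrac{1}{2r}$, hence $\tfrac{\sigma}{1-\beta}<1$; the assumption $r<\tfrac{d}{d-2}$ is exactly $\beta<\tfrac{1}{2}$, which yields $\tfrac{1}{2(1-\beta)}<1$ and therefore, since $[0,T]$ has finite measure and $b\in L^1([0,T])$, $b^{1/(2(1-\beta))}\in L^1([0,T])$ uniformly in $\epsilon$. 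Since $A(0)$ is controlled by the smoothness of $m_0$ from Assumption~A\ref{ah}, a Gronwall-type argument (e.g.\ by the substitution $\chi=A^{1-\sigma/(1-\beta)}$) yields $\sup_{[0,T]}A(t)\le C$ uniformly in $\epsilon$. The case $r=1$ is mass conservation, and the values $1<r'<r$ follow by H\"older's inequality.

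The main obstacle is the simultaneous exponent bookkeeping: one must arrange the interpolation bracket $r<2r-1<rd/(d-2)$, the Young condition $\beta<1$, the sublinearity $\sigma/(1-\beta)\le 1$ needed for Gronwall, and, most delicately, the absorption-compatible condition $\beta<\tfrac{1}{2}$ which guarantees that the time integrability of $b^{1/(2(1-\beta))}$ is inherited from $b\in L^1([0,T])$. All four constraints reduce to the single subcritical threshold $r<\tfrac{d}{d-2}=\tfrac{2^*}{2}$, matching the claim.
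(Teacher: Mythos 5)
Your argument is correct, and it closes the estimate by a mechanism different from the paper's. Both proofs share the same ingredients: the entropy identity of Proposition \ref{ent-prod} applied to a power of $m^\epsilon$, the bound $\int_0^T\int_{\Tt^d}|\div D_pH|^2 m^\epsilon\le C$ from Corollary \ref{ctpt}, Sobolev's inequality applied to $(m^\epsilon)^{\text{power}/2}$, and an $L^p$-interpolation for the term $\int (m^\epsilon)^{2r-1}$. The paper, however, splits the drift term immediately by Young's inequality with a small parameter $\delta$ and then runs an induction over the increasing sequence $\be_{n+1}=\tfrac2d(\be_n+1)\nearrow\tfrac{2}{d-2}$: at each step the previously established $L^{1+\be_n}$ bound makes the dangerous term appear only \emph{linearly} in $A+a$ with coefficient $\delta$, so it is absorbed into the dissipation and no nonlinear ODE argument is needed. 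You instead fix the target exponent $r<\tfrac{2^*}{2}$ at once, keep the drift term as $b^{1/2}(\int (m^\epsilon)^{2r-1})^{1/2}$ by Cauchy--Schwarz, interpolate against the unknown quantity $A$ itself, and close with a Bernoulli-type (sublinear) Gronwall inequality; the exponent bookkeeping you carry out is right ($\sigma+\beta=1-\tfrac1{2r}$, and $\beta<\tfrac12\iff r<\tfrac{d}{d-2}$, which is exactly what keeps $b^{1/(2(1-\beta))}$ in $L^1(0,T)$ by H\"older on the finite time interval). Your route buys a one-shot, induction-free proof in which the threshold $\tfrac{2^*}{2}$ appears transparently as the condition $\beta<\tfrac12$; the paper's iteration buys linear absorption (only classical Gronwall/integration in time) and sets up the same iterative machinery that is reused in Section \ref{elep} and Theorem \ref{cpp2}. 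Two small points worth making explicit in your write-up: the substitution $\chi=A^{1-\sigma/(1-\beta)}$ is legitimate because $A(t)=\int_{\Tt^d}(m^\epsilon)^r\ge 1$ by Jensen's inequality (mass one on the unit torus), and $A(0)=\int_{\Tt^d} m_0^r$ is bounded by the smoothness of $m_0$; the regularization $(m+\delta)^r$ for $1<r<2$ is indeed standard and is implicitly needed in the paper's proof as well.
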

\begin{proof}
To simplify the notation, throughout this proof we will omit the $\epsilon$.
We will define inductively an increasing sequence $\be_n$ such 
that $\|m(\cdot, t)\|_{1+\be_n}$ is bounded. 
Set $\be_0=0$ so that $\|m(\cdot, t)\|_{1+\be_0}=1\leq C$.
 Let $\be_{n+1}=\dfrac 2d(\be_n+1)$,
then $\be_n$ is the $n^{th}$ partial sum of the geometric series with term
$\dfrac{2^n}{d^n}$. Thus 
$\lim_{n\to \infty}\be_n=\frac 2{d-2}=\frac{2^*}{2}-1$.
 
Set $q_n=
\frac{2^*}2 (\be_{n+1}+1)=
\frac{d}{d-2}(\be_{n+1}+1)$
Note that $q_n>\frac{d}{d-2}\be_{n+1}+\be_{n+1}+1>2\be_{n+1}+1$.
Then we have
\[
\|m\|_{2\be_{n+1}+1}\leq \|m\|_{1+\be_n}^{1-\lambda_n}\|m\|_{q_n}^{\lambda_n},
\]
for $0<\lambda_n<1$ defined through 
$\frac{\lambda_n}{q_n}+\frac{1-\lambda_n}{1+\be_n}=\frac{1}{2\be_{n+1}+1}$.
In particular
\begin{align}
  \label{eq:exponent}
\lambda_n&=\frac{q_n}{q_n-\be_n-1}\frac{2\be_{n+1}-\be_n}{1+2\be_{n+1}}=\frac{\be_{n+1}+1}{1+2 \be_{n+1}}.
\end{align}

Since $\| m \|_{1+\be_n}\leq C,$ we get
\begin{equation}
 \label{igm-eq5a}
\int_{\Tt^d} m^{2\be_{n+1} +1} dx
=\|m\|_{2\be_{n+1}+1}^{2\be_{n+1}+1}
\leq C\|m\|_{q_n}^{\lambda_n (2\be_{n+1}+1)}=
C\|m\|_{q_n}^{\be_{n+1} +1}.
\end{equation}
For $\be>0$, using Proposition \ref{ent-prod} with
$\varphi(m)=m^{\be+1}$ we obtain
\begin{align}
\label{igm-eq2} 
\int_{\Tt^d} m^{\be+1}(x,\tau)dx &+\frac{4\be}{\be+1}\int_0^\tau\int_{\Tt^d}
|D_xm^{\frac{\be+1}{2}}|^2dx\,dt\notag \\
&=  \int_{\Tt^d} m^{\be+1}(x,0)dx + \be
\int_0^\tau \int_{\Tt^d} \div(D_pH)m^{\be+1}dx dt.
\end{align}
Additionally we have
\begin{align}
\label{igm-eq4} 
 \int_{\Tt^d} |\div(D_pH)m^{\be+1}|dx
&\leq  C_\delta\left(\int_{\Tt^d} |\div(D_pH)|^2 m dx\right)
 + \delta\left(\int_{\Tt^d} m^{2\be +1} dx\right),
\end{align}
where all integrals are evaluated at a fixed time $t$. 

Setting $\be=\be_{n+1}$, from \eqref{igm-eq5a}, \eqref{igm-eq2} and
\eqref{igm-eq4} we get for any $\tau\in[0,T]$ 
\begin{align}
\label{igm-eq6a} 
&\int_{\Tt^d} m^{\be_{n+1}+1}(x,\tau)dx +\frac{4\be_{n+1}}{\be_{n+1}+1}
\int_0^\tau\int_{\Tt^d}|D_xm^{\frac{\be_{n+1}+1}{2}}(x,t)|^2dx\,dt
\notag \\
\leq  &\int_{\Tt^d} m^{\be_{n+1}+1}(x,0)dx + C_\delta \int_0^\tau\int_{\Tt^d}
|\div(D_pH)|^2 m dx\,dt +\delta \int_0^\tau
\|m\|_{q_n}^{\be_{n+1}+1} dt.
\end{align}

By Sobolev's theorem we get
\begin{equation}
 \label{igm-eq3a}
\|m\|_{q_n}^{\be_{n+1}+1}=\|m^{\frac{\be_{n+1}+1}{2}}\|_{2^*}^2
\le C\left(\int_{\Tt^d}m^{(\be_{n+1}+1)}(x,t)dx+
\int_{\Tt^d}|Dm^{\frac{\be_{n+1}+1}{2}}(x,t)|^2 dx\right). 
\end{equation}
From \eqref{igm-eq5a} and  $\int_{\Tt^d}m(x,t)dx=1$, for each fixed $t$
we have 
\begin{align*}
\int_{\Tt^d}m^{(\be_{n+1}+1)}dx&\le \left(\int_{\Tt^d}m^{(2\be_{n+1}+1)}dx\right)^{1/2}\le
C_\zeta+\zeta\int_{\Tt^d}m^{(2\be_{n+1}+1)}\\
&\le C_\zeta+\zeta \|m\|_{q_n}^{\be_{n+1}+1}.
\end{align*}
Thus
\begin{equation}
 \label{}
\|m\|_{q_n}^{\be_{n+1}+1}\le C \int_{\Tt^d}|Dm^{\frac{\be_{n+1}+1}{2}}(x,t)|^2 dx+C_\zeta+\zeta \|m\|_{q_n}^{\be_{n+1}+1}.
 \end{equation}

From \eqref{igm-eq6a} and \eqref{igm-eq3a}, taking $\de$ and
$\zeta$ small enough we have for some $\de_1>0$

\begin{align*}
\int_{\Tt^d} m^{\be_{n+1}+1}(x,\tau)dx &+\delta_1 \int_0^\tau \|m\|_{q_n}^{\be_{n+1}+1}dt\\
\leq  &C+C\int_{\Tt^d} m^{\be_{n+1}+1}(x,0)dx + C\int_0^\tau\int_{\Tt^d}
|\div(D_pH)|^2 m dx\,dt.
\end{align*}
Since the last term in the right-hand side is bounded by Corollary \ref{ctpt} we have established the result. 
\end{proof}

The proof of the previous Theorem also yields the following Corollary
\begin{Corollary}\label{DmaL2}
Assume A\ref{ah}-\ref{asbdpph} hold.
Let $(u^\epsilon,m^\epsilon)$ be a solution of \eqref{eq:smfg}. Then, for $-\frac 12\leq\be\le 0$ 
we have  
\begin{equation}
\int_0^T \int_{\Tt^d} (m^\epsilon)^{\be-1}|D_xm^\epsilon|^2 dxdt\leq C. 
\end{equation}
\end{Corollary}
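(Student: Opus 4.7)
The plan is to apply the entropy-production identity of Proposition \ref{ent-prod} to the test function $\varphi(m)=m^{\be+1}$ exactly as in the proof of Theorem \ref{em}, but now in the regime $\be\in[-1/2,0)$. With this choice one computes $\varphi'(m)=(\be+1)m^\be$, $\varphi''(m)=\be(\be+1)m^{\be-1}$, and $\varphi^*(m)=-\be m^{\be+1}$. In the present range the coefficient $-\be(\be+1)$ is nonnegative, so the dissipation term appears with the \emph{correct} sign on one side of the identity while everything else can be bounded above; this is exactly the algebra used inside Theorem \ref{em}, only with the roles of left and right interchanged.

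Integrating the identity of Proposition \ref{ent-prod} over $[0,T]$ gives
\begin{align*}
-\be(\be+1)\int_0^T\!\!\int_{\Tt^d}(m^\ep)^{\be-1}|D_xm^\ep|^2\,dx\,dt
&=\int_{\Tt^d}(m^\ep)^{\be+1}(x,0)\,dx-\int_{\Tt^d}(m^\ep)^{\be+1}(x,T)\,dx\\
&\quad +\be\int_0^T\!\!\int_{\Tt^d}\div(D_pH)\,(m^\ep)^{\be+1}\,dx\,dt.
\end{align*}
The initial term is controlled by the smoothness of $m_0$ and Assumption A\ref{bcc}. The terminal term is nonpositive if $\be+1\le 1$ and can in any case be bounded by $1$ using Jensen's inequality (concavity of $z\mapsto z^{\be+1}$) together with $\int m^\ep=1$. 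For the mixed term, factor $\div(D_pH)\,m^{\be+1}=\bigl(\div(D_pH)\,m^{1/2}\bigr)\cdot m^{\be+1/2}$ and apply Cauchy--Schwarz:
\[
\Bigl|\int_0^T\!\!\int \div(D_pH)(m^\ep)^{\be+1}\Bigr|\le
\Bigl(\int_0^T\!\!\int |\div(D_pH)|^2 m^\ep\Bigr)^{1/2}\Bigl(\int_0^T\!\!\int (m^\ep)^{2\be+1}\Bigr)^{1/2}.
\]
Corollary \ref{ctpt} bounds the first factor, and since $2\be+1\in[0,1]$ the second is bounded by $T$ via Jensen and the mass conservation. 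Dividing by $-\be(\be+1)>0$ yields the claimed estimate for $\be\in[-1/2,0)$.

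The endpoint $\be=0$ must be treated separately because the coefficient $-\be(\be+1)$ vanishes. The natural replacement is $\varphi(m)=m\log m$, for which $\varphi''(m)=1/m$ and $\varphi^*(m)=-m$; Proposition \ref{ent-prod} then produces the Fisher information $\int (m^\ep)^{-1}|D_xm^\ep|^2=4\int|D_x\sqrt{m^\ep}|^2$ on the right, and the entropy $\int m^\ep\log m^\ep$ at time $T$ is controlled by any $L^r$ bound with $r>1$ from Theorem \ref{em}, while the $\div(D_pH)$ contribution is handled by Cauchy--Schwarz and Corollary \ref{ctpt} as before.

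The only subtle point is that the formal manipulations involve $m^{\be-1}$, which is singular if $m^\ep$ vanishes. For each fixed $\ep>0$ this is harmless: $m^\ep$ is smooth and the parabolic maximum principle applied to the Fokker--Planck equation together with A\ref{bcc} ensures $m^\ep(\cdot,t)>0$ pointwise. What matters for the conclusion is that the resulting upper bounds depend only on $\|m_0\|_{C^k}$, $\kappa_0$, $T$, and the constants in Corollary \ref{ctpt}, all of which are independent of $\ep$; the pointwise lower bound on $m^\ep$ never enters the final estimate. I expect the main obstacle, which is minor, to be the rigorous justification of the $\be=0$ entropy argument, where one should truncate $\log m^\ep$ (e.g. replace it by $\log(m^\ep+\delta)$) and let the truncation go to zero to avoid any issue at the initial time.
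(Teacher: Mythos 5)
Your proposal follows essentially the same route as the paper: Proposition \ref{ent-prod} with $\varphi(m)=m^{\be+1}$ (this is precisely the identity \eqref{igm-eq2}, now used for negative $\be$), the splitting $\div(D_pH)\,m^{\be+1}=\bigl(\div(D_pH)\,m^{1/2}\bigr)m^{\be+1/2}$ combined with Corollary \ref{ctpt}, and the observation that $\int_{\Tt^d} m^{2\be+1}\le 1$ by Jensen when $0\le 2\be+1\le 1$ --- which is the paper's estimate \eqref{igm-eq4}. Two remarks. First, the right-hand side of your integrated identity has the wrong overall sign: it should read $\int_{\Tt^d}(m^\ep)^{\be+1}(T)-\int_{\Tt^d}(m^\ep)^{\be+1}(0)-\be\int_0^T\!\!\int_{\Tt^d}\div(D_pH)(m^\ep)^{\be+1}$. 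This is immaterial for the conclusion, since both boundary terms are bounded in absolute value by $1$ via Jensen (and the initial one also by smoothness of $m_0$), but it should be corrected. Second, your separate treatment of the endpoint $\be=0$ via $\varphi(m)=m\log m$ is a genuine refinement of the paper's write-up: the paper runs the power argument for all $-\tfrac12\le\be\le 0$, yet at $\be=0$ the dissipation coefficient $\be(\be+1)$ vanishes (and the constant obtained after dividing degenerates as $\be\to 0^-$), so the endpoint really does need the entropy computation you sketch; note only that with the correct signs what is required at time $T$ is the lower bound $\int_{\Tt^d} m^\ep\log m^\ep\,dx\ge 0$ from Jensen, rather than an upper bound from Theorem \ref{em}. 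Your justification of the negative powers through positivity of $m^\ep$ (maximum principle together with A\ref{bcc}) is also correct and is not discussed in the paper.
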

\begin{proof}
To simplify the notation, as before, throughout this proof we will omit the $\epsilon$.
We first observe that for $-1\leq \be\leq 0$
we have
\[
\int_{\Tt^d} m^{\be+1} dx\leq C, 
\]
since for each fixed $t$ we have that $m(\cdot, t)$ is a probability
measure. Then, using   identity \eqref{igm-eq2}, coupled with estimate
\eqref{igm-eq4} and Corollary \ref{ctpt} yields 
\[
\int_0^\tau\int_{\Tt^d}
|D_xm^{\frac{\be+1}{2}}|^2dx\,dt\leq C+ C \int_0^\tau\int_{\Tt^d} m^{2 \be+1} dxdt, 
\]
and provided $-\frac 1 2 \leq \be\leq 0$ the right hand side is bounded. 
\end{proof}

As final remark, we would like to note that since the convolution with $\eta_\epsilon$ is 
a contraction in any $L^p$ space, we have
$m^\epsilon \in L^p$ implies $\eta_\epsilon*m^\epsilon \in L^p$. Similarly, 
if $(m^\epsilon)^\alpha \in L^p$ then we also have
$\eta_\epsilon*(\eta_\epsilon*m^\epsilon)^\alpha\in L^p$, and, of course, all these
bounds do not depend on $\epsilon$. 

\subsection{Regularity by $L^p$ estimates}
\label{elep}

We now obtain estimates for $m^\epsilon$ in $L^\infty([0,T],L^p(\Tt^d))$ depending polynomially on the $L^p$-norm of $D_pH$, for $p>\frac{d}{2}$. 
Because we need explicit estimates, we will prove them in detail. Throughout this Section, we omit the $\epsilon$ in the proofs for ease of presentation.

\begin{Lemma}\label{lem-ent-prod}
Let $(u^\epsilon,m^\epsilon)$ be a solution to \eqref{eq:smfg}. Then, for $\be>1$, there exist constants $c,\;C>0$ such that
\[\frac{d}{dt}\int_{\mathbb{T}^{d}}(m^\epsilon)^{\be}dx\leq
C\int_{\mathbb{T}^{d}}\left|D_{p}H\right|^{2}(m^\epsilon)^{\be}dx-c\int_{\mathbb{T}^{d}}\left|D_{x}(m^\epsilon)^{\frac{\be}{2}}\right|^{2}dx.\] 
\end{Lemma}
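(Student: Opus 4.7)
The plan is to test the Fokker--Planck equation against $\beta (m^\epsilon)^{\beta-1}$ and rearrange the resulting identity into a dissipation estimate. For readability I will drop the $\epsilon$ superscript, as the authors do in their proofs.

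First I would multiply the second equation of \eqref{eq:smfg} by $\beta m^{\beta-1}$ and integrate over $\Tt^d$. The time term gives exactly $\tfrac{d}{dt}\int m^{\beta}\,dx$. For the transport term, integration by parts yields
\[
\int_{\Tt^d}\beta m^{\beta-1}\div(D_pH\,m)\,dx
= -\beta(\beta-1)\int_{\Tt^d} m^{\beta-1}\,D_pH\cdot D_x m\,dx
= -(\beta-1)\int_{\Tt^d} D_pH\cdot D_x(m^{\beta})\,dx,
\]
using $m^{\beta-1}D_x m = \tfrac{1}{\beta}D_x(m^{\beta})$. For the Laplacian term, a second integration by parts produces
\[
\int_{\Tt^d}\beta m^{\beta-1}\Delta m\,dx = -\beta(\beta-1)\int_{\Tt^d} m^{\beta-2}|D_x m|^{2}\,dx = -\frac{4(\beta-1)}{\beta}\int_{\Tt^d}\left|D_x m^{\beta/2}\right|^{2}dx,
\]
where in the last step I use $|D_x m^{\beta/2}|^{2}=(\beta/2)^{2}m^{\beta-2}|D_x m|^{2}$.

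Next I would rewrite the transport term using the same chain-rule identity, namely $D_x(m^{\beta})=2m^{\beta/2}D_x m^{\beta/2}$, to obtain
\[
-(\beta-1)\int_{\Tt^d} D_pH\cdot D_x(m^{\beta})\,dx = -2(\beta-1)\int_{\Tt^d} D_pH\cdot m^{\beta/2}D_x m^{\beta/2}\,dx.
\]
A weighted Young inequality with parameter $\delta>0$ then bounds this by
\[
\delta\int_{\Tt^d}\left|D_x m^{\beta/2}\right|^{2}dx + \frac{(\beta-1)^{2}}{\delta}\int_{\Tt^d}|D_pH|^{2}m^{\beta}\,dx.
\]
Choosing $\delta<\tfrac{4(\beta-1)}{\beta}$ (which is strictly positive because $\beta>1$) lets me absorb the first term into the negative Laplacian contribution, leaving a strictly positive coefficient $c>0$ in front of $\int|D_x m^{\beta/2}|^{2}$ and a constant $C=C(\beta)$ in front of $\int|D_pH|^{2}m^{\beta}$. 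Combining the three pieces gives the claimed inequality.

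There is really no hard step: the only thing to watch is keeping track of signs and choosing $\delta$ small enough. Rigor-wise, since $m^\epsilon$ is a classical solution of \eqref{eq:smfg} and hence smooth and nonnegative, all integrations by parts on $\Tt^d$ are valid with no boundary terms, and the computation $\partial_t m^{\beta}=\beta m^{\beta-1}\partial_t m$ is justified pointwise (should $m$ vanish, one may first test with $(m+\eta)^{\beta-1}$ and let $\eta\downarrow 0$, exploiting $\beta>1$).
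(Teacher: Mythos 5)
Your proof is correct and follows essentially the same route as the paper's: the paper simply invokes Proposition \ref{ent-prod} with $\varphi(m)=m^\beta$, which packages exactly the identity you obtain by testing the Fokker--Planck equation against $\beta m^{\beta-1}$ and integrating by parts, after which the weighted Young inequality with $\delta<\frac{4(\beta-1)}{\beta}$ gives the stated bound. The bookkeeping (in particular the chain-rule identities $D_x(m^\beta)=2m^{\beta/2}D_xm^{\beta/2}$ and $m^{\beta-2}|D_xm|^2=\frac{4}{\beta^2}|D_xm^{\beta/2}|^2$) is exactly right, and your remark about regularizing near $\{m=0\}$ addresses the only delicate point for $1<\beta<2$.
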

\begin{proof}
It follows from Proposition \ref{ent-prod} with
$\varphi(m)=m^\be$.
\end{proof}


We address now improved integrability of $m$ in terms of the $L^r(0,T;L^p(\Tt^d))$-norms of $|D_pH|^2$ for $p<\infty$.

\begin{Lemma}\label{lpp1}
Let $(u^\epsilon,m^\epsilon)$ be a solution of \eqref{eq:smfg} and assume that $\be\ge\be_0$ 
for $\be_0>1$ fixed.
\begin{equation}
\label{elpp1}
\frac{d}{dt}\int_{\Tt^d}(m^\epsilon)^{\be}(t,x)dx\leq C\left\|\left|D_{p}H\right|^{2}\right\|_{L^p(\Tt^d)}\left\|(m^{\epsilon})^{\be}\right\|_{L^q(\Tt^d)}
-c\int_{\Tt^d}\left|D_{x}((m^\epsilon)^{\frac \be 2})\right|^{2}dx,
\end{equation}
where 
$\frac{1}{q}+\frac{1}{p}=1$.
\end{Lemma}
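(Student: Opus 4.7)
The plan is to derive this inequality as a direct consequence of Lemma \ref{lem-ent-prod} combined with Hölder's inequality applied in the $x$-variable. The requirement $\beta \geq \beta_0 > 1$ is needed because the constants $c$ and $C$ produced by Lemma \ref{lem-ent-prod} have an implicit dependence on $\beta$ (they come from the identity in Proposition \ref{ent-prod} with $\varphi(m)=m^\beta$, which introduces factors like $\beta(\beta-1)$ and $\beta/(\beta-1)$); restricting to $\beta\geq\beta_0>1$ keeps these quantities uniformly bounded away from $0$ and $\infty$.

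First I would apply Lemma \ref{lem-ent-prod} to get
\[
\frac{d}{dt}\int_{\Tt^d}(m^\epsilon)^\beta\,dx
\;\leq\; C\int_{\Tt^d}|D_pH|^2(m^\epsilon)^\beta\,dx
\;-\;c\int_{\Tt^d}|D_x(m^\epsilon)^{\beta/2}|^2\,dx,
\]
with constants $c,C$ independent of $\beta$ in the range $\beta\geq\beta_0$. The gradient term is already in the desired form, so only the first term on the right needs to be massaged.

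Next I would apply Hölder's inequality to that term. Writing the integrand as the product of $|D_pH|^2$ and $(m^\epsilon)^\beta$, and using conjugate exponents $p,q$ with $\frac{1}{p}+\frac{1}{q}=1$, I obtain
\[
\int_{\Tt^d}|D_pH|^2(m^\epsilon)^\beta\,dx
\;\leq\; \bigl\||D_pH|^2\bigr\|_{L^p(\Tt^d)}\,\bigl\|(m^\epsilon)^\beta\bigr\|_{L^q(\Tt^d)}.
\]
Substituting this bound back into the differential inequality yields exactly \eqref{elpp1}.

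There is no real obstacle here, this is a short lemma whose only purpose is to set up the interpolation and iteration that follows. The only subtlety is bookkeeping on the $\beta$-dependence of the constants, which is handled cleanly by the assumption $\beta\geq\beta_0>1$; the choice of $p$ and $q$ is left free at this stage and will be optimized later (in the proof of Theorem \ref{cpp2}) via Gagliardo--Nirenberg to control $\|(m^\epsilon)^\beta\|_{L^q}$ by the Dirichlet energy term that is being subtracted on the right-hand side.
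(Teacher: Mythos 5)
Your proposal is correct and follows the paper's own route: the paper's proof of this lemma is literally ``Follows from Lemma \ref{lem-ent-prod}'', with the H\"older step you spell out left implicit, so you have simply made explicit the intended argument. One small caveat on your side remark: from Proposition \ref{ent-prod} with $\varphi(m)=m^\be$ the Dirichlet coefficient $c\sim\frac{2(\be-1)}{\be}$ is indeed bounded away from $0$ for $\be\ge\be_0>1$, but the constant in front of the $|D_pH|^2$ term behaves like $\be(\be-1)$ and is \emph{not} uniformly bounded in $\be$; this does not affect the lemma as stated (its constants may depend on $\be$), and the $\be$-dependence is what the iteration in Theorem \ref{cpp2} tracks.
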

\begin{proof}
Follows from
Lemma \ref{lem-ent-prod}.
\end{proof}

\begin{Definition}\label{def:def1}
Let $1\;\leq \;\be_{0}\;<\;\frac{2^{*}}{2}=\frac{d}{d-2}$ be a fixed constant. The
sequence $\left(\be_{n}\right)_{n\in\mathbb{N}}$ is defined inductively by 
$\be_{n+1}\;\doteq\;\theta\be_{n},$
where $\theta>1$ is a fixed constant.
\end{Definition}

\begin{Lemma}\label{lpp2} 
Assume that $(\be_n)_{n\in\mathbb{N}}$ is given as above and
let $1\;<\;q\;<\;\frac{d}{d-2}.$
Then 
\begin{align*}
\left\|\left(m^\epsilon\right)^{\be_{n+1}}\right\|_{q}\leq\left(\int_{\Tt^d}\left(m^\epsilon\right)^{\be_{n}}dx\right)^{\theta\kappa}
\left(\int_{\Tt^d}\left(m^\epsilon\right)^{\frac{2^*\be_{n+1}}{2}}\right)^{\frac{2\left(1-\kappa\right)}{2^{*}}},
\end{align*}
where $\kappa$ is given by \eqref{bbkappa}.
\end{Lemma}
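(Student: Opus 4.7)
The plan is to recognize the stated inequality as a single application of the log-convexity of $L^p$ norms (equivalently, Hölder's inequality) applied to $m^\epsilon$ itself, with the three Lebesgue exponents $\beta_n$, $q\beta_{n+1}$, and $\frac{2^*\beta_{n+1}}{2}$.

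First I would rewrite both sides in terms of norms of $m^\epsilon$. The left-hand side is
\[
\bigl\|(m^\epsilon)^{\beta_{n+1}}\bigr\|_{L^q(\Tt^d)} = \|m^\epsilon\|_{L^{q\beta_{n+1}}(\Tt^d)}^{\beta_{n+1}},
\]
while the two integrals on the right-hand side are $\|m^\epsilon\|_{L^{\beta_n}}^{\beta_n}$ and $\|m^\epsilon\|_{L^{2^*\beta_{n+1}/2}}^{2^*\beta_{n+1}/2}$. Tracking the exponents and using $\beta_{n+1} = \theta\beta_n$, the claimed inequality is equivalent to
\[
\|m^\epsilon\|_{L^{q\beta_{n+1}}} \le \|m^\epsilon\|_{L^{\beta_n}}^{\kappa}\,\|m^\epsilon\|_{L^{2^*\beta_{n+1}/2}}^{1-\kappa}.
\]
This is precisely the log-convexity of $L^p$ norms, valid provided $\kappa\in[0,1]$ and
\[
\frac{1}{q\beta_{n+1}} = \frac{\kappa}{\beta_n} + \frac{2(1-\kappa)}{2^*\beta_{n+1}}.
\]

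Second, I would solve this linear equation for $\kappa$. Multiplying through by $\beta_n$, using $\beta_{n+1}=\theta\beta_n$ and $\frac{2}{2^*} = \frac{d-2}{d}$, the equation reduces to $\frac{1}{q\theta} = \kappa + \frac{d-2}{d\theta}(1-\kappa)$, and a short computation gives
\[
\kappa \;=\; \frac{d+2q-dq}{q\bigl[(\theta-1)d+2\bigr]},
\]
which matches \eqref{bbkappa}.

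Third, I would check the admissibility of $\kappa$. Since the hypothesis is $1<q<\frac{d}{d-2}$, we have $q(d-2)<d$, hence $d+2q-dq>0$, so $\kappa>0$. The condition $\kappa\le 1$ simplifies to $q\theta\ge 1$, which holds because $q>1$ and $\theta>1$. Equivalently, this is exactly the statement that $q\beta_{n+1}$ lies in the interpolation window $[\beta_n,\, 2^*\beta_{n+1}/2]$. With $\kappa$ so chosen, Hölder's inequality yields the inequality on $\|m^\epsilon\|_{L^{q\beta_{n+1}}}$; raising to the power $\beta_{n+1}$ and recognizing the two factors on the right as the integrals in the statement completes the proof.

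The argument is essentially bookkeeping of exponents; no analytic difficulty arises. The only point that needs care is ensuring $q\beta_{n+1}$ sits between $\beta_n$ and $2^*\beta_{n+1}/2$, which is why the hypotheses $q<d/(d-2)$ and $q\theta\ge 1$ are imposed.
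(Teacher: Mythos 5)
Your proof is correct and follows essentially the same route as the paper: the paper also applies H\"older's inequality (equivalently, log-convexity of $L^p$ norms) to $m^\epsilon$ with exponents $\beta_n$, $q\beta_{n+1}$, $2^*\beta_{n+1}/2$, obtains the same convex-combination equation for $\kappa$, and then raises both sides to the power $\beta_{n+1}$. Your write-up is somewhat more careful in explicitly solving for $\kappa$ and checking $0<\kappa<1$; note, though, that $q\theta\ge 1$ need not be imposed as a hypothesis since it follows automatically from $q>1$ and $\theta>1$.
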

\begin{proof}
H\"older inequality gives
\begin{align*}
\left(\int_{\Tt^d}m^{\be_{n+1}q}\right)^{\frac{1}{\be_{n+1}q}}\leq
\left(\int_{\Tt^d}m^{\be_{n}}\right)^{\frac{\kappa}{\be_{n}}}
\left(\int_{\Tt^d}m^{\frac{2^{*}\be_{n+1}}{2}}\right)^{\frac{2\left(1-\kappa\right)}{2^{*}\be_{n+1}}},
\end{align*}where
\begin{align}\label{eq:eq16}
\frac{1}{\be_{n+1}q}=\frac{\kappa}{\be_{n}}+\frac{2\left(1-\kappa\right)}{2^{*}\be_{n+1}}.
\end{align}
By rearranging the exponents one obtains 
\begin{align*}
\left(\int_{\Tt^d}m^{q\be_{n+1}}\right)^{\frac{1}{q}}\leq
\left(\int_{\Tt^d}m^{\be_{n}}\right)^{\frac{\be_{n+1}\kappa}{\be_{n}}}
\left(\int_{\Tt^d}m^{\frac{2^{*}\be_{n+1}}{2}}\right)^{\frac{2\left(1-\kappa\right)}{2^{*}}},
\end{align*}
establishing the result. The expression for $\kappa$ follows from 
(\ref{eq:eq16}) taking into account the definition of $(\be_{n})_{n\in\mathbb{N}}$. 
The condition on $q$ follows from the requirement that $\kappa>0$.
\end{proof}

\begin{Lemma}\label{lpp3}
For any $1<q<\frac{d}{d-2}$ we have
\begin{align*}
\left\|(m^{\epsilon})^{\frac{\be_{n+1}}{2}}\right\|_{2^{*}}^{2\left(1-\kappa\right)}\leq  C\left(\int_{\Tt^d}
\left|D_{x}\left((m^{\epsilon})^{\frac{\be_{n+1}}{2}}\right)\right|^{2}\right)^{\left(1-\kappa\right)}
+C\left\|(m^{\epsilon})^{\be_{n+1}}\right\|_{q}^{\left(1-\kappa\right)}.
\end{align*}
\end{Lemma}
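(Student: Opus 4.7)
The plan is a direct application of the Sobolev embedding followed by an elementary rearrangement. Setting $f=(m^\epsilon)^{\be_{n+1}/2}$, the claim is essentially a $(1-\kappa)$-th power of the standard Gagliardo--Nirenberg--Sobolev inequality on the torus, combined with a trivial Hölder inequality that upgrades an $L^1$ norm of $m^{\be_{n+1}}$ to an $L^q$ norm using the fact that $|\Tt^d|=1$.

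First, since $d>2$, the Sobolev embedding $H^1(\Tt^d)\hookrightarrow L^{2^*}(\Tt^d)$ gives
\[
\|f\|_{2^*}^2 \leq C\int_{\Tt^d}|D_xf|^2\,dx + C\int_{\Tt^d}f^2\,dx.
\]
Raising both sides to the power $(1-\kappa)>0$ and using the elementary inequality $(a+b)^s\leq C_s(a^s+b^s)$ for $s>0$ yields
\[
\|f\|_{2^*}^{2(1-\kappa)} \leq C\left(\int_{\Tt^d}|D_xf|^2\,dx\right)^{1-\kappa}+C\left(\int_{\Tt^d}f^2\,dx\right)^{1-\kappa}.
\]
The first term on the right is exactly the first term in the statement. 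For the second term, I would note that $f^2=(m^\epsilon)^{\be_{n+1}}$, so that
\[
\int_{\Tt^d}f^2\,dx = \left\|(m^\epsilon)^{\be_{n+1}}\right\|_{L^1(\Tt^d)}.
\]
Finally, since $\Tt^d$ has unit Lebesgue measure and $q>1$, Hölder's inequality gives $\|\cdot\|_{L^1(\Tt^d)}\leq\|\cdot\|_{L^q(\Tt^d)}$, so
\[
\left(\int_{\Tt^d}f^2\,dx\right)^{1-\kappa} \leq \left\|(m^\epsilon)^{\be_{n+1}}\right\|_{q}^{1-\kappa},
\]
which completes the argument.

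There is no real obstacle here: the only point requiring a moment of attention is the passage from $L^1$ to $L^q$ for the lower-order term, which is transparent in the compact setting $\Tt^d$ but would need to be modified on unbounded domains. The hypothesis $1<q<\frac{d}{d-2}$ is not used in this particular step; it enters only through the definition of $\kappa$ in \eqref{bbkappa} and the requirement $\kappa>0$ appearing in Lemma \ref{lpp2}, which ensures that the splitting of exponents is meaningful when this lemma is combined with Lemma \ref{lpp2} in the subsequent iteration.
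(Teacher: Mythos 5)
Your proof is correct and follows essentially the same two-step route as the paper: apply the Sobolev embedding $H^1(\Tt^d)\hookrightarrow L^{2^*}(\Tt^d)$ to $(m^\epsilon)^{\be_{n+1}/2}$, raise to the power $1-\kappa$, and then upgrade the $L^1$-norm of $(m^\epsilon)^{\be_{n+1}}$ to an $L^q$-norm via H\"older on the unit-measure torus. Your additional remark that the hypothesis $1<q<\tfrac{d}{d-2}$ is not needed in this particular step (only $q\ge 1$ is used here, the full bound being required for $\kappa>0$ in Lemma \ref{lpp2}) is a fair and accurate observation but does not represent a departure from the paper's argument.
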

\begin{proof}As before, we drop the superscript $\epsilon$.
From Sobolev inequality
\begin{align}\label{eq:eq06}
\left\|m^{\frac{\be_{n+1}}{2}}\right\|_{2^{*}}^{2\left(1-\kappa\right)}
\leq C\left(\int_{\Tt^d}\left|D_{x}\left(m^{\frac{\be_{n+1}}{2}}\right)\right|^{2}\right)^{\left(1-\kappa\right)}
+C\left(\int_{\Tt^d}\left|m^{\be_{n+1}}\right|\right)^{\left(1-\kappa\right)}.
\end{align}
Applying H\"older's inequality to the last term of this inequality yields 
\begin{align}\label{eq:eq21}
\left(\int_{\Tt^d}\left|m^{\be_{n+1}}\right|\right)^{\left(1-\kappa\right)}\leq C\left\|m^{\be_{n+1}}\right\|_{q}^{\left(1-\kappa\right)}.
\end{align}
Combining \eqref{eq:eq21} with \eqref{eq:eq06} leads to the result.
\end{proof}

\begin{Lemma}\label{lpp4}
Assume that $1\;<\;q\;<\;\frac{d}{d-2}$. Then
$\left\|(m^{\epsilon})^{\be_{n+1}}\right\|_{q}\leq C+\delta\left\|(m^{\epsilon})^{\frac{\be_{n+1}}{2}}\right\|_{2^{*}}^{2}$.
\end{Lemma}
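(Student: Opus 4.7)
The plan is to reduce the claim to a standard interpolation inequality (of the Gagliardo–Nirenberg or log–convexity type) between the $L^{2^{*}}$ norm and a very low $L^{p}$ norm that is controlled for free by the fact that $m^{\epsilon}$ is a probability density, and then to absorb the resulting sublinear term via Young's inequality.

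First I would rewrite the quantity on the left in terms of $m^{\be_{n+1}/2}$, using
\[
\left\|(m^{\epsilon})^{\be_{n+1}}\right\|_{q}=\left\|(m^{\epsilon})^{\be_{n+1}/2}\right\|_{2q}^{2}.
\]
The assumption $1<q<\frac{d}{d-2}=\frac{2^{*}}{2}$ translates exactly into $2<2q<2^{*}$, so the exponent $2q$ lies strictly between a small exponent and the Sobolev exponent.

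Next I would exploit that $\int_{\Tt^{d}}m^{\epsilon}\,dx=1$ (recalled at the beginning of Section \ref{igain}), which gives $\|m^{\epsilon}\|_{1}=1$ and hence $\|(m^{\epsilon})^{\be_{n+1}/2}\|_{2/\be_{n+1}}=1$, provided $\be_{n+1}\ge 1$ (which holds because $\be_{0}\ge 1$ and $\theta>1$ in Definition \ref{def:def1}). Interpolating $L^{2q}$ between $L^{2/\be_{n+1}}$ and $L^{2^{*}}$ on $\Tt^{d}$ yields
\[
\left\|(m^{\epsilon})^{\be_{n+1}/2}\right\|_{2q}\le\left\|(m^{\epsilon})^{\be_{n+1}/2}\right\|_{2/\be_{n+1}}^{1-\lambda}\left\|(m^{\epsilon})^{\be_{n+1}/2}\right\|_{2^{*}}^{\lambda}=\left\|(m^{\epsilon})^{\be_{n+1}/2}\right\|_{2^{*}}^{\lambda},
\]
for some $\lambda\in(0,1)$ determined by $\frac{1}{2q}=(1-\lambda)\frac{\be_{n+1}}{2}+\frac{\lambda}{2^{*}}$; the strict inequality $q<d/(d-2)$ is precisely what guarantees $\lambda<1$. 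Squaring produces the bound $\left\|(m^{\epsilon})^{\be_{n+1}}\right\|_{q}\le\left\|(m^{\epsilon})^{\be_{n+1}/2}\right\|_{2^{*}}^{2\lambda}$.

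Finally, since $2\lambda<2$, Young's inequality $ab\le C_{\delta}+\delta b^{1/\lambda}$ (equivalently $x^{2\lambda}\le C_{\delta}+\delta x^{2}$ for $x\ge 0$) yields
\[
\left\|(m^{\epsilon})^{\be_{n+1}}\right\|_{q}\le C+\delta\left\|(m^{\epsilon})^{\be_{n+1}/2}\right\|_{2^{*}}^{2},
\]
which is the desired inequality. The only mildly delicate point is ensuring both exponents of the interpolation sit on the right side of $2q$; this is handled by the hypothesis $q<d/(d-2)$ together with $\be_{n+1}\ge 1$, so there is no real obstacle beyond bookkeeping the interpolation exponents.
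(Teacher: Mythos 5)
Your argument is correct and is essentially the paper's own proof: both interpolate between the $L^1$ norm of $m^\epsilon$ (which equals $1$ since $m^\epsilon$ is a probability density) and the Sobolev-level exponent $\frac{2^*\be_{n+1}}{2}$, using $q<\frac{d}{d-2}$ to get a strictly sublinear power of $\|(m^\epsilon)^{\be_{n+1}/2}\|_{2^*}^2$, and then absorb it by Young's inequality with $\delta$. The only difference is cosmetic: you phrase the interpolation for $m^{\be_{n+1}/2}$ with lower exponent $2/\be_{n+1}$ (possibly below $1$, which is still fine by log-convexity of $p\mapsto\int |f|^p$), whereas the paper writes the same inequality directly for $m$ with lower exponent $1$.
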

\begin{proof}
Since $1<q \be_{n+1}<\frac{2^{*}\be_{n+1}}{2}$, one can interpolate between these exponents to obtain
\begin{align*}
\left(\int_{\Tt^d}m^{\be_{n+1}q}\right)^{\frac{1}{\be_{n+1}q}}
\leq\left(\int_{\Tt^d}m\right)^{\lambda}\left(\int_{\Tt^d}m^{\frac{2^{*}\be_{n+1}}{2}}\right)^{\frac{2\left(1-\lambda\right)}{2^{*}\be_{n+1}}},
\end{align*}
where
$0<\lambda<1$ solves
$\frac{1}{\be_{n+1}q}=\lambda+\frac{2\left(1-\lambda\right)}{2^{*}\be_{n+1}}$.
Because $m$ is a probability measure, it follows
$\left\|m^{\be_{n+1}}\right\|_{q}\leq\left\|m^{\frac{\be_{n+1}}{2}}\right\|_{2^{*}}^{2\left(1-\lambda\right)}$. 
Furthermore, since $\left(1-\lambda\right)<1$, Young's inequality with
$\delta$ leads to 
$\left\|m^{\be_{n+1}}\right\|_{q}\leq
C+\delta\left\|m^{\frac{\be_{n+1}}{2}}\right\|_{2^{*}}^{2}$, 
establishing the result.
\end{proof}

\begin{Proposition}\label{ppp1}
Assume that $1\;<\;q\;<\;\frac{d}{d-2}.$ Then
\begin{align*}
\left\|(m^{\epsilon})^{\be_{n+1}}\right\|_{q}\leq\left(\int_{\Tt^d}(m^{\epsilon})^{\be_{n}}\right)^{\theta\kappa}
\left[C+C\left(\int_{\Tt^d}\left|D_{x}\left((m^{\epsilon})^{\frac{\be_{n+1}}{2}}\right)\right|^{2}\right)^{\left(1-\kappa\right)}\right].
\end{align*}
\end{Proposition}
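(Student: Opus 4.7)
The plan is to assemble the three preceding lemmas (Lemma \ref{lpp2}, Lemma \ref{lpp3}, and Lemma \ref{lpp4}) and absorb the resulting self-referential term by a small constant $\delta$. Throughout I drop the superscript $\epsilon$ for clarity and denote
\[
A=\left(\int_{\Tt^d}m^{\be_n}\right)^{\theta\kappa},\qquad X=\|m^{\be_{n+1}}\|_q,\qquad Y=\int_{\Tt^d}\left|D_x(m^{\be_{n+1}/2})\right|^{2},
\]
together with $Z=\|m^{\be_{n+1}/2}\|_{2^*}^{2(1-\kappa)}$. Note that $\|m^{\be_{n+1}/2}\|_{2^*}^{2}=Z^{1/(1-\kappa)}$ and that Lemma \ref{lpp2} rewrites as $X\leq A\cdot Z$. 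Thus it suffices to prove $Z\leq C+CY^{1-\kappa}$.

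First I would invoke Lemma \ref{lpp3}, which directly gives
\[
Z\leq CY^{1-\kappa}+CX^{1-\kappa}.
\]
The obstacle is the term $X^{1-\kappa}$, which still involves the quantity $X$ we are trying to bound. To handle it I would apply Lemma \ref{lpp4}: $X\leq C+\delta\,\|m^{\be_{n+1}/2}\|_{2^*}^{2}=C+\delta\,Z^{1/(1-\kappa)}$. Since $1-\kappa<1$, the elementary inequality $(a+b)^{1-\kappa}\leq a^{1-\kappa}+b^{1-\kappa}$ for $a,b\geq 0$ yields
\[
X^{1-\kappa}\leq C+\delta^{1-\kappa}Z.
\]

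Plugging this back into the bound from Lemma \ref{lpp3} gives
\[
Z\leq CY^{1-\kappa}+C+C\delta^{1-\kappa}Z.
\]
Choosing $\delta$ sufficiently small so that $C\delta^{1-\kappa}\leq \tfrac12$, we absorb the last term on the left and obtain $Z\leq C+CY^{1-\kappa}$. Multiplying through by $A$ and using $X\leq AZ$ from Lemma \ref{lpp2} concludes the proof.

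The only subtlety is the absorption step: it requires $1-\kappa<1$, i.e.\ $\kappa>0$, which is precisely the hypothesis $1<q<\frac{d}{d-2}$ ensured in the statement (as noted at the end of the proof of Lemma \ref{lpp2}). No further calculation beyond concatenating the three lemmas is needed.
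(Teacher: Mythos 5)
Your proposal is correct and follows essentially the same route as the paper: combine Lemma \ref{lpp3} with Lemma \ref{lpp4} (raised to the power $1-\kappa$), absorb the small $\delta$-term into the left-hand side, and then insert the resulting bound for $\|m^{\frac{\be_{n+1}}{2}}\|_{2^*}^{2(1-\kappa)}$ into Lemma \ref{lpp2}. The only difference is that you spell out the subadditivity step $(a+b)^{1-\kappa}\leq a^{1-\kappa}+b^{1-\kappa}$, which the paper leaves implicit.
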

\begin{proof}
By combining the statement of Lemmas \ref{lpp3} and \ref{lpp4} one obtains that
\begin{align*}
\left\|m^{\frac{\be_{n+1}}{2}}\right\|_{2^{*}}^{2\left(1-\kappa\right)}\leq
C+C\left(\int_{\Tt^d}\left|D_{x}\left(m^{\frac{\be_{n+1}}{2}}\right)\right|^{2}\right)^{\left(1-\kappa\right)}
+\delta\left\|m^{\frac{\be_{n+1}}{2}}\right\|_{2^{*}}^{2\left(1-\kappa\right)}.
\end{align*}
Absorbing the last term of this inequality in the left-hand side, it follows that 
\begin{align}\label{eq:eq17}
\left\|m^{\frac{\be_{n+1}}{2}}\right\|_{2^{*}}^{2\left(1-\kappa\right)}
\leq C+ C\left(\int_{\Tt^d}\left|D_{x}\left(m^{\frac{\be_{n+1}}{2}}\right)\right|^{2}\right)^{\left(1-\kappa\right)}.
\end{align}
By using \eqref{eq:eq17} in Lemma \ref{lpp2} one obtains the result.
\end{proof}

\begin{Proposition}\label{ppp2}
Let $(u^\epsilon,m^\epsilon)$ be a solution of \eqref{eq:smfg} and assume that 
$1\;<\;q\;<\;\frac{d}{d-2}$.
Let $\frac{1}{q}+\frac{1}{p}=1$
and $r\kappa=1$ where $\kappa$ is given by \eqref{bbkappa}. Then, 
\begin{align}\label{eq:eq18}
\frac{d}{dt}\int_{\Tt^d}(m^{\epsilon})^{\be_{n+1}}dx\leq
&C+ C\left\|\left|D_{p}H\right|^{2}\right\|_{L^p(\Tt^d)}^{r}\left(\int_{\Tt^d}\left(m^\epsilon\right)^{\be_{n}}dx\right)^{\theta}.
\end{align}
\end{Proposition}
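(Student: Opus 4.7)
The plan is to assemble the proposition by combining Lemma \ref{lpp1} (taken with $\be = \be_{n+1}$) with the estimate from Proposition \ref{ppp1}, and then arranging the resulting expression into the desired form via two applications of Young's inequality. The key arithmetic fact to exploit is the identity $(\theta\kappa)\cdot r = \theta$, which follows immediately from the hypothesis $r\kappa = 1$.

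To set notation, write $I = \int_{\Tt^d}|D_x((m^\epsilon)^{\be_{n+1}/2})|^2\,dx$ and $M_n = \int_{\Tt^d}(m^\epsilon)^{\be_n}\,dx$. Lemma \ref{lpp1} with $\be = \be_{n+1}$ gives
\[
\frac{d}{dt}\int_{\Tt^d}(m^\epsilon)^{\be_{n+1}}\,dx + cI \leq C\||D_pH|^2\|_{L^p(\Tt^d)}\,\|(m^\epsilon)^{\be_{n+1}}\|_{L^q(\Tt^d)},
\]
and Proposition \ref{ppp1} bounds the factor $\|(m^\epsilon)^{\be_{n+1}}\|_{L^q(\Tt^d)}$ by $M_n^{\theta\kappa}\bigl(C + C I^{1-\kappa}\bigr)$. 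Substituting splits the right-hand side into two terms, $C\||D_pH|^2\|_{L^p}\,M_n^{\theta\kappa}$ and $C\||D_pH|^2\|_{L^p}\,M_n^{\theta\kappa}\,I^{1-\kappa}$, which must be handled separately.

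For the second term I would apply Young's inequality with conjugate exponents $r = 1/\kappa$ and $1/(1-\kappa)$, obtaining
\[
C\||D_pH|^2\|_{L^p}\,M_n^{\theta\kappa}\,I^{1-\kappa} \leq \delta I + C_\delta\,\||D_pH|^2\|_{L^p}^{r}\,M_n^{(\theta\kappa)r} = \delta I + C_\delta\,\||D_pH|^2\|_{L^p}^{r}\,M_n^{\theta},
\]
where the exponent simplification uses $r\kappa = 1$. Choosing $\delta < c$ absorbs $\delta I$ into the coefficient of $I$ on the left. For the first term, since $\kappa < 1$ forces $r > 1$, the elementary inequality $x^{1/r}\leq 1 + x$ (for $x\geq 0$) applied to $x = \||D_pH|^2\|_{L^p}^{r}\,M_n^{\theta}$ gives
\[
C\||D_pH|^2\|_{L^p}\,M_n^{\theta\kappa} = C\bigl(\||D_pH|^2\|_{L^p}^{r}\,M_n^{\theta}\bigr)^{1/r} \leq C + C\||D_pH|^2\|_{L^p}^{r}\,M_n^{\theta},
\]
which is exactly of the target form. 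Dropping the non-negative remainder $(c-\delta)I$ from the left-hand side yields \eqref{eq:eq18}.

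The only step that requires genuine care is the exponent bookkeeping in the Young split: the choice $r = 1/\kappa$ is precisely what is needed so that the power of $M_n$ produced on the right is exactly $\theta$, matching the recursive structure of the sequence $\be_n = \theta^n \be_0$. Once this is verified the rest of the argument is a routine assembly of the preceding lemmas, and no additional regularity of $m^\epsilon$ beyond what those lemmas provide is needed.
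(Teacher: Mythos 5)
Your proof is correct and follows essentially the same route as the paper's: Lemma \ref{lpp1} with $\be=\be_{n+1}$, substitution of Proposition \ref{ppp1}, Young's inequality with exponents $r=1/\kappa$ and $1/(1-\kappa)$ to absorb the gradient term, and a second Young-type step (your $x^{1/r}\leq 1+x$) for the remaining term, all hinging on $r\kappa=1$ so that $M_n^{\theta\kappa r}=M_n^{\theta}$. Your write-up merely makes explicit the absorption of $\delta I$ into the negative gradient term, which the paper leaves implicit.
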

\begin{proof}
From \eqref{elpp1} in 
Lemma \ref{lpp1}, 
using Proposition \ref{ppp1}, one obtains that
\begin{align*}
&\frac{d}{dt}\int\limits_{\Tt^d}m^{\be_{n+1}}(t,x)dx
\leq \left\|\left|D_{p}H\right|^{2}\right\|_{p}\left(\int_{\Tt^d}m^{\be_{n}}\right)^{\theta\kappa}
\left[C\left(\int_{\Tt^d}\left|D_{x}\left(m^{\frac{\be_{n+1}}{2}}\right)\right|^{2}\right)^{\left(1-\kappa\right)}+C\right]\\ 
&-c\int_{\Tt^d}\left|D_{x}\left(m^{\frac{\be_{n+1}}{2}}\right)\right|^{2}dx
\leq C\left\|\left|D_{p}H\right|^{2}\right\|_{p}\left(\int_{\Tt^d}m^{\be_{n}}\right)^{\theta\kappa}
+C\left\|\left|D_{p}H\right|^{2}\right\|_{p}^{r}\left(\int_{\Tt^d}m^{\be_{n}}\right)^{\theta}\\
&\leq C+ C\left\|\left|D_{p}H\right|^{2}\right\|_{p}^{r}\left(\int_{\Tt^d}m^{\be_{n}}\right)^{\theta}, 
\end{align*}
where the last inequality follows from Young's inequality and using
the fact that $r>1$ and $r\kappa=1$. This concludes the proof.
\end{proof}

\begin{proof}[Proof of the Theorem \ref{cpp2}.]
The proof proceeds by induction on $n$.
For $n=1$ we integrate \eqref{eq:eq18} with respect to $dt$ over $(0,\tau)$ to obtain
\begin{align*}
\int_{\Tt^d}m^{\be_{1}}\left(\tau,x\right)dx&\leq C\int_{0}^{\tau}\left\|\left|D_{p}H\right|^{2}\right\|_{L^p(\Tt^d)}^{r}dt+C
\leq C\left\|\left|D_{p}H\right|^{2}\right\|_{L^r(0,T;L^p(\Tt^d))}^{r}+C,
\end{align*}
where we used that 
$\int_{\Tt^d}m^{\be_0}dx\leq C$
for some constant $C>0$. This verifies our claim for $n=1$.
%
%
%

Then,
\begin{align}\label{eq:der}
\frac{d}{dt}\int_{\mathbb{T}^d}m^{\be_{n+1}}dx&
\leq C\left\|\left|D_pH\right|^2\right\|^r_{L^p(\Tt^d)}
\left(C+C\left\|\left|D_{p}H\right|^{2}\right\|_{L^{r}\left(0,T;L^{p}\left(\Tt^d\right)\right)}^{r_{n}}\right)^{\theta}. 
\end{align}
Integrating \eqref{eq:der} with respect to the Lebesgue measure $dt$  over $(0,\tau)$
one obtains that 
\begin{align*}
\int_{\Tt^d}m^{\be_{n+1}}\left(\tau,x\right)dx
&\leq C\int_{0}^{\tau}\left\|\left|D_pH\right|^2\right\|_{L^p(\Tt^d)}^{r}\left\|\left|D_pH\right|^2\right\|^{r_{n}\theta}_{L^{r}\left(0,T;L^p(\Tt^d)\right)}dt\\
&\quad+ C\int_{0}^{\tau}\left\|\left|D_pH\right|^2\right\|_{L^p(\Tt^d)}^{r}dt+C.
\end{align*}A further application of H\"older inequality leads to 
\begin{align*}
\int_{\Tt^d}m^{\be_{n+1}}\left(\tau,x\right)dx&\leq C+C\left\|\left|D_pH\right|^2\right\|^{r+r_{n}\theta}_{L^{r}\left(0,T;L^p(\Tt^d)\right)},
\end{align*}
which establishes the result.	
\end{proof}

\section{Upper bounds for the Hamilton-Jacobi equation}
\label{pub}

In this section we investigate $L^\infty$ bounds for solutions to 
 the Hamilton-Jacobi equation. 
Since by Proposition \ref{plest} any solution to  \eqref{eq:smfg}
is bounded by below, to get these bounds it is enough to establish upper bounds. 
These build
upon the improved integrability obtained previously for $m^\epsilon$ and 
will be used in the following sections. As before, we omit the $\epsilon$ in the proofs in this Section.



\begin{Proposition}
\label{imphc0}
Suppose $(u^\epsilon,m^\epsilon)$ is a solution of \eqref{eq:smfg} and $H$ satisfies
A\ref{ah}. Then, if 
$p>\frac{d}{2}$, we have
\[
u^\epsilon(x,\tau)\le(T-\tau)\max_z L(z,0)+
C\|g_\ep(m)\|_{L^\infty(0,T;L^{p}(\Tt^d))}
+\int_{\Tt^d}u^\epsilon(y,T)\te(y,T-\tau)dy,  
\]
where $\te$ is the heat kernel, with
$\te(\cdot,\tau)=\delta_x$. Furthermore, if 
$\frac{1}{r}+\frac{1}{s}=\frac{1}{p}+\frac{1}{q}=1$, 
and $\frac{p}{s}>\frac{d}{2}$, we have 
\[
u^\epsilon(x,\tau)\le(T-\tau)\max_z L(z,0)+
C\|g_\ep(m)\|_{L^r(0,T;L^{p}(\Tt^d))}
+\int_{\Tt^d}u^\epsilon(y,T)\te(y,T-\tau)dy.
\]
\end{Proposition}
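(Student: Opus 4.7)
The plan is to apply the stochastic Lax-Hopf estimate of Proposition \ref{plb} with the vector field $b\equiv 0$ and with initial datum $\zeta_0=\delta_x$. More precisely, since \ref{plb} is stated for smooth $\zeta_0$, I would first take $\zeta_0^{\nu}$ to be a smooth mollification of the Dirac mass at $x$, then pass to the limit $\nu\to 0$; by continuity of $u^\epsilon$, the left-hand side converges to $u^\epsilon(x,\tau)$, while the solution $\zeta^\nu$ of the pure heat equation $\zeta_s=\Delta\zeta$ with $\zeta(\cdot,\tau)=\zeta_0^\nu$ converges to the heat kernel $\theta$ centered at $x$ at time $\tau$. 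This yields
\[
u^\epsilon(x,\tau)\le\int_\tau^T\!\!\int_{\Tt^d}\!\bigl(L(y,0)+g_\epsilon(m^\epsilon)(y,s)\bigr)\theta(y,s-\tau)\,dy\,ds+\int_{\Tt^d}u^\epsilon(y,T)\theta(y,T-\tau)\,dy.
\]

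The Lagrangian term is immediate: since the heat semigroup preserves mass, $\int_{\Tt^d}\theta(y,s-\tau)dy=1$ for every $s\in[\tau,T]$, so the $L(y,0)$ contribution is bounded by $(T-\tau)\max_z L(z,0)$. For the $g_\epsilon(m^\epsilon)$ term I would apply H\"older's inequality in $y$ with conjugate exponents $p,q$ to get $\int_{\Tt^d} g_\epsilon(m^\epsilon)(y,s)\theta(y,s-\tau)dy\le \|g_\epsilon(m^\epsilon)(\cdot,s)\|_{L^p(\Tt^d)}\|\theta(\cdot,s-\tau)\|_{L^q(\Tt^d)}$, then invoke the standard heat kernel estimate on the torus $\|\theta(\cdot,s-\tau)\|_{L^q(\Tt^d)}\le C(s-\tau)^{-\frac{d}{2p}}$ (valid uniformly for $s-\tau\in(0,T]$ because the torus absorbs the large-time behavior).

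For the first assertion, I would then pull out the $L^\infty_t L^p_x$ norm of $g_\epsilon(m)$ and observe that $\int_\tau^T (s-\tau)^{-d/(2p)}ds<\infty$ precisely when $d/(2p)<1$, i.e. $p>d/2$; this yields a uniform constant $C$. For the second assertion, after the spatial H\"older step I would apply H\"older's inequality in time with conjugate exponents $r,s$, giving
\[
\int_\tau^T\|g_\epsilon(m^\epsilon)(\cdot,s)\|_{L^p}\|\theta(\cdot,s-\tau)\|_{L^q}\,ds\le\|g_\epsilon(m)\|_{L^r(0,T;L^p)}\Bigl(\int_\tau^T\|\theta(\cdot,s-\tau)\|_{L^q}^{s}\,ds\Bigr)^{1/s},
\]
and the last integral is finite iff $\frac{ds}{2p}<1$, i.e. $\frac{p}{s}>\frac{d}{2}$, which is the hypothesis.

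The only delicate point I foresee is the approximation argument for $\zeta_0=\delta_x$: one must verify that the integrals in \eqref{lhe2} pass to the limit, which is routine for the terminal term (continuity of $u^\epsilon(\cdot,T)$) but for the integrand in $(\tau,T)\times\Tt^d$ requires the heat-kernel integrability computed above to hold already at the approximate level, uniformly in $\nu$. Standard maximum principle bounds on $\zeta^\nu$ and the singular-integral estimates on $\theta$ provide dominated convergence, closing the argument.
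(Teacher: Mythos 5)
Your proposal matches the paper's proof: both apply Proposition \ref{plb} with $b=0$ and $\zeta_0=\delta_x$, bound the Lagrangian term by $(T-\tau)\max_z L(z,0)$ via the mass-conservation of the heat kernel, and estimate the $g_\epsilon(m^\epsilon)$ term with H\"older in $y$ plus the heat-kernel decay $\|\te(\cdot,t)\|_{L^q}\le Ct^{-d/(2p)}$, closing the first assertion by integrability in time under $p>d/2$ and the second by a further H\"older in time under $ds/(2p)<1$. The extra care you take with the mollification of $\zeta_0=\delta_x$ is a detail the paper omits, but the argument is otherwise the same.
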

\begin{proof}
 By applying Proposition \ref{plb} with $b=0$ and $\zeta_0=\te(\cdot,\tau)=\delta_{x}$,  
we obtain the estimate
\begin{align*}
u(x,\tau)\le &(T-\tau)\max_{z\in \Tt^d} L(z,0)\\
&+
\int_\tau^T\int_{\Tt^d}g(m)(y,t)\te(y,t-\tau)dydt
+\int_{\Tt^d} u(y,T)\te(y,T-\tau)dy.
\end{align*}

It is clear that the key point is to estimate 
$
\int_\tau^T\int_{\Tt^d} g(m)(y,t)\te(y,t-\tau)dydt. 
$
We recall the following property of the heat kernel, 
for $\dfrac 1 p+\dfrac 1 q=1$ we have
$
\|\te(\cdot, t)\|_q\leq \frac{C}{t^{\frac{d}{2p}}}.
$
Hence,
\[
\int_{\Tt^d} g(m)(y,t)\te(y,t-\tau)dy
\leq \frac{C}{(t-\tau)^{\frac{d}{2p}}}\|g(m(\cdot,t))\|_{L^p(\Tt^d)}. 
\]
Thus if $d<2p$ we have
\[
\int_\tau^T\int_{\Tt^d}g(m)(y,t)\te(y,t-\tau)dydt\le 
C\|g(m)\|_{L^\infty(0,T;L^{p}(\Tt^d))}.
\]

For the second assertion,
H\"older inequality leads to 
\begin{align*}
\int_{\tau}^{T}\int_{\Tt^d}&g(m)(x,t)\te(x,t-\tau)dxdt
\leq\int_{\tau}^{T}\|g(m)(\cdot,t)\|_{L^{p}(\Tt^d)}\|\te(\cdot,t-\tau)\|_{L^{q}(\Tt^d)}dt\\
&\leq\|g(m)\|_{L^{r}(0,T;L^{p}(\Tt^d))}\left(\int_{\tau}^{T}\frac{C}{t^{\frac{ds}{2p}}}\right)^{\frac{1}{s}}
\leq C\|g(m)\|_{L^{r}(0,T;L^{p}(\Tt^d))},
\end{align*}
where the last inequality follows from $\frac{ds}{2p}<1$.
\end{proof}

\begin{Corollary}
\label{imphc}
Suppose  A\ref{ah}-\ref{bcc}  hold.
Let $(u^\epsilon,m^\epsilon)$ be a solution of \eqref{eq:smfg}.  
Then, for any  $p>\frac{d}{2}$ we have 
\[
u^\epsilon(x,\tau)\le(T-\tau)\max_z L(z,0)+
C\|m*\eta_\ep\|^\alpha_{L^\infty(0,T;L^{\alpha p}(\Tt^d))}
+\int_{\Tt^d}u^\epsilon(y,T)\te(y,T-\tau)dy,  
\]
where $\te$ is the heat kernel, with $\te(\cdot,\tau)=\delta_x$.
Furthermore, if $\alpha p\leq 1$ we have
\[
u^\epsilon(x,\tau)\le(T-\tau)\max_z L(z,0)+
C
+\int_{\Tt^d}u^\epsilon(y,T)\te(y,T-\tau)dy.
\] 
\end{Corollary}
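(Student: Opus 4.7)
The plan is to reduce the Corollary to Proposition \ref{imphc0} by exploiting the specific form of $g_\ep$ under Assumption A\ref{ag2}. Recall that $g_\ep(m)=\eta_\ep*g(\eta_\ep*m)=\eta_\ep*(\eta_\ep*m)^\alpha$, and that $\eta_\ep$ is a nonnegative mollifier with unit mass, so convolution with $\eta_\ep$ is a contraction on every $L^s(\Tt^d)$, $1\le s\le\infty$. I would first invoke Proposition \ref{imphc0} with the choice $p>\frac{d}{2}$ to obtain
\[
u^\ep(x,\tau)\le (T-\tau)\max_z L(z,0)+C\|g_\ep(m^\ep)\|_{L^\infty(0,T;L^p(\Tt^d))}+\int_{\Tt^d}u^\ep(y,T)\te(y,T-\tau)dy.
\]

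Next I would estimate the middle term. Using the contractivity of convolution with $\eta_\ep$ on $L^p$, followed by the identity $\|f^\alpha\|_{L^p}=\|f\|_{L^{\alpha p}}^{\alpha}$ valid for nonnegative $f$, I get pointwise in $t$
\[
\|g_\ep(m^\ep)(\cdot,t)\|_{L^p(\Tt^d)}\le\|(\eta_\ep*m^\ep)^\alpha(\cdot,t)\|_{L^p(\Tt^d)}=\|(\eta_\ep*m^\ep)(\cdot,t)\|_{L^{\alpha p}(\Tt^d)}^{\alpha}.
\]
Taking the supremum in $t$ produces the first displayed estimate of the Corollary.

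For the second statement, I would use that $m^\ep(\cdot,t)$ is a nonnegative probability density and hence $\|\eta_\ep*m^\ep(\cdot,t)\|_{L^1(\Tt^d)}=1$. Since the torus has finite measure, for $\alpha p\le 1$ H\"older's inequality yields
\[
\|(\eta_\ep*m^\ep)(\cdot,t)\|_{L^{\alpha p}(\Tt^d)}\le |\Tt^d|^{\frac{1}{\alpha p}-1}\|(\eta_\ep*m^\ep)(\cdot,t)\|_{L^1(\Tt^d)}\le C,
\]
uniformly in $t$ and $\ep$, which substituted in the first estimate gives the second claim. There is essentially no obstacle here; the only point requiring a bit of care is to recognize that we need both factors of $\eta_\ep$ to be absorbed — the outer one by $L^p$-contractivity, the inner one implicitly through the fact that $\eta_\ep*m^\ep$ is still a probability density — and that the exponent $\alpha$ in Assumption A\ref{ag2} lets us convert an $L^p$ bound on a power into an $L^{\alpha p}$ bound on the base.
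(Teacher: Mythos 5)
Your proposal is correct and follows the same route the paper intends: Corollary \ref{imphc} is stated as an immediate consequence of the first assertion of Proposition \ref{imphc0}, with the middle term estimated exactly as you do, using $L^p$-contractivity of convolution with $\eta_\ep$, the identity $\|f^\alpha\|_{L^p}=\|f\|_{L^{\alpha p}}^\alpha$ for $f\ge 0$, and, for $\alpha p\le 1$, the fact that $\eta_\ep*m^\ep(\cdot,t)$ is a probability density so its $L^{\alpha p}$ quasi-norm is bounded by a constant on the torus. No gaps.
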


\begin{Corollary}
\label{imphc2}
Suppose  A\ref{ah}-\ref{bcc}  hold.
Let $(u^\epsilon,m^\epsilon)$ be a solution of \eqref{eq:smfg}.  
Then, for any  $p,\;r$ such that $\frac{p(r-1)}{r}>\frac{d}{2}$, we have 
\[
u^\epsilon(x,\tau)\le(T-\tau)\max_z L(z,0)+
C\|m*\eta_\ep\|^\alpha_{L^{\alpha r}(0,T;L^{\alpha p}(\Tt^d))}
+\int_{\Tt^d}u^\epsilon(y,T)\te(y,T-\tau)dy.
\]
\end{Corollary}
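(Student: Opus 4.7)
The plan is to deduce Corollary \ref{imphc2} as a direct consequence of the second assertion of Proposition \ref{imphc0}, combined with a straightforward rewriting of the $L^r_t L^p_x$ norm of $g_\ep(m^\ep)$ in terms of $\|\eta_\ep \ast m^\ep\|$. Under Assumption A\ref{ag2} we have $g(z)=z^\alpha$, so by definition $g_\ep(m^\ep) = \eta_\ep\ast(\eta_\ep\ast m^\ep)^\alpha$, and everything that follows is just bookkeeping with Hölder/Minkowski.

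First, I would apply Proposition \ref{imphc0} in its second form with the given exponents $p$ and $r$: setting $s=r/(r-1)$ and $q=p/(p-1)$, the hypothesis $\frac{p(r-1)}{r}>\frac{d}{2}$ is exactly $\frac{p}{s}>\frac{d}{2}$, so we obtain
\[
u^\epsilon(x,\tau)\le (T-\tau)\max_z L(z,0)+C\|g_\ep(m^\ep)\|_{L^r(0,T;L^p(\Tt^d))}+\int_{\Tt^d}u^\epsilon(y,T)\te(y,T-\tau)\,dy.
\]

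Second, I would estimate $\|g_\ep(m^\ep)\|_{L^r(0,T;L^p(\Tt^d))}$ in terms of $\eta_\ep\ast m^\ep$ using the fact that convolution against $\eta_\ep$ is a contraction on every $L^p(\Tt^d)$, as already noted at the end of Section \ref{rele}. For each fixed time,
\[
\|g_\ep(m^\ep)(\cdot,t)\|_{L^p(\Tt^d)}=\|\eta_\ep\ast(\eta_\ep\ast m^\ep)^\alpha(\cdot,t)\|_{L^p(\Tt^d)}\le \|(\eta_\ep\ast m^\ep)^\alpha(\cdot,t)\|_{L^p(\Tt^d)}=\|\eta_\ep\ast m^\ep(\cdot,t)\|_{L^{\alpha p}(\Tt^d)}^{\alpha},
\]
where the last equality is just the definition of the $L^{\alpha p}$ norm. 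Integrating the $r$-th power in time,
\[
\|g_\ep(m^\ep)\|_{L^r(0,T;L^p(\Tt^d))}^{r}\le \int_0^T\|\eta_\ep\ast m^\ep(\cdot,t)\|_{L^{\alpha p}(\Tt^d)}^{\alpha r}\,dt = \|\eta_\ep\ast m^\ep\|_{L^{\alpha r}(0,T;L^{\alpha p}(\Tt^d))}^{\alpha r},
\]
so $\|g_\ep(m^\ep)\|_{L^r(0,T;L^p(\Tt^d))}\le \|\eta_\ep\ast m^\ep\|_{L^{\alpha r}(0,T;L^{\alpha p}(\Tt^d))}^{\alpha}$.

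Plugging this into the inequality from step one yields exactly the claimed estimate. There is no real obstacle here; the only mild subtlety is verifying that the condition $\frac{p(r-1)}{r}>\frac{d}{2}$ is the right translation of $\frac{p}{s}>\frac{d}{2}$ under the duality $\frac{1}{r}+\frac{1}{s}=1$, which it is.
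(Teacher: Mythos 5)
Your proof is correct and follows essentially the same route the paper intends: Corollary \ref{imphc2} is exactly the second assertion of Proposition \ref{imphc0} applied with $s=\frac{r}{r-1}$ (so that $\frac{p}{s}=\frac{p(r-1)}{r}>\frac{d}{2}$), combined with the observation that $\|g_\ep(m^\ep)\|_{L^r(0,T;L^p(\Tt^d))}\le\|\eta_\ep*m^\ep\|^\alpha_{L^{\alpha r}(0,T;L^{\alpha p}(\Tt^d))}$ because convolution with $\eta_\ep$ is an $L^p$-contraction. Nothing is missing.
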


We end this Section with the proof of Lemma \ref{l8421}.

\begin{proof}[Proof of the Lemma \ref{l8421}]
The result easily follows from the second assertion of Proposition \ref{imphc0}.
\end{proof}

\section{Sobolev regularity for the Hamilton-Jacobi equation}\label{rsbqc}

In this section we consider regularity in Sobolev spaces for the Hamilton-Jacobi equation. To do so, we start by recalling the Gagliardo-Nirenberg interpolation inequality.

\begin{Lemma}\label{l91}
Let $u\in W^{2,p}(\Tt^d)$. Then there exists a constant $C>0$ such that,
\begin{equation}\label{eq:e91}
\|Du\|_{L^{2p}(\Tt^d)}\leq C\|D^2u\|_{L^{p}(\Tt^d)}^{\frac{1}{2}}\|u\|_{L^{\infty}(\Tt^d)}^{\frac{1}{2}}.
\end{equation}
\end{Lemma}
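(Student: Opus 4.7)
The plan is to prove \eqref{eq:e91} by the classical integration-by-parts argument for Gagliardo-Nirenberg type inequalities, first for smooth $u$ and then by density. Since the torus has no boundary, there are no boundary terms to worry about, which simplifies matters.

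First I would start from the identity
\[
\int_{\Tt^d} |Du|^{2p}\,dx=\int_{\Tt^d} |Du|^{2p-2}\,Du\cdot Du\,dx,
\]
and integrate by parts on the right-hand side, moving one derivative off of $Du$ onto the factor $u$. This produces terms of the schematic form $u\cdot|Du|^{2p-2}\Delta u$ and $u\cdot|Du|^{2p-4}D^2u(Du,Du)$. In either case, the pointwise bound
\[
\bigl|\,\text{div}\bigl(|Du|^{2p-2}Du\bigr)\bigr|\leq C\,|Du|^{2p-2}\,|D^2u|
\]
gives
\[
\int_{\Tt^d} |Du|^{2p}\,dx \le C\,\|u\|_{L^{\infty}(\Tt^d)}\int_{\Tt^d} |Du|^{2p-2}\,|D^2u|\,dx.
\]

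Next I would apply H\"older's inequality to the right-hand side with conjugate exponents $p/(p-1)$ and $p$, bundling $|Du|^{2p-2}$ with the first factor and $|D^2u|$ with the second:
\[
\int_{\Tt^d} |Du|^{2p-2}|D^2u|\,dx \le \|Du\|_{L^{2p}(\Tt^d)}^{2p-2}\,\|D^2u\|_{L^{p}(\Tt^d)}.
\]
Combining the two displays gives
\[
\|Du\|_{L^{2p}(\Tt^d)}^{2p}\le C\,\|u\|_{L^{\infty}(\Tt^d)}\,\|Du\|_{L^{2p}(\Tt^d)}^{2p-2}\,\|D^2u\|_{L^{p}(\Tt^d)},
\]
and dividing through by $\|Du\|_{L^{2p}(\Tt^d)}^{2p-2}$ (which may be done after a standard approximation argument that handles the degenerate case $Du\equiv 0$ trivially) yields \eqref{eq:e91} after taking a square root. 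Finally, the passage from $C^\infty(\Tt^d)$ to $W^{2,p}(\Tt^d)$ is obtained by mollification, using that both sides of \eqref{eq:e91} are continuous under $W^{2,p}$-convergence.

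There is no substantive obstacle here: the only subtlety is justifying the division when $Du$ vanishes identically (trivial, since both sides vanish) and controlling the measurability/differentiability of $|Du|^{2p-2}$ when $p$ is small, which is handled by the usual regularization $|Du|^{2p-2}\mapsto(|Du|^2+\delta)^{p-1}$ followed by $\delta\downarrow 0$. The proof is in fact a textbook result and I would simply cite Nirenberg's interpolation theorem, noting for the reader that the integration-by-parts argument above is the content of its simplest special case.
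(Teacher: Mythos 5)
Your proof is correct, and it goes by a genuinely different route than the paper. The paper's proof is a one-line citation of the Gagliardo--Nirenberg interpolation theorem with interpolation parameter $\tfrac12$: one checks the exponent relation $\frac{1}{2p}=\frac{1}{d}+\frac{1}{2}\left(\frac{1}{r}-\frac{2}{d}\right)$, which forces $r=p$, and \eqref{eq:e91} follows. You instead reprove this special case from scratch by the integration-by-parts argument: $\int_{\Tt^d}|Du|^{2p}=-\int_{\Tt^d}u\,\div\bigl(|Du|^{2p-2}Du\bigr)\le C\|u\|_{L^\infty}\int_{\Tt^d}|Du|^{2p-2}|D^2u|$, followed by H\"older with exponents $\frac{p}{p-1}$ and $p$ and division by $\|Du\|_{L^{2p}}^{2p-2}$. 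The computation is right (the H\"older step does bundle $|Du|^{2p-2}$ into $\|Du\|_{L^{2p}}^{2p-2}$ exactly as you claim), and your handling of the technical points is appropriate: the regularization $(|Du|^2+\delta)^{p-1}$ for $1<p<\tfrac32$, the trivial case $Du\equiv 0$, and the passage to $W^{2,p}$ by mollification (with Fatou or weak lower semicontinuity on the left-hand side, since a priori one does not know $Du\in L^{2p}$). What each approach buys: the paper's citation is shorter and slots into the general interpolation framework it uses elsewhere, but it carries the dimensional bookkeeping and, strictly speaking, invoking the standard whole-space statement on $\Tt^d$ deserves a word about why the purely multiplicative form holds without an additive lower-order term; your direct argument is self-contained, gives a constant depending only on $p$, imposes no relation between $p$ and $d$, and makes transparent that periodicity removes all boundary terms, so the multiplicative form is automatic.
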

\begin{proof}
Gagliardo-Nirenberg (see \cite{Fr}) inequality implies 
\[\|Du\|_{L^{2p}(\Tt^d)}\leq
C\|D^2u\|_{L^{r}(\Tt^d)}^{\frac{1}{2}}\|u\|_{L^{\infty}(\Tt^d)}^{\frac{1}{2}},\]
provided $
\frac{1}{2p}=\frac{1}{d}+\frac{1}{2}\left(\frac{1}{r}-\frac{2}{d}\right)
$. This identity yields $r=p$, which concludes the proof.
\end{proof}

\begin{Lemma}\label{l92}
Let $u\in W^{1,2p}(\Tt^d)$. Then, there exists $C>0$ such that 
\[\|Du\|_{L^{\ga p}(\Tt^d)}\leq C\|Du\|_{L^{2p}(\Tt^d)},\]for every $1<\ga<2$.
\end{Lemma}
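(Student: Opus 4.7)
The plan is to prove Lemma \ref{l92} as a direct consequence of Hölder's inequality, exploiting the fact that $\Tt^d$ has finite measure and that $\gamma p < 2p$ whenever $\gamma < 2$. This is a pure $L^q$-embedding result, and no deeper regularity theory is needed.

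First I would apply Hölder's inequality to $\int_{\Tt^d}|Du|^{\gamma p}\,dx$ with conjugate exponents $\frac{2}{\gamma}$ and $\frac{2}{2-\gamma}$, writing $|Du|^{\gamma p}=|Du|^{\gamma p}\cdot 1$. This gives
\[
\int_{\Tt^d}|Du|^{\gamma p}\,dx \;\leq\; \left(\int_{\Tt^d}|Du|^{2p}\,dx\right)^{\gamma/2}\left(\int_{\Tt^d}1\,dx\right)^{(2-\gamma)/2}.
\]
Taking the $\frac{1}{\gamma p}$-th power on both sides, and using that $|\Tt^d|$ is a finite (indeed equal to $1$) constant independent of $u$, yields
\[
\|Du\|_{L^{\gamma p}(\Tt^d)} \;\leq\; |\Tt^d|^{(2-\gamma)/(2\gamma p)}\,\|Du\|_{L^{2p}(\Tt^d)},
\]
which is the desired inequality with $C=|\Tt^d|^{(2-\gamma)/(2\gamma p)}$.

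There is essentially no obstacle here: the only minor subtlety is to note that the constant $C$ depends on $\gamma$ and $p$ but remains uniformly bounded as long as $\gamma$ and $p$ stay in bounded ranges, which is the intended use (in particular, $1<\gamma<2$ keeps the exponent $(2-\gamma)/(2\gamma p)$ strictly positive and bounded). Since the hypothesis $u\in W^{1,2p}(\Tt^d)$ guarantees $Du\in L^{2p}(\Tt^d)$, the right-hand side is finite, so the result follows.
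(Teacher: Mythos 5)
Your proof is correct and is exactly the argument the paper intends: the paper's proof of Lemma \ref{l92} consists of the single remark that it follows from H\"older's inequality, and your application of H\"older with exponents $\frac{2}{\ga}$ and $\frac{2}{2-\ga}$ on the finite-measure torus, yielding $C=|\Tt^d|^{(2-\ga)/(2\ga p)}$, is the standard way to make that precise. No gaps.
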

\begin{proof}
This follows from H\"older's inequality.
\end{proof}

\begin{Corollary}\label{c91}
Let $u\in W^{2,p}(\Tt^d)$. Then, there exists $C>0$ such that 
\begin{equation}\label{eq:e91a}
\|Du\|_{L^{\ga p}(\Tt^d)}\leq C\|D^{2}u\|_{L^p(\Tt^d)}^{\frac 12}\|u\|_{L^\infty(\Tt^d)}^{\frac 12}.
\end{equation}
\end{Corollary}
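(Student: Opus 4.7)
The statement is an immediate consequence of the two preceding lemmas, so my plan is simply to chain them together. Specifically, I would apply Lemma \ref{l92} first, which gives
\[
\|Du\|_{L^{\ga p}(\Tt^d)}\leq C\|Du\|_{L^{2p}(\Tt^d)}
\]
for any $1<\ga<2$ (note that $\ga p < 2p$ and the torus has finite measure, so this is just H\"older's inequality). I would then estimate the right-hand side by Lemma \ref{l91}, which yields
\[
\|Du\|_{L^{2p}(\Tt^d)}\leq C\|D^2u\|_{L^{p}(\Tt^d)}^{\frac{1}{2}}\|u\|_{L^{\infty}(\Tt^d)}^{\frac{1}{2}}.
\]
Concatenating the two inequalities and redefining the constant $C$ gives \eqref{eq:e91a}.

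There is no real obstacle here, since both required ingredients have already been established and involve no compatibility conditions on the exponents beyond $1<\ga<2$ and $u\in W^{2,p}(\Tt^d)$, which are exactly the hypotheses of the Corollary. The only thing to double-check is that the interpolation exponent identity in Lemma \ref{l91} indeed forces $r=p$, so that the Gagliardo-Nirenberg step yields the $L^p$ norm of $D^2u$ (not some other exponent); this was verified in the proof of Lemma \ref{l91}. Hence the Corollary follows by a one-line composition.
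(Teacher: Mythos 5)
Your proof is correct and follows exactly the paper's approach: the paper's own proof simply states that the result follows by combining Lemmas \ref{l91} and \ref{l92}, which is precisely the one-line composition you carry out.
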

\begin{proof}
The result follows by combining Lemmas \ref{l91} and \ref{l92}.
\end{proof}

\begin{Lemma}\label{lrsqc2}
Let $(u^\ep,m^\ep)$ be a solution of \eqref{eq:smfg}. Then, 
$$\|u^\ep_{t}\|_{L^{r}(0,T;L^{p}(\Tt^d))}, \|D^{2}u^\ep\|_{L^{r}(0,T;L^{p}(\Tt^d))} \le \|g_\ep(m^\ep)\|_{L^{r}(0,T;L^{p}(\Tt^d))}+\|H\|_{L^r(0,T;L^p(\Tt^d))},$$
for $1<p, r<\infty$. Furthermore,
\[
\|Du^\ep\|_{L^\infty([0,T],L^2(\Tt^d))}\le \|g_\ep(m^\ep)\|_{L^{2}(0,T;L^{2}(\Tt^d))}+\|H\|_{L^{2}(0,T;L^{2}(\Tt^d))}.
\]
\end{Lemma}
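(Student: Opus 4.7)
The plan is to view the Hamilton--Jacobi equation in \eqref{eq:smfg} as a (backwards) inhomogeneous heat equation with source $g_\ep(m^\ep)-H(x,Du^\ep)$, and then apply, respectively, parabolic maximal regularity and a basic energy identity. More precisely, I rewrite the first equation in \eqref{eq:smfg} as
\[
-u_t^\ep-\Delta u^\ep = g_\ep(m^\ep)-H(x,Du^\ep),\qquad u^\ep(\cdot,T)=u_0,
\]
so that $u^\ep$ solves a backward heat equation with smooth terminal datum and source $f:=g_\ep(m^\ep)-H(x,Du^\ep)$. The reverse-time change of variable $\tilde u^\ep(x,t)=u^\ep(x,T-t)$ converts this to the standard forward heat equation
\[
\tilde u_t^\ep-\Delta \tilde u^\ep = -f(x,T-t),\qquad \tilde u^\ep(\cdot,0)=u_0.
\]

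For the first estimate, I invoke the $L^p$-maximal regularity theory for the heat equation (Calder\'on--Zygmund / Ladyzhenskaya--Solonnikov--Ural'tseva; see \cite{Fr}), which gives, for every $1<p,r<\infty$,
\[
\|\tilde u^\ep_t\|_{L^r(0,T;L^p(\Tt^d))}+\|D^2\tilde u^\ep\|_{L^r(0,T;L^p(\Tt^d))}\le C\|f\|_{L^r(0,T;L^p(\Tt^d))}+C\|u_0\|_{W^{2,p}(\Tt^d)}.
\]
Translating back to $u^\ep$ and using the triangle inequality on $f$, together with the smoothness of $u_0$ (Assumption A\ref{ah}), yields the stated bound, where the smooth-data contribution is absorbed into the constants multiplying the two source norms.

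For the $L^\infty_tL^2_x$ estimate on $Du^\ep$, I use an energy argument: multiply the rewritten equation $u^\ep_t+\Delta u^\ep = H-g_\ep(m^\ep)$ by $\Delta u^\ep$ and integrate over $\Tt^d$. Since $\int u^\ep_t\Delta u^\ep\,dx=-\tfrac12\tfrac{d}{dt}\int|Du^\ep|^2\,dx$, this gives
\[
-\tfrac12\tfrac{d}{dt}\int_{\Tt^d}|Du^\ep|^2\,dx+\int_{\Tt^d}(\Delta u^\ep)^2\,dx=\int_{\Tt^d}(H-g_\ep(m^\ep))\Delta u^\ep\,dx.
\]
Applying Young's inequality to the right-hand side to absorb half of $\|\Delta u^\ep\|_{L^2}^2$ into the left-hand side, then integrating in time from $t$ to $T$ and using the bound on $\|Du_0\|_{L^2}$, I obtain
\[
\|Du^\ep(\cdot,t)\|_{L^2(\Tt^d)}^2\le C+\|H\|_{L^2(0,T;L^2(\Tt^d))}^2+\|g_\ep(m^\ep)\|_{L^2(0,T;L^2(\Tt^d))}^2,
\]
uniformly in $t\in[0,T]$, which gives the second inequality after taking square roots.

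There is no serious obstacle here: both parts are standard once the equation is rewritten with $H$ pulled to the right-hand side. The only subtle point is making sure the smooth terminal data $u_0$ contributes only a bounded additive constant (which is absorbed into the two explicit norms in the statement), and that the maximal-regularity constant is uniform in $\ep$, which it is because the underlying operator $\partial_t-\Delta$ does not depend on $\ep$.
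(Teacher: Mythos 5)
Your proposal is correct and matches the paper's intent: the paper's proof is simply a citation of standard regularity theory for the heat equation, and your argument (maximal $L^r_tL^p_x$ regularity for the backward heat equation with source $g_\ep(m^\ep)-H$, plus an energy identity tested with $\Delta u^\ep$ for the $L^\infty_tL^2_x$ bound on $Du^\ep$) is exactly the standard content being invoked. The only cosmetic discrepancy is that your version carries multiplicative and additive constants, whereas the Lemma as stated omits them; since all subsequent uses (Lemma \ref{lrsqc3}, Theorem \ref{c92}) reintroduce constants anyway, this is harmless.
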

\begin{proof}
It follows from standard regularity theory for the heat equation. See \cite{Lad, LiFM}.
\end{proof}

\begin{Lemma}\label{lrsqc3}
Let $(u^\ep,m^\ep)$ be a solution of \eqref{eq:smfg} and assume that
A\ref{ah}-\ref{asbdpph} hold. For $1<p, r<\infty$ there are constants
$c, C > 0$ such that 
\[\|H(x,Du^\ep)\|_{L^{r}(0,T;L^{p}(\Tt^d))} \leq\;c\|D^{2}u^\ep\|_{L^{r}(0,T;L^{p}(\Tt^d))}^{\frac{\ga}{2} }\|u^\ep\|_{L^{\infty}(0,T;L^{\infty}(\Tt^d))}^{\frac{\ga}{2}}+C.\]
\end{Lemma}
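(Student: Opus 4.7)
The plan is to chain together the growth bound on $H$, the Gagliardo--Nirenberg-type Corollary \ref{c91}, and a Hölder-in-time estimate exploiting $\gamma/2<1$.

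First, I invoke A\ref{coerca}, which gives $|H(x,Du^\ep)|\le C|Du^\ep|^\ga+C$ pointwise. Raising to the $p$-th power, integrating in $x$, and using the elementary inequality $(a+b)^{1/p}\le a^{1/p}+b^{1/p}$ (valid for $p\ge 1$) yields
\[
\|H(\cdot,Du^\ep(\cdot,t))\|_{L^p(\Tt^d)}\le C\|Du^\ep(\cdot,t)\|_{L^{\ga p}(\Tt^d)}^{\ga}+C.
\]

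Next, by Corollary \ref{c91} applied pointwise in $t$,
\[
\|Du^\ep(\cdot,t)\|_{L^{\ga p}(\Tt^d)}^{\ga}\le C\|D^2u^\ep(\cdot,t)\|_{L^p(\Tt^d)}^{\ga/2}\,\|u^\ep(\cdot,t)\|_{L^\infty(\Tt^d)}^{\ga/2},
\]
so, pulling out the $L^\infty$-in-time norm of $u^\ep$,
\[
\|H(\cdot,Du^\ep(\cdot,t))\|_{L^p(\Tt^d)}\le C\|u^\ep\|_{L^\infty(0,T;L^\infty(\Tt^d))}^{\ga/2}\,\|D^2u^\ep(\cdot,t)\|_{L^p(\Tt^d)}^{\ga/2}+C.
\]

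To obtain the $L^r$-in-time bound I take the $r$-th power, integrate over $(0,T)$, and use $(a+b)^r\le 2^{r-1}(a^r+b^r)$. The resulting time integral $\int_0^T\|D^2u^\ep(\cdot,t)\|_{L^p}^{r\ga/2}\,dt$ has exponent $r\ga/2<r$ because A\ref{coerca} provides $\ga<2$. Hölder's inequality in time with exponents $2/\ga$ and $2/(2-\ga)$ then gives
\[
\int_0^T\|D^2u^\ep\|_{L^p}^{r\ga/2}\,dt\le T^{1-\ga/2}\Bigl(\int_0^T\|D^2u^\ep\|_{L^p}^r\,dt\Bigr)^{\ga/2},
\]
and raising to the $1/r$ power produces $\|D^2u^\ep\|_{L^r(0,T;L^p(\Tt^d))}^{\ga/2}$ with a constant depending only on $T,r,\ga$. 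Collecting terms yields exactly the claimed inequality.

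The only nontrivial point is the time-integration step, where the sub-quadratic growth $\ga<2$ is essential: it makes the relevant exponent $\ga/2$ strictly less than $1$, so Hölder in $t$ converts the pointwise power $\ga/2$ into the same $\ga/2$ power of the mixed-norm $\|D^2u^\ep\|_{L^r(0,T;L^p)}$ at the cost of a harmless constant. All other steps are direct applications of results already established (A\ref{coerca} and Corollary \ref{c91}).
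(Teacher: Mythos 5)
Your proposal is correct and follows essentially the same route as the paper: the growth bound from A\ref{coerca}, Corollary \ref{c91} applied at each time $t$, pulling out $\|u^\ep\|_{L^\infty(0,T;L^\infty(\Tt^d))}$, and then the time-integration step using $\ga/2<1$ (which the paper compresses into the remark "we used that $\frac{\ga}{2}<1$"). Your explicit H\"older-in-time computation with exponents $2/\ga$ and $2/(2-\ga)$ is just a spelled-out version of that final inequality.
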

\begin{proof}
For ease of presentation, we omit the $\epsilon$. Assumption A\ref{coerca} implies that 
$
\left(\int_{\Tt^d}|H(x,Du(x,t))|^{p}dx\right)^{\frac{1}{p}}\leq c\left(\int_{\Tt^d}|Du|^{\ga p}dx\right)^{\frac{1}{p}}+C.
$
By combining this with
Corollary \ref{c91} it follows that 
\begin{align*}
\left(\int_{\Tt^d}|H(x,Du(x,t))|^{p}dx\right)^{\frac{1}{p}}&\leq c\left(\int_{\Tt^d}|Du|^{\ga p}dx\right)^{\frac{1}{p}}+C\\
&\leq c\|D^{2}u\|_{L^{p}(\Tt^d)}^{\frac{\ga}{2}}\|u\|_{L^{\infty}(\Tt^d)}^{\frac{\ga}{2}}+C.
\end{align*}
Then,
\begin{align*}
\|H(x,Du)\|_{L^r(0,T;L^p(\Tt^d))}&\le c\left(\int_{0}^{T}\Big(\|D^{2}u\|_{L^p(\Tt^d)}^{\frac\ga2}\|u\|_{L^{\infty}(\Tt^d)}^{\frac\ga2}\Big)^r\right)^{\frac 1r}+C\\
&\leq c\|u\|_{L^{\infty}(0,T;L^{\infty}(\Tt^d))}^{\frac\ga2}\|D^{2}u\|_{L^r(0,T;L^p(\Tt^d))}^{\frac\ga2}+C,
\end{align*}
where in the last inequality we used that $\frac \gamma 2 <1$. 
\end{proof}

The proof of Theorem \ref{c92} ends the section.

\begin{proof}[Proof of the Theorem \ref{c92}]
As before, we omit the $\epsilon$. By combining Lemmas \ref{lrsqc2} and \ref{lrsqc3} one obtains 
\begin{align*}
\|D^2u\|_{L^r(0,T;L^p(\Tt^d))} &\le c\|D^2u\|_{L^r(0,T;L^p(\Tt^d))}^{\frac\ga2}\|u\|_{L^{\infty}(0,T;L^{\infty}(\Tt^d))}^{\frac\ga2}\\
&+\|g(m)\|_{L^r(0,T;L^p(\Tt^d))}+C.
\end{align*}
Set $j=\frac{2}{\ga}$ and define $l$ by 
$\frac{1}{j}+\frac{1}{l}=1$.
Using Young's inequality with $\varepsilon$, with exponents $j$ and $l$, it follows that 
\begin{align*}
\|D^2u\|_{L^r(0,T;L^p(\Tt^d))}&\le \|g(m)\|_{L^r(0,T;L^p(\Tt^d))}+\varepsilon\|D^2u\|_{L
^r(0,T;L^p(\Tt^d))}\\
&+C\|u\|_{L^{\infty}(0,T;L^{\infty}(\Tt^d))}^{\frac\ga{2-\ga}}+C.
\end{align*}
Absorbing the term $\varepsilon\|D^{2}u\|_{L^r(0,T;L^p(\Tt^d))}$ on the left-hand side yields
\begin{align*}
\|D^2u\|_{L^r(0,T;L^{p}(\Tt^d))}\le c\|g(m)\|_{L^r(0,T;L^p(\Tt^d))}+c\|u\|_{L^{\infty}(0,T;L^{\infty}(\Tt^d))}^{\frac\ga{2-\ga}}+C,
\end{align*}
which concludes the proof.
\end{proof}

\section{Improved regularity}
\label{sec10}
Throughout this section we define, for $1\le\be_0<\dfrac{2^*}2$ and
\begin{equation}\label{eq:1sec10}
0\leq\upsilon\leq 1<\te,
\end{equation}
\begin{equation}\label{eq:2sec10}
b_\upsilon\doteq\frac{d(\af+1)\be_0\theta}{(\af+1)d\upsilon+\theta\be_0(d-2)(1-\upsilon)}\;\;\;\;\mbox{and}\;\;\;\;a_\upsilon\doteq\frac{\af+1}{1-\upsilon}.
\end{equation}

\begin{Lemma}\label{lemma2sec10}
Let $(u^\epsilon,m^\epsilon)$ be a solution of \eqref{eq:smfg} and assume that
A\ref{ah}-\ref{asbdpph} hold. 
Suppose further that $a_\upsilon$ and $b_\upsilon$ are given as in
\eqref{eq:1sec10} and \eqref{eq:2sec10} respectively. Then,  
\[
\big\|m^\epsilon\big\|_{L^{a_\upsilon}(0,T;L^{b_\upsilon}(\Tt^d))}\leq C+C\big\||D_pH|^2\big\|^\frac{r\upsilon\left(1-\frac{1}{\theta}\right)}{\be_0\left(\theta-1\right)}_{L^r(0,T;L^p(\Tt^d))},
\]
where 
\begin{equation}
\label{eq:c4}
p>\frac{d}{2}\;\;\,\;\mbox{and}\;\;\;\;r=\frac{p(d(\theta-1)+2)}{2p-d}.
\end{equation}
\end{Lemma}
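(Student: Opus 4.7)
The plan is to interpolate two previously established bounds on $m^\epsilon$: a space-time bound coming from the second-order estimate (Corollary \ref{mhjr}), and a pointwise-in-time bound from the polynomial estimate of Theorem \ref{cpp2} applied with $n=1$.

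First, Theorem \ref{cpp2} applied with $n=1$ (so $\be_1 = \theta\be_0$ and $r_1 = r$) gives the $\epsilon$-uniform bound
\[
\|m^\epsilon\|_{L^\infty(0,T;L^{\be_1}(\Tt^d))} \le C + C\,\bigl\||D_pH|^2\bigr\|_{L^r(0,T;L^p(\Tt^d))}^{r/\be_1}.
\]
The hypothesis $p>d/2$ and the expression $r = p(d(\theta-1)+2)/(2p-d)$ are consistent with Theorem \ref{cpp2}: substituting the conjugate relation $q = p/(p-1)$ into \eqref{bbkappa} and using $r\kappa = 1$ reproduces the stated formula. Second, Corollary \ref{mhjr} supplies
\[
\int_0^T \|m^\epsilon(\cdot,t)\|_{L^{q_0}(\Tt^d)}^{\alpha+1}\, dt \le C, \qquad q_0 := \tfrac{2^*}{2}(\alpha+1) = \tfrac{d(\alpha+1)}{d-2}.
\]

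Now fix $\upsilon\in[0,1]$ and interpolate $L^{b_\upsilon}$ pointwise in $t$ between $L^{q_0}$ and $L^{\be_1}$ in space: with
\[
\frac{1}{b_\upsilon} = \frac{1-\upsilon}{q_0} + \frac{\upsilon}{\be_1} = \frac{(1-\upsilon)(d-2)}{d(\alpha+1)} + \frac{\upsilon}{\theta\be_0},
\]
one has $\|m^\epsilon(\cdot,t)\|_{L^{b_\upsilon}} \le \|m^\epsilon(\cdot,t)\|_{L^{q_0}}^{1-\upsilon}\|m^\epsilon(\cdot,t)\|_{L^{\be_1}}^{\upsilon}$, and a direct algebraic manipulation verifies that this $b_\upsilon$ matches the formula in \eqref{eq:2sec10}. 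Raise both sides to the power $a_\upsilon = (\alpha+1)/(1-\upsilon)$, observe the key cancellation $(1-\upsilon)a_\upsilon = \alpha+1$, integrate in time over $[0,T]$, and pull the $L^\infty$-in-$t$ norm outside the time integral to obtain
\[
\int_0^T \|m^\epsilon(\cdot,t)\|_{L^{b_\upsilon}}^{a_\upsilon}\, dt \le \|m^\epsilon\|_{L^\infty(L^{\be_1})}^{\upsilon a_\upsilon}\int_0^T \|m^\epsilon(\cdot,t)\|_{L^{q_0}}^{\alpha+1}\, dt \le C\,\|m^\epsilon\|_{L^\infty(L^{\be_1})}^{\upsilon a_\upsilon}.
\]

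Taking the $a_\upsilon$-th root and inserting the bound from the first step gives the claim, once one notes that the final exponent simplifies as $\upsilon r/\be_1 = \upsilon r/(\theta\be_0) = r\upsilon(1-1/\theta)/(\be_0(\theta-1))$, which is exactly the exponent in the statement. The main obstacle is bookkeeping: checking that $a_\upsilon$, $b_\upsilon$, and $r$ match exactly between the interpolation setup and the hypotheses of Theorem \ref{cpp2}, and transferring the bound from Corollary \ref{mhjr}, stated for $\eta_\epsilon*m^\epsilon$, to $m^\epsilon$ with $\epsilon$-uniform constants via standard mollification estimates. Once these alignments are verified, the proof reduces to the two or three lines of interpolation displayed above.
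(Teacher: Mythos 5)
Your proposal is correct and follows essentially the same route as the paper: the paper also interpolates between the space-time bound of Corollary \ref{mhjr} and the $L^\infty(0,T;L^{\theta\be_0}(\Tt^d))$ bound obtained from Theorem \ref{cpp2} with $n=1$, using exactly the exponent relations \eqref{eq:1sec10}--\eqref{eq:2sec10}; your pointwise-in-time interpolation followed by integration is just an explicit derivation of the mixed-norm H\"older inequality the paper invokes directly. The only point worth noting is that the paper, like you, passes from the bound on $\eta_\epsilon*m^\epsilon$ in Corollary \ref{mhjr} to one on $m^\epsilon$ without comment, so your flagging of that transfer is a fair observation rather than a deviation.
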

\begin{proof}
For ease of presentation, we omit the $\epsilon$. H\"older inequality implies that 
\[
\big\|m\big\|_{L^{a_\upsilon}(0,T;L^{b_\upsilon}(\Tt^d))}\leq\big\|m\big\|_{L^{\af+1}(0,T;L^{\frac{2^*(\af+1)}{2}}(\Tt^d))}^{1-\upsilon}
\big\|m\big\|_{L^{\infty}(0,T;L^{\theta\beta_0}(\Tt^d))}^\upsilon
\]
provided that, for some $0\leq \upsilon\leq 1$,  
$
\frac{1}{a_\upsilon}=\frac{1-\upsilon}{\af+1}
$
and
$
\frac{1}{b_\upsilon}=\frac{1-\upsilon}{\frac{2^*(\af+1)}{2}}+\frac{\upsilon}{\theta\be_0},
$
which hold by \eqref{eq:1sec10} and \eqref{eq:2sec10}. 
Because of Corollary \ref{mhjr} we have 
$
\|m\|_{L^{\af+1}(0,T;L^{\frac{2^*(\af+1)}{2}}(\Tt^d))}^{1-\upsilon}\leq C.
$
On the other hand, from Theorem \ref{cpp2} it follows that 
\[
\big\|m\big\|_{L^{\infty}(0,T;L^{\theta\beta_0}(\Tt^d))}^\upsilon\leq C+C\big\||D_pH|^2\big\|_{L^r(0,T;L^p(\Tt^d))}^\frac{r\upsilon\left(1-\frac{1}{\theta}\right)}{\left(\theta-1\right)\be_0}.
\]
By combining the previous computations one obtains the result.
\end{proof}

\begin{Lemma}\label{lemma1sec10}
Let $(u^\epsilon,m^\epsilon)$ be a solution of \eqref{eq:smfg} and assume that
A\ref{ah}-\ref{asbdpph} hold.  
Assume that 
\begin{equation}\label{eq:3sec10}
\frac{b_\upsilon}{a_\upsilon}\big(\frac{a_\upsilon-\af}{\af}\big)>\frac{d}{2}.
\end{equation}
Then, 
\[
\big\|u^\ep\big\|_{L^\infty(0,T;L^\infty(\Tt^d))}\leq C+C\big\|g_\ep(m)\big\|_{L^\frac{a_\upsilon}{\af}(0,T;L^\frac{b_\upsilon}{\af}(\Tt^d))}
\]
\end{Lemma}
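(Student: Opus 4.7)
The plan is to apply Lemma \ref{l8421} directly, choosing the exponents $a$ and $b$ there to be $a = a_\upsilon/\af$ and $b = b_\upsilon/\af$. With this choice, the condition $\frac{d}{2} < \frac{b(a-1)}{a}$ in Lemma \ref{l8421} becomes
\[
\frac{d}{2} < \frac{(b_\upsilon/\af)\bigl((a_\upsilon/\af)-1\bigr)}{a_\upsilon/\af} = \frac{b_\upsilon(a_\upsilon-\af)}{a_\upsilon\,\af} = \frac{b_\upsilon}{a_\upsilon}\cdot\frac{a_\upsilon-\af}{\af},
\]
which is exactly hypothesis \eqref{eq:3sec10}. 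In particular this hypothesis also forces $a_\upsilon>\af$, and since $d\ge 1$ it forces $b_\upsilon>\af$ as well, so that both exponents $a,b>1$ as required by Lemma \ref{l8421}.

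First I would verify these admissibility conditions on the exponents, observing that under A\ref{ah}--A\ref{alphaimp} (which include $\af>0$) the quantities $a_\upsilon,b_\upsilon$ are well defined and positive for $0\le\upsilon<1$, and that \eqref{eq:3sec10} is the precise reformulation of the Lemma \ref{l8421} hypothesis for our specific choice. Then I would apply Lemma \ref{l8421}, obtaining
\[
\|u^\ep\|_{L^\infty(0,T;L^\infty(\Tt^d))} \le C + C\,\|g_\ep(m^\ep)\|_{L^{a_\upsilon/\af}(0,T;L^{b_\upsilon/\af}(\Tt^d))}.
\]

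There is essentially no obstacle here beyond bookkeeping: the content of the lemma is repackaging Lemma \ref{l8421} so that it can later be chained with Lemma \ref{lemma2sec10} (which controls $\|m^\ep\|_{L^{a_\upsilon}L^{b_\upsilon}}$) via the identification $g_\ep(m)=(\eta_\ep\ast m)^\af$, whose $L^{a_\upsilon/\af}L^{b_\upsilon/\af}$-norm is precisely $\|\eta_\ep\ast m^\ep\|_{L^{a_\upsilon}L^{b_\upsilon}}^{\af}$, itself bounded by $\|m^\ep\|_{L^{a_\upsilon}L^{b_\upsilon}}^{\af}$ since convolution with the symmetric mollifier $\eta_\ep$ is a contraction on every $L^p$. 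The only mildly delicate point is to make sure that \eqref{eq:3sec10} is consistent with the constraints \eqref{eq:1sec10}--\eqref{eq:2sec10} on $\upsilon,\theta,\be_0$; this consistency is guaranteed by the admissible range of parameters singled out in Lemma \ref{techlemma}, and no further estimate is required at this stage.
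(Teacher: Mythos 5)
Your proof is correct and is essentially the paper's own argument: the paper simply invokes the second assertion of Proposition \ref{imphc0} (equivalently Lemma \ref{l8421}) after noting that \eqref{eq:3sec10} is exactly the required exponent condition for the choice $a=a_\upsilon/\af$, $b=b_\upsilon/\af$, which is precisely your computation. The extra remarks on chaining with Lemma \ref{lemma2sec10} and on the mollifier being an $L^p$-contraction are consistent with how the paper uses this lemma later, and the only nitpick is that the deduction $b_\upsilon>\af$ uses $d>2$ (so $d/2>1$) rather than merely $d\ge 1$, which is harmless under A\ref{ah}.
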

\begin{proof}
The result follows by using Lemma \ref{imphc0} since \eqref{eq:3sec10} holds.
\end{proof}

\begin{Lemma}\label{lem:newint}
Let $(u^\epsilon,m^\epsilon)$ be a solution of \eqref{eq:smfg} and assume that
A\ref{ah}-\ref{asbdpph} hold. Let $\zeta$, $\tilde{p}$ and $\tilde{r}$ be such that
\begin{equation}\label{eq:eq1ni}
0\leq \zeta\leq 1,\;\;\tilde{p}\left(\frac{\tilde{r}-1}{\tilde{r}}\right)>\frac{d}{2},
\end{equation}where 
\begin{equation}\label{eq:eq2ni}
\frac{1}{\tilde{p}}\doteq\frac{1-\zeta}{\left(1+\frac{1}{\af}\right)\frac{d}{d-2}}+\frac{\zeta}{\frac{b_\upsilon}{\af}}
\end{equation}and
\begin{equation}\label{eq:eq3ni}
\frac{1}{\tilde{r}}\doteq\frac{1-\zeta}{1+\frac{1}{\af}}+\frac{\zeta}{\frac{a_\upsilon}{\af}}.
\end{equation} Then
$$\left\|g_\ep\right\|_{L^{\tilde{r}}\left(0,T;L^{\tilde{p}}(\Tt^d)\right)}\leq C\left\|g_\ep\right\|_{L^{\frac{a_{\upsilon}}{\af}}\left(0,T;L^{\frac{b_\upsilon}{\af}}(\Tt^d)\right)}^{\zeta}$$and
$$\left\|u^\ep\right\|_{L^\infty(0,T;L^\infty(\Tt^d))}\leq C+C\left\|g_\ep(m)\right\|_{L^{\tilde{r}}(0,T;L^{\tilde{p}}(\Tt^d))}.$$
\end{Lemma}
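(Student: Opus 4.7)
The strategy is to split the lemma into its two assertions, treat each by a single standard tool, and rely on the work already done in Corollary \ref{mhjr} and Proposition \ref{imphc0}.

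For the first inequality, I would view $\|g_\ep(m)\|_{L^{\tilde r}(0,T;L^{\tilde p}(\Tt^d))}$ as a space-time Lebesgue norm to be interpolated between two endpoints. The first endpoint is the pair $\bigl(1+\tfrac{1}{\af},(1+\tfrac{1}{\af})\tfrac{d}{d-2}\bigr)$ which, as can be checked directly, is exactly the exponent pair at which Corollary \ref{mhjr} provides a uniform bound. Specifically, since $g(m)=m^{\af}$ and $\eta_\ep*$ is a contraction on every $L^s$, Young's convolution inequality gives
\[
\|g_\ep(m^\ep)\|_{L^{(1+1/\af)d/(d-2)}(\Tt^d)}\leq \|\eta_\ep*m^\ep\|_{L^{(2^*/2)(\af+1)}(\Tt^d)}^{\af},
\]
and raising to the $(1+1/\af)$-power and integrating in time yields a constant by Corollary \ref{mhjr}. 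The second endpoint is the pair $(a_\upsilon/\af,b_\upsilon/\af)$. The mixed-norm Hölder inequality applied with weights $1-\zeta$ and $\zeta$, together with the identities \eqref{eq:eq2ni} and \eqref{eq:eq3ni}, then gives
\[
\|g_\ep(m^\ep)\|_{L^{\tilde r}(0,T;L^{\tilde p}(\Tt^d))}\leq \|g_\ep(m^\ep)\|_{L^{1+1/\af}(0,T;L^{(1+1/\af)d/(d-2)}(\Tt^d))}^{1-\zeta}\|g_\ep(m^\ep)\|_{L^{a_\upsilon/\af}(0,T;L^{b_\upsilon/\af}(\Tt^d))}^{\zeta},
\]
and the first factor is absorbed into the constant.

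For the second inequality, the hypothesis $\tilde p(\tilde r -1)/\tilde r > d/2$ in \eqref{eq:eq1ni} is exactly the integrability condition required to invoke the second assertion of Proposition \ref{imphc0} (equivalently, the assumption in Lemma \ref{l8421}) with $r=\tilde r$ and $p=\tilde p$. This yields an upper bound of the form
\[
u^\ep(x,\tau)\le C+C\|g_\ep(m^\ep)\|_{L^{\tilde r}(0,T;L^{\tilde p}(\Tt^d))}+\int_{\Tt^d}u^\ep(y,T)\te(y,T-\tau)dy,
\]
where the last term is bounded because $u^\ep(\cdot,T)=u_0\in C^\infty(\Tt^d)$ by Assumption A\ref{ah}. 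A matching lower bound for $u^\ep$ is furnished by Proposition \ref{plest}, noting $g_\ep\ge 0$ and that $u_0$ is bounded below, so both sup and inf of $u^\ep$ are controlled and the $L^\infty$ bound follows.

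The only place where one has to be careful is verifying that the two endpoint exponents in the first interpolation really match Corollary \ref{mhjr} after the mollification and the $m\mapsto m^{\af}$ step; this is the main obstacle, and it amounts to a bookkeeping calculation showing $(2^*/2)(\af+1)/\af=(1+1/\af)d/(d-2)$ and $(\af+1)/\af=1+1/\af$, both of which are immediate. Once this matching is noted, the two assertions of the lemma are just one Hölder interpolation and one application of a previously established Lax–Hopf type estimate.
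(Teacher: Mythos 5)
Your proof is correct and follows essentially the same route as the paper: the first assertion is the mixed-norm H\"older interpolation between the endpoint $\bigl(1+\tfrac{1}{\af},\tfrac{2^*}{2}(1+\tfrac{1}{\af})\bigr)$ controlled by Corollary \ref{mhjr} and the endpoint $(a_\upsilon/\af,b_\upsilon/\af)$, and the second assertion is exactly \eqref{eq:eq1ni} combined with Lemma \ref{l8421} (i.e.\ the second part of Proposition \ref{imphc0}). You merely spell out details the paper leaves implicit, namely the Young/contraction step converting the bound on $\eta_\ep*m^\ep$ into one on $g_\ep(m^\ep)$ and the lower bound from Proposition \ref{plest}.
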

\begin{proof}
The second assertion follows from \eqref{eq:eq1ni} along with Lemma \ref{l8421}. For the first assertion, notice that H\"older inequality implies
$$\left\|g_\ep\right\|_{L^{\tilde{r}}\left(0,T;L^{\tilde{p}}(\Tt^d)\right)}\leq\left\|g_\ep\right\|_{L^{1+\frac{1}{\af}}\left(0,T;L^{\frac{2^*}{2}\left(1+\frac{1}{\af}\right)}(\Tt^d)\right)}^{1-\zeta} C\left\|g_\ep\right\|_{L^{\frac{a_{\upsilon}}{\af}}\left(0,T;L^{\frac{b_\upsilon}{\af}}(\Tt^d)\right)}^{\zeta}.$$Also, we have from Corollary \ref{mhjr} that $\left\|g_\ep\right\|_{L^{1+\frac{1}{\af}}\left(0,T;L^{\frac{2^*}{2}\left(1+\frac{1}{\af}\right)}(\Tt^d)\right)}^{1-\zeta}<C$, for some $C>0$. By combining these, the result follows.
\end{proof}

\begin{Lemma}\label{lemma03sec10}
Let $(u^\epsilon,m^\ep)$ be a solution of \eqref{eq:smfg} and assume that
A\ref{ah}-\ref{asbdpph} hold. Suppose further that $p>\frac{d}{2}$ and $r$ is given as is \eqref{eq:c4}. Then 
\[
\left\|\left|D_pH\right|^2\right\|_{L^r(0,T;L^p(\Tt^d))}\leq C+C\left\|Du^\ep\right\|_{L^F(0,T;L^G(\Tt^d))}^{2(1-\lambda)(\gamma-1)},
\]where $\lambda$, $F$ and $G$ satisfy 
\begin{equation}\label{eq:eqF}
0\leq \lam\leq1,\;\;\;\frac{1}{2(\ga-1)r}=\frac{\lambda}{\gamma}+\frac{1-\lambda}{F}
\end{equation}
and
\begin{equation}\label{eq:eqG}
\frac{1}{2(\ga-1)p}=\frac{\lambda}{\gamma}+\frac{1-\lambda}{G},
\end{equation}respectively.
\end{Lemma}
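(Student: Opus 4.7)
The plan is to first use the growth condition on $D_pH$ to convert a norm of $|D_pH|^2$ into a norm of $|Du^\ep|^{2(\gamma-1)}$, then apply a mixed-norm H\"older interpolation to distribute that norm between a scale-invariant piece (which we can bound by a constant) and the target $L^F(0,T;L^G(\Tt^d))$ piece.

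The first step would be to invoke Assumption A\ref{dphs} pointwise, giving $|D_pH(x,Du^\ep)|^2\leq C|Du^\ep|^{2(\gamma-1)}+C$. Raising to $L^p_x$ and then $L^r_t$, and using $(a+b)^{r/p}\leq C(a^{r/p}+b^{r/p})$ together with the boundedness of $[0,T]\times\Tt^d$, we get
\[
\bigl\||D_pH|^2\bigr\|_{L^r(0,T;L^p(\Tt^d))}\leq C+C\,\|Du^\ep\|_{L^{2(\gamma-1)r}(0,T;L^{2(\gamma-1)p}(\Tt^d))}^{2(\gamma-1)}.
\]

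The second step would be a standard mixed-norm H\"older interpolation: for $\lambda\in[0,1]$, applying spatial H\"older on each slice $\{t\}\times\Tt^d$ and then temporal H\"older in $t$ yields
\[
\|Du^\ep\|_{L^{2(\gamma-1)r}(0,T;L^{2(\gamma-1)p}(\Tt^d))}\leq\|Du^\ep\|_{L^\gamma(0,T;L^\gamma(\Tt^d))}^{\lambda}\,\|Du^\ep\|_{L^F(0,T;L^G(\Tt^d))}^{1-\lambda},
\]
provided the exponents satisfy precisely the relations \eqref{eq:eqF}-\eqref{eq:eqG}. One checks consistency: both defining identities determine the same $\lambda\in[0,1]$ by hypothesis, so the interpolation is admissible. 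Combining with the first step produces
\[
\bigl\||D_pH|^2\bigr\|_{L^r(L^p)}\leq C+C\,\|Du^\ep\|_{L^\gamma(L^\gamma)}^{2\lambda(\gamma-1)}\,\|Du^\ep\|_{L^F(L^G)}^{2(1-\lambda)(\gamma-1)}.
\]

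The third step would be to absorb $\|Du^\ep\|_{L^\gamma(L^\gamma)}^{2\lambda(\gamma-1)}$ into the constant. For this one uses Corollary \ref{c4p1}, which gives $\int_0^T\!\!\int_{\Tt^d}H(x,Du^\ep)\,dx\,dt\leq C+\osc(u^\ep(\cdot,T))$; since $u^\ep(\cdot,T)=u_0\in C^\infty(\Tt^d)$ by A\ref{ah}, the right-hand side is bounded uniformly in $\epsilon$. Combined with the lower-growth property $H(x,p)\geq c|p|^\gamma-C$ valid in the subquadratic framework under consideration (and explicit for the model Hamiltonian of subsection \ref{subsec:sec22}), this yields $\|Du^\ep\|_{L^\gamma(L^\gamma)}\leq C$, completing the proof.

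\textbf{Main obstacle.} The only nontrivial point is Step 3: the uniform $L^\gamma(L^\gamma)$ bound on $Du^\ep$ is what makes the interpolation useful, and it relies on pairing the upper bound $H\leq C|p|^\gamma+C$ from A\ref{coerca} with a matching lower bound $H\geq c|p|^\gamma-C$. Everything else is a routine combination of A\ref{dphs} and mixed-norm H\"older interpolation.
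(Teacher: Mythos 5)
Your argument is correct and follows essentially the same route as the paper: A\ref{dphs} to reduce $\||D_pH|^2\|_{L^r(0,T;L^p(\Tt^d))}$ to $\|Du^\ep\|^{2(\ga-1)}_{L^{2(\ga-1)r}(0,T;L^{2(\ga-1)p}(\Tt^d))}$, then the mixed-norm H\"older interpolation under \eqref{eq:eqF}--\eqref{eq:eqG}, then absorbing the $L^\ga(0,T;L^\ga(\Tt^d))$ factor via an a priori bound on $Du^\ep$. The only divergence is in that last step, where the paper cites Proposition \ref{pehm} while you go through Corollary \ref{c4p1} together with an explicit lower growth bound $H(x,p)\geq c|p|^\ga-C$; this is a cosmetic variation, and your explicit flagging of the needed $\ga$-coercivity (which the paper leaves implicit, as in its model Hamiltonian) is if anything the more careful bookkeeping.
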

\begin{proof}
Because of A\ref{dphs}, we have that 
\[
\left\|\left|D_pH\right|^2\right\|_{L^r(0,T;L^p(\Tt^d))}\leq C+C\left\|Du^\ep\right\|^{2(\ga-1)}_{L^{2(\ga-1)r}(0,T;L^{2(\ga-1)p}(\Tt^d))}.
\]
On the other hand, H\"older inequality implies that
\begin{equation}\label{eq:eq88plus}
\left\|Du^\ep\right\|_{L^{2(\ga-1)r}(0,T;L^{2(\ga-1)p}(\Tt^d))}\leq \left\|Du^\ep\right\|_{L^\ga(0,T;L^\ga(\Tt^d))}^\lambda\left\|Du^\ep\right\|_{L^F(0,T;L^G(\Tt^d))}^{1-\lambda}
\end{equation}
provided that \eqref{eq:eqF} and \eqref{eq:eqG} hold.
Because of Proposition \ref{pehm}, we have $Du\in L^\gamma(\Tt^d\times[0,T])$. By combining this with the previous computation, the result follows.
\end{proof}

\begin{Lemma}\label{lemma3sec10}
Let $(u^\epsilon,m^\ep)$ be a solution of \eqref{eq:smfg} and assume that
A\ref{ah}-\ref{asbdpph} hold. Suppose further that \eqref{eq:3sec10}-\eqref{eq:eqG},
\begin{equation}\label{eq:5sec10}
\frac{F}{\ga}=\frac{a_\upsilon}{\af}
\end{equation}
and
\begin{equation}\label{eq:6sec10}
\frac{G}{\ga}=\frac{b_\upsilon}{\af}
\end{equation}
hold. Then,
\[
\big\|Du^\ep\big\|_{L^F(0,T;L^G(\Tt^d))}\leq C\big\|g_\ep\big\|_{L^{\frac{a_\upsilon}{\af}}(0,T;L^{\frac{b_\upsilon}{\af}}(\Tt^d))}^{\frac{\zeta}{2-\ga}+\frac{\zeta}{2}}+C\big\|g_\ep\big\|_{L^{\frac{a_\upsilon}{\af}}(0,T;L^{\frac{b_\upsilon}{\af}}(\Tt^d))}^{\frac{1}{\ga}+\frac{\zeta}{2}}+C.
\]
\end{Lemma}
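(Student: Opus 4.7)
My plan is to chain together three ingredients: the spatial Gagliardo--Nirenberg interpolation of Corollary \ref{c91}, the parabolic regularity of Theorem \ref{c92}, and the $L^\infty$ bound of Lemma \ref{lem:newint}. Throughout, abbreviate $X:=\|g_\ep(m^\ep)\|_{L^{a_\upsilon/\af}(0,T;L^{b_\upsilon/\af}(\Tt^d))}$ and $U:=\|u^\ep\|_{L^\infty(0,T;L^\infty(\Tt^d))}$.

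The first step is to apply Corollary \ref{c91} pointwise in $t$ with $p=b_\upsilon/\af$, which by \eqref{eq:6sec10} matches the target spatial integrability exponent $G=\ga b_\upsilon/\af$. Raising the resulting pointwise estimate to the $F$-th power, integrating over $[0,T]$, and pulling out the factor $\|u^\ep(\cdot,t)\|_{L^\infty(\Tt^d)}^{F/2}\le U^{F/2}$ gives
\[
\|Du^\ep\|_{L^F(0,T;L^G(\Tt^d))}\le CU^{1/2}\,\|D^2u^\ep\|_{L^{F/2}(0,T;L^{b_\upsilon/\af}(\Tt^d))}^{1/2}.
\]
Since $F/2=\ga a_\upsilon/(2\af)<a_\upsilon/\af$ because $\ga<2$, H\"older's inequality in time on the bounded interval $[0,T]$ replaces the $L^{F/2}$ norm by an $L^{a_\upsilon/\af}$ norm up to an absolute constant. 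Theorem \ref{c92} with $r=a_\upsilon/\af$, $p=b_\upsilon/\af$ then bounds the second-order term by $c X+c U^{\ga/(2-\ga)}+C$. Combining these and using the elementary identity $\tfrac12+\tfrac{\ga}{2(2-\ga)}=\tfrac1{2-\ga}$ produces
\[
\|Du^\ep\|_{L^F(0,T;L^G(\Tt^d))}\le CU^{1/2}X^{1/2}+CU^{1/(2-\ga)}+CU^{1/2}.
\]

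The last step is to substitute the bound $U\le C+CX^\zeta$ supplied by Lemma \ref{lem:newint}, which implies $U^s\le C+CX^{s\zeta}$ for any $s\ge 0$. Expanding the right-hand side of the display above then produces a finite sum of powers $X^{e}$ with $e\in\{\tfrac12,\ \tfrac12+\tfrac\zeta2,\ \tfrac\zeta{2-\ga},\ \tfrac\zeta2\}$ plus a constant. Since $\ga<2$ forces $\tfrac1\ga\ge\tfrac12$, one has $\tfrac12,\ \tfrac12+\tfrac\zeta2\le\tfrac1\ga+\tfrac\zeta2$ and $\tfrac\zeta2,\ \tfrac\zeta{2-\ga}\le\tfrac\zeta{2-\ga}+\tfrac\zeta2$, so the trivial inequality $X^a\le 1+X^b$ valid for $0\le a\le b$ and $X\ge 0$ absorbs every lower power into the constant and the two target exponents.

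The hard part is not really analytic but combinatorial: checking that the compatibility conditions \eqref{eq:3sec10}--\eqref{eq:eqG}, \eqref{eq:5sec10}, \eqref{eq:6sec10} appearing in the hypothesis really do make every application legitimate. In particular, one must verify that $a_\upsilon/\af,b_\upsilon/\af>1$ so that Theorem \ref{c92} applies, that the Gagliardo--Nirenberg exponent identity behind Corollary \ref{c91} is compatible with the chosen $p=b_\upsilon/\af$, and that the parameters $\tilde r,\tilde p,\zeta$ invoked through Lemma \ref{lem:newint} are consistent with the same $a_\upsilon,b_\upsilon$. These are precisely the relations encoded in \eqref{eq:1sec10}--\eqref{eq:eqG} and \eqref{eq:eq1ni}--\eqref{eq:eq3ni}, so no further obstruction arises.
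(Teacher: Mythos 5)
Your argument is correct and follows essentially the same route as the paper: Corollary \ref{c91} to interpolate $Du^\ep$ between $D^2u^\ep$ and $\|u^\ep\|_\infty$, Theorem \ref{c92} applied with $r=\frac{a_\upsilon}{\af}$, $p=\frac{b_\upsilon}{\af}$, and Lemma \ref{lem:newint} to eliminate $\|u^\ep\|_{L^\infty(0,T;L^\infty(\Tt^d))}$, followed by absorption of the lower powers of $\|g_\ep\|$. The only (harmless) difference is bookkeeping: the paper bumps the exponent $\tfrac12$ on the $D^2u^\ep$ factor to $\tfrac1\ga$ pointwise (via $x^{1/2}\le x^{1/\ga}+1$) so that raising to the $F$-th power lands directly on the $L^{F/\ga}=L^{a_\upsilon/\af}$ norm in time, whereas you keep $\tfrac12$ and pass from $L^{F/2}$ to $L^{a_\upsilon/\af}$ in time by H\"older on $[0,T]$, which produces slightly smaller exponents $\bigl\{\tfrac12,\tfrac12+\tfrac\zeta2,\tfrac{\zeta}{2-\ga},\tfrac\zeta2\bigr\}$ that you then correctly dominate by the two exponents in the statement.
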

\begin{proof}
Inequality \eqref{eq:e91a} implies that 
\[
\|Du^\ep\|_{L^{2(\ga-1)p}(\Tt^d)}\leq C\|D^2u^\ep\|^{\frac{1}{2}}_{L^{\frac{2(\ga-1)p}{\ga}}(\Tt^d)}\|u^\ep\|_{L^\infty(\Tt^d)}^{\frac{1}{2}}.
\]
By noticing that $\ga <2$ it follows that
\begin{equation}
  \label{eq:lrf4}
\|Du^\ep\|_{L^{2(\ga-1)p}(\Tt^d)}\leq C\|D^2u^\ep\|^{\frac{1}{\ga}}_{L^{\frac{2(\ga-1)p}{\ga}}(\Tt^d)}\|u^\ep\|_{L^\infty(\Tt^d)}^{\frac{1}{2}}+C\|u^\ep\|_{L^\infty(\Tt^d)}^{\frac{1}{2}}  
\end{equation}
From \eqref{eq:lrf4} and Corollary \ref{c92} it follows that
\begin{align*}
\|Du^\ep\big\|_{L^F(0,T;L^G(\Tt^d))}
&\le  C\big\|D^2u^\ep\big\|^\frac{1}{\ga}_{L^\frac{F}{\ga}(0,T;L^\frac{G}{\ga}(\Tt^d))}\big\|u^\ep\big\|_{L^{\infty}(0,T;L^{\infty}(\Tt^d))}^{\frac{1}{2}}\\
&\quad+C\big\|u^\ep\big\|_{L^{\infty}(0,T;L^{\infty}(\Tt^d))}^{\frac{1}{2}}
\end{align*}and 
\begin{align*}
\big\|D^2u^\ep\big\|^\frac{1}{\ga}_{L^\frac{F}{\ga}(0,T;L^\frac{G}{\ga}(\Tt^d))}&\leq C\big\|g_\ep(m)\big\|^\frac{1}{\ga}_{L^{\frac{a_\upsilon}{\af}}(0,T;L^{\frac{b_\upsilon}{\af}}(\Tt^d))}\\&\quad+C\big\|u^\ep\big\|_{L^{\infty}(0,T;L^{\infty}(\Tt^d))}^{\frac{1}{2-\ga}}+C.
\end{align*}
By combining these, one obtains
\begin{align}\label{eq:eq3sec10}
\big\|Du^\ep\big\|_{L^F(0,T;L^G(\Tt^d))} &
\le C\big\|g_\ep(m)\big\|^\frac{1}{\ga}_{L^{\frac{a_\upsilon}{\af}}(0,T;L^{\frac{b_\upsilon}{\af}}(\Tt^d))}\big\|u^\ep\big\|_{L^{\infty}(0,T;L^{\infty}(\Tt^d))}^{\frac{1}{2}}\\
\notag &\quad +C\big\|u^\ep\big\|_{L^{\infty}(0,T;L^{\infty}(\Tt^d))}^{\frac{1}{2-\ga}+\frac{1}{2}}+C\big\|u^\ep\big\|_{L^{\infty}(0,T;L^{\infty}(\Tt^d))}^{\frac{1}{2}}.
\end{align}
Because of Lemma \ref{lem:newint} we also have
$$\big\|u^\ep\big\|_{L^{\infty}(0,T;L^{\infty}(\Tt^d))}\leq
C+C\big\|g_\ep\big\|_{L^{\frac{a_\upsilon}{\af}}(0,T;L^{\frac{b_\upsilon}{\af}}(\Tt^d))}^\zeta.$$
Hence, \eqref{eq:eq3sec10} becomes
\begin{align*}
\big\|Du^\ep\big\|&_{L^F(0,T;L^G(\Tt^d))}\leq C\big\|g_\ep\big\|^{\frac{1}{\ga}+\frac{\zeta}{2}}_{L^{\frac{a_\upsilon}{\af}}(0,T;L^{\frac{b_\upsilon}{\af}}(\Tt^d))}
\\&+C\big\|g_\ep\big\|_{L^{\frac{a_\upsilon}{\af}}(0,T;L^{\frac{b_\upsilon}{\af}}(\Tt^d))}^{\frac{\zeta}{2-\ga}+\frac{\zeta}{2}}
+C\big\|g_\ep\big\|_{L^{\frac{a_\upsilon}{\af}}(0,T;L^{\frac{b_\upsilon}{\af}}(\Tt^d))}^{\frac{\zeta}{2}}
\\&+C\big\|g_\ep\big\|_{L^{\frac{a_\upsilon}{\af}}(0,T;L^{\frac{b_\upsilon}{\af}}(\Tt^d))}^{\frac{1}{\ga}}+C\\
&\leq C\big\|g_\ep\big\|_{L^{\frac{a_\upsilon}{\af}}(0,T;L^{\frac{b_\upsilon}{\af}}(\Tt^d))}^{\frac{\zeta}{2-\ga}+\frac{\zeta}{2}}+C\big\|g_\ep\big\|_{L^{\frac{a_\upsilon}{\af}}(0,T;L^{\frac{b_\upsilon}{\af}}(\Tt^d))}^{\frac{1}{\ga}+\frac{\zeta}{2}}+C, 
\end{align*}
where the last inequality follows from Young's applied to those terms with lower exponents.
\end{proof}

\begin{Corollary}\label{cor1sec10}
Let $(u^\ep,m^\ep)$ be a solution of \eqref{eq:smfg} and assume that
A\ref{ah}-\ref{asbdpph} hold. Suppose further that
\eqref{eq:3sec10} holds. Then, 
\[
\big\|g_\ep(m)\big\|_{L^\frac{a_\upsilon}{\af}(0,T;L^\frac{b_\upsilon}{\af}(\Tt^d))}\leq 
C+C\big\||D_pH|^2\big\|_{L^r(0,T;L^p(\Tt^d))}^\frac{r\upsilon\af\left(1-\frac{1}{\theta}\right)}{\be_0(\theta-1)},
\]
where $p>\frac{d}{2}$ and $r$ is given by \eqref{eq:c4}.
\end{Corollary}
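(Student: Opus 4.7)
The plan is to reduce the statement to Lemma \ref{lemma2sec10} by peeling off the nonlinearity $g(m)=m^\af$ and the mollifier $\eta_\ep$. Because convolution with $\eta_\ep$ is a contraction on every $L^s(\Tt^d)$, one has
\[
\|g_\ep(m)\|_{L^{b_\upsilon/\af}(\Tt^d)} = \|\eta_\ep * (\eta_\ep * m)^\af\|_{L^{b_\upsilon/\af}(\Tt^d)} \leq \|(\eta_\ep * m)^\af\|_{L^{b_\upsilon/\af}(\Tt^d)} = \|\eta_\ep * m\|_{L^{b_\upsilon}(\Tt^d)}^\af \leq \|m\|_{L^{b_\upsilon}(\Tt^d)}^\af .
\]
Integrating the $\af$-th power of this pointwise-in-time bound over $(0,T)$ with the exponent $a_\upsilon/\af$ in time (and noting that the time exponent on the right becomes $a_\upsilon$), I obtain
\[
\|g_\ep(m)\|_{L^{a_\upsilon/\af}(0,T;L^{b_\upsilon/\af}(\Tt^d))} \leq \|m\|_{L^{a_\upsilon}(0,T;L^{b_\upsilon}(\Tt^d))}^{\af}.
\]

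The second step is to apply Lemma \ref{lemma2sec10} directly to the right-hand side. Under the stated hypotheses (in particular $p>d/2$ and $r$ as in \eqref{eq:c4}), Lemma \ref{lemma2sec10} gives
\[
\|m\|_{L^{a_\upsilon}(0,T;L^{b_\upsilon}(\Tt^d))} \leq C + C\big\||D_pH|^2\big\|_{L^r(0,T;L^p(\Tt^d))}^{\frac{r\upsilon(1-1/\theta)}{\be_0(\theta-1)}}.
\]
Raising this to the $\af$-th power and using the elementary inequality $(A+B)^\af \leq C_\af (A^\af+B^\af)$ to distribute the exponent across the two terms yields
\[
\|m\|_{L^{a_\upsilon}(0,T;L^{b_\upsilon}(\Tt^d))}^{\af} \leq C + C\big\||D_pH|^2\big\|_{L^r(0,T;L^p(\Tt^d))}^{\frac{r\upsilon\af(1-1/\theta)}{\be_0(\theta-1)}},
\]
which is precisely the claimed bound once combined with the previous inequality.

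There is essentially no hard step here: the corollary is a bookkeeping statement that packages Lemma \ref{lemma2sec10} in the form used later in Section \ref{sec10}. The only points to check are (i) that the time-space exponents fit together correctly under the identity $a_\upsilon\cdot \af^{-1}\cdot \af = a_\upsilon$ and likewise for $b_\upsilon$, and (ii) that the mollifier $\eta_\ep$ can be discarded by a contraction estimate, uniformly in $\ep$. Hypothesis \eqref{eq:3sec10} plays no role in the proof itself; it is recorded in the statement because the corollary is invoked in tandem with Lemma \ref{lemma1sec10}, which does require it.
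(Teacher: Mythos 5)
Your proof is correct and follows essentially the same route as the paper: bound $\|g_\ep(m)\|_{L^{a_\upsilon/\af}(0,T;L^{b_\upsilon/\af}(\Tt^d))}$ by $\|m^\ep\|^\af_{L^{a_\upsilon}(0,T;L^{b_\upsilon}(\Tt^d))}$ using A\ref{ag2} and the contraction property of the mollifier, then invoke Lemma \ref{lemma2sec10} and distribute the power $\af$. Your explicit treatment of the exponent bookkeeping and the observation that \eqref{eq:3sec10} is carried in the statement only for later use are both consistent with the paper's (more terse) argument.
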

\begin{proof}
Lemma \ref{lemma2sec10} along with A\ref{ag2} leads to 
\[
\big\|g_\ep(m)\big\|_{L^\frac{a_\upsilon}{\af}(0,T;L^\frac{b_\upsilon}{\af}(\Tt^d))}
\le\big\|m^\ep\big\|^\af_{L^{a_\upsilon}(0,T;L^{b_\upsilon}(\Tt^d))}\leq
C+C\big\||D_pH|^2\big\|_{L^r(0,T;L^p(\Tt^d))}^\frac{r\upsilon\af\left(1-\frac{1}{\theta}\right)}{\be_0(\theta-1)}
\]
and the Corollary is established.
\end{proof}

\begin{Corollary}\label{cor2sec10}
Let $(u^\ep,m^\ep)$ be a solution of \eqref{eq:smfg} and assume that
A\ref{ah}-\ref{asbdpph} hold. 
Suppose further that \eqref{eq:3sec10}-\eqref{eq:6sec10} hold. 
Then, 
\begin{align*}
\big\|Du^\ep\big\|_{L^F(0,T;L^G(\Tt^d))}&\leq C+
C\big\|Du^\ep\big\|_{L^F(0,T;L^G(\Tt^d))}^{\frac{(1-\lam)(\ga-1)(4\zeta-\ga\zeta)}{(2-\ga)}\frac{r\upsilon\af\left(1-\frac{1}{\theta}\right)}{\be_0(\theta-1)}}\\&\quad+C\big\|Du^\ep\big\|_{L^F(0,T;L^G(\Tt^d))}^{\frac{(1-\lam)(\ga-1)(2+\ga\zeta)}{\ga}\frac{r\upsilon\af\left(1-\frac{1}{\theta}\right)}{\be_0(\theta-1)}}.
\end{align*}
where $p>\frac{d}{2}$ and $r$ is given by \eqref{eq:c4}.
\end{Corollary}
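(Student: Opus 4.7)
The plan is to chain together the three estimates developed immediately above, which together form a closed loop for $\|Du^\ep\|_{L^F(0,T;L^G(\Tt^d))}$. The three ingredients are: Lemma \ref{lemma03sec10}, which controls $\||D_pH|^2\|_{L^r(0,T;L^p(\Tt^d))}$ by a power of $\|Du^\ep\|_{L^F(0,T;L^G(\Tt^d))}$; Corollary \ref{cor1sec10}, which controls the relevant norm of $g_\ep(m)$ by a power of $\||D_pH|^2\|_{L^r(0,T;L^p(\Tt^d))}$; and Lemma \ref{lemma3sec10}, which in turn estimates $\|Du^\ep\|_{L^F(0,T;L^G(\Tt^d))}$ by two powers of the norm of $g_\ep(m)$. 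Composing these three in order closes the loop.

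Concretely, the first step is to substitute the bound from Lemma \ref{lemma03sec10} into Corollary \ref{cor1sec10}. Using the standard inequality $(a+b)^q \leq C(a^q+b^q)$ for $q>0$ to absorb additive constants, this produces the intermediate estimate
\begin{align*}
\big\|g_\ep(m)\big\|_{L^{a_\upsilon/\af}(0,T;L^{b_\upsilon/\af}(\Tt^d))}
\leq C + C\big\|Du^\ep\big\|_{L^F(0,T;L^G(\Tt^d))}^{\,2(1-\lam)(\ga-1)\frac{r\upsilon\af(1-1/\theta)}{\be_0(\theta-1)}}.
\end{align*}

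The second step is to insert this into the two terms appearing in Lemma \ref{lemma3sec10}. The two exponents produced there simplify as
\begin{align*}
\frac{\zeta}{2-\ga}+\frac{\zeta}{2} \;=\; \frac{4\zeta-\ga\zeta}{2(2-\ga)},
\qquad
\frac{1}{\ga}+\frac{\zeta}{2} \;=\; \frac{2+\ga\zeta}{2\ga}.
\end{align*}
Multiplying each by the transfer factor $2(1-\lam)(\ga-1)\frac{r\upsilon\af(1-1/\theta)}{\be_0(\theta-1)}$ produces precisely the two exponents appearing on the right-hand side of the Corollary, and once again one applies $(a+b)^q\leq C(a^q+b^q)$ to split the constants. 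After collecting the resulting additive constants into a single $C$, one obtains exactly the stated inequality.

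No serious obstacle is expected: the whole argument is algebraic substitution, once the ingredients are in place. The only routine caution is to verify that the intermediate exponents (e.g.\ $\zeta/(2-\ga)+\zeta/2$ and $1/\ga+\zeta/2$) are strictly positive so that the inequality $(a+b)^q\leq C(a^q+b^q)$ applies, which follows from $0<\zeta\leq 1$ and $1<\ga<2$. The genuine content has already been placed in Lemmas \ref{lemma03sec10} and \ref{lemma3sec10} and Corollary \ref{cor1sec10}; the role of the present Corollary is to package them in the self-referential form that will allow one to close the bootstrap in the next step, provided the resulting exponents on $\|Du^\ep\|_{L^F(0,T;L^G(\Tt^d))}$ can be made strictly less than $1$ under Assumption A\ref{alphaimp}.
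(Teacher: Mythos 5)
Your proposal is correct and follows essentially the same route as the paper: both chain Lemma \ref{lemma03sec10}, Corollary \ref{cor1sec10} and Lemma \ref{lemma3sec10}, and your exponent arithmetic ($\frac{\zeta}{2-\ga}+\frac{\zeta}{2}=\frac{4\zeta-\ga\zeta}{2(2-\ga)}$, $\frac{1}{\ga}+\frac{\zeta}{2}=\frac{2+\ga\zeta}{2\ga}$, multiplied by the transfer factor $2(1-\lam)(\ga-1)\frac{r\upsilon\af\left(1-\frac{1}{\theta}\right)}{\be_0(\theta-1)}$) reproduces exactly the stated exponents. The only difference is the order of substitution (you eliminate $\||D_pH|^2\|$ first, the paper eliminates $\|g_\ep\|$ first), which is immaterial.
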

\begin{proof}
Lemma \ref{lemma3sec10} along with Corollary \ref{cor1sec10} leads to 
\begin{align*}
\big\|Du^\ep\big\|_{L^F(0,T;L^G(\Tt^d))}
&\leq  C+C\big\||D_pH|^2\big\|_{L^r(0,T;L^p(\Tt^d))}^{\frac{(4\zeta-\ga\zeta)}{2(2-\ga)}\frac{r\upsilon\af\left(1-\frac{1}{\theta}\right)}{\be_0(\theta-1)}}\\&\quad+C\big\||D_pH|^2\big\|_{L^r(0,T;L^p(\Tt^d))}^{\frac{2+\ga\zeta}{2\ga}\frac{r\upsilon\af\left(1-\frac{1}{\theta}\right)}{\be_0(\theta-1)}}
\end{align*}
Furthermore, because of Assumption A\ref{dphs} and Lemma \ref{lemma03sec10} we have
\[
\big\||D_pH|^2\big\|_{L^r(0,T;L^p(\Tt^d))}^{\frac{(4\zeta-\ga\zeta)}{2(2-\ga)}\frac{r\upsilon\af\left(1-\frac{1}{\theta}\right)}{\be_0(\theta-1)}}\leq C+C\big\|Du^\ep\big\|_{L^F(0,T;L^G(\Tt^d))}^{\frac{(1-\lam)(\ga-1)(2+2\zeta-\zeta\ga)}{(2-\ga)}\frac{r\upsilon\af\left(1-\frac{1}{\theta}\right)}{\be_0(\theta-1)}}
\]and
\[
\big\||D_pH|^2\big\|_{L^r(0,T;L^p(\Tt^d))}^{\frac{2+\ga\zeta}{2\ga}\frac{r\upsilon\af\left(1-\frac{1}{\theta}\right)}{\be_0(\theta-1)}}\leq C+C\big\|Du^\ep\big\|_{L^F(0,T;L^G(\Tt^d))}^{\frac{(1-\lam)(\ga-1)(2+\ga\zeta)}{\ga}\frac{r\upsilon\af\left(1-\frac{1}{\theta}\right)}{\be_0(\theta-1)}}.
\]The result follows by combining the former computation.
\end{proof}

\begin{Lemma}\label{lemma4sec10} 
Let $(u^\ep,m^\ep)$ be a solution of \eqref{eq:smfg}, assume that
A\ref{ah}-\ref{alphaimp} hold. 
Then,
\[
\big\|Du^\ep\big\|_{L^F(0,T;L^G(\Tt^d))}\leq C,
\]where $F$ and $G$ are given by \eqref{eq:eqF} and \eqref{eq:eqG}, respectively.
\end{Lemma}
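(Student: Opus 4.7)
The strategy is to turn the self-referential bound from Corollary \ref{cor2sec10} into an a priori estimate by a Young-type absorption argument. Write $X\doteq\bigl\|Du^\ep\bigr\|_{L^F(0,T;L^G(\Tt^d))}$ and denote the two exponents that appear in that Corollary by
\[
e_1\doteq\frac{(1-\lam)(\ga-1)(4\zeta-\ga\zeta)}{2-\ga}\cdot\frac{r\upsilon\af\bigl(1-\tfrac{1}{\theta}\bigr)}{\be_0(\theta-1)},\qquad e_2\doteq\frac{(1-\lam)(\ga-1)(2+\ga\zeta)}{\ga}\cdot\frac{r\upsilon\af\bigl(1-\tfrac{1}{\theta}\bigr)}{\be_0(\theta-1)},
\]
so that Corollary \ref{cor2sec10} reads $X\le C+CX^{e_1}+CX^{e_2}$. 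If both $e_1<1$ and $e_2<1$, Young's inequality gives $CX^{e_i}\le\tfrac14 X+C'$ for $i=1,2$, and absorbing these terms into the left-hand side produces $X\le C''$, which is the conclusion.

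It therefore suffices to verify that, under the running assumptions, the tuple of parameters $\lam,\zeta,\upsilon,a_\upsilon,b_\upsilon,r,\tilde r,p,\tilde p,\theta,F,G$ can be chosen to meet all the constraints invoked in Section \ref{sec10}, namely \eqref{eq:1sec10}--\eqref{eq:eqG}, \eqref{eq:5sec10}--\eqref{eq:6sec10}, together with the additional strict inequalities \eqref{eq:7sec10}--\eqref{eq:8sec10} encoding $e_1<1$ and $e_2<1$. This is exactly what Lemma \ref{techlemma} is designed to provide: the threshold $\af_{\ga,d}$ is defined so that whenever Assumption A\ref{alphaimp} is in force one can indeed select such an admissible tuple. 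My plan is therefore to fix the parameters supplied by Lemma \ref{techlemma}, verify the hypotheses \eqref{eq:3sec10}--\eqref{eq:6sec10} needed to apply Corollary \ref{cor2sec10}, invoke that Corollary, and then close the argument with the Young step described above.

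The main obstacle is hidden in Lemma \ref{techlemma}: the hard part is the algebraic compatibility of the rather large system \eqref{eq:1sec10}--\eqref{eq:8sec10} together with $e_1,e_2<1$, which must admit a solution in the regime $d>2$, $1+\tfrac{1}{d+1}<\ga<2$ and $0<\af<\af_{\ga,d}$. The explicit form of $\af_{\ga,d}$ in the statement of Lemma \ref{techlemma} makes it clear that this feasibility check is carried out by symbolic computation; once it is granted, the proof of Lemma \ref{lemma4sec10} itself reduces to the clean absorption step outlined above.
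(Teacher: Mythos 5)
Your proof is correct and takes essentially the same route as the paper: invoke Corollary \ref{cor2sec10} to obtain the self-referential bound, observe that \eqref{eq:7sec10}--\eqref{eq:8sec10} say precisely that both exponents are below $1$, absorb the corresponding terms via Young's inequality, and appeal to Lemma \ref{techlemma} for the existence of an admissible parameter tuple. You have merely unpacked the paper's terse "combining Young's inequality with Lemma \ref{techlemma}" into the explicit absorption step $CX^{e_i}\le\tfrac14 X+C'$, which is a faithful rendering of the same argument.
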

\begin{proof}
By Corollary \ref{cor2sec10}, if \eqref{eq:1sec10}-\eqref{eq:6sec10} hold,
\begin{align*}
\big\|Du^\ep\big\|_{L^F(0,T;L^G(\Tt^d))}&\leq C+
C\big\|Du^\ep\big\|_{L^F(0,T;L^G(\Tt^d))}^{\frac{(1-\lam)(\ga-1)(4\zeta-\ga\zeta)}{(2-\ga)}\frac{r\upsilon\af\left(1-\frac{1}{\theta}\right)}{\be_0(\theta-1)}}\\&\quad+C\big\|Du^\ep\big\|_{L^F(0,T;L^G(\Tt^d))}^{\frac{(1-\lam)(\ga-1)(2+\ga\zeta)}{\ga}\frac{r\upsilon\af\left(1-\frac{1}{\theta}\right)}{\be_0(\theta-1)}}.
\end{align*}Also, 
\begin{align}\label{eq:7sec10}
\frac{(1-\lam)(\ga-1)(4\zeta-\ga\zeta)}{(2-\ga)}\frac{r\upsilon\af\left(1-\frac{1}{\theta}\right)}{\be_0(\theta-1)}&<1\\\label{eq:8sec10}\frac{(1-\lam)(\ga-1)(2+\ga\zeta)}{\ga}\frac{r\upsilon\af\left(1-\frac{1}{\theta}\right)}{\be_0(\theta-1)}&<1
\end{align}have to be satisfied.
The Lemma follows by combining Young's inequality with Lemma \ref{techlemma}.
\end{proof}
\begin{teo}\label{teosec10}
Let $(u^\ep,m^\ep)$ be a solution of \eqref{eq:smfg} and assume that
A\ref{ah}-\ref{alphaimp} hold. Then, for any $\be>1$,
$\|m^\ep\|_{L^{\infty}(0,T;L^{\be}(\Tt^d))}$ is bounded uniformly in $\ep$. 
\end{teo}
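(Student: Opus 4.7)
The plan is to close the loop between the polynomial estimate of Theorem \ref{cpp2} and the Sobolev regularity bound of Lemma \ref{lemma4sec10}. All ingredients are already in place, and the argument consists of reading off the right bounds in the correct order.

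First, I would invoke Lemma \ref{lemma4sec10}, which under A\ref{ah}--\ref{alphaimp} yields the uniform-in-$\ep$ bound
\[
\big\|Du^\ep\big\|_{L^F(0,T;L^G(\Tt^d))}\leq C,
\]
with $F,G$ given by \eqref{eq:eqF}--\eqref{eq:eqG}. Next, I would feed this into Lemma \ref{lemma03sec10}, which together with assumption A\ref{dphs} gives
\[
\big\||D_pH|^2\big\|_{L^r(0,T;L^p(\Tt^d))}\leq C+C\big\|Du^\ep\big\|_{L^F(0,T;L^G(\Tt^d))}^{2(1-\lambda)(\gamma-1)}\leq C,
\]
for $p>d/2$ and $r$ as in \eqref{eq:c4}. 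This is the crucial transfer of information: having bypassed the implicit inequalities through the choice of exponents in Lemma \ref{techlemma}, the nonlinear term $|D_pH|^2$ is now controlled in a space compatible with the iteration of Theorem \ref{cpp2}.

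With this uniform bound in hand, I would apply Theorem \ref{cpp2} with the given $\beta_0\geq 1$. For each $n\in\mathbb{N}$ the theorem produces a constant $C_n$ (depending on $n$ through the exponent $r_n=r(\theta^n-1)/(\theta-1)$) such that
\[
\big\|m^\ep\big\|_{L^\infty(0,T;L^{\beta_n}(\Tt^d))}\leq C_n,
\]
uniformly in $\ep$, where $\beta_n=\theta^n\beta_0$. Since $\theta>1$ by \eqref{eq:1sec10} (cf.\ Definition \ref{def:def1}), we have $\beta_n\to\infty$ as $n\to\infty$.

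Finally, given any $\beta>1$, I would pick $n$ large enough so that $\beta_n\geq \beta$. Because $\Tt^d$ has finite measure, the continuous inclusion $L^{\beta_n}(\Tt^d)\hookrightarrow L^{\beta}(\Tt^d)$ yields
\[
\big\|m^\ep\big\|_{L^\infty(0,T;L^{\beta}(\Tt^d))}\leq C\big\|m^\ep\big\|_{L^\infty(0,T;L^{\beta_n}(\Tt^d))}\leq C,
\]
uniformly in $\ep$, which is the claim. The main obstacle here is conceptually the combinatorial bookkeeping of exponents that was already shouldered by Lemma \ref{techlemma} and Lemma \ref{lemma4sec10}; once the bound on $\|Du^\ep\|_{L^F(0,T;L^G)}$ is in place, the Theorem is a direct consequence of the iteration scheme of Theorem \ref{cpp2}.
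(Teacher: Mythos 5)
Your proposal is correct and follows essentially the same route as the paper: control $\||D_pH|^2\|_{L^r(0,T;L^p(\Tt^d))}$ via Lemma \ref{lemma4sec10} (through the interpolation \eqref{eq:eq88plus} and A\ref{dphs}) and then feed this into the iteration of Theorem \ref{cpp2}. Your extra step of choosing $\beta_n\geq\beta$ and using the inclusion $L^{\beta_n}(\Tt^d)\hookrightarrow L^{\beta}(\Tt^d)$ simply makes explicit what the paper states as ``for any $\be>1$ there is $r_\be$,'' so the two arguments coincide.
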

\begin{proof}
For  $p>\frac d2$, $\te>1$ and  $r$ given by Lemma \ref{lemma4sec10}, we
have by Theorem \ref{cpp2} that for any $\be>1$ there is $r_\be$ such that
\begin{align*}
\int_{\Tt^d}
(m^\ep)^{\be}(\tau,x)dx\leq C+C\big\||D_{p}H(x,Du^\ep)|^{2}\big\|_{L^{r}(0,T;L^{p}(\Tt^d))}^{r_{\be}}.
\end{align*}
By combining \eqref{eq:eq88plus} and Lemma \ref{lemma4sec10} with A\ref{dphs} one obtains
\[\||D_pH(x,Du^\ep)|^{2}\|_{L^{r}(0,T;L^{p}(\Tt^d))}\leq 
C\|Du^\ep\|_{L^F(0,T;L^G(\Tt^d))}^{2(\ga-1)(1-\lambda)}+C\le C,\]
which establishes the Theorem.
\end{proof}
\begin{Corollary}\label{sec10final}
Let $(u^\ep,m^\ep)$ be a solution of \eqref{eq:smfg}, assume that
A\ref{ah}-\ref{alphaimp} hold. Then, for any $p,r>1$,
$\|Du^\ep\|_{L^r(0,T;L^p(\Tt^d))}$, $\|D^2u^\ep\|_{L^r(0,T;L^p(\Tt^d))} $
are bounded uniformly in $\ep$. 
\end{Corollary}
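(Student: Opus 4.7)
The plan is to bootstrap from Theorem \ref{teosec10}, which already provides uniform bounds on $\|m^\ep\|_{L^\infty(0,T;L^\be(\Tt^d))}$ for arbitrary $\be>1$. This is the decisive input; the rest is assembling earlier estimates.

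First I would transfer the $L^\infty_t L^\be_x$ bound on $m^\ep$ into a bound on $g_\ep(m^\ep)=\eta_\ep*g(\eta_\ep*m^\ep)$. Since convolution with $\eta_\ep$ is a contraction on every $L^q(\Tt^d)$, and since $g(z)=z^\af$ by A\ref{ag2}, one has
\[
\|g_\ep(m^\ep)(\cdot,t)\|_{L^q(\Tt^d)}\le \|(\eta_\ep*m^\ep)(\cdot,t)\|_{L^{\af q}(\Tt^d)}^{\af}\le \|m^\ep(\cdot,t)\|_{L^{\af q}(\Tt^d)}^{\af},
\]
and Theorem \ref{teosec10} applied with $\be=\af q$ yields $\|g_\ep(m^\ep)\|_{L^\infty(0,T;L^q(\Tt^d))}\le C$ uniformly in $\ep$, for every $q>1$.

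Next I would use Lemma \ref{l8421} to control $u^\ep$ in $L^\infty$. Pick any $a,b>1$ with $\tfrac{d}{2}<\tfrac{b(a-1)}{a}$; the estimate just obtained for $g_\ep(m^\ep)$ bounds $\|g_\ep(m^\ep)\|_{L^a(0,T;L^b(\Tt^d))}$ uniformly, hence
\[
\|u^\ep\|_{L^\infty(0,T;L^\infty(\Tt^d))}\le C.
\]
Plugging this together with the $L^r(L^p)$ bound on $g_\ep(m^\ep)$ into Theorem \ref{c92} gives a uniform bound
\[
\|D^2 u^\ep\|_{L^r(0,T;L^p(\Tt^d))}\le C,
\]
valid for every $1<p,r<\infty$.

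Finally I would pass from second to first derivatives by Gagliardo-Nirenberg. Given a target exponent pair $(p,r)$, Lemma \ref{l91} applied with exponent $p/2$ gives
\[
\|Du^\ep(\cdot,t)\|_{L^p(\Tt^d)}\le C\,\|D^2u^\ep(\cdot,t)\|_{L^{p/2}(\Tt^d)}^{1/2}\|u^\ep(\cdot,t)\|_{L^\infty(\Tt^d)}^{1/2},
\]
so integrating the $r$-th power in $t$ and using both the $L^\infty$ bound on $u^\ep$ and the $L^{r/2}(L^{p/2})$ (or any higher) bound on $D^2 u^\ep$ yields a uniform bound on $\|Du^\ep\|_{L^r(0,T;L^p(\Tt^d))}$.

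There is essentially no analytic obstacle here; the only thing to watch is internal consistency of the exponents in the contraction step for $g_\ep$ and in the Gagliardo-Nirenberg interpolation. The strength of the statement rests entirely on Theorem \ref{teosec10}, whose proof is the genuinely hard piece that precedes this corollary.
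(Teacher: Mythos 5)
Your argument is correct and follows essentially the same route as the paper: Theorem \ref{teosec10} gives uniform bounds on $g_\ep(m^\ep)$ in every $L^r(0,T;L^p(\Tt^d))$, which through the heat-kernel upper bound (Lemma \ref{l8421}, i.e.\ Proposition \ref{imphc0}) and Theorem \ref{c92} controls $\|u^\ep\|_{L^\infty}$ and $\|D^2u^\ep\|_{L^r(0,T;L^p(\Tt^d))}$, and the Gagliardo--Nirenberg inequality of Lemma \ref{l91} then yields the bound on $Du^\ep$. The only difference is cosmetic (you spell out the convolution-contraction step and adjust the exponent in Lemma \ref{l91} to hit a prescribed $p$ rather than stating the conclusion for $2p,2r$), so nothing further is needed.
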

\begin{proof}
By Theorem  \ref{teosec10}, for any $p,r>1$, $\|g_\ep(m^\ep)\|_{L^r(0,T;L^p(\Tt^d))}$ 
is bounded uniformly in $\ep$. So are $\big\|u^\ep\big\|_{L^\infty(0,T;L^\infty(\Tt^d))}$ and
$\|D^2u^\ep\|_{L^r(0,T;L^p(\Tt^d))}$, because of Lemma \ref{imphc0} and Corollary \ref{c92}.
Finally, from Lemma \ref{l91} 
\[\|Du^\ep\|_{L^{2r}(0,T;L^{2p}(\Tt^d))} \le C\|D^2u^\ep\|_{L^r(0,T;L^p(\Tt^d))}^\frac 12
\|u^\ep\|_{L^\infty(0,T;L^\infty(\Tt^d))}^\frac 12.\]
\end{proof}

\section{Lipschitz regularity}\label{lrsbh}

We now derive Lipschitz regularity for the solution $u^\epsilon$. To do so we use the non-linear adjoint method introduced in \cite{E3}, 
see also the applications in \cite{T1, ES, CGMT, CGT1, CGT2}.

\begin{teo}\label{teo1sec13}
Let $(u^\ep,m^\ep)$ be a solution of \eqref{eq:smfg}, assume that
A\ref{ah}-A\ref{alphaimp} hold. 
Then $Du^\epsilon\in L^\infty(\Tt^d\times\left[0,T\right])$, uniformly in $\epsilon$.
\end{teo}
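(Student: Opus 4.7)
\textbf{Proof proposal for Theorem \ref{teo1sec13}.}

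My plan is to implement the nonlinear adjoint method of \cite{E3} in order to obtain a pointwise representation of $Du^\epsilon$. Fix $k\in\{1,\dots,d\}$ and a reference point $(x_0,t_0)\in \Tt^d\times[0,T)$. Differentiating the first equation of \eqref{eq:smfg} with respect to $x_k$ gives, for $w=u^\epsilon_{x_k}$,
\[
-w_t+D_pH(x,Du^\epsilon)\cdot Dw+H_{x_k}(x,Du^\epsilon)=\Delta w+\partial_{x_k}g_\epsilon(m^\epsilon).
\]
I then introduce the adjoint variable $\rho^\epsilon$ as the solution on $\Tt^d\times(t_0,T)$ of the linear Fokker--Planck equation
\[
\rho_t-\div\bigl(D_pH(x,Du^\epsilon)\,\rho\bigr)=\Delta\rho, \qquad \rho(\cdot,t_0)=\delta_{x_0},
\]
(obtained as the weak limit of smooth approximations of the Dirac mass, as is standard in this method). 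This is precisely the linearization of the continuity equation for $m^\epsilon$, so classical theory gives $\rho^\epsilon\ge 0$ and $\int_{\Tt^d}\rho^\epsilon(\cdot,t)\,dx=1$ for every $t\in[t_0,T]$.

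Multiplying the equation for $w$ by $\rho^\epsilon$, the adjoint equation by $w$, subtracting and integrating by parts in $\Tt^d$ (all boundary terms vanish by periodicity and the divergence/Laplacian structure kills the transport and diffusion parts after pairing), I obtain the duality identity
\begin{align*}
u^\epsilon_{x_k}(x_0,t_0)
&=\int_{\Tt^d}\partial_{x_k}u_0(x)\,\rho^\epsilon(x,T)\,dx
-\int_{t_0}^{T}\!\!\int_{\Tt^d}H_{x_k}(x,Du^\epsilon)\,\rho^\epsilon\,dx\,dt\\
&\quad+\int_{t_0}^{T}\!\!\int_{\Tt^d}\partial_{x_k}g_\epsilon(m^\epsilon)\,\rho^\epsilon\,dx\,dt.
\end{align*}
The proof reduces to bounding each of these three terms independently of $\epsilon$ and of $(x_0,t_0)$.

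For the first term I use $u_0\in C^\infty(\Tt^d)$ and $\|\rho^\epsilon(\cdot,T)\|_{L^1}=1$. For the second term, Assumption A\ref{strong} yields $|H_{x_k}|\le C H+C$, and so it suffices to prove the uniform estimate $\int_{t_0}^{T}\!\int H\,\rho^\epsilon\,dx\,dt\le C$; this is achieved by the same scheme as in Proposition \ref{pehm}, multiplying the Hamilton--Jacobi equation by $\rho^\epsilon$, integrating, and invoking Assumption A\ref{aele} together with the $L^\infty$ bound on $u^\epsilon$ furnished by Lemma \ref{l8421} combined with Corollary \ref{sec10final} (via Theorem \ref{teosec10}). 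For the third term I exploit the convolution structure $g_\epsilon(m^\epsilon)=\eta_\epsilon*g(\eta_\epsilon*m^\epsilon)$ to move one derivative onto $\eta_\epsilon*\rho^\epsilon$, producing
\[
\int_{t_0}^{T}\!\!\int_{\Tt^d}\partial_{x_k}g_\epsilon(m^\epsilon)\,\rho^\epsilon\,dx\,dt
=\int_{t_0}^{T}\!\!\int_{\Tt^d}g'(\eta_\epsilon*m^\epsilon)\,\partial_{x_k}(\eta_\epsilon*m^\epsilon)\,(\eta_\epsilon*\rho^\epsilon)\,dx\,dt,
\]
which I then estimate by a Hölder inequality in three factors, using the $L^p$ bounds on $m^\epsilon$ from Theorem \ref{teosec10} for $g'(\eta_\epsilon*m^\epsilon)$, the parabolic Sobolev bound on $Dm^\epsilon$ obtained from Corollary \ref{sec10final} via the Fokker--Planck equation, and $L^q$ bounds on $\eta_\epsilon*\rho^\epsilon$ (which are dominated by those of $\rho^\epsilon$ uniformly in $\epsilon$ by Young's inequality, since $\|\eta_\epsilon\|_{L^1}=1$).

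The main obstacle will be the third term: controlling $\partial_{x_k}g_\epsilon(m^\epsilon)$ uniformly in $\epsilon$ requires genuine regularity of the mollified density, and hence careful bookkeeping with the parabolic regularity of the Fokker--Planck equation fed by the now-controlled drift $D_pH(x,Du^\epsilon)$. The estimate $\int\!\int H\,\rho^\epsilon\le C$ for the adjoint is routine once $u^\epsilon$ is known to be in $L^\infty$, but the entire scheme crucially depends on having already established the uniform $L^rL^p$ bounds of Corollary \ref{sec10final} so that all terms above can be made finite with constants independent of $\epsilon$ and of $(x_0,t_0)$.
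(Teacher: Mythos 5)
Your proposal uses the same family of techniques (the nonlinear adjoint method of \cite{E3}) but implements it differently from the paper, and the difference is exactly where a genuine gap appears. The paper never differentiates $H$ or $g_\epsilon(m^\epsilon)$: it regards $u^\epsilon$ as a solution of a heat equation $u_t+\Delta u=f$ with $f=H(x,Du^\epsilon)-g_\epsilon(m^\epsilon)$ bounded in $L^a([0,T]\times\Tt^d)$ for every $a<\infty$ (by Theorem \ref{teosec10} and Corollary \ref{sec10final}), takes for the adjoint the plain heat kernel $\rho$ with $\rho(\cdot,\tau)=\delta_{x_0}$, writes $u_\xi(x_0,\tau)=\int\psi_\xi\rho(\cdot,T)+\iint f\rho_\xi$, and then splits $\rho_\xi=\rho^{1-\frac\nu2}\cdot\rho^{\frac\nu2-1}D\rho$ so that only $\|D\rho^{\nu/2}\|_{L^2}$ (an elementary energy identity for the heat equation) and $\|\rho^{1-\frac\nu2}\|_{L^b}$ for some $b>2$ are needed; the latter is controllable by interpolation and Sobolev precisely because the total power of $\rho$ involved is $\nu+\tfrac 2d$, just above $1$.

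You instead differentiate the equation, so you must estimate $\iint \partial_{x_k}g_\epsilon(m^\epsilon)\,\rho^\epsilon$ and $\iint H_{x_k}\rho^\epsilon$ against an adjoint carrying the drift $D_pH(x,Du^\epsilon)$ and Dirac data. Your second term is salvageable: pairing the Hamilton--Jacobi equation with the adjoint, using A\ref{aele} and the uniform $L^\infty$ bound on $u^\epsilon$, does give $\iint H\rho^\epsilon\le C$. The gap is the third term. Your three-factor H\"older requires a space-time bound $\|\rho^\epsilon\|_{L^q}\le C$ uniform in $\epsilon$ and $(x_0,t_0)$; but even for the drift-free heat kernel with $\delta_{x_0}$ data one has $\rho\in L^q_{t,x}$ only for $q<\frac{d+2}{d}$, so nothing near the exponent $2$ that would be forced if the $D(\eta_\epsilon*m^\epsilon)$ factor lives only in $L^2$-type spaces, and in no case is the bound ``dominated by Young's inequality''. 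Even if you upgrade $Dm^\epsilon$ to all $L^Q$ spaces by parabolic regularity (which you assert but do not prove) so that only some $q>1$ is needed for $\rho^\epsilon$, that bound is still not available for free: the drift $D_pH(x,Du^\epsilon)$ is at this stage only in $L^r(L^p)$, not $L^\infty$, so the entropy-type estimates for the adjoint (the analogues of Propositions \ref{p8p3} and \ref{prop:prop153}, which the paper proves only \emph{after} Lipschitz regularity, when the drift is bounded) produce a term $\iint|D_pH|^2\rho^\nu$ whose control again requires integrability of $\rho^\epsilon$ --- your scheme is circular exactly at this point. To repair it you would either have to import the paper's weighted splitting $\rho^{1-\frac\nu2}\rho^{\frac\nu2-1}D\rho$ together with the missing uniform bounds on $D(\eta_\epsilon*m^\epsilon)$ and on the drifted adjoint, or, more simply, not differentiate the right-hand side at all, as the paper does.
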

\begin{proof}We omit the $\epsilon$.
Note that $u$ is a solution of the heat equation 
\begin{equation}\label{eq:heatf}
  \begin{cases}
    u_t+\Delta u&=f\\ u(x,T)&=\psi
  \end{cases}
\end{equation}
with $\psi\in W^{1,\infty}(\Tt^d)$ and $f\in L^a([0,T]\times\Tt^d)$
for any $a>1$. 
We introduce the adjoint equation 
\begin{equation}
\label{ADJ1}
\rho_t-\Delta \rho=0
\end{equation}
with initial data
$\rho(\cdot,\tau)=\delta_{x_0}$.
Multiplying \eqref{ADJ1} by $\nu\rho^{\nu-1}$ and integrating, we
have for $\tau<s<T$
\begin{equation}
\label{blabla}
\int_{\Tt^d}(\rho^\nu(x,T)-\rho^\nu(x,s))dx
=\int_s^T\int_{\Tt^d}\nu\rho^{\nu-1}\Delta\rho
=\frac{4(1-\nu)}{\nu}\int_s^T\int_{\Tt^d}|D(\rho^{\nu/2})|^2 dxdt.  
\end{equation}
Because $\rho(\cdot, t)$ is a probability measure and $0<\nu<1$ we have
$
\int_{\Tt^d} \rho^\nu(x,t) dx\leq 1
$.
Thus
\[
\int_\tau^T\int_{\Tt^d}|D\rho^{\nu/2}|^2dx\,dt\le \frac{\nu}{4(1-\nu)}.
\]

Fixing a unit vector $\xi\in\Rr^d$,
we have the following representation formula for the 
directional derivative $u_\xi$:  
\begin{equation}
\label{7A1}
u_\xi(x_0,\tau)-\int_{\Tt^d}\psi_\xi\rho(x,T)=-\int_\tau^T\int_{\Tt^d}f_\xi\rho=
\int_\tau\int_{\Tt^d}f\rho_\xi(x,T). 
\end{equation}
Note that
$
\left|\int_{\Tt^d} \psi_\xi\rho(x,T)\right|\le \|\psi\|_{W^{1,\infty}(\Tt^d)}.
$
For $0<\nu<1$, 
\begin{align*}
\left|\int_\tau^T\int_{\Tt^d}f\rho_\xi\right|\leq 
&\int_\tau^T\int_{\Tt^d}|f|\rho^{1-\frac\nu 2} |\rho^{\frac\nu 2-1}D\rho|\\
\le &\|f\|_{L^a([\tau,T]\times\Tt^d)}
\|\rho^{1-\frac \nu 2}\|_{L^b([\tau,T]\times\Tt^d)}
\|D\rho^{\frac \nu 2}\|_{L^2([\tau,T]\times\Tt^d)},
\end{align*}
for any $2\leq a, b\leq \infty$ satisfying
$
\frac 1 a +\frac 1 b+ \frac 1 2 =1 
$.
Therefore it suffices to bound
$
\|\rho^{1-\frac \nu 2}\|_{L^b([\tau,T]\times\Tt^d)}, 
$
for some $b>2$.

Let $\dfrac{d-1}d<\nu<1$, and $\kappa=\dfrac{d\nu}{d\nu+2}$. Then 
$1-\kappa+\frac{2\kappa}{2^*\nu}=\frac\kappa\nu$,
and therefore $1<\dfrac\nu\kappa<\dfrac{2^*\nu}2$. Moreover
$\dfrac\nu\kappa>2-\nu$. Define 
$b=\frac\nu{\kappa(1-\frac\nu 2)}>2$.
By H\"older inequality we have
\[
\Big(\int_{\Tt^d} \rho^{b( 1-\frac \nu 2)}\Big)^\frac{1}{b( 1-\frac \nu 2)}
=\Big(\int_{\Tt^d} \rho^{b( 1-\frac \nu 2)}\Big)^{\frac\kappa\nu}
\leq \Big(\int_{\Tt^d} \rho\Big)^{1-\kappa}
\Big(\int_{\Tt^d} \rho^{\frac{2^*\nu}{2}}\Big)^{\frac{2\kappa}{2^*\nu}}. 
\]
Recall that by Sobolev's inequality we have
$
\left(\int_{\Tt^d} \rho^{\frac{2^*\nu}{2}}\right)^{\frac{2}{2^*}}\leq
C+C \int_{\Tt^d} |D\rho^{\frac \nu 2}|^2$. 
Therefore 
\[\int_{\Tt^d}\rho^{b( 1-\frac \nu 2)}\leq
C+C \int_{\Tt^d} |D\rho^{\frac \nu 2}|^2,\]
and then
\[
\int_\tau^T\int_{\Tt^d}\rho^{b( 1-\frac \nu 2)}
\le C+C \int_\tau^T\int_{\Tt^d} |D\rho^{\frac \nu 2}|^2 \le C.
\]
\end{proof}

\section{Improved regularity for the Fokker-Planck equation}
\label{irfp}

We proceed to obtain further regularity and integrability on $m^\epsilon$ and 
$\eta_\epsilon*m^\epsilon$ building upon the results in the previous
sections. For convenience of notation we will consider the convolution
$\eta_\delta*m^\epsilon$ where $\delta\in\{0, \epsilon\}$ with the natural
convention that $\eta_0*m^\epsilon=m^\epsilon$. 
We start with a modified version of 
Proposition \ref{ent-prod}:

\begin{Proposition} \label{ent-prod2}
Let $(u^\epsilon,m^\epsilon)$ be a solution of \eqref{eq:smfg}. 
Let $\varphi:\Rr\to \Rr$ be a $C^2$ function, and let $\delta\in\{0, \epsilon\}$. Then
\begin{align*}
\frac{d}{dt}\int_{\Tt^d} \varphi(\eta_\delta* m^\epsilon) dx&
+\int_{\Tt^d}\varphi''(\eta_\delta* m^\epsilon) 
D_x(\eta_\delta* m^\epsilon)\cdot\eta_\delta *(m^\epsilon D_pH)dx  \\
&=-\int_{\Tt^d} \varphi''(\eta_\delta* m^\epsilon) |D_x (\eta_\delta* m^\epsilon)|^2 dx.
\end{align*} 
\end{Proposition}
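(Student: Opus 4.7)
The proof is a direct computation paralleling Proposition \ref{ent-prod}, the only difference being that the Fokker--Planck equation is first mollified. My plan is the following. First, convolve the second equation in \eqref{eq:smfg} with $\eta_\delta$; since convolution commutes with all derivatives that appear, this produces a transport--diffusion equation
\[
\partial_t(\eta_\delta * m^\epsilon) - \div\bigl(\eta_\delta*(m^\epsilon D_pH)\bigr) = \Delta(\eta_\delta * m^\epsilon),
\]
in which the drift term appears as $\eta_\delta*(m^\epsilon D_pH)$ rather than as $(\eta_\delta * m^\epsilon)\,D_pH$. This asymmetry (the convolution does not factor across the product) is precisely what makes the identity \emph{modified} with respect to Proposition \ref{ent-prod}.

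Next, I multiply the mollified equation by $\varphi'(\eta_\delta * m^\epsilon)$ and integrate over $\Tt^d$. The time-derivative term becomes immediately $\frac{d}{dt}\int_{\Tt^d}\varphi(\eta_\delta * m^\epsilon)\,dx$ by the chain rule. For the drift term, integrating by parts to move the divergence onto $\varphi'(\eta_\delta * m^\epsilon)$ produces the factor $\varphi''(\eta_\delta * m^\epsilon)\,D_x(\eta_\delta * m^\epsilon)\cdot\eta_\delta*(m^\epsilon D_pH)$. For the Laplacian term, one more integration by parts produces $-\varphi''(\eta_\delta * m^\epsilon)|D_x(\eta_\delta * m^\epsilon)|^2$. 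Reassembling these three contributions yields exactly the identity in the statement.

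I do not foresee any technical obstacle. The smoothness of $\eta_\delta * m^\epsilon$ (inherited from the mollifier when $\delta=\epsilon$, or from the assumed regularity of the solution when $\delta=0$) together with the smoothness of $\varphi$ justifies both integrations by parts, and periodicity eliminates all boundary contributions. As a consistency check, in the case $\delta=0$ one has $\eta_0 * m^\epsilon = m^\epsilon$ and $\eta_0*(m^\epsilon D_pH)=m^\epsilon D_pH$; a further integration by parts using that $\psi(z)=z\varphi'(z)-\varphi(z)$ is a primitive of $z\varphi''(z)$ recovers exactly the $\int \div(D_pH)\,\varphi^*(m^\epsilon)$ form of Proposition \ref{ent-prod}.
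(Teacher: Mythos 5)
Your proposal is correct and follows exactly the paper's own argument: convolve the Fokker--Planck equation with $\eta_\delta$, multiply by $\varphi'(\eta_\delta * m^\epsilon)$, integrate over $\Tt^d$, and integrate by parts, with the convolved drift $\eta_\delta*(m^\epsilon D_pH)$ appearing precisely because convolution does not distribute over the product. Your consistency check for $\delta=0$ recovering Proposition \ref{ent-prod} is a nice addition but not needed.
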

\begin{proof}
The proof follows by first convolving the second equation in \eqref{eq:smfg}
with $\eta_\delta$, then 
by multiplying it by $\varphi'(\eta_\delta*m^\epsilon)$ and finally 
integrating in $\Tt^d$. 
\end{proof}

\begin{Proposition}
\label{firstreg}
Let $(u^\epsilon,m^\epsilon)$ be a solution of \eqref{eq:smfg}. Suppose that A\ref{ah}-\ref{alphaimp} hold.  Let $\delta\in\{0, \epsilon\}$.
For any $\be>0$, $\eta_\delta* m^\epsilon\in L^{\infty}((0,T), L^\be(\Tt^d))$, 
$D_x((\eta_\delta* m^\epsilon)^\be)\in L^2(\Tt^d\times\left[0,T\right])$.  
Additionally $D_x\ln(\eta_\delta* m^\epsilon)\in L^2(\Tt^d\times\left[0,T\right])$.  
Furthermore, all the bounds are uniform in $\epsilon$ and $\delta$.
\end{Proposition}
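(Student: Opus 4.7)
The plan is to use Proposition \ref{ent-prod2} with a suitable family of test functions $\varphi$, exploiting the two new pieces of information we have acquired by this point: first, the Lipschitz regularity $Du^\epsilon\in L^\infty$ from Theorem \ref{teo1sec13}, which together with A\ref{dphs} yields a uniform bound $|D_pH(x,Du^\epsilon)|\leq C$; and second, the uniform $L^\infty(0,T;L^\beta(\Tt^d))$ bounds on $m^\epsilon$, for every $\beta>1$, from Theorem \ref{teosec10}. The $L^\infty(0,T;L^\beta)$ bound on $\eta_\delta*m^\epsilon$ is immediate because convolution with a probability kernel is an $L^\beta$ contraction; the case $\beta\leq 1$ is covered by the fact that $\eta_\delta*m^\epsilon$ has unit mass.

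To establish $D_x((\eta_\delta*m^\epsilon)^\beta)\in L^2(\Tt^d\times[0,T])$, take $\varphi(z)=z^{2\beta}$ (with the understanding $\beta>1/2$; the remaining range follows by interpolation with the trivial $L^\infty L^1$ bound). Then $\varphi''(z)=2\beta(2\beta-1)z^{2\beta-2}$ and $\varphi''(\eta_\delta*m^\epsilon)|D_x(\eta_\delta*m^\epsilon)|^2$ is a positive multiple of $|D_x((\eta_\delta*m^\epsilon)^\beta)|^2$. Integrating the identity from Proposition \ref{ent-prod2} in time over $[0,T]$ and using Cauchy--Schwarz followed by Young's inequality on the drift term,
\begin{align*}
\Big|\int \varphi''(\eta_\delta*m^\epsilon)\,D_x(\eta_\delta*m^\epsilon)\cdot\eta_\delta*(m^\epsilon D_pH)\Big|
&\leq \tfrac12\int \varphi''(\eta_\delta*m^\epsilon)|D_x(\eta_\delta*m^\epsilon)|^2\\
&\quad +\tfrac12\int \varphi''(\eta_\delta*m^\epsilon)|\eta_\delta*(m^\epsilon D_pH)|^2,
\end{align*}
one half of the dissipation term is absorbed on the left. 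The boundary term at $t=T$ has the good sign (it only improves the estimate), the boundary term at $t=0$ is finite since $m_0\in C^\infty$, and the remaining quadratic-in-drift term is bounded because $|\eta_\delta*(m^\epsilon D_pH)|\leq C\,\eta_\delta*m^\epsilon$ (since $|D_pH|\leq C$), which reduces the integral to $C^2\int_0^T\int(\eta_\delta*m^\epsilon)^{2\beta}$, finite by Theorem \ref{teosec10} and the contraction property of convolution.

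For the logarithmic bound, apply Proposition \ref{ent-prod2} with $\varphi(z)=-\log z$, so $\varphi''(z)=z^{-2}$, and observe that $\eta_\delta*m^\epsilon>0$ everywhere (because $m_0\geq\kappa_0>0$ by A\ref{bcc} and the parabolic maximum principle yields $m^\epsilon>0$). Integrating in time produces
\[
\int_0^T\!\!\int_{\Tt^d}\frac{|D_x(\eta_\delta*m^\epsilon)|^2}{(\eta_\delta*m^\epsilon)^2}\,dx\,dt
=\int_{\Tt^d}\log(\eta_\delta*m^\epsilon(\cdot,T))-\int_{\Tt^d}\log(\eta_\delta*m_0)
-\int_0^T\!\!\int_{\Tt^d}\frac{D_x(\eta_\delta*m^\epsilon)\cdot\eta_\delta*(m^\epsilon D_pH)}{(\eta_\delta*m^\epsilon)^2}.
\]
The first boundary term is bounded above by Jensen's inequality (applied to the concave function $\log$, using $\int \eta_\delta*m^\epsilon=1$); the second boundary term is bounded because $\eta_\delta*m_0\geq\kappa_0>0$. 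The drift term is handled again by Cauchy--Schwarz and Young: the ``good'' half is absorbed, while the remaining term has integrand $|\eta_\delta*(m^\epsilon D_pH)|^2/(\eta_\delta*m^\epsilon)^2\leq\|D_pH\|_\infty^2\leq C^2$ and is therefore uniformly bounded.

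The conceptual obstacle is the same in both cases: controlling the drift term $\int\varphi''(\eta_\delta*m^\epsilon)D_x(\eta_\delta*m^\epsilon)\cdot\eta_\delta*(m^\epsilon D_pH)$ without losing control of the dissipation term we are trying to estimate. The two ingredients that make this work uniformly in $\epsilon$ and $\delta$ are precisely the pointwise bound on $D_pH$ (from the Lipschitz estimate of Section \ref{lrsbh}) and the pointwise inequality $|\eta_\delta*(m^\epsilon D_pH)|\leq \|D_pH\|_\infty\,\eta_\delta*m^\epsilon$, which kills the denominator $(\eta_\delta*m^\epsilon)^{2}$ arising from $\varphi''$ in the logarithmic estimate and reduces the polynomial case to a plain $L^{2\beta}$ bound already supplied by Theorem \ref{teosec10}.
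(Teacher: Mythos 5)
Your treatment of the range $\beta>1/2$ and of the logarithmic bound is essentially the paper's own proof: apply Proposition \ref{ent-prod2} with power and logarithm test functions, use the boundedness of $D_pH(x,Du^\epsilon)$ coming from Theorem \ref{teo1sec13} through the pointwise inequality $|\eta_\delta*(m^\epsilon D_pH)|\le C\,\eta_\delta*m^\epsilon$, absorb half of the dissipation by Cauchy--Schwarz, and control the boundary terms at $t=T$ and $t=0$ by Jensen and by $m_0\ge\kappa_0$, respectively. Quoting Theorem \ref{teosec10} for the $L^\infty(0,T;L^\beta)$ bound instead of re-deriving it from the resulting differential inequality (as the paper does) is a harmless shortcut.

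The genuine gap is the case $0<\beta\le 1/2$, which you dismiss with ``interpolation with the trivial $L^\infty L^1$ bound''. That does not work: the quantity to control is $\int_0^T\int_{\Tt^d}|D_x((\eta_\delta*m^\epsilon)^\beta)|^2=\beta^2\int_0^T\int_{\Tt^d}(\eta_\delta*m^\epsilon)^{2\beta-2}|D_x(\eta_\delta*m^\epsilon)|^2$, and for $\beta\le 1/2$ the exponent $2\beta-2\le -1$ makes the weight singular exactly where $\eta_\delta*m^\epsilon$ is small; no $L^p$ bound on $m^\epsilon$ with a positive exponent (in particular not $L^\infty L^1$) gives any control on negative powers of $\eta_\delta*m^\epsilon$, and a uniform positive lower bound on $m^\epsilon$ is not available at this stage (it is only obtained later, in Theorem \ref{mLip}, which builds on this very section). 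The repair is precisely the ingredient you already proved, the logarithmic estimate: for $q>\tfrac{1}{2\beta}$ and $\tfrac1p+\tfrac1q=1$, write $|D_x(\eta_\delta*m^\epsilon)^\beta|=\bigl(\tfrac1q|D_x(\eta_\delta*m^\epsilon)^{q\beta}|\bigr)^{1/q}\bigl(\beta\,|D_x\ln(\eta_\delta*m^\epsilon)|\bigr)^{1/p}$, which is an identity because $\beta-1=\tfrac{q\beta-1}{q}-\tfrac1p$; then H\"older in space-time with exponents $q$ and $p$ yields the $L^2$ bound, using that $D_x(\eta_\delta*m^\epsilon)^{q\beta}\in L^2$ (your $\beta>1/2$ case, since $q\beta>1/2$) and $D_x\ln(\eta_\delta*m^\epsilon)\in L^2$. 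This is exactly how the paper closes the small-$\beta$ range; with that substitution your argument is complete.
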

\begin{proof}
Note that for $0<\be\leq 1$ the statement 
$\eta_\delta* m^\epsilon\in L^{\infty}((0,T),
L^\be(\Tt^d))$ is immediate, so we discuss only the case $\be>1$. 
Since $D_xu^\epsilon$ is bounded, so is $|D_pH(x,D_xu^\epsilon)|$. Thus
applying Proposition \ref{ent-prod2} for $\varphi(z)=z^\be$ yields
\begin{align}\notag
& \frac{d}{dt}\int_{\Tt^d}(\eta_\delta* m^\epsilon)^\be dx
=-\be(\be-1)\int_{\Tt^d} (\eta_\delta* m^\epsilon)^{\be-2}
\eta_\delta*(m^\epsilon D_pH)D_x(\eta_\delta* m^\epsilon)\\
&-\be(\be-1)\int_{\Tt^d}(\eta_\delta* m^\epsilon)^{\be-2}
|D_x(\eta_\delta*m^\epsilon)|^2 dx\\\notag
&\le \frac{\be(\be-1)}2\int_{\Tt^d} (\eta_\delta* m^\epsilon)^{\be-2}
\left(|\eta_\delta*(m^\epsilon D_pH)|^2-
|D_x(\eta_\delta* m^\epsilon)|^2\right)dx\\\label{est}
&\le C\int_{\Tt^d}(\eta_\delta* m^\epsilon)^\be dx
-2\frac{\be-1}\be\int|D_x(\eta_\delta* m^\epsilon)^{\be/2}|^2dx.
\end{align}
Estimate \eqref{est} follows from the fact that, because
$|D_pH(x,Du_\epsilon)|\le C$ and $m^\epsilon\geq 0$, we have
\begin{equation}
  \label{eq:simple}
|\eta_\delta*(m^\epsilon D_pH)|\le C \eta_\delta* m^\epsilon.  
\end{equation}

Consequently $\eta_\delta* m^\epsilon\in L^{\infty}((0,T), L^\be(\Tt^d))$.
Integrating \eqref{est} in time we get
\begin{align*}
&2\frac{\be-1}\be\int_0^T\int_{\Tt^d}|D_x(\eta_\delta*m^\epsilon)^{\be/2}|^2dxdt
\le C\int_0^T\int_{\Tt^d}(\eta_\delta*m^\epsilon)^\be dx dt\\&
+\int_{\Tt^d}(\eta_\delta* m^\epsilon)^\be(x,0) dx
-\int_{\Tt^d}(\eta_\delta* m^\epsilon)^\be(x,T) dx. 
\end{align*}
Thus $D_x(\eta_\delta* m^\epsilon)^\ga\in L^2((0,T)\times\Tt^d)$ 
for $\ga>\dfrac 12$. 

Now Proposition \ref{ent-prod2} applied to $\varphi(z)=\ln z$ yields
\[
\frac{d}{dt}\int_{\Tt^d}\ln(\eta_\delta*m^\epsilon)dx
-\int_{\Tt^d}\frac{\eta_\delta*(m^\epsilon D_pH)D_x(\eta_\delta*m^\epsilon)}
{(\eta_\delta*m^\epsilon)^2}dx=
\int_{\Tt^d}\frac{|D_x(\eta_\delta*m^\epsilon)|^2}{(\eta_\delta*m^\epsilon)^2}dx, 
\]
from which it follows using \eqref{eq:simple}
\[
\int_0^T \int_{\Tt^d} \frac{|D_x(\eta_\delta*m^\epsilon)|^2}
{(\eta_\delta*m^\epsilon)^2} dx \leq
C+ 2\int_{\Tt^d}\ln(\eta_\delta*m^\epsilon)(x,T)dx 
- 2\int_{\Tt^d} \ln(\eta_\delta*m^\epsilon)(x, 0) dx.
\]
By Jensen's inequality 
\[\int_{\Tt^d}\ln(\eta_\delta*m^\epsilon)(x,T)dx
\leq\ln\int_{\Tt^d}\eta_\delta*m^\epsilon(x,T)dx
=\ln\int_{\Tt^d}m^\epsilon(x,T)dx=0.\] 
Therefore $D_x\ln(\eta_\delta*m^\epsilon)\in L^2(\Tt^d\times\left[0,T\right])$.

Let now $0<\be\le \frac 12$. For $q>\dfrac 1{2\be}$ let $p$ be the conjugated
exponent. Since $\be-1=\frac{q\be-1}q-\frac 1p$, we have
\begin{align}
\label{gg0}
|D_x(\eta_\delta*m^\epsilon)^\be|
&=(\tfrac 1q|D_x(\eta_\delta*m^\epsilon)^{q\be}|)^{\frac 1q}
(\be|D_x\ln(\eta_\delta*m^\epsilon)|)^{\frac 1p}.
\end{align} 
Now we observe that by the previous two parts of the proof 
\[(\tfrac 1q|D_x(\eta_\delta*m^\epsilon)^{q\be}|)^{\frac 1q}
\in L^{2q}((0,T)\times \Tt^d),\quad
(\be|D_x\ln(\eta_\delta*m^\epsilon)|)^{\frac 1p}
\in L^{2p}((0,T)\times\Tt^d),\] 
from which we conclude 
$D_x (\eta_\delta*m^\epsilon)^\be \in L^2(\Tt^d\times\left[0,T\right])$.
\end{proof}

\begin{Corollary} \label{freg}
Let $(u^\epsilon,m^\epsilon)$ be a solution of \eqref{eq:smfg}. Suppose that A\ref{ah}-\ref{alphaimp} hold. Then $D_{xx}^2u^\epsilon, u_t^\epsilon\in L^r([0,T]\times\Tt^d)$,
for any $r<\infty$. Furthermore, all the bounds are uniform in $\epsilon$.
 \end{Corollary}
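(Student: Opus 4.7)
The plan is to read off the estimate from the Hamilton-Jacobi equation
\[
-u_t^\epsilon + H(x, Du^\epsilon) = \Delta u^\epsilon + g_\epsilon(m^\epsilon),
\]
viewed as a heat equation for $u^\epsilon$ with source $H(x, Du^\epsilon) - g_\epsilon(m^\epsilon)$, and to invoke Lemma \ref{lrsqc2}. According to that Lemma, for any $1 < p, r < \infty$,
\[
\|u_t^\epsilon\|_{L^r(0,T;L^p(\Tt^d))} + \|D^2_{xx} u^\epsilon\|_{L^r(0,T;L^p(\Tt^d))} \le \|g_\epsilon(m^\epsilon)\|_{L^r(0,T;L^p(\Tt^d))} + \|H(\cdot, Du^\epsilon)\|_{L^r(0,T;L^p(\Tt^d))}.
\]
Taking $p=r$ identifies the space-time $L^r$ norm, so it suffices to show that each term on the right is uniformly bounded in $\epsilon$.

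For the Hamiltonian term, Theorem \ref{teo1sec13} provides a uniform $L^\infty$ bound on $Du^\epsilon$. Combined with the smoothness of $H$ in A\ref{ah} and the subquadratic growth in A\ref{coerca}, this yields $\|H(x, Du^\epsilon)\|_{L^\infty(\Tt^d\times[0,T])} \le C$ uniformly in $\epsilon$, which trivially embeds in any $L^r$ space.

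For the coupling term, write $g_\epsilon(m^\epsilon) = \eta_\epsilon * (\eta_\epsilon * m^\epsilon)^\alpha$ using A\ref{ag2}. Proposition \ref{firstreg} gives that $\eta_\epsilon * m^\epsilon$ is uniformly bounded in $L^\infty(0,T;L^\beta(\Tt^d))$ for every $\beta > 0$; taking $\beta = \alpha p$ shows $(\eta_\epsilon * m^\epsilon)^\alpha$ is uniformly bounded in $L^\infty(0,T;L^p(\Tt^d))$, and Young's convolution inequality with the unit-mass mollifier $\eta_\epsilon$ preserves this bound. Hence $g_\epsilon(m^\epsilon)$ is uniformly bounded in $L^\infty(0,T;L^p(\Tt^d))$, and in particular in $L^r(0,T;L^p(\Tt^d))$, for every $p, r < \infty$.

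Substituting these two bounds into Lemma \ref{lrsqc2} and specializing to $p = r$ yields the conclusion. There is essentially no obstacle here: all the difficulty was absorbed into establishing the uniform Lipschitz bound on $u^\epsilon$ (Theorem \ref{teo1sec13}) and the arbitrary $L^\beta$ regularity of the mollified density (Proposition \ref{firstreg}); the present Corollary is the clean-up step that feeds these inputs into parabolic Calder\'on-Zygmund regularity.
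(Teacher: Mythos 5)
Your proposal is correct and follows essentially the same route as the paper: the paper also writes $f=H(x,Du^\epsilon)-\eta_\epsilon*(\eta_\epsilon*m^\epsilon)^\alpha$, bounds it in $L^\infty([0,T],L^r(\Tt^d))$ via Theorem \ref{teo1sec13} and Proposition \ref{firstreg}, and concludes by parabolic regularity for the heat equation (which is what Lemma \ref{lrsqc2} encodes). The only difference is that you spell out the mollifier and Young's inequality step, which the paper leaves implicit.
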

 \begin{proof}
Let $f=H(x,Du^\epsilon)-\eta_\epsilon*(\eta_\epsilon*m^\epsilon)^\af$. 
Then $f\in L^\infty([0,T],L^r(\Tt^d))$ by Theorem \ref{teo1sec13} 
and Proposition \ref{firstreg}. Since 
$L^\infty([0,T],L^r(\Tt^d)) \subset L^r([0,T]\times\Tt^d)$, 
it follows by the regularity theory for the heat equation that
$D_{xx}^2u^\epsilon, u^\epsilon_t\in L^r([0,T]\times\Tt^d)$.
\end{proof}

 \begin{Corollary}\label{uregular}
Let $(u^\epsilon,m^\epsilon)$ be a solution of \eqref{eq:smfg}. Suppose that A\ref{ah}-\ref{alphaimp} hold. Then   
$D_{xxx}^3u^\epsilon, D_{xt}^2u^\epsilon\in L^2([0,T]\times\Tt^d)$,
$D_{xx}u^\epsilon\in L^\infty([0,T], L^2(\Tt^d))$. Furthermore, all
the bounds are uniform in $\epsilon$. 
 \end{Corollary}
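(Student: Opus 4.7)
The plan is to differentiate the Hamilton--Jacobi equation once in $x$ and treat the resulting equation as a linear parabolic equation for $v^\ep\doteq u^\ep_{x_k}$, then apply the standard $L^2$-parabolic regularity theory. Writing the first equation of \eqref{eq:smfg} as $-u^\ep_t-\Delta u^\ep=-H(x,Du^\ep)+g_\ep(m^\ep)$ and differentiating in $x_k$, we obtain
\begin{equation*}
-v^\ep_t-\Delta v^\ep=-H_{x_k}(x,Du^\ep)-D_pH(x,Du^\ep)\cdot Dv^\ep+\bigl(g_\ep(m^\ep)\bigr)_{x_k},
\end{equation*}
a backward linear parabolic equation with the smooth terminal datum $v^\ep(\cdot,T)=(u_0)_{x_k}$.

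The first step is to show that the right-hand side is in $L^2(\Tt^d\times[0,T])$ uniformly in $\ep$. By Theorem \ref{teo1sec13}, $Du^\ep$ is uniformly bounded, so by A\ref{strong} and A\ref{dphs} the terms $H_{x_k}(x,Du^\ep)$ and $D_pH(x,Du^\ep)$ are uniformly bounded. Since Corollary \ref{freg} gives $D^2_{xx}u^\ep\in L^r$ for any $r<\infty$, the transport term $D_pH\cdot Dv^\ep$ is in $L^2$. For the forcing $\bigl(g_\ep(m^\ep)\bigr)_{x_k}$, I would write $g_\ep(m^\ep)=\eta_\ep*(\eta_\ep*m^\ep)^\af$, so that
\begin{equation*}
D_x g_\ep(m^\ep)=\eta_\ep*D_x\bigl((\eta_\ep*m^\ep)^\af\bigr),
\end{equation*}
and invoke Proposition \ref{firstreg} with $\be=\af$ to obtain $D_x((\eta_\ep*m^\ep)^\af)\in L^2(\Tt^d\times[0,T])$ uniformly in $\ep$; convolution with $\eta_\ep$ is a contraction in $L^2$, so the forcing is controlled in $L^2$ as required.

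The second step is a standard energy estimate on a backward heat equation with $L^2$ right-hand side. After the change of time $s=T-t$, the equation for $v^\ep$ becomes a forward parabolic equation with smooth initial datum and $L^2$ source $F^\ep$. Multiplying by $v^\ep_t$ (respectively by $-\Delta v^\ep$) and integrating by parts in the usual way, I would obtain bounds of the form
\begin{equation*}
\sup_{t\in[0,T]}\int_{\Tt^d}|Dv^\ep|^2\,dx+\int_0^T\!\!\int_{\Tt^d}\bigl(|v^\ep_t|^2+|D^2v^\ep|^2\bigr)dx\,dt\le C\|F^\ep\|_{L^2}^2+C\|v^\ep(\cdot,T)\|_{H^1}^2,
\end{equation*}
with $C$ independent of $\ep$. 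Translating back, this gives $D^2_{xx}u^\ep\in L^\infty([0,T];L^2(\Tt^d))$ and $D^3_{xxx}u^\ep,\,D^2_{xt}u^\ep\in L^2(\Tt^d\times[0,T])$ with uniform bounds, which is the statement of the corollary.

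The only delicate point is the $L^2$ bound on $D_x g_\ep(m^\ep)$, and this is exactly what Proposition \ref{firstreg} was designed to deliver; the rest is routine parabolic regularity applied to a linear equation with bounded coefficients. I do not expect any significant obstacle beyond keeping the estimates uniform in $\ep$, which is automatic because all the inputs (boundedness of $Du^\ep$, the $L^2$ bound on $D_x((\eta_\ep*m^\ep)^\af)$, and the $L^r$ bounds on $D^2_{xx}u^\ep$) have already been established uniformly in $\ep$ in the preceding sections.
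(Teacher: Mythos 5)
Your proposal is correct and follows essentially the same route as the paper: the paper also differentiates the equation $u^\ep_t+\Delta u^\ep=H(x,Du^\ep)-\eta_\ep*(\eta_\ep*m^\ep)^\af$ in $x$, bounds the differentiated right-hand side in $L^2$ using the uniform Lipschitz bound (Theorem \ref{teo1sec13}), the $L^r$ bound on $D^2_{xx}u^\ep$ (Corollary \ref{freg}) and Proposition \ref{firstreg} for $D_x\bigl((\eta_\ep*m^\ep)^\af\bigr)$, and then applies parabolic regularity to $u^\ep_{x_it}+\Delta u^\ep_{x_i}=f_{x_i}$. Your only deviation is keeping the transport term $D_pH\cdot Dv^\ep$ explicit and spelling out the energy estimate, which is the same computation the paper cites as standard regularity theory.
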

\begin{proof}
Let $f=H(x,Du^\epsilon)-\eta_\epsilon*(\eta_\epsilon*m^\epsilon)^\af$
as in the proof of Corollary \ref{freg}. 
We claim that  
$D_xf\in L^2([0,T]\times\Tt^d)$.
Indeed, since
\[
D_x(H(x,Du^\epsilon))=D_xH+D_pHD^2_{xx}u^\epsilon\in L^r([0,T]\times\Tt^d), 
\]
and so 
using Proposition \ref{firstreg} we conclude that 
$D_xf\in L^2([0,T]\times\Tt^d)$.
 
The result then follows by applying regularity theory to 
$
u_{x_it}^\epsilon+\Delta u_{x_i}^\epsilon=f_{x_i}$.
\end{proof}

\begin{Corollary}\label{mregular}
Let $(u^\epsilon,m^\epsilon)$ be a solution of \eqref{eq:smfg}. Suppose that A\ref{ah}\ref{alphaimp} hold. Then 
   $D_{xx}^2m^\epsilon, m_t^\epsilon\in L^2([0,T]\times\Tt^d)$, and
   $D_xm^\epsilon\in L^\infty([0,T], L^2(\Tt^d))$.
 \end{Corollary}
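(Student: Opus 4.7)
The plan is to mirror the arguments in Corollaries \ref{freg} and \ref{uregular}: rewrite the Fokker--Planck equation as a heat equation with a concrete forcing term, check that this forcing lies in $L^2([0,T]\times\Tt^d)$, and then invoke the standard $L^p$-regularity theory for the heat equation, which simultaneously yields the second-order spatial regularity, the regularity in time, and the $L^\infty_tL^2_x$ bound on the gradient.

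Concretely, I would write the second equation of \eqref{eq:smfg} as
\begin{equation*}
m_t^\epsilon - \Delta m^\epsilon = \div\!\left(D_pH(x,Du^\epsilon)\,m^\epsilon\right) = D_pH\cdot D_x m^\epsilon + m^\epsilon\,\div\!\left(D_pH(x,Du^\epsilon)\right),
\end{equation*}
and set $F\doteq\div(D_pH\,m^\epsilon)$. The task reduces to showing $F\in L^2([0,T]\times\Tt^d)$. For the first summand, Theorem \ref{teo1sec13} gives $Du^\epsilon\in L^\infty$, so by Assumption A\ref{dphs} the field $D_pH(x,Du^\epsilon)$ is uniformly bounded; combined with $D_xm^\epsilon\in L^2([0,T]\times\Tt^d)$ (which is the case $\be=1$ of Proposition \ref{firstreg}), this term is controlled in $L^2$. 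For the second summand, $\div(D_pH)=\tr(D^2_{pp}H\,D^2_{xx}u^\epsilon)+\tr(D^2_{xp}H)$, and using Assumption A\ref{asbdpph} together with Corollary \ref{freg} we get $\div(D_pH)\in L^r([0,T]\times\Tt^d)$ for every finite $r$. Since $m^\epsilon\in L^\infty([0,T],L^s(\Tt^d))$ for every finite $s$ by Theorem \ref{teosec10}, a Hölder split with suitably chosen exponents places $m^\epsilon\,\div(D_pH)$ in $L^2([0,T]\times\Tt^d)$.

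With $F\in L^2([0,T]\times\Tt^d)$ established, the standard $L^p$-parabolic theory (see \cite{Lad, LiFM}, as already invoked in Lemma \ref{lrsqc2}) applied to
\begin{equation*}
m_t^\epsilon - \Delta m^\epsilon = F, \qquad m^\epsilon(\cdot,0)=m_0\in C^\infty(\Tt^d),
\end{equation*}
yields $D^2_{xx}m^\epsilon,\,m^\epsilon_t\in L^2([0,T]\times\Tt^d)$ and $D_xm^\epsilon\in L^\infty([0,T],L^2(\Tt^d))$, with bounds that are uniform in $\epsilon$ since all previous estimates were uniform in $\epsilon$.

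The only genuine point to be careful about is the $L^2$-bound on $D_xm^\epsilon$ used in the first term of $F$: this is precisely the $\be=1$ instance of Proposition \ref{firstreg}, which itself hinges on the uniform bound on $D_pH(x,Du^\epsilon)$ from the Lipschitz estimate of Theorem \ref{teo1sec13}. Once this is granted the remainder of the argument is completely routine.
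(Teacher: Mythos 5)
Your proposal is correct and takes essentially the same route as the paper: both decompose $\div(D_pH\,m^\epsilon)=\div(D_pH)\,m^\epsilon+D_pH\cdot D_xm^\epsilon$, bound each summand in $L^2([0,T]\times\Tt^d)$ using $Du^\epsilon\in L^\infty$ from Theorem \ref{teo1sec13}, the $L^r$ integrability of $D^2_{xx}u^\epsilon$ from Corollary \ref{freg}, and the $L^\infty_tL^s_x$ and $D_xm^\epsilon\in L^2$ bounds, and then close by parabolic regularity. Your citation of Theorem \ref{teosec10} for the integrability of $m^\epsilon$ (where the paper cites Proposition \ref{firstreg}) is a harmless variation.
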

 \begin{proof}
We have
$
  D_{x_i}(D_{p_j}H(x,D_xu^\epsilon))=D^2_{x_ip_j}H+D^2_{p_jp_k}H D^2_{x_ix_k}u^\epsilon
$
  from what it follows, using Corollaries \ref{freg} and \ref{uregular}, that for any $r<\infty$
\begin{equation}
  \label{eq:div}
D_{x}(D_{p}H(x,D_xu^\epsilon)) \in L^{r}([0,T]\times\Tt^d) \cap L^\infty([0,T], L^2(\Tt^d)).
  \end{equation}
We have
\begin{equation}
\label{divexp}
\div(D_pHm^\epsilon)=\div(D_pH)m^\epsilon+D_pH\cdot D_xm^\epsilon
\end{equation}
Since $m^\epsilon\in L^\infty([0,T], L^r(\Tt^d))$, for all $r>1$, by Proposition \ref{firstreg}, 
using \eqref{eq:div} we get $\div(D_pH)m^\epsilon\in L^2(\Tt^d\times\left[0,T\right])$. 
Also, by proposition \ref{firstreg}, we have $D_x m^\epsilon\in L^2(\Tt^d\times\left[0,T\right])$, 
and so $D_pH\cdot D_xm^\epsilon \in L^2(\Tt^d\times\left[0,T\right])$. 
Therefore
$
\div(D_pHm^\epsilon)\in L^2([0,T]\times\Tt^d)
$.
Applying regularity theory to 
$
m_t^\epsilon-\Delta m^\epsilon=\div(D_pHm^\epsilon)
$
we get the Corollary.
\end{proof}

 \begin{Corollary}\label{mholder}
Let $(u^\epsilon,m^\epsilon)$ be a solution of \eqref{eq:smfg}. Suppose that A\ref{ah}-\ref{alphaimp} hold. There is $\br>d$ such that 
$D_xm^\epsilon, D_{xx}^2m^\epsilon, m_t^\epsilon\in L^{\br}(\Tt^d\times\left[0,T\right])$ and then
$m^\epsilon\in C^{0,1-d/\br}(\Tt^d\times\left[0,T\right])$.
 \end{Corollary}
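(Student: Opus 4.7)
The plan is to bootstrap $L^p$ regularity for $m^\epsilon$ via the Fokker--Planck equation, starting from the $L^2$ bounds of Corollary \ref{mregular}. Rewriting the second equation of \eqref{eq:smfg} in non-divergence form using \eqref{divexp} gives
\[
m_t^\epsilon - \Delta m^\epsilon = -D_pH(x,Du^\epsilon)\cdot D_xm^\epsilon - \div(D_pH(x,Du^\epsilon))\,m^\epsilon.
\]
By Theorem \ref{teo1sec13} combined with A\ref{dphs}, the coefficient $D_pH(x,Du^\epsilon)$ is bounded uniformly in $\epsilon$. By \eqref{eq:div} and Proposition \ref{firstreg}, the zeroth-order forcing $\div(D_pH(x,Du^\epsilon))\,m^\epsilon$ lies in $L^r(\Tt^d\times[0,T])$ for every $r<\infty$, again uniformly in $\epsilon$. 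Therefore the right-hand side has exactly the same integrability as $D_xm^\epsilon$ (plus a much more regular remainder).

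Suppose inductively that $D_xm^\epsilon\in L^{p_k}(\Tt^d\times[0,T])$; the base case $p_0=2$ is Corollary \ref{mregular}. Then the right-hand side above belongs to $L^{p_k}(\Tt^d\times[0,T])$, so parabolic $L^p$-theory applied to $m_t^\epsilon-\Delta m^\epsilon=f$ with $f\in L^{p_k}$ (see \cite{Lad,LiFM}) yields $D_{xx}^2m^\epsilon, m_t^\epsilon\in L^{p_k}(\Tt^d\times[0,T])$ with a bound uniform in $\epsilon$. The parabolic Sobolev embedding then raises the integrability of $D_xm^\epsilon$ to $L^{p_{k+1}}$ with
\[
\frac{1}{p_{k+1}}=\frac{1}{p_k}-\frac{1}{d+2},
\]
provided $p_k<d+2$. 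Starting from $p_0=2$ one obtains $p_k=\tfrac{2(d+2)}{d-2k}$, so after at most $\lceil d/2\rceil$ iterations some $p_k$ exceeds $d$; nothing prevents the iteration from continuing beyond, so the exponent can be taken as large as needed. Fix such a $\br>d$; one more application of parabolic $L^{\br}$-theory to the same equation provides the announced bound for $D_xm^\epsilon$, $D_{xx}^2m^\epsilon$, and $m_t^\epsilon$ in $L^{\br}(\Tt^d\times[0,T])$.

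The Hölder statement $m^\epsilon\in C^{0,1-d/\br}(\Tt^d\times[0,T])$ then follows from Morrey's embedding: for each fixed $t$, the bound $D_xm^\epsilon(\cdot,t)\in L^{\br}(\Tt^d)$ with $\br>d$ (an essential $t$-pointwise consequence of the preceding step, obtainable after one extra iteration) gives $m^\epsilon(\cdot,t)\in C^{0,1-d/\br}(\Tt^d)$ uniformly in $t$, while the $L^{\br}$ bound on $m_t^\epsilon$ supplies the corresponding temporal Hölder estimate. Alternatively, choosing $\br>d+2$ in the bootstrap and invoking the parabolic Morrey embedding of $W^{2,1}_{\br}(\Tt^d\times[0,T])$ directly produces joint space-time Hölder regularity.

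The main technical point is that each step of the bootstrap must improve the integrability of the gradient $D_xm^\epsilon$, not merely of $m^\epsilon$ itself; this is precisely what the parabolic embedding $W^{2,1}_{p}\hookrightarrow W^{1,0}_{p^{*}}$ provides, and it closes in finitely many steps because the boundedness of $D_pH(x,Du^\epsilon)$ and the high integrability of $\div(D_pH(x,Du^\epsilon))\,m^\epsilon$ ensure that no new loss of regularity is introduced along the way.
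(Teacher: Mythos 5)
Your proof is correct, but it runs the bootstrap differently from the paper. Both arguments iterate parabolic regularity for the Fokker--Planck equation with right-hand side controlled through \eqref{divexp} (bounded $D_pH$ from Theorem \ref{teo1sec13}, and $\div(D_pH)m^\epsilon$ in every $L^r$ from \eqref{eq:div} and Proposition \ref{firstreg}); the difference is how the gradient integrability is upgraded at each step. The paper interpolates, at each time slice, between the spatial Sobolev conjugate exponent $\be_n^*$ coming from $\|m^\epsilon\|_{W^{2,\be_n}}$ and the time-uniform bound $D_xm^\epsilon\in L^\infty([0,T],L^2(\Tt^d))$ of Corollary \ref{mregular}, which yields the multiplicative gain $\be_{n+1}=\left(1+\tfrac2d\right)\be_n$ using only the elliptic Sobolev inequality. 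You instead invoke the anisotropic parabolic embedding $W^{2,1}_{p}\hookrightarrow W^{1,0}_{p^*}$ with $\tfrac1{p^*}=\tfrac1p-\tfrac1{d+2}$, an additive gain in $\tfrac1p$, which only needs the space-time $L^2$ bound on $D_xm^\epsilon$ as base case but leans on the stronger embedding theorem; either way the iteration passes $d$ in finitely many steps, uniformly in $\epsilon$. Two small points: your explicit formula should read $p_k=\tfrac{2(d+2)}{d+2-2k}$ rather than $\tfrac{2(d+2)}{d-2k}$ (harmless for the conclusion), and for the H\"older statement your first route is shaky as written, since a space-time $L^{\br}$ bound on $D_xm^\epsilon$ and $m_t^\epsilon$ does not by itself give the $t$-pointwise spatial bound or the temporal modulus you invoke; your alternative route --- push the bootstrap to some $\br>d+2$ and use the parabolic Morrey embedding of $W^{2,1}_{\br}$ --- is the one to keep, and it delivers the joint space-time H\"older continuity claimed in the Corollary (the paper's own appeal to ``Morrey's theorem'' is similarly terse on the time variable).
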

 \begin{proof}
We define inductively a finite increasing sequence
$\be_0=2,\ldots,\be_{N-1}\le d<\be_N$ such that the 
statement of the Corollary holds for $r=\be_n$ and $\be_n\le d$. Assuming 
$\be_n\le d$ is already defined let $\be_{n+1}=(1+\frac 2d)\be_n$. As usually, 
denote by $\be_n^*$ the Sobolev conjugated exponent to $\be_n$: $
\frac{1}{\be_n^*}=\frac 1 {\be_n} -\frac 1 d$.
By H\"older and Sobolev inequalities
\[
\int_{\Tt^d}|D_xm^\epsilon|^{\be_{n+1}}\le\left(\int_{\Tt^d}
  |D_xm^\epsilon|^{\be_n^*}\right)^{\frac{\be_n}{\be_n^*}}  
\left(\int_{\Tt^d}|D_xm^\epsilon|^2\right)^{\frac{\be_n}d}
\le\|m^\epsilon\|_{W^{2,\be_n}}^{\be_n}\|D_xm^\epsilon\|_2^{\frac {2\be_n}d}.
 \]
Integrating in the time variable
\[
\|D_xm^\epsilon\|^{\be_{n+1}}_{L^{\be_{n+1}}(\Tt^d\times\left[0,T\right])}
\le \|m^\epsilon\|_{L^{\be_n}(0,T; W^{2,\be_n})}^{\be_n}
\|D_xm^\epsilon\|_{L^\infty([0,T], L^2(\Tt^d)}^{\frac{2\be_n}d}.
\]
Taking into account \eqref{divexp}, and the fact that 
 $\div(D_pH) \in L^p(\Tt^d\times\left[0,T\right])$, for any $p>1$,
 $m^\epsilon\in L^\infty([0,T], L^r(\Tt^d))$ for any $r>1$,  
$D_pH$ bounded, and the above estimate for $D_xm$, 
we conclude that  
 $\div(D_pHm^\epsilon)\in L^{\be_{n+1}}(\Tt^d\times\left[0,T\right])$.
The statement of the Corollary for $r=\be_{n+1}$ then follows by standard parabolic regularity. The H\"older
continuity is a consequence of Morrey's theorem. 
\end{proof}

\begin{Corollary} \label{m+regular}
Let $(u^\epsilon,m^\epsilon)$ be a solution of \eqref{eq:smfg}. Suppose that A\ref{ah}-\ref{alphaimp} hold. Then $D_{xxx}^3m^\epsilon, D^2_{xt}m^\epsilon\in L^2(\Tt^d\times\left[0,T\right])$, 
$D^2_{xx}m^\epsilon\in L^{\infty}([0,T], L^2(\Tt^d))$.
 \end{Corollary}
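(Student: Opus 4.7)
The plan is to repeat the strategy of Corollary \ref{mregular} at one higher order of differentiation: apply $\partial_{x_i}$ to the Fokker-Planck equation, verify that the resulting source term lies in $L^2(\Tt^d\times[0,T])$, and then invoke parabolic $L^2$ regularity. Differentiating the second equation of \eqref{eq:smfg} in $x_i$ gives, for each $i$,
\[
\partial_t(m^\epsilon_{x_i})-\Delta(m^\epsilon_{x_i})=-\partial_{x_i}\div\bigl(D_pH(x,Du^\epsilon)m^\epsilon\bigr),
\]
and standard parabolic regularity then yields simultaneously $D^3_{xxx}m^\epsilon\in L^2$, $D^2_{xt}m^\epsilon\in L^2$, and $D^2_{xx}m^\epsilon\in L^\infty([0,T],L^2(\Tt^d))$, provided the right-hand side is in $L^2$.

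To check the $L^2$ bound I would expand $\div(D_pHm^\epsilon)=\div(D_pH)m^\epsilon+D_pH\cdot D_xm^\epsilon$ and differentiate once more in $x_i$, producing four groups of terms. The easy ones are: $D_pH\cdot D^2_{xx}m^\epsilon\in L^2$, since $D_pH$ is bounded (by Theorem \ref{teo1sec13} and A\ref{dphs}) and $D^2_{xx}m^\epsilon\in L^2$ by Corollary \ref{mregular}; while $\div(D_pH)\,\partial_{x_i}m^\epsilon$ and $\partial_{x_i}(D_pH)\cdot D_xm^\epsilon$ are each products of a factor in $L^r(\Tt^d\times[0,T])$ for every $r<\infty$ (since $D^2_{xx}u^\epsilon\in L^r$ for every $r$ by Corollary \ref{freg}, with bounded coefficients coming from $D^2_{xp}H$ and $D^2_{pp}H$ along $Du^\epsilon$) times $D_xm^\epsilon\in L^{\bar r}$ with $\bar r>d$ from Corollary \ref{mholder}, so Hölder places them in $L^2$.

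The remaining term $\partial_{x_i}\div(D_pH)\,m^\epsilon$ is the delicate one, and I expect it to be the main obstacle. Writing $\div(D_pH)=\tr(D^2_{pp}H\,D^2_{xx}u^\epsilon)+\tr(D^2_{xp}H)$ and differentiating once more in $x_i$, we generate in particular the genuinely third-order contribution of the form $D^2_{pp}H\,D^3_{x_ix_jx_k}u^\epsilon$, together with lower-order terms in which derivatives of $H$ are composed with $D^2_{xx}u^\epsilon$. The derivatives of $H$ evaluated along $Du^\epsilon$ are bounded by A\ref{asbdpph} and Theorem \ref{teo1sec13}, $D^3_{xxx}u^\epsilon\in L^2$ by Corollary \ref{uregular}, and $m^\epsilon$ is bounded thanks to the Hölder continuity in Corollary \ref{mholder}; this places the whole term in $L^2$. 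Assembling the four pieces and applying parabolic $L^2$ regularity to the heat equation for $m^\epsilon_{x_i}$ concludes the proof.
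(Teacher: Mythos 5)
Your proof is correct and follows essentially the same route as the paper's: differentiate the Fokker--Planck equation, expand the source by the Leibniz rule into the same four groups of terms, bound the delicate term $\partial_{x_i}\div(D_pH)\,m^\epsilon$ via $D^3_{xxx}u^\epsilon\in L^2$ from Corollary~\ref{uregular}, bound the mixed terms via H\"older using Corollaries~\ref{freg} and~\ref{mholder}, bound $D_pH\cdot D^2_{xx}m^\epsilon$ via boundedness of $D_pH$ and Corollary~\ref{mregular}, and conclude by parabolic $L^2$ regularity for $m^\epsilon_{x_i}$. The only cosmetic difference is that you apply $\partial_{x_i}$ to the divergence form $\div(D_pH)m^\epsilon+D_pH\cdot D_xm^\epsilon$ while the paper computes $D^2_{x_ix_k}(D_{p_l}Hm^\epsilon)$ directly, but the resulting decomposition and estimates are identical.
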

 \begin{proof}
We start by establishing that 
 $D_{xx}(D_pHm^\epsilon)\in L^2(\Tt^d\times\left[0,T\right])$.
 

Note that 
\begin{align}\notag
 D^2_{x_ix_k}(D_{p_l}Hm^\epsilon)&=D^2_{x_ix_k}(D_{p_l}H)m^\epsilon+D_{x_i}(D_{p_l}H)D_{x_k}m^\epsilon\\
&+D_{x_k}(D_{p_l}H)D_{x_i}m^\epsilon+D^2_{x_ix_k}m^\epsilon(D_{p_l}H). 
  \label{eq:dd(mdiv)}
\end{align}
The first term $D^2_{x_ix_k}(D_{p_l}H)m^\epsilon$, since $m^\epsilon$ is bounded by Corollary \ref{mholder}, can be estimated by 
showing that 
\begin{align}\notag
 &D^2_{x_ix_k}(D_{p_l}H(x,Du^\epsilon))=D_{x_ix_kp_j}^3H+D_{x_ip_jp_l}^3Hu_{x_lx_k}^\epsilon
+D_{p_jp_lx_k}^3Hu_{x_lx_i}^\epsilon\\\label{ddiv}
&\quad +D_{p_jp_lp_m}^3Hu_{x_lx_i}^\epsilon u_{x_mx_k}^\epsilon+D^2_{p_jp_l}Hu_{x_lx_ix_k}^\epsilon
\in L^{2}(\Tt^d\times\left[0,T\right]). 
\end{align}
In the previous equation $D_{x_ix_kp_j}^3H$ is bounded; $D_{x_ip_jp_l}^3Hu_{x_lx_k}^\epsilon, \ D_{p_jp_lx_k}^3Hu_{x_lx_i}^\epsilon
\in L^{2}(\Tt^d\times\left[0,T\right])$, by Corollary \eqref{freg}; using that $D^2_{xx} u^\epsilon\in L^r(\Tt^d\times\left[0,T\right])$, 
for any $r<\infty$ we also get that $D_{p_jp_lp_m}^3Hu_{x_lx_i}^\epsilon u_{x_mx_k}^\epsilon \in L^{2}(\Tt^d\times\left[0,T\right])$; 
finally $D^2_{p_jp_l}Hu_{x_lx_ix_k}^\epsilon$ is controlled thanks to Corollary \ref{uregular}. 

The second term $D_{x_i}(D_{p_l}H)D_{x_k}m^\epsilon$ and the 
 third term $D_{x_k}(D_{p_l}H)D_{x_i}m^\epsilon$ can be estimated by observing that $D_xm^\epsilon \in L^\br(\Tt^d\times\left[0,T\right])$, for
some $\br>d\geq 2$, by Corollary \ref{mholder} and using \eqref{eq:div} in the proof Corollary \ref{mregular} which states that 
$D_x(D_pH) \in L^r(\Tt^d\times\left[0,T\right])$, for any $r<\infty$. Hence, by taking $r$ large enough we have
$
D_{x_i}(D_{p_l}H)D_{x_k}m^\epsilon,\  D_{x_k}(D_{p_l}H)D_{x_i}m^\epsilon\in L^2(\Tt^d\times\left[0,T\right])$.
Finally the last term $D^2_{x_ix_k}m^\epsilon(D_{p_l}H)$, since $D_pH$ is bounded can be controlled by the estimate 
for $D^2_{xx} m^\epsilon$ in Corollary \ref{mregular}. And so $D^2_{x_ix_k}m^\epsilon(D_{p_l}H)\in L^2(\Tt^d\times\left[0,T\right])$.
From which we conclude that $ D^2_{x_ix_k}(D_{p_l}Hm^\epsilon)\in L^2(\Tt^d\times\left[0,T\right])$. 
The proof ends by applying
standard regularity theory to 
$
m_{x_it}^\epsilon-\Delta m_{x_i}^\epsilon=(\div(D_pHm^\epsilon))_{x_i}.
$
 \end{proof}

\section{Further regularity by the Hopf-Cole transformation}
\label{frhct}
To simplify the notation, throughout this Section we will omit the $\epsilon$ as all the estimates previously obtained are uniform in $\epsilon$. 
We use the log transform $w=\ln m$, then $w$ solves
\begin{equation}
  \label{eq:logm}
w_t=\div(D_pH(x,D_xu)) +D_pH(x,D_xu) Dw+|Dw|^2+\Delta w.  
\end{equation}
Therefore
$w$ is the value function for
the following stochastic optimal control problem 
\begin{align*}
  w(x,t)=&\sup_\bv E
  \Big[
  (w(\bx(0),0)\\
  &+\int_0^t(\div D_pH)(\bx,D_xu(\bx,s)) 
-\frac {|\bv-D_pH(\bx,D_xu(\bx,s))|^2}4ds)\Big],
\end{align*}
where the supremum is taken over all bounded and progressively
measurable controls $\bv$,
\[
d\bx=\bv ds+\sqrt{2} dW_{t-s},\quad   \bx(t)=x
\] 
and $W_s$ is a $d$-dimensional Brownian motion.

This implies the following Lax-Hopf estimate: 
\begin{align}
\label{lhw}
w(x,t)\geq&\int_0^t\int_{\Tt^d}\big(\div(D_pH)-\frac 14|b-D_pH|^2\big)
\zeta(x,y,s) dy ds\\\notag &+\int_{\Tt^d} w(y,0)\zeta(x,y,0), 
\end{align}
where $b$ is any smooth vector field $b:\Tt^d\times \left[0,t\right]\to \Rr^d$,
and $\zeta$ is a solution of 
\begin{equation}
\label{dlaw}
\zeta_s+\Delta\zeta=\div(b\zeta),
\end{equation}
with $\zeta(x,y,t)=\delta_x(y)$. 

\begin{Corollary}
\label{imphk}
Let $(u^\epsilon,m^\epsilon)$ be a solution of \eqref{eq:smfg}. Suppose that A\ref{ah}-\ref{alphaimp} hold. Set $w=\ln m$.   
If $r>d$ we have 
\begin{equation}
\label{atron}
w(x,t)\ge-C-C\|\div(D_pH)\|_{L^2(0,t;L^r(\Tt^d))}+\int_{\Tt^d}
w(x,0)\theta(y,t)dy, 
\end{equation}
where $\theta(y,s)$ is the heat kernel with $\theta(\cdot,0)=\delta_{x}$, and $q$
is the conjugated exponent to $r$, $\frac 1 q + \frac 1 r =1$. 
\end{Corollary}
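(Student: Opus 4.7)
The strategy is to apply the Lax-Hopf estimate \eqref{lhw} with the simplest possible choice $b \equiv 0$, which turns \eqref{dlaw} into the backward heat equation and makes $\zeta$ explicit via the heat kernel.

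With $b = 0$, \eqref{dlaw} reads $\zeta_s + \Delta \zeta = 0$ with terminal data $\zeta(x,y,t) = \delta_x(y)$. Time reversal $s \mapsto t-s$ turns this into the forward heat equation, so $\zeta(x,y,s) = \theta(y,t-s)$, where $\theta(\cdot,\tau)$ is the heat kernel with $\theta(\cdot,0) = \delta_x$. In particular $\zeta(x,y,0) = \theta(y,t)$, which produces the desired last term of \eqref{atron}. Substituting into \eqref{lhw} yields
\[
w(x,t) \ge \int_0^t\!\!\int_{\Tt^d}\!\div(D_pH)(y,s)\,\theta(y,t-s)\,dyds - \tfrac14\int_0^t\!\!\int_{\Tt^d}\!|D_pH|^2\theta(y,t-s)\,dyds + \int_{\Tt^d}\!w(y,0)\theta(y,t)\,dy.
\]

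Next, I would absorb the quadratic term into the constant. By Theorem \ref{teo1sec13}, $Du^\epsilon$ is bounded uniformly in $\epsilon$, and A\ref{dphs} gives $|D_pH(x,Du^\epsilon)| \le C|Du^\epsilon|^{\gamma-1}+C$, hence $|D_pH|$ is uniformly bounded. Since $\int_{\Tt^d}\theta(y,t-s)\,dy = 1$, the term $\tfrac14\int_0^t\!\int_{\Tt^d}|D_pH|^2\theta\,dyds$ is bounded by $CT$, absorbed into the constant $C$ on the right-hand side of \eqref{atron}.

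For the remaining linear term, I would apply Hölder's inequality in space with conjugate exponents $r$ and $q$:
\[
\left|\int_{\Tt^d}\div(D_pH)(y,s)\,\theta(y,t-s)\,dy\right| \le \|\div(D_pH)(\cdot,s)\|_{L^r(\Tt^d)}\,\|\theta(\cdot,t-s)\|_{L^q(\Tt^d)}.
\]
The standard heat-kernel estimate gives $\|\theta(\cdot,\tau)\|_{L^q(\Tt^d)} \le C\tau^{-\frac{d}{2}(1-1/q)} = C\tau^{-\frac{d}{2r}}$. Applying Hölder in time with exponents $2$ and $2$,
\[
\int_0^t\!\|\div(D_pH)(\cdot,s)\|_{L^r}\|\theta(\cdot,t-s)\|_{L^q}ds \le C\|\div(D_pH)\|_{L^2(0,t;L^r(\Tt^d))}\left(\int_0^t(t-s)^{-d/r}ds\right)^{1/2}.
\]

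The last time integral is finite precisely when $d/r < 1$, i.e., when $r > d$ — this is exactly the hypothesis of the corollary and is the only genuinely restrictive step. Combining the three estimates produces \eqref{atron}. There is no deep obstruction; the main point is that the choice $b=0$ reduces everything to heat-kernel $L^q$ estimates and that the condition $r>d$ is the sharp threshold for integrability of the singular factor $(t-s)^{-d/(2r)}$ in $L^2_s$.
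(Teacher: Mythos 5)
Your proof is essentially the same as the paper's: both take $b=0$ in the Lax--Hopf estimate \eqref{lhw}, use the uniform boundedness of $D_pH(x,D_xu^\epsilon)$ (from Theorem \ref{teo1sec13} and A\ref{dphs}) to absorb the quadratic term into the constant, and then estimate the $\div(D_pH)$ term by H\"older in space with exponents $r,q$, the heat-kernel decay $\|\theta(\cdot,\tau)\|_{L^q}\le C\tau^{-d/(2r)}$, and H\"older in time with exponents $2,2$, the hypothesis $r>d$ being exactly what makes $\int_0^t(t-s)^{-d/r}\,ds$ finite. You have merely filled in the steps the paper leaves implicit (the identification $\zeta(x,y,s)=\theta(y,t-s)$ and the final time integration); there is no substantive difference.
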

\begin{proof}
Take $b=0$ in \eqref{lhw} and recall that $D_pH(x,D_xu)$ is bounded.
\begin{align*}
-\int_{\Tt^d}\div(D_pH)\theta(y,t-s)dy&
\le \|\div(D_pH(\cdot,D_xu(\cdot,s))\|_r\|\theta(\cdot,t-s)\|_q\\
&\le C\|\div(D_pH(\cdot,D_xu(\cdot,s))\|_r(t-s)^{-\frac d{2r}}.
\end{align*}
Thus
\[-\int_0^t\int_{\Tt^d}\div(D_pH)\theta(y,t-s)dy ds\le
C\|\div(D_pH)\|_{L^2(0,t;L^r(\Tt^d))}.\]
\end{proof}

The optimal drift $b$ in \eqref{lhw} is given by the 
following Proposition 

\begin{Proposition}
Let $(u^\epsilon,m^\epsilon)$ be a solution of \eqref{eq:smfg}. Suppose that A\ref{ah}-\ref{alphaimp} hold. Set $w=\ln m$. Consider the solution $\rho$ to the adjoint equation
\begin{equation}
\label{ADJw}
\rho_t+\Delta \rho =\div((D_pH+2Dw)\rho)
\end{equation}
with terminal data $\rho(\cdot,t)=\delta_{x_0}$. Then
\begin{equation}
\label{7z}
w(x_0,t)=\int_0^t\int_{\Tt^d}(\div D_pH-|Dw|^2)\rho+\int_{\Tt^d}w(x,0)\rho(x,0). 
\end{equation}
\end{Proposition}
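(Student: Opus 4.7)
The strategy is the standard adjoint duality: I will differentiate the pairing $\int_{\Tt^d} w(x,s)\rho(x,s)\,dx$ in $s$, substitute the equation \eqref{eq:logm} for $w_s$ and the adjoint equation \eqref{ADJw} for $\rho_s$, and integrate by parts on $\Tt^d$ so that the higher-order terms cancel pairwise and only the contributions $\div D_pH$ and $-|Dw|^2$ survive. Integrating the resulting identity from $s=0$ to $s=t$ and using the terminal condition $\rho(\cdot,t)=\delta_{x_0}$ then produces \eqref{7z}.

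Concretely, first I would argue that, thanks to the regularity obtained in Sections \ref{irfp} and the upcoming Corollary for $w$, the field $D_pH(x,D_xu)+2Dw$ is sufficiently regular that the backward parabolic problem \eqref{ADJw} admits a unique nonnegative solution $\rho$ on $[0,t)$ which is smooth, has $\int_{\Tt^d}\rho(x,s)\,dx=1$ for every $s<t$, and $\rho(\cdot,s)\rightharpoonup \delta_{x_0}$ as $s\uparrow t$. To avoid working directly with the delta at $s=t$, I would replace the terminal condition by a smooth mollification $\rho^\delta(\cdot,t)=\eta_\delta(\cdot-x_0)$ and recover \eqref{7z} by passing to the limit $\delta\to 0$ using the continuity of $w$ from Corollary \ref{mholder}.

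For a fixed $\tau<t$, the main computation is:
\begin{align*}
\frac{d}{ds}\int_{\Tt^d} w\,\rho\,dx
&=\int_{\Tt^d}\bigl[\div(D_pH)+D_pH\cdot Dw+|Dw|^2+\Delta w\bigr]\rho\,dx\\
&\quad +\int_{\Tt^d} w\bigl[-\Delta\rho+\div((D_pH+2Dw)\rho)\bigr]dx.
\end{align*}
Integration by parts gives $\int\Delta w\,\rho=\int w\,\Delta\rho$ and $\int w\,\div((D_pH+2Dw)\rho)=-\int(D_pH+2Dw)\cdot Dw\,\rho$, which cancels the term $\int D_pH\cdot Dw\,\rho$ and turns $+|Dw|^2$ into $-|Dw|^2$ after combining. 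Thus
\[
\frac{d}{ds}\int_{\Tt^d} w\,\rho\,dx=\int_{\Tt^d}\bigl(\div D_pH-|Dw|^2\bigr)\rho\,dx.
\]
Integrating this identity on $[0,\tau]$ and then letting $\tau\uparrow t$, the left-hand boundary term at $s=\tau$ converges to $w(x_0,t)$ because $\rho(\cdot,\tau)\to \delta_{x_0}$ and $w$ is continuous, while the right-hand side converges by monotone/dominated convergence using the $L^2$ control on $Dw$ from Proposition \ref{firstreg} and the $L^r$ control on $\div D_pH$ used in Corollary \ref{imphk}.

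The main obstacle is the singular terminal condition for $\rho$: all integrations by parts are cleanly justified on any subinterval $[0,\tau]$ with $\tau<t$ since $\rho$ is smooth there, but one must make the limit $\tau\uparrow t$ rigorous. The mollification approach above, combined with uniqueness of \eqref{ADJw} and the stability estimates implied by the fact that $D_pH+2Dw$ lies in suitable $L^p$ spaces, handles this cleanly. All other steps are routine integration by parts on the torus, which requires no boundary terms.
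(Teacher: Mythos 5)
Your argument is the same as the paper's: multiply \eqref{eq:logm} by $\rho$ and \eqref{ADJw} by $w$, integrate by parts on $\Tt^d$ so that the transport and Laplacian terms cancel, and integrate in time using the terminal data $\rho(\cdot,t)=\delta_{x_0}$. The algebra in your display is correct (the term $\int D_pH\cdot Dw\,\rho$ cancels and $+|Dw|^2$ becomes $-|Dw|^2$ after combining), and your extra care about mollifying the Dirac terminal condition is sensible bookkeeping that the paper leaves implicit.
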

\begin{proof}
Multiply \eqref{eq:logm} by $\rho$ and \eqref{ADJw} by $w$ and integrate.  
Then using integration by parts we
obtain \eqref{7z}.  
\end{proof}
\begin{Corollary}
\label{c8p}
Let $(u^\epsilon,m^\epsilon)$ be a solution of \eqref{eq:smfg}. Suppose that A\ref{ah}-\ref{alphaimp} hold. Set $w=\ln m$ and let $\rho$ solve \eqref{ADJw}.  
Suppose $r>d$. Let $q$ be the conjugate exponent to $r$, that is
$\frac 1r+\frac 1q=1$.
Then
\begin{equation}
\label{pqr1}
\int_0^t \int_{\Tt^d} |Dw|^2 \rho \leq C+C\|\rho\|_{L^2(0,t;L^q(\Tt^d))}. 
\end{equation}
\end{Corollary}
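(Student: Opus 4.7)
My plan is to start from the representation formula \eqref{7z} and rearrange it to isolate the quantity of interest:
\[
\int_0^t \int_{\Tt^d} |Dw|^2 \rho = -w(x_0,t) + \int_0^t \int_{\Tt^d} \div(D_pH)\,\rho + \int_{\Tt^d} w(x,0)\rho(x,0)dx.
\]
The three terms on the right-hand side then need to be controlled one by one. The second term is where the norm $\|\rho\|_{L^2(0,t;L^q(\Tt^d))}$ on the right of \eqref{pqr1} will appear; the others will go into the constant $C$.

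For the first term, I would apply Corollary \ref{imphk}, which provides a pointwise lower bound of the form
\[
w(x_0,t) \geq -C - C\|\div(D_pH)\|_{L^2(0,t;L^r(\Tt^d))} + \int_{\Tt^d} w(x,0)\theta(y,t)dy.
\]
Since $m_0 \geq \kappa_0 > 0$ by A\ref{bcc}, $w(x,0) = \ln m_0$ is bounded below, and $\theta(\cdot,t)$ integrates to $1$, so the heat-kernel term is bounded. Combining with the regularity of $\div(D_pH)$ established in Section \ref{irfp} (essentially \eqref{eq:div}), we get $-w(x_0,t) \leq C$.

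For the third term, smoothness and positivity of $m_0$ make $w(x,0) = \ln m_0$ bounded, so
\[
\int_{\Tt^d} w(x,0)\rho(x,0)dx \leq C\int_{\Tt^d}\rho(x,0)dx = C,
\]
using the fact that $\rho$ preserves mass (integrate \eqref{ADJw} and note that $\rho(\cdot,t)=\delta_{x_0}$ has unit mass, so $\int_{\Tt^d}\rho(\cdot,s)dx = 1$ for all $s\in [0,t]$). For the middle term, H\"older's inequality in space and time gives
\[
\int_0^t\int_{\Tt^d}\div(D_pH)\,\rho \leq \|\div(D_pH)\|_{L^2(0,t;L^r(\Tt^d))}\|\rho\|_{L^2(0,t;L^q(\Tt^d))},
\]
and the first factor is finite and uniform in $\epsilon$ (by the regularity theory of Section \ref{irfp}, since $r$ can be any finite exponent). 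Assembling the three bounds yields \eqref{pqr1}.

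The only delicate point is that $\rho$ is a measure with a Dirac terminal condition, so one must read the identity \eqref{7z} and the manipulations above as applied to a parabolic smoothing of $\rho$ (or, equivalently, using $\rho(\cdot,s)$ for $s<t$ and passing $s\uparrow t$), exploiting the continuity of $w=\ln m$ provided by Corollary \ref{mholder} together with the lower bound from Corollary \ref{imphk} to make sense of $w(x_0,t)$ pointwise. Once this is granted, all the estimates are uniform in $\epsilon$ because every ingredient (the $L^r$ bound on $\div(D_pH)$, the lower bound on $w$, and the mass conservation for $\rho$) is uniform in $\epsilon$.
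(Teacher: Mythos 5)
Your proof follows the paper's own argument essentially step for step: rearrange the representation formula \eqref{7z} to isolate $\int_0^t\int_{\Tt^d}|Dw|^2\rho$, bound $-w(x_0,t)$ via the Lax--Hopf estimate of Corollary \ref{imphk}, control the $\div D_pH$ term by H\"older in space-time against $\|\rho\|_{L^2(0,t;L^q(\Tt^d))}$, absorb the $w(x,0)\rho(x,0)$ term using the positive lower bound on $m_0$ and mass conservation for $\rho$, and finish with the $L^r$ bound on $\div(D_pH)$ from \eqref{eq:div}. The extra remark about approximating the Dirac terminal condition is a reasonable precaution, but otherwise this is the same proof.
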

\begin{proof}
Recalling the estimate \eqref{atron} from  Corollary \ref{imphk} 
\begin{align*}
&\int_0^t\int_{\Tt^d}|Dw|^2\rho=\int_0^t\int_{\Tt^d}\div D_pH\rho+
\int_{\Tt^d}w(x,0)\rho(x,0)-w(x_0,t)\\
&\le C+C\|\div(D_pH)\|_{L^2(0,t;L^r(\Tt^d))}\|\rho\|_{L^2(0,t;L^q(\Tt^d))}+
\int_{\Tt^d}w(x,0)\rho(x,0)dx.
\end{align*}The result follows from \eqref{eq:div}.
\end{proof}
\begin{Proposition}
\label{p8p3}
Let $(u^\epsilon,m^\epsilon)$ be a solution of \eqref{eq:smfg}. Suppose that A\ref{ah}-\ref{alphaimp} hold. Set $w=\ln m$ and let $\rho$ solve \eqref{ADJw}.  
For $\nu<1$
\[
\|D \rho^{\nu/2}\|_{L^2(0,t;L^2(\Tt^d))}^2\leq C 
+\delta_1\int_0^t \int_{\Tt^d} |Dw|^2\rho dx dt.
\]
\end{Proposition}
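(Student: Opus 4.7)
The plan is to test the adjoint equation \eqref{ADJw} with $\nu\rho^{\nu-1}$, in analogy with the calculation \eqref{blabla} from the proof of Theorem \ref{teo1sec13}, but now accounting for the additional drift $D_pH + 2Dw$. Since $\rho$ has a singular terminal datum, I would work with the time-reversed density $\tilde\rho(\cdot,\tau) = \rho(\cdot, t-\tau)$ and drift $\tilde b(\cdot,\tau) = (D_pH + 2Dw)(\cdot, t-\tau)$, so that $\tilde\rho$ satisfies the forward equation $\tilde\rho_\tau - \Delta\tilde\rho = -\div(\tilde b\tilde\rho)$ with $\tilde\rho(\cdot,0) = \delta_{x_0}$, and is smooth for $\tau > 0$. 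Multiplying by $\nu\tilde\rho^{\nu-1}$, integrating over $\Tt^d$, and using the identities $\tilde\rho^{\nu-2}|D\tilde\rho|^2 = \tfrac{4}{\nu^2}|D\tilde\rho^{\nu/2}|^2$ and $\tilde\rho^{\nu-1}D\tilde\rho = \tfrac{2}{\nu}\tilde\rho^{\nu/2}D\tilde\rho^{\nu/2}$ after integration by parts, one obtains
\[
\frac{d}{d\tau}\int_{\Tt^d}\tilde\rho^\nu\,dx = \frac{4(1-\nu)}{\nu}\int_{\Tt^d}|D\tilde\rho^{\nu/2}|^2\,dx + 2(\nu-1)\int_{\Tt^d}\tilde\rho^{\nu/2}D\tilde\rho^{\nu/2}\cdot\tilde b\,dx.
\]

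Integrating in $\tau$ over $(0,t)$, the initial contribution vanishes because $\tilde\rho(0) = \delta_{x_0}$ and $\nu < 1$, while $\int_{\Tt^d}\tilde\rho^\nu(t,x)\,dx \leq |\Tt^d|^{1-\nu}$ by H\"older's inequality applied to the probability measure $\tilde\rho(t,\cdot)\,dx$. I would then apply Young's inequality, $2(1-\nu)\tilde\rho^{\nu/2}|D\tilde\rho^{\nu/2}||\tilde b|\leq \tfrac{\epsilon}{2}|D\tilde\rho^{\nu/2}|^2 + \tfrac{2(1-\nu)^2}{\epsilon}\tilde\rho^\nu|\tilde b|^2$ with $\epsilon = 4(1-\nu)/\nu$, to absorb half of the gradient on the left, leaving
\[
\int_0^t\int_{\Tt^d}|D\tilde\rho^{\nu/2}|^2\,dx\,d\tau \leq C + \frac{\nu^2}{4}\int_0^t\int_{\Tt^d}\tilde\rho^\nu|\tilde b|^2\,dx\,d\tau.
\]

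To close the argument, I would use $|\tilde b|^2 \leq 2|D_pH|^2 + 8|Dw|^2 \leq C + 8|Dw|^2$ (the uniform bound on $D_pH$ follows from Theorem \ref{teo1sec13} together with A\ref{dphs}) and the elementary inequality $\tilde\rho^\nu \leq 1 + \tilde\rho$, valid for $\tilde\rho \geq 0$ and $0 < \nu < 1$, to estimate $\tilde\rho^\nu|Dw|^2 \leq |Dw|^2 + \tilde\rho|Dw|^2$. The $L^2$ bound on $Dw$ from Proposition \ref{firstreg} absorbs the first term into $C$, and reverting the change of variable $\tau = t-s$ gives the stated inequality with $\delta_1$ proportional to $\nu^2$. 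The main delicate point is the justification of the identity in the presence of the singular initial datum $\delta_{x_0}$, which I would handle via a standard mollification of the terminal condition and passage to the limit; a secondary trick is the inequality $\rho^\nu \leq 1 + \rho$, which is exactly what converts the quantity $\rho^\nu|Dw|^2$ produced by Young's inequality into the desired $\rho|Dw|^2$, using the a priori $L^2$ control on $Dw$ to dispose of the excess.
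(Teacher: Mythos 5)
Your proof is correct and follows essentially the same strategy as the paper: test the adjoint equation \eqref{ADJw} with $\nu\rho^{\nu-1}$, integrate by parts, apply Young's inequality to absorb the gradient term, and then exploit the boundedness of $D_pH$ (from Theorem \ref{teo1sec13} and A\ref{dphs}) together with the $L^2$ bound on $Dw$ (from Proposition \ref{firstreg}) and the elementary pointwise domination of $\rho^\nu$ by an affine function of $\rho$. The time-reversal and mollification of the terminal datum is a sensible technical refinement that the paper leaves implicit. The one real divergence is in the last step: you use $\rho^\nu\le 1+\rho$, which delivers the stated inequality with $\delta_1=2\nu^2$, a fixed constant, whereas the paper invokes $\rho^\nu\le C_{\delta_1}+\delta_1\rho$ so that $\delta_1$ can be made arbitrarily small. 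Both are consistent with the Proposition as stated, and for the downstream Corollary \ref{c8p14} the distinction is immaterial because the bootstrap closes via the sublinear power $2\mu<1$ of Proposition \ref{prop:prop153}, which does not require smallness of $\delta_1$.
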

\begin{proof}
Multiply \eqref{ADJw} by $\nu\rho^{\nu-1}$ and integrate by parts to obtain $$\frac{d}{dt}\int_{\Tt^d}\rho^\nu dx +\frac{4(1-\nu)}{\nu}\int_{\Tt^d}|D\rho^{\frac{\nu}{2}}|dx=\nu(1-\nu)\int_{\Tt^d}(D_pH+2Dw)\rho^{\nu-1}D\rho\, dx.$$Since $0<\nu<1$ and because $\rho$ is a probability measure, the former computation implies
\begin{align*}
\int_0^T\int_{\Tt^d}|D\rho^{\frac{\nu}{2}}|\,dxdt&\leq C+C\int_0^T\int_{\Tt^d}|(D_pH+2Dw)|\rho^{\nu-1}|D\rho|\, dx\\&\leq C+C\int_0^T\int_{\Tt^d}|D_pH|\rho^{\nu-1}|D\rho| +C\int_0^T\int_{\Tt^d}|Dw|\rho^{\nu-1}|D\rho|\\&\leq C+C\int_0^T\int_{\Tt^d}|D_pH|^2\rho^\nu +C\int_0^T\int_{\Tt^d}|Dw|^2\rho^\nu\\&\quad+\delta\int_0^T\int_{\Tt^d}|D\rho^{\frac{\nu}{2}}|\,dxdt,
\end{align*}where the last inequality follows from a weighted Cauchy inequality. The result follows the Proposition \ref{firstreg} combined with the facts that $u$ is Lipschitz, $\rho$ is a probability measure, $0<\nu<1$ and $\rho^\nu\leq C_{\delta_1}+\delta_1\rho$.
\end{proof}

\begin{Corollary}\label{cor:cor153}
Let $(u^\epsilon,m^\epsilon)$ be a solution of \eqref{eq:smfg}. Suppose that A\ref{ah}-\ref{alphaimp} hold. Set $w=\ln m$ and let $\rho$ solve \eqref{ADJw}.  
Assume $r>d$. Let $q$ be the conjugate exponent to $r$, that is
$\frac 1r+\frac 1q=1$. Then
\[
\int_0^t\int_{\Tt^d}|D\rho^{\nu/2}|^2dx\,dt
\leq C+C\delta\|\rho\|_{L^2(0,t;L^q(\Tt^d))}.
\]
\end{Corollary}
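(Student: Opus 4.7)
The proof should be immediate by chaining the two preceding quantitative estimates, so my plan is quite short. First I would invoke Proposition \ref{p8p3} to control the quantity of interest by a constant plus $\delta_1$ times the weighted Dirichlet energy $\int_0^t\int_{\Tt^d}|Dw|^2\rho\,dx\,dt$. The assumption $\nu<1$ required there is already in force, so there is nothing to check.

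Next, since we are assuming $r>d$ with conjugate exponent $q$, Corollary \ref{c8p} applies and gives
\[
\int_0^t\int_{\Tt^d}|Dw|^2\rho\,dx\,dt\leq C+C\|\rho\|_{L^2(0,t;L^q(\Tt^d))}.
\]
Substituting this bound into the estimate from Proposition \ref{p8p3} yields
\[
\int_0^t\int_{\Tt^d}|D\rho^{\nu/2}|^2\,dx\,dt\leq C+C\delta_1\|\rho\|_{L^2(0,t;L^q(\Tt^d))},
\]
after absorbing the constant $C\delta_1$ into $C$. Renaming $\delta_1$ as $\delta$ (which is the parameter left free in Proposition \ref{p8p3}) gives exactly the stated inequality.

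There is really no obstacle here: the corollary is a direct composition of the two previous results, with no new analytic ingredient beyond combining them. The only thing worth double-checking is that the appeal to Corollary \ref{c8p} is legitimate, which it is because its hypotheses ($w=\ln m$, $\rho$ solving \eqref{ADJw}, $r>d$ with conjugate $q$) coincide verbatim with the hypotheses of the present corollary, and the constants $C$ in both estimates are independent of $\epsilon$ by the uniform bounds established in Sections \ref{sec10}--\ref{irfp} (in particular $\|\div D_pH\|_{L^2(0,t;L^r(\Tt^d))}\le C$ from \eqref{eq:div} and Corollary \ref{uregular}).
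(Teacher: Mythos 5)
Your proposal is correct and matches the paper's own argument, which likewise obtains the estimate by substituting the bound of Corollary \ref{c8p} into the inequality of Proposition \ref{p8p3}. Nothing further is needed.
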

\begin{proof}
This result follows by using Corollary \ref{c8p} in the estimate given by Proposition \ref{p8p3}.
\end{proof}

\begin{Proposition}\label{prop:prop153}
Let $(u^\epsilon,m^\epsilon)$ be a solution of \eqref{eq:smfg}. Suppose that A\ref{ah}-\ref{alphaimp} hold. Set $w=\ln m$ and let $\rho$ solve \eqref{ADJw}.  
Set
$
\mu=\frac{1-\frac 1 q}{\nu-\frac 2 {2^*}}
$.
If $2\mu<1$, 
then
\[
\|\rho\|_{L^2(0,t;L^q(\Tt^d))}\leq C+C \|D\rho^{\nu/2}\|_{L^2(0,t;L^2(\Tt^d))}^{2\mu}. 
\]
\end{Proposition}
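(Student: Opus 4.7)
The plan is to establish the inequality pointwise in time by spatial interpolation, then integrate in time and use H\"older's inequality, exploiting the hypothesis $2\mu<1$.

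First, since $\rho(\cdot,s)$ is a probability measure on $\Tt^d$, one has $\|\rho(\cdot,s)\|_{L^1(\Tt^d)}=1$. Interpolation between $L^1$ and $L^{2^*\nu/2}$ yields, for each fixed $s$,
\[
\|\rho(\cdot,s)\|_{L^q(\Tt^d)}\leq\|\rho(\cdot,s)\|_{L^1(\Tt^d)}^{1-\mu\nu}\|\rho(\cdot,s)\|_{L^{2^*\nu/2}(\Tt^d)}^{\mu\nu}=\|\rho^{\nu/2}(\cdot,s)\|_{L^{2^*}(\Tt^d)}^{2\mu},
\]
where the choice of the interpolation exponent $\mu\nu$ is precisely the one forced by the identity $\frac 1q=(1-\mu\nu)+\mu\nu\cdot\frac{2}{2^*\nu}$; a short computation verifies this matches the definition of $\mu$ in the statement.

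Next, Sobolev's embedding on $\Tt^d$ gives
\[
\|\rho^{\nu/2}(\cdot,s)\|_{L^{2^*}(\Tt^d)}\leq C\bigl(\|\rho^{\nu/2}(\cdot,s)\|_{L^2(\Tt^d)}+\|D\rho^{\nu/2}(\cdot,s)\|_{L^2(\Tt^d)}\bigr).
\]
Because $0<\nu<1$ and $\rho(\cdot,s)$ is a probability density, Jensen's inequality bounds $\|\rho^{\nu/2}(\cdot,s)\|_{L^2(\Tt^d)}^2=\int_{\Tt^d}\rho^\nu\leq 1$. Combining the previous two displays gives, for every $s\in[0,t]$,
\[
\|\rho(\cdot,s)\|_{L^q(\Tt^d)}^2\leq C+C\|D\rho^{\nu/2}(\cdot,s)\|_{L^2(\Tt^d)}^{4\mu}.
\]

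Finally, I integrate in $s\in[0,t]$ and apply H\"older's inequality with conjugate exponents $\frac{1}{2\mu}$ and $\frac{1}{1-2\mu}$, which is legitimate precisely because $2\mu<1$:
\[
\int_0^t\|D\rho^{\nu/2}(\cdot,s)\|_{L^2(\Tt^d)}^{4\mu}ds\leq t^{1-2\mu}\,\|D\rho^{\nu/2}\|_{L^2(0,t;L^2(\Tt^d))}^{4\mu}.
\]
Taking square roots in the resulting bound on $\|\rho\|_{L^2(0,t;L^q(\Tt^d))}^2$ concludes the proof.

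There is no real obstacle, since every step is a standard interpolation/Sobolev/H\"older manipulation. The only point that requires attention is matching the interpolation exponent with the definition of $\mu$ and verifying that the hypothesis $2\mu<1$ is exactly what is needed to apply H\"older in time in the last step.
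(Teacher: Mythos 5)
Your proof is correct and follows essentially the same route as the paper: interpolate $\|\rho\|_{L^q}$ between $L^1$ (using that $\rho$ is a probability density) and $L^{2^*\nu/2}$, control the latter by Sobolev applied to $\rho^{\nu/2}$, and then integrate in time using $2\mu<1$ — your H\"older-in-time step with exponents $\frac{1}{2\mu}$ and $\frac{1}{1-2\mu}$ is just the paper's Jensen-inequality step in disguise.
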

\begin{proof}
Recall that for any $1\leq p_0< p_1<\infty,\, 0<\theta<1$ we have 
$
\|f\|_{p_{\theta}}\leq\|f\|_{p_1}^\theta\|f\|_{p_0}^{1-\theta},
$
where $p_{\theta}$ is given by
$
\frac{1}{p_{\theta}}=\frac{\theta}{p_1}+\frac{1-\theta}{p_0}
$.
Let
$p_1=1,$ $p_0=\frac{2^*\nu}{2}$. Then for $p_{\theta}=q$, we have
$
1-\theta=\frac{1-\frac{1}{q}}{1-\frac{1}{p_0}}.
$
By Sobolev's inequality
$
\left(\int_{\Tt^d}\rho^{\frac{2^*\nu}{2}}(x,t)\right)^{\frac{1}{2^*}}
\leq C+C\left(\int_{\Tt^d}|D(\rho^{\frac{\nu}{2}})(x,t)|^2dx \right)^{\frac{1}{2}},
$
and so
$
\|\rho(\cdot,t)\|_{\frac{2^*\nu}{2}}\leq C +
C\|D(\rho^{\frac{\nu}{2}})(\cdot,t)\|_2^{\frac{2}{\nu}}.
$
Using $\|\rho(.,t)\|_1=1$ and we conclude that 
\[
\|\rho(\cdot,t)\|_q\leq C+
C\|D(\rho^{\frac{\nu}{2}})(\cdot,t)\|_2^{2\mu}, 
\]
with $\mu=\frac{1-\theta}{\nu}.$ 
Then, if $2\mu<1$, by Jensen's inequality
\[
\|\rho\|^2_{L^2(0,t;L^q(\Tt^d))}
\leq C+C\|D(\rho^{\frac{\nu}{2}})\|_{L^2(0,t;L^2(\Tt^d))}^{4\mu}. 
\]
\end{proof}

{\bf Remark.} Let $r$ be the conjugate exponent to $q$, $\frac 1r+\frac 1 q=1$. 
Then  
$$\mu=\frac{1-\frac{1}{q}}{\nu-\frac{2}{2^*}}=\frac{1}{r(\nu-\frac{2}{2^*})}
=\frac{1}{r(\nu-1+\frac{2}{d})}.$$  
For $\nu$ close to $1$ and $r>d,$ we have $2\mu<1.$ In general $2\mu< 1$ provided
\begin{equation}
\label{nubound}
\nu > \frac 2r +\frac 2{2^*}. 
\end{equation}

\begin{Corollary}
\label{c8p14}
Let $(u^\epsilon,m^\epsilon)$ be a solution of \eqref{eq:smfg}. Suppose that A\ref{ah}-\ref{alphaimp} hold. Set $w=\ln m$ and let $\rho$ solve \eqref{ADJw}.  
Suppose $r>d$. Let $q$ be the conjugate exponent to $r$, that is 
$\frac 1r+\frac 1q=1$, and suppose \eqref{nubound} holds. Then
\[
\|\rho\|_{L^2(0,T;L^q(\Tt^d))}\leq C, 
\qquad 
\int_0^t \int_{\Tt^d} |Dw|^2 \rho \leq C,
\]
and
\[
\|D \rho^{\nu/2}\|_{L^2(0,T;L^2(\Tt^d))}^2 \leq C.
\]
\end{Corollary}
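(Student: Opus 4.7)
The plan is to combine the three preceding results, Corollary \ref{c8p}, Corollary \ref{cor:cor153} and Proposition \ref{prop:prop153}, into a self-improving (bootstrap) inequality for the single quantity $X\doteq \|\rho\|_{L^2(0,t;L^q(\Tt^d))}$, and then use Young's inequality to close the estimate.

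First I would observe that feeding Corollary \ref{c8p} into Proposition \ref{p8p3} (as is already done in Corollary \ref{cor:cor153}) yields
\[
\|D\rho^{\nu/2}\|_{L^2(0,t;L^2(\Tt^d))}^{2}\leq C+CX.
\]
Next I would invoke Proposition \ref{prop:prop153}, which under the hypothesis \eqref{nubound} gives $2\mu<1$ and
\[
X\leq C+C\,\|D\rho^{\nu/2}\|_{L^2(0,t;L^2(\Tt^d))}^{2\mu}.
\]
Substituting the previous bound for $\|D\rho^{\nu/2}\|^2$ into this inequality produces
\[
X\leq C+C(C+CX)^{\mu}\leq C+CX^{\mu},
\]
where I have used $(a+b)^\mu\leq C(a^\mu+b^\mu)$ for $\mu>0$.

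The main (only) technical point is now to absorb the right-hand side. Since $\mu<\tfrac12<1$, Young's inequality gives $CX^\mu\leq \tfrac12 X+C'$, so $\tfrac12 X\leq C+C'$ and therefore $X\leq C$. This is precisely the first asserted bound $\|\rho\|_{L^2(0,T;L^q(\Tt^d))}\leq C$. Once this is in hand, the remaining two estimates are immediate: the bound for $\int_0^t\!\!\int_{\Tt^d}|Dw|^2\rho$ follows by inserting $X\leq C$ into Corollary \ref{c8p}, and the bound for $\|D\rho^{\nu/2}\|_{L^2(0,T;L^2(\Tt^d))}^2$ follows by inserting $X\leq C$ into Corollary \ref{cor:cor153}. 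The condition \eqref{nubound} is used only to guarantee $2\mu<1$, which in turn is what makes the Young-type absorption step legitimate; there is no other real obstacle.
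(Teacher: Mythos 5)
Your proposal is correct and follows essentially the same route as the paper, which simply states that the Corollary follows by combining Corollary \ref{c8p}, Proposition \ref{prop:prop153} and Corollary \ref{cor:cor153} with elementary inequalities; your bootstrap in $X=\|\rho\|_{L^2(0,t;L^q(\Tt^d))}$ and the Young-type absorption (legitimate since $2\mu<1$, modulo the standard a priori finiteness of $X$) is exactly the ``elementary inequalities'' step made explicit.
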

\begin{proof}
The assertions of the Corollary follow by combining Corollary \ref{c8p}, Proposition \ref{prop:prop153} and Corollary \ref{cor:cor153} with elementary inequalities.
\end{proof}

\begin{teo}\label{mLip}
Let $(u^\epsilon,m^\epsilon)$ be a solution of \eqref{eq:smfg}. Suppose that A\ref{ah}-\ref{alphaimp} hold. Set $w=\ln m$, and let $\rho$ solve \eqref{ADJw}. Then, 
$\ln m$ is Lipschitz, and hence $m$ is bounded by above and below. 
  \end{teo}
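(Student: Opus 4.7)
The plan is to extract a uniform two-sided bound $0<c\le m^\ep\le C$ from the Hopf--Cole representation and then upgrade to Lipschitz regularity of $\ln m^\ep$ via the parabolic estimates already developed.

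\textbf{Upper bound for $w=\ln m^\ep$.} Start from the representation formula \eqref{7z}:
\[
w(x_0,t)=\int_0^t\int_{\Tt^d}(\div D_pH-|Dw|^2)\rho\,dxds+\int_{\Tt^d}w(x,0)\rho(x,0)\,dx.
\]
Since $|Dw|^2\rho\ge 0$, dropping this nonnegative term yields an upper bound. For the remaining drift term, apply H\"older in space and then in time with conjugate exponents $q,r$ satisfying $r>d$, $\tfrac{1}{r}+\tfrac{1}{q}=1$:
\[
\int_0^t\!\int_{\Tt^d}\div(D_pH)\,\rho\,dxds
\le\|\div D_pH\|_{L^2(0,T;L^r(\Tt^d))}\,\|\rho\|_{L^2(0,T;L^q(\Tt^d))}.
\]
The first factor is uniformly bounded thanks to \eqref{eq:div}, while the second is controlled by Corollary \ref{c8p14}. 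The initial term satisfies $\int_{\Tt^d}w(x,0)\rho(x,0)dx\le \ln\|m_0\|_\infty$ because $\rho(\cdot,0)$ is a probability measure. Hence $w(x_0,t)\le C$ uniformly in $\ep$ and $(x_0,t)$, proving $m^\ep\le C$.

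\textbf{Lower bound for $w$.} Use Corollary \ref{imphk}: for $r>d$ one has
\[
w(x,t)\ge -C-C\|\div D_pH\|_{L^2(0,t;L^r(\Tt^d))}+\int_{\Tt^d}w(y,0)\theta(y,t)\,dy.
\]
By \eqref{eq:div} the middle term is uniformly bounded, and Assumption A\ref{bcc} gives $w(\cdot,0)=\ln m_0\ge \ln\kappa_0>-\infty$, so the heat-kernel integral is also bounded below. Therefore $w\ge -C$, i.e.\ $m^\ep\ge c>0$, uniformly in $\ep$.

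\textbf{From pointwise bounds to Lipschitz regularity.} With $c\le m^\ep\le C$ established, we have $\ln m^\ep$ bounded and $D_x\ln m^\ep=D_xm^\ep/m^\ep$. The regularity statements of Corollaries \ref{mregular}--\ref{m+regular} yield $D_xm^\ep,\,D^2_{xx}m^\ep,\,m^\ep_t\in L^{\br}(\Tt^d\times[0,T])$ for some $\br>d$; bootstrapping in the Fokker--Planck equation using the boundedness of $m^\ep$ and the known bounds on $D_pH(x,Du^\ep)$ and its derivatives (Theorem \ref{teo1sec13} and Corollaries \ref{freg}--\ref{uregular}) improves this to $D_xm^\ep\in L^\infty(\Tt^d\times[0,T])$ and an analogous bound for $m^\ep_t$. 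Dividing by $m^\ep\ge c$ then yields a uniform $L^\infty$ bound for $D_x\ln m^\ep$ and $\partial_t\ln m^\ep$, which is the desired Lipschitz estimate.

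\textbf{Main obstacle.} The upper bound step is the key point: one must be able to drop $|Dw|^2\rho$ and still control the remaining drift term by an expression that is uniformly bounded independently of $\ep$. This is precisely what Corollary \ref{c8p14} provides, and that corollary in turn rests on the delicate interplay between the lower bound \eqref{atron} and the quadratic term, closed via the Sobolev--interpolation argument of Proposition \ref{prop:prop153} under the threshold condition \eqref{nubound}. The rest is bootstrapping in well-understood parabolic regularity.
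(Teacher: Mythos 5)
Your proposal is correct in its essentials, but it takes a genuinely different route from the paper. The paper never passes through pointwise bounds on $m$ first: it differentiates the equation \eqref{eq:logm} for $w=\ln m$ in a spatial direction $\xi$, integrates against the adjoint $\rho$ to obtain the representation \eqref{eq:dw} for $w_\xi(x_0,t)$, and bounds the two integral terms directly, using precisely the package of Corollary \ref{c8p14} (the bounds on $\|\rho\|_{L^2(0,T;L^q)}$, $\int_0^t\int |Dw|^2\rho$ and $\|D\rho^{\nu/2}\|_{L^2}$) together with the integrability \eqref{eq:div} of $(D_pH)_\xi$; the two-sided bound on $m$ is then deduced from the Lipschitz bound and $\int_{\Tt^d} m\,dx=1$. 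You instead extract $0<c\le m^\ep\le C$ from \eqref{7z} and \eqref{atron} (using A\ref{bcc} for the lower bound), and then recover the Lipschitz estimate by parabolic bootstrapping in the Fokker--Planck equation and dividing by $m\ge c$. This does work: with $m^\ep$ bounded, $D_pH$ bounded, $\div(D_pH)\in L^p$ for every $p<\infty$ uniformly in $\ep$ by \eqref{eq:div}, and $D_xm^\ep\in L^{\br}$ with $\br>d$ by Corollary \ref{mholder}, iterating $W^{2,1}_p$ estimates and parabolic Sobolev embedding yields $D_xm^\ep\in L^\infty$ uniformly, hence $D_x\ln m^\ep\in L^\infty$, with no circularity since none of these inputs rely on Theorem \ref{mLip}. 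What each approach buys: yours is more modular (pointwise bounds first, then classical linear regularity), while the paper's adjoint computation gives the gradient bound in one stroke and avoids having to justify the bootstrap.

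Two caveats. First, the bootstrap step is only sketched, yet in your version it carries the entire Lipschitz claim; it should be written out (RHS $=\div(D_pH)m^\ep+D_pH\cdot D_xm^\ep\in L^{\br}$, hence $m^\ep\in W^{2,1}_{\br}$, then iterate the embedding until $D_xm^\ep$ is H\"older, all uniformly in $\ep$ since $m_0$ is smooth). Second, the asserted ``analogous bound for $m^\ep_t$'', i.e. $\partial_t\ln m^\ep\in L^\infty$, does not follow from this argument: $m_t^\ep=\Delta m^\ep+\div(D_pH)m^\ep+D_pH\cdot D_xm^\ep$ is only in $L^p$, $p<\infty$, because $\Delta m^\ep$ and $\div(D_pH)$ (which involves $D^2u^\ep$) are not bounded. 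This overreach is harmless here, since the Lipschitz regularity the paper establishes and uses is the spatial one, but you should either drop that claim or restrict your conclusion to $D_x\ln m^\ep\in L^\infty$.
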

  \begin{proof}
Differentiate equation \eqref{eq:logm} with respect to a direction
determined by a unit vector $\xi\in \Rr^d$. Then 
\[w_{\xi t}=(\div D_pH(x,D_xu))_\xi+\left[D_pH(x,D_xu)\right]_\xi Dw+
(D_pH(x,D_xu)+2Dw) Dw_\xi+\Delta w_\xi.\]
Integrate the previous identity with respect to $\rho$, then
\begin{equation}
  \label{eq:dw}
w_\xi(x_0,t)=\int_0^t\int_{\Tt^d}((\div D_pH)_\xi+(D_pH)_\xi Dw)\rho\,dx\,ds
+\int_{\Tt^d}w_\xi(x,0)\rho(x,0)dx. 
\end{equation}
The proof ends if we manage to obtain an uniform bound in the integral terms of \eqref{eq:dw}. We first address the term
$
\int_0^t\int_{\Tt^d}(\div D_pH)_\xi \rho 
$.
As previously, we integrate by parts and therefore it suffices to bound
$
\left| \int_0^t\int_{\Tt^d}(\div D_pH) \rho_\xi\right|$.
Recall also, that by \eqref{eq:div} we have 
$\div D_pH \in L^p(\Tt^d\times\left[0,T\right])$ for any $p<\infty$.
We have, for $\nu$ close enough to $1$,
\begin{align*}
\left| \int_0^t\int_{\Tt^d}(\div D_pH) \rho_\xi\right|&\leq
C\int_0^t\int_{\Tt^d} |\div D_pH| \rho^{1-\frac \nu 2} D\rho^{\frac \nu 2}\\
&\leq
C\left(\int_0^t\int_{\Tt^d} |\div D_pH|^2 \rho^{2-\nu}\right)^{1/2}.
\end{align*}
This term is bounded provided we can show that, for some $\nu$ close enough to $1$
we have $\rho^{2-\nu}\in L^s$, for some $s>1$. It is indeed the case
since $\rho \in L^\infty([0,T], L^1)\cap L^2([0,T],L^q)$.

Secondly we consider the term 
$
\int_0^t\int_{\Tt^d}(D_pH)_\xi Dw\rho\,dx\,ds
$.
From \eqref{eq:div} we have
$(D_pH)_\xi\in L^p(\Tt^d\times\left[0,T\right])$ for any $p<\infty$.
Let $r>d$ and $q$ be the conjugate exponent, then
\begin{align*}
\int_0^t\int_{\Tt^d}(D_pH)_\xi Dw\rho &\le
\int_0^t\|(D_pH)_\xi\|_{2r}\|\sqrt{\rho}\|_{2q}\|Dw\sqrt{\rho}\|_2\\
&\le  \|(D_pH)_\xi\|_{L^4(0,t;L^{2r}(\Tt^d))} \|\rho\|^{\frac 12}_{L^2(0,t;L^q(\Tt^d))}
\Bigl(\int_0^t\int_{\Tt^d}|Dw|^2 \rho\Bigr)^{\frac 12}.
\end{align*}
This is clearly bounded by the bounds on $(D_pH)_\xi$.
\end{proof}

\section{Limit as $\epsilon \to 0$}\label{sec17}

In this section we investigate the behavior of the approximated solutions $(u^\epsilon,m^\epsilon)$ to \eqref{eq:smfg} as $\epsilon\to 0$. 

\begin{Lemma}\label{lemma01sec17}
Let $(u^\epsilon,m^\epsilon)$ be a solution of \eqref{eq:smfg}. Suppose that A\ref{ah}-\ref{alphaimp} hold. Then there exist $\ga\in\left(0,1\right)$ and $C>0$ such that \[\left\|u^\epsilon\right\|_{\mathcal{C}^{0,\ga}\left(\Tt^d\times\left[0,T\right]\right)}\leq C, \]uniformly in $\epsilon$. Furthermore  there exists $u\in \mathcal{C}^{0,\ga}\left(\Tt^d\times\left[0,T\right]\right)$, for some $\ga\in\left(0,1\right)$, such that $u^\epsilon\to u$ through some sub-sequence, in $\mathcal{C}^{0,\ga}\left(\Tt^d\times\left[0,T\right]\right)$.
\end{Lemma}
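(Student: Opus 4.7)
The plan is to assemble the uniform-in-$\epsilon$ estimates already established in Sections \ref{sec10}, \ref{lrsbh}, and \ref{irfp} to place $u^\epsilon$ in a parabolic Sobolev space of sufficiently high integrability, then apply a parabolic embedding into Hölder spaces, and finally invoke Arzelà-Ascoli to extract a convergent subsequence.

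First, I would collect the uniform bounds. By Corollary \ref{sec10final}, $\|Du^\epsilon\|_{L^r(0,T;L^p(\Tt^d))}$ and $\|D^2u^\epsilon\|_{L^r(0,T;L^p(\Tt^d))}$ are bounded independently of $\epsilon$ for every $p,r>1$. Combined with Corollary \ref{freg}, which yields $\|u^\epsilon_t\|_{L^r([0,T]\times\Tt^d)}$ bounded for every $r<\infty$, one obtains that $u^\epsilon$ lies in a bounded set of the parabolic Sobolev space $W^{2,1}_{r}(\Tt^d\times[0,T])$, that is $u^\epsilon, Du^\epsilon, D^2u^\epsilon, u^\epsilon_t \in L^r(\Tt^d\times[0,T])$ uniformly in $\epsilon$, for any $r<\infty$. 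Moreover, Theorem \ref{teo1sec13} gives the sharper information that $Du^\epsilon$ is in fact uniformly bounded in $L^\infty$, and the initial-terminal data control $u^\epsilon$ itself in $L^\infty$ via Proposition \ref{plest} and Corollary \ref{imphc}.

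The key step is the parabolic Sobolev embedding: for $r>d+2$, functions in $W^{2,1}_r(\Tt^d\times[0,T])$ embed continuously into $\mathcal{C}^{0,\gamma}(\Tt^d\times[0,T])$ with $\gamma = 1-\frac{d+2}{r}$ (see, e.g., Lady\v{z}henskaja--Solonnikov--Ural'ceva). Choosing $r$ sufficiently large, one gets a uniform bound
\[
\|u^\epsilon\|_{\mathcal{C}^{0,\gamma}(\Tt^d\times[0,T])}\leq C,
\]
for some $\gamma\in(0,1)$ and $C>0$ independent of $\epsilon$. This settles the first assertion of the lemma.

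For the second assertion, fix any $\gamma'\in(0,\gamma)$. Since the embedding $\mathcal{C}^{0,\gamma}(\Tt^d\times[0,T])\hookrightarrow \mathcal{C}^{0,\gamma'}(\Tt^d\times[0,T])$ is compact (the domain is a fixed compact set), the uniformly bounded family $\{u^\epsilon\}$ is precompact in $\mathcal{C}^{0,\gamma'}$. Extracting a subsequence $\epsilon_k\to 0$, Arzelà-Ascoli yields a limit $u\in\mathcal{C}^{0,\gamma'}(\Tt^d\times[0,T])$ with $u^{\epsilon_k}\to u$ in $\mathcal{C}^{0,\gamma'}$. The only delicate point is checking that the exponent in the Sobolev embedding is indeed positive, which only requires taking $r$ larger than the parabolic dimension $d+2$; this is possible precisely because the estimates of Corollaries \ref{freg} and \ref{sec10final} hold for \emph{every} finite $r$. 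No new estimate is required beyond the ones already established.
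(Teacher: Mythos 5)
Your proof is correct and takes essentially the same route as the paper: uniform bounds on $Du^\epsilon$ (in fact $L^\infty$, by Theorem \ref{teo1sec13}) and on $u^\epsilon_t$ in every $L^p$ (Corollary \ref{freg}), an embedding into a H\"older space (the paper simply cites Morrey's inequality on the space-time domain where you invoke the parabolic $W^{2,1}_r$ embedding), followed by Arzel\`a-Ascoli for the subsequential limit. The additional $D^2u^\epsilon$ bounds from Corollary \ref{sec10final} that you bring in are not needed for this step, but they are harmless.
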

\begin{proof}
Since $D_xu^\ep\in L^\infty\left(\Tt^d\times\left[0,T\right]\right)$ we have
$
D_xu^\ep\in L^p\left(\Tt^d\times\left[0,T\right]\right)
$, for every $p<\infty$.
Also, Corollary \ref{freg} yields $
\left\|u^\epsilon_t\right\|_{L^p\left(\Tt^d\times\left[0,T\right]\right)}\leq C
$, for every $p<\infty$.
Then Morrey's inequality implies the result. The convergence follows from Ascoli-Arzela theorem. 
\end{proof}

\begin{Lemma}\label{lemma1sec17}
Let $(u^\epsilon,m^\epsilon)$ be a solution of \eqref{eq:smfg}. Suppose that A\ref{ah}-\ref{alphaimp} hold. Then, there exists $m\in \mathcal{C}^{0,\ga}\left(\Tt^d\times\left[0,T\right]\right)$, for some $\ga\in\left(0,1\right)$, such that $m^\epsilon\to m$ through some sub-sequence
in $\mathcal{C}^{0,\ga}\left(\Tt^d\times\left[0,T\right]\right)$.
\end{Lemma}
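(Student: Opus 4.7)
The plan is to mirror exactly the argument used for $u^\epsilon$ in Lemma \ref{lemma01sec17}, but now feeding on the Hölder regularity established in Corollary \ref{mholder} rather than on Morrey's inequality applied afresh. The key point is that every estimate entering Corollary \ref{mholder}, namely the $L^{\br}$ bounds on $D_x m^\epsilon$, $D_{xx}^2 m^\epsilon$ and $m_t^\epsilon$ for some $\br > d$, is uniform in $\epsilon$; hence the resulting Hölder seminorm is also uniform in $\epsilon$.

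First I would invoke Corollary \ref{mholder} to obtain a constant $C>0$ and an exponent $\gamma_0 = 1 - d/\br \in (0,1)$, both independent of $\epsilon$, such that
\[
\|m^\epsilon\|_{\mathcal{C}^{0,\gamma_0}(\Tt^d \times [0,T])} \leq C.
\]
In particular $\{m^\epsilon\}_{\epsilon>0}$ is uniformly bounded and equicontinuous on the compact set $\Tt^d \times [0,T]$.

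Next, by the Ascoli--Arzelà theorem, there exist a subsequence (still denoted $m^\epsilon$) and a function $m \in C(\Tt^d \times [0,T])$ such that $m^\epsilon \to m$ uniformly. The uniform Hölder bound above passes to the limit and yields $m \in \mathcal{C}^{0,\gamma_0}(\Tt^d \times [0,T])$. To upgrade uniform convergence to convergence in a Hölder norm, I would fix any $\gamma \in (0,\gamma_0)$ and interpolate: for $(x,t),(y,s) \in \Tt^d \times [0,T]$, writing $\Delta := |(x,t)-(y,s)|$,
\[
\frac{|(m^\epsilon - m)(x,t) - (m^\epsilon - m)(y,s)|}{\Delta^\gamma}
\leq \bigl(2C\bigr)^{\gamma/\gamma_0}\, \bigl(2\|m^\epsilon - m\|_\infty\bigr)^{1-\gamma/\gamma_0},
\]
obtained by bounding the increment crudely by $2\|m^\epsilon-m\|_\infty$ when $\Delta$ is small, and by $2C\Delta^{\gamma_0}$ otherwise, and then optimizing. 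Since $\|m^\epsilon - m\|_\infty \to 0$, this gives $\|m^\epsilon - m\|_{\mathcal{C}^{0,\gamma}} \to 0$.

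There is no real obstacle here: the heavy lifting was already done in Corollary \ref{mholder}, where uniform-in-$\epsilon$ Hölder regularity of $m^\epsilon$ was extracted via a bootstrap in the Fokker--Planck equation. The present lemma is a clean compactness corollary of that fact, perfectly parallel to Lemma \ref{lemma01sec17}.
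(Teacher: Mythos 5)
Your proof takes essentially the same route as the paper's: both invoke Corollary \ref{mholder} for a uniform-in-$\epsilon$ Hölder bound, then apply Ascoli--Arzelà to extract a uniformly convergent subsequence whose limit inherits the Hölder seminorm. You are in fact a touch more careful than the paper, since you make explicit the standard interpolation step that upgrades uniform convergence plus a uniform $C^{0,\gamma_0}$ bound to convergence in $C^{0,\gamma}$ for $\gamma<\gamma_0$, which the paper glosses over.
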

\begin{proof}
We note that because of Corollary \ref{mholder} we have $m^\epsilon\in\mathcal{C}^{0,\ga}\left(\Tt^d\times\left[0,T\right]\right)$, uniformly in $\epsilon$. Then, the family $(m^\epsilon)\epsilon$ is equicontinuous and equibounded. Hence, by Ascoli-Arzela theorem, 
 $m^\epsilon$ converges to some function $m$ as $\epsilon\to 0$, through a sub-sequence if necessary, and the limit $m$ satisfies the same uniform estimates as $m^\epsilon$. Since $m^\epsilon$ is uniform continuous, the result follows. 
\end{proof}

\begin{Corollary}\label{cor1sec17}
Let $(u^\epsilon,m^\epsilon)$ be a solution of \eqref{eq:smfg}. Suppose that A\ref{ah}-\ref{alphaimp} hold. Hence, the limit of $m^\epsilon$ as $\epsilon\to 0$ is a weak solution of \[m_t-\div\left(D_pH(x,Du)m\right)=\Delta m.\]
\end{Corollary}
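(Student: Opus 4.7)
The plan is to pass to the limit $\epsilon\to 0$ in the weak formulation of the Fokker--Planck equation satisfied by $m^\epsilon$, namely
\[
\int_0^T\!\!\int_{\Tt^d} \bigl(-m^\epsilon \varphi_t - D_pH(x,Du^\epsilon)\,m^\epsilon \cdot D\varphi - m^\epsilon \Delta\varphi\bigr)\,dx\,dt = \int_{\Tt^d} m_0\,\varphi(\cdot,0) - \int_{\Tt^d} m^\epsilon(\cdot,T)\,\varphi(\cdot,T),
\]
for every $\varphi\in C^\infty(\Tt^d\times[0,T])$. Three convergences are needed: $m^\epsilon\to m$ in a space strong enough to pair against $\varphi_t$ and $\Delta\varphi$; $m^\epsilon\to m$ strongly enough to multiply with $D_pH(x,Du^\epsilon)$; and $D_pH(x,Du^\epsilon)\to D_pH(x,Du)$ in $L^{p}$ for some $p$ paired with an $L^q$ bound on $m^\epsilon$.

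The first two are already in hand by Lemma \ref{lemma1sec17}: up to a subsequence, $m^\epsilon\to m$ uniformly on $\Tt^d\times[0,T]$, which gives strong convergence in every $L^p$. The third is the main step. From Theorem \ref{teo1sec13}, $Du^\epsilon$ is uniformly bounded in $L^\infty(\Tt^d\times[0,T])$; from Corollary \ref{uregular}, $D^2_{xx}u^\epsilon\in L^\infty([0,T],L^2(\Tt^d))$ and $D^3_{xxx}u^\epsilon,\,D^2_{xt}u^\epsilon\in L^2(\Tt^d\times[0,T])$, all uniformly in $\epsilon$. Applying the Aubin--Lions compactness lemma to $Du^\epsilon$, with $Du^\epsilon$ bounded in $L^2(0,T;H^1(\Tt^d))$ and $\partial_t Du^\epsilon$ bounded in $L^2(0,T;L^2(\Tt^d))$, yields relative compactness of $\{Du^\epsilon\}$ in $L^2(\Tt^d\times[0,T])$. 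Hence, along a further subsequence, $Du^\epsilon\to Du$ in $L^2$ and a.e., which together with the uniform $L^\infty$ bound on $Du^\epsilon$ promotes the convergence to any $L^p$, $p<\infty$, by dominated convergence. The limit gradient can be identified with $Du$ because Lemma \ref{lemma01sec17} already gives $u^\epsilon\to u$ uniformly, so the distributional derivatives match.

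Since $D_pH$ is continuous in $(x,p)$ and polynomially controlled in $|p|$ (Assumption A\ref{dphs}), continuity together with the $L^p$ convergence of $Du^\epsilon$ yields $D_pH(\cdot,Du^\epsilon)\to D_pH(\cdot,Du)$ in $L^p(\Tt^d\times[0,T])$ for every $p<\infty$. Combined with the uniform convergence $m^\epsilon\to m$, the product $D_pH(x,Du^\epsilon)\,m^\epsilon$ converges to $D_pH(x,Du)\,m$ in $L^p$, which is more than enough to pass to the limit in the weak formulation above. The linear terms pass to the limit immediately, and on the right-hand side $\int m^\epsilon(\cdot,T)\varphi(\cdot,T)\to\int m(\cdot,T)\varphi(\cdot,T)$ by uniform convergence, while $m^\epsilon(\cdot,0)=m_0$ is fixed. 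This produces the weak formulation of $m_t-\diver(D_pH(x,Du)m)=\Delta m$. The main technical obstacle is the identification of the strong limit of $Du^\epsilon$; the uniform second-order-in-space and mixed space-time bounds from Corollary \ref{uregular} are precisely what make the Aubin--Lions step work.
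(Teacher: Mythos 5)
Your proposal is correct and follows essentially the same route as the paper: compactness of $Du^\epsilon$ in $L^2$ coming from the uniform bounds of Corollary \ref{uregular} (which the paper asserts directly and you justify via Aubin--Lions), uniform convergence of $m^\epsilon$ from Lemma \ref{lemma1sec17}, and passage to the limit in the integrated-by-parts weak formulation. The only cosmetic difference is that the paper tests against $\phi\in C^\infty_c\left((0,T)\times\Tt^d\right)$, so no boundary terms in time appear, whereas you keep them; this does not change the argument.
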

\begin{proof}
We start by noticing that because of Corollary \eqref{uregular} we have $Du^\epsilon$ is pre-compact in, for instance, $L^2([0,T], L^2(\Tt^d))$. 
Therefore, through a sub-sequence,  
$Du^\epsilon\to Du$
both in $L^2([0,T], L^2(\Tt^d))$ and almost everywhere. 
 Let $\phi\in C^\infty_c\left((0,T)\times \Tt^d\right)$ be arbitrary. Multiply the equation for $m^\epsilon$ by $\phi$ and integrate to obtain
\[
0=\int_{0}^{T}\int_{\Tt^d}\left(m_t^\epsilon-\div(D_pHm^\epsilon)-\Delta m^\epsilon\right)\phi dxdt.
\] Integrating by parts and taking limits one obtains
\begin{align*}
0&=\lim_{\epsilon\to 0}\left(\int_{0}^{T}\int_{\Tt^d}-\phi_t m^\epsilon+D_pHD\phi m^\epsilon-\Delta\phi m^\epsilon dxdt\right)\\&=\int_{0}^{T}\int_{\Tt^d}\left(-\phi_t+D_pHD\phi-\Delta\phi\right) m dxdt,
\end{align*}where the second equality follows from Lemmas \ref{lemma1sec17} and \ref{lemma01sec17}. This concludes the proof.
\end{proof}

Next we present the proof of Theorem \ref{teo:intro1}. 

\begin{proof}[Proof of Theorems \ref{teo:intro1}] 
We firstly notice that Lemma \ref{lemma01sec17} and Corollary \ref{mholder} ensure that $u^\ep$ and $m^\ep$ are H\"older continuous, uniformly in $\epsilon$. Furthermore, because of Lemmas \ref{lemma1sec17} and \ref{lemma01sec17}, we have that $u^\ep\to u$ in $\mathcal{C}^{0,\ga}(\Tt^d\times\left[0,T\right])$ as well as $m^\ep\to m$ in $\mathcal{C}^{0,\ga}(\Tt^d\times\left[0,T\right])$, as $\ep\to 0 $.

From Corollary \ref{cor1sec17} it follows that $m$ is a weak solution to 
\[
m_t-\div(D_pH(x,Du)m)=\Delta m.
\]
Because $u^\ep$ is Lipschitz and $m^\ep$ converges uniformly,  
by combining Lemma \ref{lemma01sec17}
with Lemma \ref{lemma1sec17}, 
it follows that $u$ is a viscosity solution to
\[
-u_t+H(x,Du)=\Delta u+g(m).
\]
From the regularity results that we obtained for $(u^\epsilon, m^\epsilon)$, a simple bootstrapping argument by differentiating repeatedly the 
equations and using parabolic regularity ensures that $(u^\epsilon, m^\epsilon)$ satisfy uniform bounds in any Sobolev space. 
By noticing that $(u,m)$ inherits the regularity of $(u^\ep,m^\ep)$, one concludes the proof. 
\end{proof}

\bibliographystyle{alpha}
\bibliography{paper1}

\begin{thebibliography}{GISMY10}

\bibitem[AC78]{CrAm}
H.~Amann and M.~G. Crandall.
\newblock On some existence theorems for semi-linear elliptic equations.
\newblock {\em Indiana Univ. Math. J.}, 27(5):779--790, 1978.

\bibitem[ACCD12]{CDY}
Y.~Achdou, F.~Camilli, and I.~Capuzzo-Dolcetta.
\newblock Mean field games: numerical methods for the planning problem.
\newblock {\em SIAM J. Control Optim.}, 50(1):77--109, 2012.

\bibitem[ACD10]{DY}
Y.~Achdou and I.~Capuzzo-Dolcetta.
\newblock Mean field games: numerical methods.
\newblock {\em SIAM J. Numer. Anal.}, 48(3):1136--1162, 2010.

\bibitem[Ach13]{achdou2013finite}
Y.~Achdou.
\newblock Finite difference methods for mean field games.
\newblock In {\em Hamilton-Jacobi Equations: Approximations, Numerical Analysis
  and Applications}, pages 1--47. Springer, 2013.

\bibitem[BF13]{BFl}
M.~Bardi and E.~Feleqi.
\newblock Nonlinear elliptic systems and mean field games.
\newblock {\em Preprint}, 2013.

\bibitem[Car11]{cardaliaguet}
P.~Cardaliaguet.
\newblock Notes on mean-field games.
\newblock 2011.

\bibitem[Car13a]{Cd1}
P.~Cardaliaguet.
\newblock Long time average of first order mean-field games and weak kam
  theory.
\newblock {\em Preprint}, 2013.

\bibitem[Car13b]{Cd2}
P.~Cardaliaguet.
\newblock Weak solutions for first order mean-field games with local coupling.
\newblock {\em Preprint}, 2013.

\bibitem[CD13a]{Carmona2}
R.~Carmona and F.~Delarue.
\newblock Mean field forward-backward stochastic differential equations.
\newblock {\em Preprint}, 2013.

\bibitem[CD13b]{Carmona1}
R.~Carmona and F.~Delarue.
\newblock Probabilistic analysis of mean-field games.
\newblock {\em Preprint}, 2013.

\bibitem[CGMT13]{CGMT}
F.~Cagnetti, D.~Gomes, H.~Mitake, and H.~Tran.
\newblock A new method for large time behavior of convex hamilton-jacobi
  equations: degenerate equations and weakly coupled systems.
\newblock {\em Preprint}, 2013.

\bibitem[CGTa]{CGT2}
F.~Cagnetti, D.~Gomes, and H.~V. Tran.
\newblock Adjoint methods for obstacle problems and weakly coupled systems of
  {P}{D}{E}.
\newblock {\em submitted}.

\bibitem[CGTb]{CGT1}
F.~Cagnetti, D.~Gomes, and H.~V. Tran.
\newblock {A}ubry-{M}ather measures in the non convex setting.
\newblock {\em submitted}.

\bibitem[CLLP12]{CLLP}
P.~Cardaliaguet, J.-M. Lasry, P.-L. Lions, and A.~Porretta.
\newblock Long time average of mean field games.
\newblock {\em Netw. Heterog. Media}, 7(2):279--301, 2012.

\bibitem[ES11]{ES}
L.~C. Evans and C.~K. Smart.
\newblock Adjoint methods for the infinity {L}aplacian partial differential
  equation.
\newblock {\em Arch. Ration. Mech. Anal.}, 201(1):87--113, 2011.

\bibitem[Eva03]{e}
L.~C. Evans.
\newblock Some new pde methods for weak kam theory.
\newblock {\em Calc. Var. Partial Differential Equations}, 17(2):159--177,
  2003.

\bibitem[Eva09]{E2}
L.~C. Evans.
\newblock Further {PDE} methods for weak {KAM} theory.
\newblock {\em Calc. Var. Partial Differential Equations}, 35(4):435--462,
  2009.

\bibitem[Eva10]{E3}
L.~C. Evans.
\newblock Adjoint and compensated compactness methods for {H}amilton-{J}acobi
  {PDE}.
\newblock {\em Arch. Ration. Mech. Anal.}, 197(3):1053--1088, 2010.

\bibitem[FG13]{GF}
R.~Ferreira and D.~Gomes.
\newblock On the convergence of finite state mean-field games through
  {$\Gamma$}-convergence.
\newblock {\em Preprint}, 2013.

\bibitem[Fri69]{Fr}
A.~Friedman.
\newblock {\em Partial Differential Equations.}
\newblock Holt. Rinehart \& Winston, New York, NY, 1969.

\bibitem[FS06]{FS}
W.~H. Fleming and H.~M. Soner.
\newblock {\em Controlled Markov processes and viscosity solutions}, volume~25
  of {\em Stochastic Modelling and Applied Probability}.
\newblock Springer-Verlag, New York, 2006.

\bibitem[GISMY10]{GIMY}
D.~Gomes, R.~Iturriaga, H.~S{\'a}nchez-Morgado, and Y.~Yu.
\newblock Mather measures selected by an approximation scheme.
\newblock {\em Proc. Amer. Math. Soc.}, 138(10):3591--3601, 2010.

\bibitem[GMS10]{GMS}
D.~Gomes, J.~Mohr, and R.~R. Souza.
\newblock Discrete time, finite state space mean field games.
\newblock {\em Journal de Math\'ematiques Pures et Appliqu\'ees},
  93(2):308--328, 2010.

\bibitem[GMS13]{GMS2}
D.~Gomes, J.~Mohr, and R.~R. Souza.
\newblock Continuous time finite state mean-field games.
\newblock {\em Appl. Math. and Opt.}, 68(1):99--143, 2013.

\bibitem[GP13]{GPat}
D.~Gomes and S.~Patrizi.
\newblock Obstacle and weakly coupled systems problem in mean field games.
\newblock {\em Preprint}, 2013.

\bibitem[GPSM12]{GPM1}
D.~A. Gomes, G.~E. Pires, and H.~S{\'a}nchez-Morgado.
\newblock A-priori estimates for stationary mean-field games.
\newblock {\em Netw. Heterog. Media}, 7(2):303--314, 2012.

\bibitem[GPSM13]{GPM3}
D.~Gomes, E.~Pimentel, and H~Sanchez-Morgado.
\newblock Time dependent mean-field games in the superquadratic case.
\newblock {\em Preprint}, 2013.

\bibitem[GPV13]{GPatVrt}
D.~Gomes, S.~Patrizi, and V.~Voskanyan.
\newblock On the existence of classical solutions for stationary extended mean
  field games.
\newblock {\em Preprint}, 2013.

\bibitem[GS13]{GS}
D.~Gomes and J.~Saude.
\newblock Mean-field games: a brief survey.
\newblock {\em To appear Dyn. Games Appl.}, 2013.

\bibitem[GSM13]{GM}
D.~Gomes and H.~Sanchez-Morgado.
\newblock On the stochastic {E}vans-{A}ronsson problem.
\newblock {\em To appear in Trans. Amer. Math. Soc}, 2013.

\bibitem[Gu{\'e}09a]{GueantT}
O.~Gu{\'e}ant.
\newblock {\em Mean Field Games and Applications to Economics}.
\newblock Ph.D. Thesis. Universit\'e Paris Dauphine, Paris, 2009.

\bibitem[Gu{\'e}09b]{Ge}
O.~Gu{\'e}ant.
\newblock A reference case for mean field games models.
\newblock {\em J. Math. Pures Appl. (9)}, 92(3):276--294, 2009.

\bibitem[GV13]{GVrt}
D.~Gomes and V.~Voskanyan.
\newblock Extended mean-field games - formulation, existence, uniqueness and
  examples.
\newblock {\em Preprint}, 2013.

\bibitem[HCM07]{C2}
M.~Huang, P.~E. Caines, and R.~P. Malham{\'e}.
\newblock Large-population cost-coupled {LQG} problems with nonuniform agents:
  individual-mass behavior and decentralized {$\epsilon$}-{N}ash equilibria.
\newblock {\em IEEE Trans. Automat. Control}, 52(9):1560--1571, 2007.

\bibitem[HMC06]{C1}
M.~Huang, R.~P. Malham{\'e}, and P.~E. Caines.
\newblock Large population stochastic dynamic games: closed-loop
  {M}c{K}ean-{V}lasov systems and the {N}ash certainty equivalence principle.
\newblock {\em Commun. Inf. Syst.}, 6(3):221--251, 2006.

\bibitem[KLY11]{KLY}
V.~N. Kolokoltsov, J.~Li, and W.~Yang.
\newblock Mean field games and nonlinear markov processes.
\newblock {\em Preprint}, 2011.

\bibitem[Lio95]{LiFM}
P.-L. Lions.
\newblock {\em Mathematical Topics in Fluid Mechanics. Vol. 1. Incompressible
  Models.}, volume~3 of {\em Oxford Lecture Series in Mathematics and its
  Applications}.
\newblock Oxford, Oxford., 1995.

\bibitem[Lio11]{LCDF}
P.-L. Lions.
\newblock College de france course on mean-field games.
\newblock 2007-2011.

\bibitem[Lio12]{LIMA}
P.-L. Lions.
\newblock {I}{M}{A}, {U}niversity of {M}inessota. {C}ourse on mean-field games.
  {V}ideo. http://www.ima.umn.edu/2012-2013/sw11.12-13.12/.
\newblock 2012.

\bibitem[LL06a]{ll1}
J.-M. Lasry and P.-L. Lions.
\newblock Jeux \`a champ moyen. {I}. {L}e cas stationnaire.
\newblock {\em C. R. Math. Acad. Sci. Paris}, 343(9):619--625, 2006.

\bibitem[LL06b]{ll2}
J.-M. Lasry and P.-L. Lions.
\newblock Jeux \`a champ moyen. {II}. {H}orizon fini et contr\^ole optimal.
\newblock {\em C. R. Math. Acad. Sci. Paris}, 343(10):679--684, 2006.

\bibitem[LL07a]{ll3}
J.-M. Lasry and P.-L. Lions.
\newblock Mean field games.
\newblock {\em Jpn. J. Math.}, 2(1):229--260, 2007.

\bibitem[LL07b]{ll4}
J.-M. Lasry and P.-L. Lions.
\newblock Mean field games.
\newblock {\em Cahiers de la Chaire Finance et D\'eveloppement Durable}, 2007.

\bibitem[LLG10a]{llg1}
J.-M. Lasry, P.-L. Lions, and O.~Gu{\'e}ant.
\newblock Application of mean field games to growth theory.
\newblock {\em Preprint}, 2010.

\bibitem[LLG10b]{llg2}
J.-M. Lasry, P.-L. Lions, and O.~Gu{\'e}ant.
\newblock Mean field games and applications.
\newblock {\em Paris-Princeton lectures on Mathematical Finance}, 2010.

\bibitem[LSN67]{Lad}
O.A. Ladyzenskaja, V.A. Solonnikov, and Ural'ceva N.N.
\newblock {\em Linear and quasilinear equations of parabolic type.}, volume~27
  of {\em Translations of Mathematical Monographs}.
\newblock American Mathematical Society, Providence, RI, 1967.

\bibitem[LST10]{lst}
A.~Lachapelle, J.~Salomon, and G.~Turinici.
\newblock Computation of mean field equilibria in economics.
\newblock {\em Mathematical Models and Methods in Applied Sciences},
  20(04):567--588, 2010.

\bibitem[NH12]{NguyenHuang}
S.~L. Nguyen and M.~Huang.
\newblock Linear-quadratic-{G}aussian mixed games with continuum-parametrized
  minor players.
\newblock {\em SIAM J. Control Optim.}, 50(5):2907--2937, 2012.

\bibitem[Pim13]{PIM}
E.A. Pimentel.
\newblock {\em Time dependent mean-field games.}
\newblock IST-UTL. Doctoral thesis, Lisbon, 2013.

\bibitem[Por13]{porretta}
A.~Porretta.
\newblock On the planning problem for the mean-field games system.
\newblock {\em Dyn. Games Appl.}, 2013.

\bibitem[Tra11]{T1}
H.~V. Tran.
\newblock Adjoint methods for static {H}amilton-{J}acobi equations.
\newblock {\em Calculus of Variations and PDE}, 41:301--319, 2011.

\end{thebibliography}

\end{document}